\newtheorem{theorem}{Theorem}[section]
\newtheorem*{localtheorem}{Local Main Theorem}
\newtheorem{lemma}[theorem]{Lemma}
\newtheorem{proposition}[theorem]{Proposition}
\newtheorem{corollary}[theorem]{Corollary}
\theoremstyle{remark}
\newtheorem{remark}[theorem]{Remark}
\newcommand{\C}{{\mathbb C}}
\newcommand{\Q}{{\mathbb Q}}
\newcommand{\A}{{\mathbb A}}
\newcommand{\ip}[2]{\langle #1, #2 \rangle}
\newcommand{\gsp}{\mathop{\rm GSp}}
\newcommand{\Sp}{\mathop{\rm Sp}}
\newcommand{\gl}{\mathop{\rm  GL}}
\newcommand{\go}{\mathop{\rm  GO}}
\newcommand{\gso}{\mathop{\rm  GSO}}
\newcommand{\orth}{\mathop{\rm O}}
\newcommand{\oo}{\mathop{\rm O}}
\newcommand{\so}{\mathop{\rm  SO}}
\newcommand{\OF}{{{\mathfrak o}_L}}
\newcommand{\GL}{{\rm GL}}
\newcommand{\SL}{{\rm SL}}
\newcommand{\SP}{{\rm Sp}}
\newcommand{\SO}{{\rm SO}}
\newcommand{\GO}{{\rm GO}}
\newcommand{\OO}{{\rm O}}
\newcommand{\GSO}{{\rm GSO}}
\newcommand{\SSp}{{\rm Sp}}
\newcommand{\GSp}{{\rm GSp}}
\newcommand{\norm}{{\rm  N}}
\newcommand{\trace}{{\rm  Tr}}
\newcommand{\Mat}{{\rm  M}}
\newcommand{\sym}{{\rm Sym}}
\newcommand{\Kl}{{\rm Kl}}
\newcommand{\K}{{\rm K}}
\newcommand{\vol}{{\rm  vol}}
\newcommand{\supp}{{\rm  supp}}
\newcommand{\mat}{{\rm M}}
\newcommand{\Hom}{{\rm Hom}}
\newcommand{\Ind}{{\rm Ind}}
\begin{document}
\title{An explicit theta lift to Siegel paramodular forms}
\author[J. Johnson-Leung]{Jennifer Johnson-Leung}
\address{Department of Mathematics and Statistical Science\\ University of Idaho\\ Moscow, ID~USA}
\email{jenfns@uidaho.edu}
\author[N. Rupert]{Nina Rupert}
\address{Western Governors University\\ Salt Lake City, UT~USA}
\email{nina.rupert@wgu.edu}

\begin{abstract}
Let $E/L$ be a real quadratic extension of number fields. We construct an explicit map from an irreducible cuspidal automorphic representation of $\gl(2,E)$ which contains a Hilbert modular form with $\Gamma_0$ level to an irreducible automorphic representation of $\gsp(4,L)$ which contains a Siegel paramodular form and exhibit local data which produces a paramodular invariant vector for the local theta lift at every finite place, except when the local extension has wild ramification. 
\keywords{Siegel modular forms, theta lift, paramodular group, GSp(4)}
\end{abstract}
\maketitle
\section{Introduction}
\label{sec:introduction}
The theta correspondence, introduced by Roger Howe in 1979 \cite{Howe79}, provides a robust and explicit method of relating automorphic forms on pairs of reductive groups. In general, the theta correspondence is a tool for relating automorphic adelic representations (or admissible local representations) of certain pairs of subgroups $(S_1,S_2)$ of a symplectic group $\Sp(X)$ defined over a number field (or a local field of characteristic zero). Such a map is generally called a theta lift, and this theory has been carefully developed over the past forty-five years by the work of many people. See, for example, \cite{ganIHESnotes} for a modern introduction.

The global theta lift from $S_1$ to $S_2$ takes as input a cusp form $f$ on the adeles of $S_1$ and defines an automorphic form on the adeles of $S_2$ by integrating $f$ against a theta kernel; this theta kernel depends on a choice of a certain local Schwartz function $\varphi$. In the archimedean case Schwartz functions are rapidly decreasing away from $0$, and the in non-archimedean case Schwartz functions are locally constant and compactly supported. Determining a choice for $\varphi$, which gives automorphic forms with desirable qualities, requires a theory of local theta lifts that are commensurable with the global theory.

While the global theta lift has a natural  construction as an integral operator derived from the work of Weil \cite{Weil64}, there is nothing so ubiquitous in the local theory. In \cite{wald1}, the author studies the local and global theta lift when $W$ is a 2-dimensional symplectic space, $V$ is a rank $3$ quadratic space, and $X = W \otimes V$. In \cite{wald2}, the author investigated the correspondence between modular forms of half integral weight and those with integral weight and computed special values of $L$-functions in terms of Fourier coefficients of half integral weight modular forms. 

In this work, we consider an analogous problem when $W$ is a 4-dimensional symplectic space, $V$ is a rank $4$ quadratic space, and $X = W \otimes V$. The key technical result is the construction an explicit theta lift from $\GO(X)$ to $\GSp(4)$. In this setting, the local theta correspondence was studied by Roberts in a series of papers, culminating in \cite{Roberts2001}. An integral map for the local theta lift was studied in \cite{Johnson-Leung_Roberts2012}, and the authors proved that for every Hilbert cusp form of level $\Gamma_0({\mathfrak N})$ there exists a Siegel paramodular newform with weight, level, Hecke eigenvalues, epsilon factor and $L$-function determined explicitly by the data of the Hilbert modular form. 

We continue this work by calculating explicit input data for the local theta lifts. In particular, we study an integral realization for the local theta lifts and give an explicit recipe for Schwartz functions at all places that are not wildly ramified. This allows for explicit construction of the Siegel paramodular forms appearing in this correspondence as lifts of Hilbert modular forms over any real quadratic field, except $\mathbb{Q}(\sqrt{2})$.  These paramodular forms correspond to abelian surfaces which are Weil restrictions of elliptic curves over real quadratic fields as a consequence of the results of \cite{RLS}. This provides further evidence for the paramodular conjecture of Brumer and Kramer \cite{BK}. Our result opens up a new approach to calculate Fourier coefficients of these paramodular lifts.

This paper is organized as follows: in Section \ref{sec:Main Theorem} we set up the language of theta correspondence with similitudes which is required to state our main results. Section \ref{sec:notation} contains known results about Weil representations and Whittaker models which are required for the proof. In Section \ref{sec:intertwiningmaps} we study the intertwining maps which give the local integral map of the theta lift, and finally in Section \ref{sec:testdata} we complete the proof of the Main Theorem by specifying specific Schwartz functions to produce the prescribed paramodular vector at every prime.

\subsection*{Acknowledgments}
This project was initiated as part of the second author's PhD thesis at the University of Idaho. Both authors would like to thank Brooks Roberts for providing key insights through many conversations, sharing unpublished work, and providing detailed feedback on the initial form of these results as a member of the dissertation committee.

\section{Statement of the Main Theorem}\label{sec:Main Theorem}
We define the algebraic group $\gsp(4)$ with respect to the symplectic form $J=[\begin{smallmatrix}&I\\-I&\end{smallmatrix}]$ to be the subgroup of $g \in \GL(4)$ such that ${}^tgJg=\lambda(g) J$, for some $\lambda(g)\in\GL(1)$. The Siegel upper half space $\mathcal H$ is the space of elements of $[\begin{smallmatrix}\tau&z\\z&\tau'\end{smallmatrix}]\in\mathrm{M}(2, \mathbb{C})$ whose imaginary part is positive definite. For natural numbers $N$ and $n$ and a rational prime $p$, we define global and local paramodular groups of level $N$ and $p^n$, respectively, to be
\begin{equation*}
  \K(N) = \begin{bmatrix}
    \mathbb{Z}&\mathbb{Z}&N^{-1}\mathbb{Z}&\mathbb{Z}\\
    N\mathbb{Z}&\mathbb{Z}&\mathbb{Z}&\mathbb{Z}\\
    N\mathbb{Z}&N\mathbb{Z}&\mathbb{Z}&N\mathbb{Z}\\
    N\mathbb{Z}&\mathbb{Z}&\mathbb{Z}&\mathbb{Z}
  \end{bmatrix} \cap \Sp(\mathbb{Q})\quad   \K(p^n) = \begin{bmatrix}
    \mathbb{Z}_p&\mathbb{Z}_p&p^{-n}\mathbb{Z}_p&\mathbb{Z}_p\\
    p^n\mathbb{Z}_p&\mathbb{Z}_p&\mathbb{Z}_p&\mathbb{Z}_p\\
    p^n\mathbb{Z}_p&p^n\mathbb{Z}_p&\mathbb{Z}_p&p^n\mathbb{Z}_p\\
    p^n\mathbb{Z}_p&\mathbb{Z}_p&\mathbb{Z}_p&\mathbb{Z}_p
  \end{bmatrix} \cap \GSp(\mathbb{Q}_p).
\end{equation*}
A Siegel paramodular form is a holomorphic function on $\mathcal{H}$ such that $F(\gamma \langle Z \rangle) = j(\gamma, Z)^{-k} F(Z)$ for all $\gamma \in \K(N)$ and $Z \in \mathcal H$, where $\gamma \langle Z \rangle$ is the action by fractional linear transformation. By Theorem 1.4.1 of \cite{JohnsonLeung2023} there is a bijection between cuspidal automorphic representations of $\GSp(4,\A)$ of weight $k$ which are paramodular with conductor $N$ and cuspidal Siegel eigenforms with respect to $\K(N)$ that are new of level $N$.

 For any even-dimensional non-degenerate symmetric bilinear space $X$ over a field of characteristic not equal to 2, we let $\GO(X)$ be the group of
$h \in \GL(X)$ such that there exists a $\lambda \in \GL(1)$ such that $\ip {hx} {hy} = \lambda \ip xy$ 
for all $x,y \in X$. The scalar $\lambda$ is unique, and will be denoted by $\lambda(h)$.  We let $\OO(X)$ be the subgroup of $h \in \GO(X)$ such that $\lambda(h)=1$, and we let $\SO(X)$ be the subgroup of $h \in \OO(X)$ such that $\det (h)=1$. We see that for $h \in \GO(X)$ we have $\det(h)^2 = \lambda (h)^{\dim X}$. We set $\GSO(X)$ to be the subgroup of $h \in \GO(X)$ such that $\det(h)=\lambda(h)^{\dim X/2}$ and $\SO(X) = \OO(X) \cap \GSO(X)$.

Let $L$ be a number field, let $E$ be a real quadratic extension of $L$, let $\A_L$ be the adeles of $L$, and let $\A_E$ be the adeles of $E$. Let $\pi$ be a cuspidal automorphic representation of $\GL(2, \A_E)$ that has trivial central character and is not Galois invariant. The Jacquet-Langlands correspondence \cite{Jacquet1970} produces a cuspidal automorphic representation $\pi'$ of a quaternion algebra $B^\times(\A_E)$, from the data of $\pi$. Let $X(\A_L)$ be a certain symmetric bilinear 4-dimensional $\A_L$-subspace of $B(\A_E)$. For $x , y \in X$, the symmetric product on $X$ is given by $\langle x , y \rangle = \frac{1}{2}\trace(xy^*)$. The  following sequence is exact (\cite{Knus}):
\begin{equation}
\label{intro:ES}
		1 \to \A_E^\times \to \A_L^\times \times B(\A_E)^\times \xrightarrow{\ \ \rho\ \ } \gso(X(\A_E)) \to 1.
\end{equation}
So, from the data of $\pi'$, it is simple to produce a cuspidal automorphic representation of $\GSO(X(\A_L))$, also denoted by $\pi'$. Let $\sigma$ be the associated cuspidal automorphic representation of $\go(X)$, as in Theorem 7 of \cite{Roberts2001}.  Since $\GO(X)$ and $\GSp(4)$ form a dual reductive pair, we have the existence of the Weil representation $\omega$ on the group $R = \{(g,h) \in \GSp(4,\A_L) \times \GO(X(\A_L)) \mid \lambda(g) = \lambda(h)\}$ and hence the existence of the theta lift, which is the focus of this paper. 

The space of $\omega$ is the space of Schwartz functions on $X(\A_L)^2$, which we denote by $\mathcal S(X(\A_L)^2)$. The key to determining an explicit theta lift is choosing a $\varphi \in \mathcal S(X(\A_L)^2)$ wisely, but we can define the theta lift for any choice of $\varphi$. Let $f$ be a cusp form on $\GO(X,\A_L)$ and $\varphi \in \mathcal S(X(\A_L)^2)$. Let $\GSp(4 , \A_L)^+$ be the subgroup of $g \in \GSp(4 , \A_L)$ such that $\lambda(g) \in \lambda (\GO(X (\A_L)))$. For $g \in \GSp(4 , \A_L)^+$ define:			\begin{equation*}
	\label{eq:globalthetaliftintro}
		\theta(f , \varphi)(g) = \int_{\orth(X , \Q) \setminus \orth(X , \A_L)} \vartheta(g , h_1h; \varphi) f(h_1 h )dh_1 
	\end{equation*}	
where $h \in \GO(X , \A_L)$ is any element such that $(g , h) \in R(\A_L)$ and $\vartheta$ is the global theta kernel given by
	\begin{equation*}
			\vartheta( g , h ; \varphi) = \sum_{x \in X(L)^2} (\omega(g , h)\cdot \varphi)(x).
	\end{equation*}
Then $\theta(f,\varphi)$ can be extended uniquely to all of $\GSp(4 , \A_L)$ which is left invariant under $\GSp(4 , L)$ and is, in fact, an automorphic form of $\GSp(4 , \A_L)$. 

In order to make a good choice for $\varphi \in \mathcal S(X(\A_L)^2)$ we need to examine the problem locally. Now let $L_v$ be a local  field and let $E_w$ be a quadratic extension of $L_v$. Using the path outlined in the global case one may determine a $\GO(X, L_v)$ representation $(\sigma_v, \mathcal W)$ from the data of an infinite-dimensional irreducible admissible representation $\pi_v$ of $\gl(2,E_w)$. Let $R_v = \{(g,h) \in \GSp(4,L_v) \times \GO(X(L_v)) \mid \lambda(g) = \lambda(h)\}$ where the similitude factor of each coordinate matches and let $\mathcal S(X(L_v)^2)$ be the Schwartz functions of $X(L_v)^2$. When $\sigma_v$ is irreducible as a $\GO(X)$-representation, it is the direct sum of two isomorphic $\GSO(X)$-representations. Throughout this work, we choose a distinguished $\GSO(X)$-representation, which we denote with a $+$. We provide data so that the local theta correspondence can be explicitly realized. Define the intertwining map
\begin{equation}
	\label{eq1}
	B(g, \varphi, W, s) = \int\limits_{H\backslash \SO(X)} \big(\omega(g, hh') \varphi \big)(x_1,x_2) Z(s, \pi(hh') W) \, dh
\end{equation}
where $g \in \gsp(4,L_v), \varphi \in \mathcal S(X(L_v)^2), W \in \mathcal W, \Re(s) \gg 0$, $Z(s, W)$ is the zeta integral as defined in \eqref{eq:split_zeta_def} or \eqref{eq:nonsplit_zeta_def}, and $x_1,x_2 \in X$ are certain specified elements and $H$ is their stabilizer in $\SO(X)$. Indeed, the space of  functions $\{ B(\cdot, \varphi, W,s)  \mid \varphi \in \mathcal S(X^2), W \in V\}$ is an irreducible admissible  representation $\Theta(\pi^+_v) = \Pi_v$ of $\gsp(4,L_v)$. The representation $\Pi_v$ has a canonical paramodular level ${\mathfrak p}_L^N$. Furthermore, by \cite{Roberts2001} the intertwining map is unique. The main result of this work is summarized below.
\begin{localtheorem}
Let $L$ be a non-archimedean local field of characteristic $0$ and let $E$ be either a real quadratic field extension of $L$ or let $E = L\times L$. If $E$ is a field let ${\mathfrak P}$ be the unique maximal ideal of the ring of integers of $E$, ${\mathfrak o}_E$,  and let $\varpi_E$ be the uniformizer of ${\mathfrak P}$. Assume that if the residual characteristic of $L$ is even then $E/L$ is unramified. 

If $E$ is a field let $\tau_0$ be an infinite-dimensional, irreducible, admissible representation of $\gl(2,E)$ with trivial central character.  If $E = L\times L$ let $\tau_1$ and $\tau_2$ be infinite-dimensional, irreducible, admissible representations of $\gl(2,L)$ with trivial central character. For $i \in \{0,1,2\}$, we assume that the space of $\tau_i$ is its unique Whittaker model $\mathcal W(\tau_i)$. If $E = L\times L$ let $(\pi(\tau_1,\tau_2), V)$, where $V = \mathcal W(\tau_1) \times \mathcal W(\tau_2)$ is the associated $+$-representation of $\gso(X)$.  

If $E$ is a field let $(\pi(1, \tau_0), V)$, where $V = \mathcal W(\tau_0)$ is the associated $+$-representation of $\gso(X)$, and let $W\in V$ be a local $\gl(2,E)$-newform with $\Gamma_0({\mathfrak p}^n)$-invariance. If $E = L\times L$ then let $W_i \in V$ be local $\gl(2,L)$-newforms with $\Gamma_0({\mathfrak p}^{n_i})$-invariance, for $i \in \{1,2\}$ and set $W = (W_1, W_2)$.

If $E = L\times L$ then set $N = n_1 + n_2$ and let $\varphi \in \mathcal S(X^2)$ be given by as in Theorem \ref{MT_split},
if $E/L$ is inert then set $N = 2n$ and let $\varphi \in \mathcal S(X^2)$ be given as in Theorem \ref{MT_inert}, and 
if $E/L$ is tamely ramified then set $N = n + 2$, let $\chi$ be the non-trivial quadratic character of $E/L$, and let $\varphi \in \mathcal S(X^2)$ be given as in Theorem \ref{MT_ramified}.
Finally, let $s \in \mathbb{C}$ be such that $\Re(s) \gg 0$, and let $B(\cdot, \varphi, W, s): \gsp(4, L)\to \C$ be the local intertwining map as in \eqref{eq1}. 

Then $B(\cdot, \varphi, W, s): \gsp(4,L) \to \mathbb{C}$ is non-zero and is invariant under right translation by elements of $\mathrm{K}({\mathfrak p}^N)$. 
\end{localtheorem}
The compatibility of this local map with the global theta lift follows from Corollary \ref{cor:piplusmap_arch} and \cite{Roberts2001}. The remainder of the paper is devoted to the proof of of these results.
\section{Preliminaries}
\label{sec:notation}
In this section, we collect some results on local representations that will be needed in the sequel.
\subsection{Whittaker Models}
\label{sec:whittakermodels}

Let $\psi$ be a non-trivial additive character of $L$. If $L$ is non-archimedean we let $\mathcal{W}(\GL(2,L),\psi)$ be the $\mathbb{C}$-vector space of functions $W:\GL(2,L) \to \C$ such that 
\begin{align} \label{eq:whittaker_character}
W([\begin{smallmatrix}1&x\\ &1 \end{smallmatrix}] g) = \psi(x) W(g)
\end{align}
for $x \in L$ and $g \in \GL(2,L)$, and there exists a compact, open subgroup $K$ of $\GL(2,L)$ such that $W(gk)=W(g)$ for $g \in \GL(2,L)$ and $k \in K$. If $L = \mathbb{R}$ we let $\mathcal W (\gl(2,L) , \psi)$ be the space of smooth functions with rapid decay away from $0$ that satisfy \eqref{eq:whittaker_character}.  Evidently, $\mathcal{W}(\GL(2,L),\psi)$ is a smooth $\GL(2,L)$-representation under right translation. We say that a representation $(\tau, V)$ of $\GL(2,L)$ admits a \emph{Whittaker model}, denoted by $\mathcal W(\tau,\psi)$, if it is isomorphic to a subrepresentation of $\mathcal W(\GL(2,L),\psi)$. 
\begin{theorem}
	Let $L$ be a non-archimedean local field and let $\psi$ be a non-trivial additive character of $L$. Let $(\tau , V)$ be an irreducible admissible representation of $\GL(2,L)$. Then $\tau$ is infinite-dimensional if and only if $\tau$ admits a unique Whittaker model. 
\end{theorem}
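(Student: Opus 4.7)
The plan is to prove the two implications separately, using the standard Gelfand--Kazhdan and Jacquet module machinery.

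For the easy direction, I would first observe that an irreducible admissible representation $(\tau,V)$ of $\GL(2,L)$ that is finite-dimensional must be one-dimensional: any such $\tau$ factors through an abelian quotient of $\GL(2,L)$, and since the commutator subgroup of $\GL(2,L)$ contains all unipotent elements, $\tau$ must factor through $\det:\GL(2,L)\to L^\times$. In particular $\tau$ acts trivially on the unipotent radical $N = \{[\begin{smallmatrix}1&x\\&1\end{smallmatrix}]\}$. A Whittaker realization $W$ would then satisfy $W(g) = W(ng) = \psi(x)W(g)$ for all $x\in L$, and the non-triviality of $\psi$ forces $W\equiv 0$. So no finite-dimensional $\tau$ admits a Whittaker model.

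For the harder direction, I would separate uniqueness and existence. Uniqueness amounts to $\dim \Hom_N(V,\C_\psi) \le 1$, and I would invoke the Gelfand--Kazhdan theorem: every distribution $T$ on $\GL(2,L)$ satisfying $T(n_1 g n_2) = \psi(n_1)\psi(n_2)^{-1}T(g)$ is invariant under the transpose-inverse involution $g\mapsto {}^tg^{-1}$, which forces the multiplicity of $\psi$ in the Jacquet-type coinvariants to be at most one. This is a standard Bruhat-decomposition calculation which I would cite rather than reproduce.

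For existence, the plan is to produce a non-zero element of $\Hom_N(V,\C_\psi)$, or equivalently to show that the twisted Jacquet module
\[
V_{N,\psi} \;=\; V\big/\mathrm{span}\{\tau(n)v - \psi(n)v : n\in N,\ v\in V\}
\]
is non-zero, from which a non-zero functional and then (by Frobenius reciprocity) an embedding $V\hookrightarrow \mathcal W(\GL(2,L),\psi)$ follows. The key structural input is the theory of the mirabolic subgroup $P = \{[\begin{smallmatrix}a&b\\&1\end{smallmatrix}]\}$: Bernstein--Zelevinsky and Jacquet's analysis of the derivative functor shows that any smooth $P$-module is an extension of a trivial $P$-module by a direct sum of copies of $\mathrm{ind}_N^P(\psi)$, and that for irreducible admissible $\tau$ the restriction $V|_P$ is either trivial (forcing $V$ one-dimensional and finite-dimensional) or contains $\mathrm{ind}_N^P(\psi)$. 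Under the hypothesis that $V$ is infinite-dimensional, the latter holds, yielding $V_{N,\psi}\neq 0$.

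The main obstacle is the mirabolic classification; I would expect to cite it from a standard source (e.g.\ Bump's \emph{Automorphic Forms and Representations}, \S4.4, or the original Jacquet--Langlands treatment) rather than develop it from scratch, since the body of the paper uses the theorem only as a black box.
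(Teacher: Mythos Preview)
Your proposal is correct and in fact considerably more detailed than the paper's own treatment: the paper does not prove this theorem at all but simply cites Theorem~3.5.3 of Bump. Since you conclude by saying you would cite Bump or Jacquet--Langlands rather than develop the mirabolic classification from scratch, your approach ultimately collapses to the paper's, just with an honest outline of what is being invoked.
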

\begin{proof}
	See Theorem 3.5.3 of \cite{bump}, for example. 
\end{proof}
We note that if $(\tau,V)$ is a finite-dimensional irreducible admissible representation of $\gl(2,L)$, then $\tau \cong \beta \circ \det$ for some character $\beta:L^\times \to \mathbb{C}^\times$. In the case that $L = \mathbb{R}$ we have a similar result.
\begin{theorem}
  Let $(\pi, V)$ be an irreducible admissible $({\mathfrak g} , K)$-module for $\gl(2,\mathbb{R})$. Then there exists at most one space $\mathcal W(\pi, \psi) \subset \mathcal W(\gl(2,\mathbb{R}) , \psi)$ that is invariant under the actions of $U({\mathfrak g})$ and $K$ on $C^\infty(\gl(2,\mathbb{R}))$ such that $\mathcal W(\pi,\psi)$ is isomorphic to $(\pi,V)$ as a $({\mathfrak g},K)$-module. 
\end{theorem}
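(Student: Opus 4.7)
The plan is to reformulate uniqueness of the Whittaker model as one-dimensionality of the space of Whittaker functionals on $V$, and then to establish that one-dimensionality. First, suppose $\mathcal{W}(\pi,\psi) \subset \mathcal{W}(\gl(2,\R),\psi)$ is a $({\mathfrak g},K)$-subspace isomorphic to $(\pi, V)$. Then it is the image of some $({\mathfrak g},K)$-equivariant embedding $T: V \hookrightarrow \mathcal{W}(\gl(2,\R),\psi)$, and the assignment $\lambda_T(v) := T(v)(1)$ defines a linear functional on $V$. Differentiating the left-$N$-equivariance of $T(v)$ at the identity yields the Whittaker condition $\lambda_T(\pi(X)v) = d\psi(X)\lambda_T(v)$ for all $X$ in the Lie algebra ${\mathfrak n}$ of upper-triangular unipotent matrices. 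Conversely, via the Casselman--Wallach globalization $V^\infty$ and continuous extension $\lambda^\infty$, one recovers $T$ through $T(v)(g) = \lambda^\infty(\pi^\infty(g)v)$; proportional functionals produce the same image $T(V)$. Hence the number of $({\mathfrak g},K)$-invariant subspaces of $\mathcal{W}(\gl(2,\R),\psi)$ isomorphic to $V$ is bounded by $\dim \mathrm{Wh}_\psi(V)$, where $\mathrm{Wh}_\psi(V)$ denotes the space of ${\mathfrak n}$-equivariant functionals $\lambda: V \to \C_\psi$.

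To conclude, I would establish $\dim \mathrm{Wh}_\psi(V) \leq 1$ by invoking Kostant's multiplicity-one theorem for quasi-split real reductive groups, which applies to $\gl(2,\R)$ and yields this bound for any irreducible admissible $({\mathfrak g},K)$-module. As a self-contained alternative, one uses the classification of such modules---principal series, (limits of) discrete series, and the one-dimensional characters---and in each infinite-dimensional case realizes the module explicitly (via a Kirillov model, or induction from a character of the Borel), so that the Whittaker condition becomes a first-order differential equation on $\R^\times$ whose space of admissible solutions is one-dimensional. In the one-dimensional case $\tau \cong \beta\circ\det$, the condition forces $\lambda = 0$, so $\mathrm{Wh}_\psi$ vanishes and there is nothing to prove.

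The main obstacle is distinctly archimedean: one must rule out additional solutions to the Whittaker differential equation that lie in $\mathcal{W}(\gl(2,\R),\psi)$, whereas the analogous non-archimedean result of Shalika reduces to a purely algebraic Jacquet-module computation. The discrete series (and limits thereof) are particularly delicate because they are not fully induced from the Borel; one circumvents this by realizing them as sub- or quotient representations of reducible principal series via the Casselman subrepresentation theorem, or by using their standard realization on the upper half-plane and then appealing to the moderate-growth/rapid-decay conditions to eliminate the second linearly independent solution. Once the Whittaker functional is pinned down on a minimal $K$-type, the $({\mathfrak g},K)$-action propagates it uniquely up to scalar to the rest of $V$, completing the proof.
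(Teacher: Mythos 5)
The paper does not actually prove this statement; it simply cites Theorem 2.8.1 of Bump, whose proof is precisely your ``self-contained alternative'': for each $K$-type the Whittaker transformation law plus the Casimir eigenvalue yields a second-order ODE in the coordinate $y$, the rapid-decay (moderate growth) condition built into $\mathcal W(\gl(2,\mathbb{R}),\psi)$ kills the exponentially growing solution, and the raising/lowering operators then propagate the resulting scalar ambiguity consistently through all $K$-types, so any two invariant subspaces isomorphic to $(\pi,V)$ coincide. In that sense your second route is essentially the argument behind the paper's citation, and it is sound, including your correct identification of the discrete-series/limit cases and of the fact that $g$ does not act on $V$, so that recovering a model from a functional requires the smooth globalization.

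Your first route, however, is misstated in a way that matters: Kostant's theorem is not a multiplicity-one statement for bare ${\mathfrak n}$-equivariant functionals on the Harish-Chandra module. For a large irreducible module of a quasi-split group the space of purely algebraic Whittaker functionals has dimension equal to the order of the Weyl group --- for $\gl(2,\mathbb{R})$ principal series it is $2$-dimensional, reflecting the two solutions of the Whittaker ODE --- so the reduction ``number of models $\le \dim \mathrm{Wh}_\psi(V)$'' with $\mathrm{Wh}_\psi(V)$ taken to be all ${\mathfrak n}$-equivariant functionals only gives the useless bound $2$. Multiplicity one holds for functionals that are continuous on the Casselman--Wallach globalization $V^\infty$ (Shalika; Casselman--Hecht--Mili\v{c}i\'c; Wallach), equivalently for functionals remembering the rapid-decay condition that defines $\mathcal W(\gl(2,\mathbb{R}),\psi)$; this continuity/growth restriction is exactly what your ODE argument supplies, so you should either quote the smooth multiplicity-one theorem rather than Kostant, or discard the functional shortcut and rely on the ODE-plus-decay analysis throughout.
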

\begin{proof}
  See Theorem 2.8.1 of \cite{bump}, for example. 
\end{proof}

\subsection{The split case}

Let $L$ be a local field. Define the following symmetric bilinear space of L:
\begin{equation*}
    X_M = \mathrm{M}(2, L).
\end{equation*}
Define an action of $\GL(2, L) \times \GL(2, L)$ on $X_M$ by $\rho(d_1,d_2)x=d_1xd_2^*$. Then $\rho(d_1,d_2) \in \GSO(X_M)$ for $d_1,d_2 \in \GL(2, L)$, and the sequence
\begin{equation}
\label{eq:Base_exact_sequence}
1 \to L^\times \to \GL(2,L) \times \GL(2, L) \xrightarrow{\ \ \rho\ \ } \GSO(X_M) \to 1
\end{equation}
is exact. Let $(\tau_i, \mathcal{W}(\tau_i, \psi))$ be the Whittaker model for an infinite-dimensional, irreducible, admissible representation of $\GL(2,L)$ for $i\in \{1, 2\}$. Assume that $\tau_1$ and $\tau_2$ have equal central character. Then $\pi = \rho(\tau_1, \tau_2)$ is an infinite-dimensional, irreducible, admissible representation of $\GSO(X_M)$, let $\sigma$ be the induced representation of $\GO(X_M)$, and let $\pi^+$ be a particular irreducible subprepresentation of $\sigma$. Let $W_i \in \mathcal W(\tau_i, \psi)$ for $i \in \{1, 2\}$. Let $s \in \mathbb{C}$ and set
\begin{align} \nonumber
	Z(s,W_i) &= \int\limits_{L^\times} W_i([\begin{smallmatrix}
		a & \\ & 1
	\end{smallmatrix}])|a|^{s-\frac 12} \, d^\times a.
	\intertext{We define $Z(s,W)$ by setting} \label{eq:split_zeta_def}
	Z(s, W) & = Z(s,W_1) \cdot Z(s, W_2)
\end{align}
for $W = W_1 \otimes W_2 \in V$.
\begin{lemma}
\label{zeta_convergence_1}
	There exists a positive real number $M$ such that $Z(s,W_i)$ converges absolutely, for all $W_i \in W(\tau_i, \psi)$, for $\Re(s) > M$. If $L$ is non-archimedean then $Z(s,W_i)$ converges to a rational function in $q^{-s}$. 
\end{lemma}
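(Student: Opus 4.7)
The plan is to analyze the integrand $W_i([\begin{smallmatrix}a & \\ & 1\end{smallmatrix}])$ separately in the two regimes $|a|$ large and $|a|$ small, using only the defining properties of the Whittaker model and the admissibility of $\tau_i$. The two regimes are essentially independent and call for quite different techniques.

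For the non-archimedean case, I would first show that the integrand is compactly supported from above. Fix a compact open subgroup $K \subset \GL(2,L)$ fixing $W_i$ on the right, and choose $x \in L$ small enough that $[\begin{smallmatrix} 1 & x \\ & 1 \end{smallmatrix}] \in K$. The identity
\[
[\begin{smallmatrix} a & \\ & 1 \end{smallmatrix}] [\begin{smallmatrix} 1 & x \\ & 1 \end{smallmatrix}] = [\begin{smallmatrix} 1 & ax \\ & 1 \end{smallmatrix}] [\begin{smallmatrix} a & \\ & 1 \end{smallmatrix}]
\]
combined with the Whittaker transformation law \eqref{eq:whittaker_character} then yields $W_i([\begin{smallmatrix}a & \\ & 1\end{smallmatrix}]) = \psi(ax)\, W_i([\begin{smallmatrix}a & \\ & 1\end{smallmatrix}])$. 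For $|a|$ large one can arrange $\psi(ax) \neq 1$, forcing the vanishing of $W_i([\begin{smallmatrix}a & \\ & 1\end{smallmatrix}])$, so that the integration is supported on $|a| \le c$ for some $c = c(W_i)$.

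Near $a = 0$, I would appeal to the standard Casselman--Jacquet asymptotic expansion: since $\tau_i$ is admissible, its Jacquet module is finite-dimensional and decomposes under the diagonal torus as a finite sum of generalized characters, giving
\[
W_i\big([\begin{smallmatrix} a & \\ & 1 \end{smallmatrix}]\big) = \sum_{j} P_j(\val(a))\, \chi_j(a), \qquad |a| \text{ sufficiently small},
\]
with $P_j$ polynomials and $\chi_j$ characters of $L^\times$ determined by $\tau_i$. Inserting this into the definition of $Z(s,W_i)$, for $\Re(s)$ larger than a bound $M = M(\tau_i)$ determined by the exponents of the $\chi_j$, each summand produces an absolutely convergent geometric series in $q^{-s}$ with polynomial coefficients, while the contribution from the finite ``transition region'' where the asymptotic expansion is not yet valid is a polynomial in $q^{\pm s}$. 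Summing these pieces shows $Z(s,W_i)$ is absolutely convergent and exhibits it as a rational function in $q^{-s}$.

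For the archimedean case, I would invoke the classical fact that any $W_i \in \mathcal{W}(\GL(2,\mathbb{R}),\psi)$ has rapid decay as $|a| \to \infty$ and, near $a=0$, is bounded by $|a|^\alpha(1 + |\log|a||)^m$ for some $\alpha, m$ depending on $\tau_i$; both tails then give absolute convergence of $Z(s,W_i)$ for $\Re(s)$ large. The main obstacle is the non-archimedean asymptotic analysis near $|a|=0$, where one must leverage the finite length of the Jacquet module to obtain the expansion that simultaneously yields absolute convergence and rationality. Once this classical input is in hand (see, e.g., \cite{bump} or the original treatment of Jacquet--Langlands), both conclusions of the lemma follow routinely.
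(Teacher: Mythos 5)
Your argument is correct and is exactly the standard proof of this statement: the upper bound on the support of $a \mapsto W_i([\begin{smallmatrix} a & \\ & 1\end{smallmatrix}])$ via the identity $[\begin{smallmatrix} a & \\ & 1\end{smallmatrix}][\begin{smallmatrix} 1 & x \\ & 1\end{smallmatrix}] = [\begin{smallmatrix} 1 & ax \\ & 1\end{smallmatrix}][\begin{smallmatrix} a & \\ & 1\end{smallmatrix}]$ together with smoothness, the finite asymptotic expansion near $a=0$ coming from the finite-dimensional Jacquet module (with exponents, hence the bound $M$, depending only on $\tau_i$ and not on $W_i$), and the archimedean rapid-decay and moderate-growth estimates. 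The paper gives no argument of its own --- it simply cites Theorem 6.12 and Remark 6.13 of Gelbart --- so your sketch is a faithful reconstruction of the proof behind that reference rather than a different route.
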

\begin{proof}
	See Theorem 6.12 and Remark 6.13 of \cite{gelbart}, for example.
  \end{proof}

\subsection{The non-split case}
Let $\delta \in L$ be square free and set $E = L(\sqrt{\delta})$. Define the symmetric bilinear space of $L$:
\begin{equation}\label{eq:X_standard_model}
	X_{ns} = \left \{ [\begin{smallmatrix}
			a & b \sqrt{ \delta}\\
			c \sqrt{\delta} & \alpha(a)
		\end{smallmatrix}] \mid a \in E, \ (b,c) \in L\times L \right \}.
\end{equation}
We define an action $\rho$ of $L^\times \times \GL(2,E)$ on $X$ by
$\rho(t,b)\cdot x = t^{-1}bx\alpha(b)^*$ for $t \in L^\times, b \in B^\times$ and $x \in X$. The sequence 
\begin{align}
\label{eq:extended_exact_sequence}
1 \to E^\times \to L^\times \times \GL(2,E) \xrightarrow{\ \ \rho\ \ } \gso(X_{ns}) \to 1
\end{align}
is exact. Let $(\tau, \mathcal{W}(\tau, \psi_E))$ be the Whittaker model for an infinite-dimensional, irreducible, admissible representation of $\GL(2,E)$ with trivial central character.  Then $\pi = \rho(1, \tau)$ is an infinte-dimensional, irreducible, admissible representation of $\GSO(X_{ns})$ and let $\sigma$ be the induced representation of $\GO(X_{ns})$, or a particular irreducible subprepresentation.  We let
\begin{equation}\label{eq:nonsplit_zeta_def}
	Z(s,W) = \int\limits_{E^\times} W([\begin{smallmatrix}
		a & \\ & 1
	\end{smallmatrix}])|a|_E^{s-\frac 12} \, d^\times a
\end{equation}
for $W \in \mathcal{W}(\tau, \psi_E)$ and $s \in \mathbb{C}$.
\begin{lemma}
\label{zeta_convergence_2}
	There exists a positive real number $M$ such that $Z(s,W)$ converges absolutely, for all $W \in W(\tau_0, \psi_E)$, for $\Re(s) > M$ to a rational function in $q_E^{-s}$.

\end{lemma}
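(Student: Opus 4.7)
The plan is to reduce the statement to the classical convergence theorem for $\GL(2)$ zeta integrals applied with the base field taken to be $E$ itself. Under the hypothesis that $E$ is a quadratic extension of the non-archimedean local field $L$, the field $E$ is again a non-archimedean local field of characteristic zero, equipped with its own uniformizer, normalized absolute value $|\cdot|_E$, residue cardinality $q_E$, and non-trivial additive character $\psi_E$. The integral
\[
Z(s,W) = \int_{E^\times} W\bigl([\begin{smallmatrix} a & \\ & 1 \end{smallmatrix}]\bigr)|a|_E^{s-\tfrac12}\, d^\times a
\]
is then literally the standard Jacquet--Langlands zeta integral attached to the infinite-dimensional, irreducible, admissible representation $\tau$ of $\GL(2,E)$ in its unique Whittaker model $\mathcal W(\tau,\psi_E)$.

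First I would invoke the standard consequence of the Kirillov model that, for any $W \in \mathcal W(\tau,\psi_E)$, the function $a \mapsto W([\begin{smallmatrix} a & \\ & 1 \end{smallmatrix}])$ vanishes for $|a|_E$ sufficiently large. This follows from the $\psi_E$-equivariance of $W$ on the upper-triangular unipotent, combined with the right invariance of $W$ under a small enough open compact subgroup of $\GL(2,E)$. The question of convergence therefore reduces to controlling the diagonal restriction as $|a|_E \to 0$. Next, I would apply the asymptotic expansion of Whittaker functions on the torus, expressing this restriction on a neighborhood of $0$ as a finite linear combination of terms of the form $\chi(a)|a|_E^{s_0} v_E(a)^k$, with characters $\chi$ and exponents $s_0$ dictated by the Jordan--H\"older constituents of the Jacquet module of $\tau$. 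For $\Re(s)$ exceeding a constant $M$ depending on these exponents, each such term is absolutely integrable against $|a|_E^{s-1/2}d^\times a$ on $E^\times$, and evaluates to a geometric series in $q_E^{-s}$; summing these contributions yields the asserted rationality.

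This argument is verbatim the proof of Lemma \ref{zeta_convergence_1} with $L$ replaced by $E$, so in practice I would simply cite Theorem 6.12 and Remark 6.13 of \cite{gelbart} applied over the local field $E$. The only step that requires any care is bookkeeping: ensuring that the chosen normalizations of the Haar measure $d^\times a$ on $E^\times$, the character $\psi_E$, and the absolute value $|\cdot|_E$ agree with the conventions of the reference. I do not foresee any genuine mathematical obstacle beyond this routine change of base field.
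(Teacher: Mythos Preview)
Your proposal is correct and takes essentially the same approach as the paper: both reduce to citing Theorem 6.12 and Remark 6.13 of \cite{gelbart}, applied with the base field taken to be $E$. Your write-up supplies helpful context (vanishing for large $|a|_E$ via smoothness, asymptotics from the Jacquet module) that the paper omits, but the underlying argument is identical.
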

\begin{proof}
	See Theorem 6.12 and Remark 6.13 of \cite{gelbart}, for example. 
  \end{proof}

We specify the following elements of $X$
\begin{equation}
	x_1 = \begin{cases}
		[\begin{smallmatrix}
			0 & 1\\ 0& 0
		\end{smallmatrix}] & $X$ \  \text{split}\\ 
		[\begin{smallmatrix}
			0 & \sqrt{\delta}\\ 0& 0
		\end{smallmatrix}] & $X$ \  \text{non-split}\\
	\end{cases} \qquad \text{and}, \qquad
	x_2 = \begin{cases}
		[\begin{smallmatrix}
			0 & 0\\ -2& 0
		\end{smallmatrix}] & $X$ \  \text{split}\\ 
		[\begin{smallmatrix}
			0 & 0\\ \frac{-2 \sqrt{\delta}}{\delta}& 0
		\end{smallmatrix}] & $X$ \  \text{non-split}.\\
	\end{cases}
\end{equation}
Let $H$ be the stabilizer in $\so(X)$ of $(x_1, x_2) \in X^2$. We make the following observation. 

\begin{lemma} \label{lemma:stab}
Let $H$ be as above. In the non-split case assume that $L$ is non-archimedean. Then,
	$H = \{ \rho(1, [\begin{smallmatrix}
		1&\\&u
	\end{smallmatrix}]) \mid u \in E^1\}$
and in the split case $H = \{ \rho([\begin{smallmatrix}
		a & \\ & 1
	\end{smallmatrix}], [\begin{smallmatrix}
		a^{-1} & \\& 1
	\end{smallmatrix}]) \mid  a \in L^\times\}$.
\end{lemma}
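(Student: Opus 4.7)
The plan is to exploit the surjections $\rho$ from the exact sequences \eqref{eq:Base_exact_sequence} and \eqref{eq:extended_exact_sequence}. Since $\SO(X) = \OO(X)\cap \GSO(X)$ sits inside $\GSO(X)$, every element $h\in H$ lifts to some $(d_1,d_2)\in \GL(2,L)\times \GL(2,L)$ in the split case, or to some $(t,b)\in L^\times \times \GL(2,E)$ in the non-split case. I will write the stabilizer conditions $\rho(\cdot)x_i=x_i$, $i=1,2$, as explicit $2\times 2$ matrix equations to force $d_1,d_2$ (respectively $b$) to be diagonal, then quotient by the kernel of $\rho$ to obtain the claimed parameterization. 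A direct inspection will show that the similitude factor equals $1$, so the $\GSO(X)$-stabilizer is automatically contained in $\SO(X)$.

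In the split case, write $d_1 = [\begin{smallmatrix} a&b\\c&d\end{smallmatrix}]$ and $d_2 = [\begin{smallmatrix} p&q\\r&s\end{smallmatrix}]$, so that $d_2^* = [\begin{smallmatrix} s&-q\\-r&p\end{smallmatrix}]$. A direct computation gives $d_1 x_1 d_2^* = [\begin{smallmatrix} -ar&ap\\-cr&cp\end{smallmatrix}]$, and setting this equal to $x_1$ forces $c=0$, $r=0$, $ap=1$. Repeating the calculation with $x_2$ yields $b=0$, $q=0$, $ds=1$. Thus $d_1 = [\begin{smallmatrix} a&0\\0&d\end{smallmatrix}]$ and $d_2 = [\begin{smallmatrix} a^{-1}&0\\0&d^{-1}\end{smallmatrix}]$. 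The kernel of $\rho$ is $\{(\lambda I,\lambda^{-1}I):\lambda\in L^\times\}$; rescaling by $\lambda=d^{-1}$ normalizes the $(2,2)$ entries to $1$, yielding the claimed form. The similitude factor $\det(d_1)\det(d_2)=1$ is automatic.

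In the non-split case, write $b = [\begin{smallmatrix} p&q\\r&s\end{smallmatrix}]$, so that $\alpha(b)^* = [\begin{smallmatrix} \alpha(s)&-\alpha(q)\\-\alpha(r)&\alpha(p)\end{smallmatrix}]$. Expanding $t^{-1} b x_1 \alpha(b)^* = x_1$ entry by entry forces $r = 0$ and $\norm_{E/L}(p) = t$; the analogous computation with $x_2$ forces $q=0$ and $\norm_{E/L}(s) = t$, so $b$ is diagonal. The kernel of $\rho$ embeds $E^\times$ as $\{(\norm_{E/L}(e),eI):e\in E^\times\}$; choosing $e = p^{-1}$ replaces $(t,b)$ by $(1,[\begin{smallmatrix}1&\\&s/p\end{smallmatrix}])$, and the remaining entry $u = s/p$ satisfies $\norm_{E/L}(u) = \norm_{E/L}(s)/\norm_{E/L}(p) = 1$, so $u\in E^1$. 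The similitude factor $t^{-2}\norm_{E/L}(\det b) = 1$ is again automatic.

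The computations are routine $2\times 2$ matrix multiplications. The only minor points of care are correctly tracking the $\alpha$-twisted adjoint $\alpha(b)^*$ in the non-split case and the differing embeddings of the kernel of $\rho$ in the two cases; I do not anticipate any genuine obstacle.
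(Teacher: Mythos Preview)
Your proof is correct. The paper states this lemma as a simple observation and omits the proof entirely, so there is nothing to compare against; your direct computation via the exact sequences \eqref{eq:Base_exact_sequence} and \eqref{eq:extended_exact_sequence} is exactly the natural argument, and you correctly handle the quotient by $\ker\rho$ and the verification that $\lambda(h)=1$.
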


\begin{lemma}\label{zetaintegrals}
Let $H$ be as in Section \ref{sec:notation}, let $X = X_M$ or $X = X_{ns}$, and let $\tau_1,\tau_2$, and $\tau_0$ be as above.
\begin{enumerate}
	\item[(a)] Let $s \in \mathbb{C}$ with $\Re(s) \gg M$. Assume that $X = X_M$. If $W \in V$ and $t_1,t_2 \in L^\times$, then $Z(s, \pi(\rho([\begin{smallmatrix}
				t_1&\\&t_2
			\end{smallmatrix}] , 1))W) = |t_2/t_1|^{s - \frac 12}Z(s, W)$.
	\item[(b)] Let $s \in \mathbb{C}$ with $\Re(s) \gg M$. Assume that $X = X_{ns}$. If $W \in V$ and $t_1,t_2 \in E^\times$, then $Z(s, \pi( \rho(1, [\begin{smallmatrix}
				t_1&\\&t_2
			\end{smallmatrix}]))W) = |t_2/t_1|_E^{s - \frac 12}Z(s, W)$.
	\item[(c)] Let $s \in \mathbb{C}$ with $\Re(s) \gg M$, $W \in V$, and $h \in H$. Then, 
	\begin{equation}
		Z(s, \pi(h)W) = Z(s, W).
	\end{equation}  
\end{enumerate}
\end{lemma}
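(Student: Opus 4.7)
The plan is to unfold each zeta integral directly by combining the explicit action of $\pi$ given via the exact sequences \eqref{eq:Base_exact_sequence} and \eqref{eq:extended_exact_sequence} with a multiplicative change of variable. Throughout, the hypothesis $\Re(s)\gg M$ and Lemmas~\ref{zeta_convergence_1} and \ref{zeta_convergence_2} guarantee absolute convergence, so every manipulation I perform is legitimate.

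For part (a), I first observe that $\pi(\rho([\begin{smallmatrix}t_1&\\&t_2\end{smallmatrix}],1))(W_1\otimes W_2) = \tau_1([\begin{smallmatrix}t_1&\\&t_2\end{smallmatrix}])W_1 \otimes W_2$, so by \eqref{eq:split_zeta_def} the claim reduces to showing $Z(s,\tau_1([\begin{smallmatrix}t_1&\\&t_2\end{smallmatrix}])W_1) = |t_2/t_1|^{s-1/2}Z(s,W_1)$. I would then exploit the factorization $[\begin{smallmatrix}a&\\&1\end{smallmatrix}][\begin{smallmatrix}t_1&\\&t_2\end{smallmatrix}] = t_2 \cdot [\begin{smallmatrix}at_1/t_2&\\&1\end{smallmatrix}]$ together with the trivial central character of $\tau_1$ to reduce the integrand to $W_1([\begin{smallmatrix}at_1/t_2&\\&1\end{smallmatrix}])|a|^{s-1/2}$, and the change of variable $a\mapsto at_2/t_1$ will extract the factor $|t_2/t_1|^{s-1/2}$. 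Part (b) is formally the identical computation in the nonsplit setting, using \eqref{eq:nonsplit_zeta_def}, the $E$-adic absolute value, and the trivial central character of $\tau_0$.

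For part (c) I would invoke Lemma~\ref{lemma:stab}. In the split case every $h\in H$ has the form $\rho([\begin{smallmatrix}a&\\&1\end{smallmatrix}],[\begin{smallmatrix}a^{-1}&\\&1\end{smallmatrix}])$, and applying (a) separately to the two tensor factors produces the scalars $|1/a|^{s-1/2}$ and $|a|^{s-1/2}$, which cancel in the product \eqref{eq:split_zeta_def}. In the nonsplit case $h=\rho(1,[\begin{smallmatrix}1&\\&u\end{smallmatrix}])$ with $u\in E^1$; since $|u|_E = u\bar u = 1$, part (b) with $(t_1,t_2)=(1,u)$ immediately yields invariance.

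There is essentially no main obstacle, as the result is a translation identity for the $\GL(2)$ Whittaker zeta integral transported through the isomorphism $\rho$. The one delicate point worth flagging is that the central character must drop out of the factorization $[\begin{smallmatrix}at_1&\\&t_2\end{smallmatrix}] = t_2\,[\begin{smallmatrix}at_1/t_2&\\&1\end{smallmatrix}]$; this is ensured by the standing hypothesis of trivial central character in the Main Theorem, without which the split case would accrue an extra $\omega_{\tau_1}(t_2)$ factor and part (c) would only give invariance up to a character.
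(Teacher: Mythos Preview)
Your proposal is correct and follows essentially the same route as the paper: unfold the zeta integral, factor out the center using the trivial central character, and change variables multiplicatively; part (c) then follows from Lemma~\ref{lemma:stab} exactly as you describe. The only cosmetic discrepancy is that the paper's convention has $\rho(g,1)$ acting through $\tau_2$ on $W_2$ rather than through $\tau_1$ on $W_1$, but since the computation is symmetric in the two factors this does not affect the argument or the resulting scalar.
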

\begin{proof}
\begin{enumerate}
	\item[(a)] Let $t_1,t_2 \in L^\times$, let $\tau = \tau_i$ for $i \in \{1,2\}$ and let $W = W_1 \otimes W_2 \in V$. Then,
	\begin{align*}
		Z(s, \tau_2([\begin{smallmatrix}
			t_1&\\&t_2
		\end{smallmatrix}])W_2) & = \int_{L^\times} W_2([\begin{smallmatrix}
			a&\\&1
		\end{smallmatrix}][\begin{smallmatrix}
			t_1&\\&t_2
		\end{smallmatrix}]) |a|^{s - \frac 12} \,d^\times a\\
		& = \int_{L^\times} W_2([\begin{smallmatrix}
			a&\\&t_2
		\end{smallmatrix}])|a/t_1|^{s - \frac 12} \,d^\times a\\
		& = \int_{L^\times} W_2(t_2 [\begin{smallmatrix}
			a&\\&1
		\end{smallmatrix}]) |a|^{s - \frac 12}|t_2/t_1|^{s - \frac 12} \,d^\times a\\
		& = |t_2/t_1|^{s - \frac 12}Z(s, W_2).
	\end{align*}
	Therefore, $Z(s, \pi(\rho([\begin{smallmatrix}
			t_1&\\&t_2
		\end{smallmatrix}] , 1))W)  = Z(s, W_1) Z(s, \tau_2([\begin{smallmatrix}
			t_1&\\&t_2
		\end{smallmatrix}])W_2) = |t_2/t_1|^{s - \frac 12} Z(s, W)$.
	The result follows for a general $W \in V$.
	\item[(b)] The calculation is the same as the calculation in part (a), except the factor is $|t_2/t_1|_E^{s - \frac{1}{2}}$.
\item[(c)] The proof of (c) follows simply from part (a) and (b) and Lemma \ref{lemma:stab}.
\end{enumerate}
  \end{proof}

\begin{corollary}
	\label{locallyconstantzeta}
	Assume that $L$ is non-archimedean and fix a $W \in V$. The function $H\backslash \SO(X) \to \mathbb{C}$ defined by $Hh\mapsto Z(s,\pi(h)W)$ is well defined and locally constant.
\end{corollary}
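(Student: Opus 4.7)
The plan is to separate the two claims: well-definedness follows directly from Lemma \ref{zetaintegrals}(c), while local constancy follows from the smoothness of $\pi$ as a representation of $\GSO(X)$. Neither step should be difficult; the only thing to keep track of is that $s$ lies in a region where $Z(s,\cdot)$ is defined on all of $V$ (either in the common half-plane of absolute convergence given by Lemmas \ref{zeta_convergence_1} and \ref{zeta_convergence_2}, or via the rational-function extension in $q^{-s}$ or $q_E^{-s}$).

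For well-definedness, I would fix $h \in \SO(X)$, set $W' = \pi(h)W \in V$, and apply Lemma \ref{zetaintegrals}(c) to $W'$: for every $h_0 \in H$,
\[
Z(s, \pi(h_0 h)W) = Z(s, \pi(h_0) W') = Z(s, W') = Z(s, \pi(h)W).
\]
Thus $h \mapsto Z(s, \pi(h)W)$ is left $H$-invariant and descends to $H\backslash \SO(X)$.

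For local constancy, I would exploit the smoothness of the $\GSO(X)$-representation $(\pi, V)$, which is inherited from the smoothness of the Whittaker models $\mathcal{W}(\tau_i,\psi)$ or $\mathcal{W}(\tau_0,\psi_E)$ through the surjections $\rho$ in \eqref{eq:Base_exact_sequence} and \eqref{eq:extended_exact_sequence}. Hence the stabilizer $\{g \in \GSO(X) : \pi(g)W = W\}$ is open in $\GSO(X)$, and its intersection $K$ with $\SO(X)$ is an open subgroup. For every $h \in \SO(X)$ and $k \in K$, $\pi(hk)W = \pi(h)\pi(k)W = \pi(h)W$, and so $Z(s,\pi(hk)W) = Z(s, \pi(h)W)$; the pullback of our function to $\SO(X)$ is therefore right $K$-invariant and hence locally constant. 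Since the canonical projection $\SO(X) \to H\backslash \SO(X)$ is continuous and open, local constancy descends to the quotient. I do not anticipate any genuine obstacle beyond these routine verifications; the work of the corollary is really carried by Lemma \ref{zetaintegrals}(c) on the algebraic side and by admissibility on the topological side.
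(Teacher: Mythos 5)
Your proposal is correct and follows essentially the same route as the paper: well-definedness from the $H$-invariance of the zeta integral in Lemma \ref{zetaintegrals} (the paper cites part (b), which gives the invariance on the diagonal elements making up $H$, while you invoke part (c), its direct consequence), and local constancy from the smoothness of $\pi$, which the paper states in one line and you merely spell out via the openness of the stabilizer of $W$ and of the projection $\SO(X)\to H\backslash\SO(X)$. No gaps.
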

\begin{proof}
	Part (b) of Lemma \ref{zetaintegrals} proves the map is well defined. Evidently, the smoothness of $\pi$ guarantees that the map is locally constant.
  \end{proof}

\subsection{Additional Results}
\label{sec:additional_results}

\begin{lemma}\label{simplegausschilemma}
Let $\psi$ be a character of $L^\times$ with conductor ${\mathfrak o}_L$. Let $dx$ be the Haar measure on $L$ so that the volume of $\OF$ is $1$. Let $c \in L^\times$. Then
	\begin{align*}
		\int\limits_{\varpi^{n}\OF} \psi(c \alpha)\, d \alpha & = \begin{cases}
			q^{-n} & \nu(c) \ge -n\\
			0 & \nu(c) < -n,
		\end{cases}
	\intertext{and}
		\int\limits_{\varpi^{n}\OF^\times} \psi(c \alpha)\, d \alpha & = \begin{cases}
			q^{-n} - q^{-n  - 1} & \nu(c) > -n - 1\\
			- q^{-n - 1} & \nu(c) = - n - 1\\
			0 & \nu(c) < -n - 1.
		\end{cases}
	\end{align*}
\end{lemma}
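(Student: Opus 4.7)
The plan is to reduce both identities to the standard orthogonality relation: an additive character, integrated over a compact open subgroup against a Haar measure, yields the volume of that subgroup when the character is trivial there, and vanishes otherwise. The key hypothesis to exploit is that $\psi$ has conductor ${\mathfrak o}_L$, meaning $\psi$ is trivial on ${\mathfrak o}_L$ but not on $\varpi^{-1}{\mathfrak o}_L$.

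For the first integral, I would change variables via $\alpha = \varpi^n\beta$ with $\beta \in {\mathfrak o}_L$. Since $\vol({\mathfrak o}_L) = 1$, one has $d\alpha = q^{-n}\,d\beta$, so
\[
\int_{\varpi^n {\mathfrak o}_L}\psi(c\alpha)\,d\alpha \;=\; q^{-n}\int_{{\mathfrak o}_L}\psi(c\varpi^n\beta)\,d\beta.
\]
The inner character $\beta\mapsto \psi(c\varpi^n\beta)$ is trivial on ${\mathfrak o}_L$ exactly when $c\varpi^n\in{\mathfrak o}_L$, i.e.\ $\nu(c)\geq -n$; in that case the integral evaluates to $q^{-n}$. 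Otherwise, fixing $\beta_0\in{\mathfrak o}_L$ with $\psi(c\varpi^n\beta_0)\neq 1$ and using translation invariance of $d\beta$ forces the integral to vanish.

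For the second integral, I would decompose $\varpi^n{\mathfrak o}_L^\times = \varpi^n{\mathfrak o}_L \setminus \varpi^{n+1}{\mathfrak o}_L$ and write the integral as the difference of two integrals, each of which is handled by the first part with $n$ and then $n+1$. The arithmetic yields $q^{-n}-q^{-n-1}$ when $\nu(c)\geq -n$ (equivalently $\nu(c)>-n-1$), the value $0 - q^{-n-1} = -q^{-n-1}$ when $\nu(c) = -n-1$, and $0$ when $\nu(c) < -n-1$, exactly matching the three cases stated.

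There is no genuine obstacle here beyond careful bookkeeping of the boundary inequality $\nu(c)\geq -n-1$ versus $\nu(c)\geq -n$; the argument rests entirely on the standard orthogonality relation for additive characters on a non-archimedean local field, together with the normalization $\vol({\mathfrak o}_L)=1$ that converts volumes of the shrinking discs $\varpi^n{\mathfrak o}_L$ into the powers $q^{-n}$.
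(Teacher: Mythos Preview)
Your proposal is correct and is the standard argument; the paper itself states this lemma without proof, treating it as a well-known character-sum identity, so there is nothing to compare against beyond noting that your approach is exactly what one would expect.
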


\begin{lemma}
	\label{lemma:paramodular_generators}
	Let $N$ be a non-negative integer. Then $\K({\mathfrak p}^N)$ is generated by the following:
	\begin{enumerate}
		\item[(a)] $[\begin{smallmatrix}
			A & 0\\ 0 & \lambda^t A^{-1}
		\end{smallmatrix}]$ for all $A \in \Gamma_0({\mathfrak p}^N)$ and $\lambda \in {\mathfrak o}_L^\times$,
		\item[(b)] $\left[\begin{smallmatrix}
			1&&\varpi^{-N}b_1&b_2\\
			&1&b_2&b_3\\
			&&1&\\
			&&&1
		\end{smallmatrix}\right]$ for $b_1,b_2,b_3 \in {\mathfrak o}_L$,
	\item[(c)] $s_2 = \left[\begin{smallmatrix}
		1&&&\\
		&&&1\\
		&&1&\\
		&-1&&
	\end{smallmatrix}\right]$, an element of the Weyl group, and

	\item[(d)] $t_N = \left[\begin{smallmatrix}
		&&\varpi^{-N}&\\
		&1&&\\
		-\varpi^N&&&\\
		&&&1
	\end{smallmatrix}\right]$.
	\end{enumerate}
\end{lemma}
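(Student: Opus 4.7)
The proof has two independent parts: (i) verifying that each of (a)--(d) lies in $\K({\mathfrak p}^N)$, and (ii) showing that the subgroup $G'$ they generate equals all of $\K({\mathfrak p}^N)$. Part (i) reduces to direct entry checks together with the similitude relation ${}^t g J g = \lambda(g)J$. For (a), the $\Gamma_0({\mathfrak p}^N)$-condition forces the lower-left entry of $A$ into ${\mathfrak p}^N$, and then $\lambda\,{}^t A^{-1}$ automatically has the correct shape in the lower-right block. For (b) the upper-right block is visibly in the prescribed shape and is symmetric, so the matrix is symplectic. For (c) and (d), the entry check and the computations ${}^t s_2 J s_2 = J$ and ${}^t t_N J t_N = J$ are both immediate.

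Part (ii) I plan to prove by reduction. First, I observe that every block upper-triangular element of $\K({\mathfrak p}^N)$ factors uniquely as a type-(a) element times a type-(b) element, so $G'$ already contains the entire Siegel parabolic part of $\K({\mathfrak p}^N)$. It therefore suffices to show that any $g \in \K({\mathfrak p}^N)$ can be brought into block upper-triangular form by left multiplication by elements of $G'$. Concretely, the plan is to reduce the first column $(g_{11}, g_{21}, g_{31}, g_{41})^T$ to $(1,0,0,0)^T$ using the following moves: row operations on the top two rows realized by type-(a) elements via the $\Gamma_0({\mathfrak p}^N)$-action; left multiplication by $t_N$ to exchange the roles of rows $1$ and $3$ with a $\varpi^{\pm N}$ twist while preserving the paramodular shape; and row operations on the second and third rows obtained by conjugating suitable type-(a) elements by $s_2$. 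Once the first column is $(1,0,0,0)^T$, the symplectic relation pins the fourth row, and a parallel reduction of the second column completes the job.

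The main obstacle is the bookkeeping: every intermediate matrix must remain in $\K({\mathfrak p}^N)$ so that the next move applies an element of $G'$ and not something outside it. The paramodular shape is preserved under only very specific left and right multiplications, and one must verify that the divisibilities of the entries after each step still permit the next reduction. A clean organizing device is the Bruhat decomposition $\GSp(4,L) = \bigsqcup_{w} B w B$ relative to the upper-triangular Borel; intersecting with $\K({\mathfrak p}^N)$ enumerates the relevant cells by a finite subset of the affine Weyl group, and one then checks cell by cell that a double-coset representative can be written as a product of (a)--(d). An argument of this form is carried out over $\Q_p$ in the work of Roberts--Schmidt on local newforms for $\GSp(4)$, and the plan is to adapt it here with the minor modifications needed for a general non-archimedean local field $L$ and for a nontrivial similitude factor $\lambda \in {\mathfrak o}_L^\times$.
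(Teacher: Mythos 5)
Your part (i) is fine, and the observation that a block upper-triangular element of $\K({\mathfrak p}^N)$ factors as a type-(a) times a type-(b) element is correct (it tacitly uses that $\lambda(g)\in{\mathfrak o}_L^\times$ for $g\in\K({\mathfrak p}^N)$ and that the upper-left block is integral together with its inverse, hence lies in $\Gamma_0({\mathfrak p}^N)$). The gap is in part (ii): the reduction of an arbitrary element of $\K({\mathfrak p}^N)$ to block upper-triangular form by the moves you list is the entire content of the lemma, and you do not carry it out. You yourself identify the bookkeeping -- that each intermediate matrix stays in $\K({\mathfrak p}^N)$ and that its entry divisibilities permit the next move -- as the main obstacle and leave it unresolved, and the fallback (enumerating Bruhat cells and checking representatives cell by cell) is likewise only a plan. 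The difficulty is genuine: the $(1,1)$ entry of an element of $\K({\mathfrak p}^N)$ need not be a unit (the first column of $t_N$ is $-\varpi^N e_3$), so the column reduction must interleave multiplications by $t_N$ with the $\Gamma_0({\mathfrak p}^N)$ row operations, and small slips in the sketch (with the paper's $J$, first column equal to $e_1$ pins the \emph{third} row as $(0,0,\lambda,0)$, not the fourth; conjugation by $s_2$ only mixes the pairs of rows $2,3$ and $1,4$) suggest the case analysis has not actually been checked.

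For comparison, the paper disposes of the lemma by citing two facts from \cite{Roberts_Schmidt2007}: the Iwahori factorization (2.7) of the Klingen congruence subgroup $\Kl({\mathfrak p}^N)$, and the coset decomposition of $\K({\mathfrak p}^N)/\Kl({\mathfrak p}^N)$ reproduced in \eqref{eq:KKlcosets}. The latter shows $\K({\mathfrak p}^N)$ is generated by $\Kl({\mathfrak p}^N)$ together with the type-(b) elements having $u\varpi^{-N}$ in the $(1,3)$ position and the element $t_N$; the former, as in \eqref{eq:klingen_iwahori}, writes any element of $\Kl({\mathfrak p}^N)$ as an upper unipotent times a Levi element times a lower unipotent with entries in ${\mathfrak p}^N$, and each factor is a product of elements of types (a)--(d) (the lower-triangular factors come from type-(a) elements with lower-triangular $A\in\Gamma_0({\mathfrak p}^N)$ and from $t_N$- and $s_2$-conjugates of type-(b) elements, e.g.\ $t_N(1+\varpi^{-N}b\,E_{13})t_N^{-1}=1-\varpi^{N}b\,E_{31}$). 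To close your gap, either complete your row reduction in full detail or reorganize the argument along these lines: prove (or quote) the $\K/\Kl$ coset decomposition and the Iwahori factorization, after which the lemma follows in a few lines.
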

\begin{proof} This follows from the Iwahori factorization and the decomposition of $\K({\mathfrak p}^N)/ \Kl({\mathfrak p})$. These results can be found in  \cite{Roberts_Schmidt2007} in (2.7) and Lemma 3.3.1, respectively. We reproduce the later in this text for easy reference.
\begin{equation} \label{eq:KKlcosets}
    \K({\mathfrak p}^N) = \bigsqcup_{u \in {\mathfrak o}_L/{\mathfrak p}^N} \left[\begin{smallmatrix}
      1&&u\varpi^{-N}&\\
      &1&&\\
      &&1&\\
      &&&1
    \end{smallmatrix}\right] \Kl({\mathfrak p}^N) \sqcup \bigsqcup_{o \in {\mathfrak o}_L/{\mathfrak p}^{N - 1}} t_N \left[\begin{smallmatrix}
      1&&u \varpi^{-N + 1}&\\
      &1&&\\
      &&1&\\
      &&&1
    \end{smallmatrix}\right]\Kl({\mathfrak p}^N)
\end{equation}
for $N$ a positive integer. Notice that if $N = 0$ that both sides are equal to $\gsp(4, {\mathfrak o})$.
  \end{proof}

We now demonstrate a more user friendly, but equivalent, Haar measure in the split and non-split cases. In this we will need the following simple observations
\begin{equation}
\label{eq:ip_expand_split}
	2 \langle [\begin{smallmatrix}
		x_1 & x_2 \\ x_3 & x_4
	\end{smallmatrix}], [\begin{smallmatrix}
		y_1 & y_2 \\ y_3 & y_4
	\end{smallmatrix}] \rangle = y_1x_4 - y_2x_3 - y_3x_2 + y_4x_1.
\end{equation}
\begin{equation}
\label{eq:ip_expand_nonsplit}
		2\langle [\begin{smallmatrix}
			x_1 + x_2\sqrt{\delta} & x_3 \\ x_4 & x_1 - x_2 \sqrt{\delta}
		\end{smallmatrix}] , [\begin{smallmatrix}
			y_1 + y_2 \sqrt{\delta} & y_3 \\ y_4 & y_1 - y_2 \sqrt{\delta} 
		\end{smallmatrix}] \rangle = 2x_1 y_1 - 2x_2y_2 \delta - x_3y_4 - x_4y_3.
	\end{equation}
\begin{lemma}
\label{Haar_measure_constant_lemma}
	If $y = \left[\begin{smallmatrix}
		y_1 & y_2 \\ y_3 & y_4
	\end{smallmatrix}\right] \in X_M = \mat(2,L)$ then $dy_1 \, dy_2 \, dy_3 \, dy_4$ is a Haar measure on $X_M$. Since $X_M$ is locally compact there must exist a non-zero constant $c$ such that $dy = c \cdot dy_1 \, dy_2 \, dy_3 \, dy_4$. The claim is that $c = 1$.

	Assume that $E = L(\sqrt{\delta})$ where $\delta \in L^\times$ is squarfree and let $X = X_{ns}$. We denote the maximal ideal of ${\mathfrak o}_E$ by ${\mathfrak P}$. If $y = \left[\begin{smallmatrix}
		y_1 + y_2 \sqrt{\delta} & y_3\\ y_4 & y_1 - y_2 \sqrt{\delta}
	\end{smallmatrix}\right]$ then $dy_1 \, dy_2 \, dy_3 \, dy_4$ is a Haar measure on $X_{ns}$. Since $X_{ns}$ is locally compact there must exist a positive constant $c$ such that $dy = c \cdot dy_1 \, dy_2 \, dy_3 \, dy_4$. The claim is that $c = |\delta|_{{\mathfrak p}} = q_E^{-\nu_L(\delta)}$.
\end{lemma}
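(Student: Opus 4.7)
The plan is to recognize $dy$ as the self-dual Haar measure on the quadratic space $X$ with respect to the additive character $\psi_L \circ 2\langle\cdot,\cdot\rangle = \psi_L\circ\trace(\cdot\,\cdot^*)$, where $\psi_L$ is a fixed additive character of $L$ of conductor $\OF$. The key classical fact I would invoke is that for any non-degenerate symmetric $L$-bilinear form $B$ on $L^n$ with Gram matrix $G$ in a basis, the self-dual measure with respect to $\psi_L\circ B$ equals $|\det G|_L^{1/2}$ times the product of the standard Haar measures on each coordinate copy of $L$ (each normalized so that $\OF$ has volume one). This reduces the problem in each case to computing a single Gram determinant.

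For the split case, I would apply \eqref{eq:ip_expand_split} to read off the Gram matrix of $2\langle\cdot,\cdot\rangle$ in the matrix-unit basis $(e_{11},e_{12},e_{21},e_{22})$: it is the signed permutation matrix with a $+1$ in the $(1,4)$ and $(4,1)$ entries and a $-1$ in the $(2,3)$ and $(3,2)$ entries. Its determinant is $+1$, so the self-dual measure is already the unscaled product measure, and $c=1$.

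For the non-split case, the subtlety is that in the lemma's parametrization the off-diagonal coordinates $y_3$ and $y_4$ are entries of the matrix $y$, hence lie in $L\sqrt{\delta}$ rather than in $L$ (compare \eqref{eq:X_standard_model}). I would first substitute $y_3=b\sqrt{\delta}$ and $y_4=c\sqrt{\delta}$ with $b,c\in L$; then by \eqref{eq:ip_expand_nonsplit} the Gram matrix of $2\langle\cdot,\cdot\rangle$ in the auxiliary coordinates $(y_1,y_2,b,c)$ is block-diagonal with blocks $\mathrm{diag}(2,-2\delta)$ and $\left(\begin{smallmatrix}0 & -\delta\\-\delta & 0\end{smallmatrix}\right)$, giving $|\det G|_L = |4\delta^3|_L = |\delta|_L^{3}$ under the standing hypothesis that $|2|_L=1$ (which holds by the assumption that $E/L$ is unramified whenever the residue characteristic equals $2$). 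Converting back to the lemma's coordinates $(y_1,y_2,y_3,y_4)$, the Jacobian from the coordinate change $b\mapsto y_3=b\sqrt{\delta}$ introduces one factor of $|\sqrt{\delta}|_L^{-1}$ per off-diagonal slot, and combining these factors with the relation $q_E=q_L^{[E:L]/e}$ for the residue-field cardinalities yields exactly $c=q_E^{-\nu_L(\delta)}$, which is verified case-by-case (unramified: $\nu_L(\delta)=0$ so $c=1$; tamely ramified with $\delta$ a uniformizer: $c=q_L^{-1}$).

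The main obstacle is precisely this bookkeeping in the non-split case: one has to reconcile several different factors of $|\delta|_L^{1/2}$, arising both from the Gram determinant and from the coordinate changes between $L\sqrt{\delta}$ and $L$, and verify that they collapse into the single clean factor $q_E^{-\nu_L(\delta)}$ uniformly across the unramified and tamely ramified cases. The split case is by contrast purely a one-line determinant computation.
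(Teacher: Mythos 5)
Your split-case argument is correct, and it is essentially the paper's own proof in different packaging: the paper obtains $c=1$ by Fourier-transforming the characteristic function of $\mat(2,{\mathfrak o}_L)$ and using $\mathcal F_1(\mathcal F_1\varphi)(x)=\varphi(-x)$, which is precisely the standard proof of the self-dual-measure formula $c=|\det G|_L^{1/2}$ that you invoke; since the Gram matrix of $2\langle\cdot,\cdot\rangle$ in the matrix-unit basis has determinant a unit (in fact $1$), both routes give $c=1$, granted the standing assumption that $\psi$ has conductor ${\mathfrak o}_L$.

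In the non-split case, however, your proof does not close, and exactly at the step you flag. First, a secondary error: the claim that $|2|_L=1$ ``because $E/L$ is unramified whenever the residue characteristic is $2$'' is false --- $|2|_L<1$ whenever the residual characteristic is $2$, regardless of how $E/L$ ramifies; what is actually needed (and what the paper's proof assumes outright) is odd residual characteristic. Second, and decisively, the promised collapse to $q_E^{-\nu_L(\delta)}$ is asserted rather than computed, and it cannot come out of the bookkeeping you describe. In your auxiliary coordinates $(y_1,y_2,b,c)$, with off-diagonal entries $y_3=b\sqrt{\delta}$ and $y_4=c\sqrt{\delta}$, the self-dual measure is $|4\delta^3|_L^{1/2}\,dy_1\,dy_2\,db\,dc$, i.e.\ $q_L^{-3/2}$ times this product measure when $E/L$ is tamely ramified and $\delta$ is a uniformizer. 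If $dy_3$ denotes the natural measure on $L\sqrt{\delta}$ giving ${\mathfrak o}_L\sqrt{\delta}$ volume one, then $dy_3=db$ and no Jacobian appears at all, so your method yields $c=q_L^{-3/2}$; inserting ``one factor of $|\sqrt{\delta}|^{-1}$ per off-diagonal slot'' shifts the total exponent by $1$ or $2$ (according as $|\sqrt{\delta}|$ is read as $q_L^{-1/2}$ or as $q_E^{-1}$), landing at $q_L^{-1/2}$ or $q_L^{1/2}$ --- in no reading do you reach the asserted $q_E^{-\nu_L(\delta)}=q_L^{-1}$. The obstruction is structural, not clerical: $\nu_L(4\delta^3)=3$ is odd, so relative to \emph{any} product measure built from $L$-coordinate measures (all of which differ from one another by integer powers of $q_L$) the constant $|\det G|_L^{1/2}$ is a half-integral power of $q_L$ and can never equal $q_L^{-1}$. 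To make the Gram-determinant strategy prove the stated value you would first have to pin down which measure $dy_1\,dy_2\,dy_3\,dy_4$ on $X_{ns}$ is meant --- the stated constant corresponds, for instance, to measuring the diagonal coordinate $a\in E$ by the self-dual measure of $(E,\psi\circ\trace_{E/L})$ rather than by $dy_1\,dy_2$ --- and then redo the arithmetic; as written, the tamely ramified case is a genuine gap. For comparison, the paper's proof instead transforms the characteristic function of an explicit lattice and solves $c^2q_E^{2r}=1$, which presupposes that $\mathcal F_1$ carries that lattice's characteristic function to a multiple of itself; the same normalization question is hiding there, so your route does not let you avoid confronting it.
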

\begin{proof}
	Assume that the residual characteristic of $L$ is odd. 

	To align with some future notation chose $\varphi_2 \in \mathcal S(X)$ to be
	\begin{equation*}
		\varphi_2 = \begin{cases}
			f_{M(2,{\mathfrak o}_L)}, & E/L \ \text{is split} \\
			f_{\left[\begin{smallmatrix}
				{\mathfrak o}_L & {\mathfrak P}^{-r}\\
				{\mathfrak P}^{r} & {\mathfrak o}_L
			\end{smallmatrix}\right] \cap X}, & E/L \ \text{is non-split, and} \ \nu_L(\delta) = \nu_E(\sqrt{\delta}) = r. 
		\end{cases}
	\end{equation*}

	We choose $dy$ so that $\mathcal F_1(\mathcal F_1(\varphi_2))(x) = \varphi_1(-x) = \varphi_2(x)$. On the other hand, we can compute this directly using Lemma \ref{simplegausschilemma} and \eqref{eq:ip_expand_split} or \eqref{eq:ip_expand_nonsplit}:
	\begin{align*}
		\mathcal F_1(\varphi_2)& = \int\limits_X \varphi_2(y) \psi(2 \langle x , y \rangle) \,dy = \int\limits_{[\begin{smallmatrix}
			{\mathfrak o}_L& {\mathfrak o}_L\\ {\mathfrak o}_L& {\mathfrak o}_L 
		\end{smallmatrix}]} \psi(2 \langle x , y \rangle) \,dy\\
		& =c \int\limits_{y_1 \in {\mathfrak o}_L} \psi(y_1x_4) \,dy_1 \int\limits_{y_2 \in {\mathfrak o}_L} \psi(y_2x_3) \,dy_2 \int\limits_{y_3 \in {\mathfrak o}_L} \psi(y_3x_2) \,dy_3 \int\limits_{y_4 \in {\mathfrak o}_L} \psi(y_4x_1) \,dy_4 = c \varphi_2(x),
  \end{align*}
\begin{align*}
		\mathcal F_1(\varphi_2)(x) &= \int\limits_{X} \varphi_2(y) \psi(2 \langle x , y \rangle \, dy = \int\limits_{[\begin{smallmatrix}
			{\mathfrak o}_E & {\mathfrak P}^{-r}\\
			{\mathfrak P}^r & {\mathfrak o}_E
		\end{smallmatrix}] \cap X} \psi(2 \langle x , y \rangle) \, dy\\
		& = c \int\limits_{{\mathfrak o}_E} \psi(2x_1y_1) \, dy_1 \int\limits_{{\mathfrak P}^{-r}} \psi(-2x_2y_2) \delta \,dy_2 \int\limits_{{\mathfrak P}^{-r}} \psi(-x_4y_3) \,dy_3 \int\limits_{{\mathfrak P}^r} \psi(-x_3y_4) \,dy_4= c q_E^r  \cdot\varphi_2(x).
	\end{align*}
	Therefore $\varphi_2(x) = \mathcal F_1(\mathcal F_1 \varphi_2)(x) = c^2 q_E^{2r} \varphi_2(x)$ and so $c = q_E^{-r}$, in particular $c$ is equal to 1 when $E/L$ is unramified.
  \end{proof}

\subsection{Weil Representations}
\label{sec:weil_reps}
Let $\psi:L \to \mathbb C^\times$ be a non-trivial continuous  unitary character. For $c \in L$ set the notation $\psi^c(x) = \psi(cx)$ for $x \in L$.  Assume $L$ is a local field. If $L$ is non-archimedean We define $\mathcal S(X)$ the space of Schwartz functions on $X$ to be locally constant and compactly supported. If $L$ is archimedean then $\mathcal S(X)$ will be functions that decay rapidly away from 0. The following formulas determine a unique Weil representation $(\omega_1, \mathcal S(X))$ of $\gl(2,L) \times \oo(X)$ with respect to $\psi$: 
\begin{align*}
		\big(\omega_1(1,h)\varphi\big)(x) &= \varphi(h^{-1}x),\\
		\big( \omega_1(
		    [\begin{smallmatrix} a & \\ & a^{-1} \end{smallmatrix}], 1) \varphi\big) (x) & = \chi_{E/L}(a)|a|^{\dim X/2} \varphi(ax),\\
		\big(\omega_1( [\begin{smallmatrix} 1  &b \\ &1 \end{smallmatrix}],1)\varphi \big)(x)
		    &=\psi(b\langle x,x \rangle) \varphi(x),\\
		\big(\omega_1([\begin{smallmatrix}&1  \\ -1& \end{smallmatrix}],1)\varphi\big)(x_1,x_2) & =\gamma_1(X) \big(\mathcal{F}_1(\varphi)\big)(x).
\end{align*}
Here $\gamma_1(X)$ is the fourth root of unity discussed in \cite{JacquetLanglandsgl2} and $\mathcal F_1(\varphi)$ is the Fourier transform of $\varphi$ given by 
\begin{equation}
\label{eq:sl2_fourier}
\mathcal{F}_1(\varphi)(x)=\int_{X} \varphi(y) \psi( 2 \langle x,y\rangle)\, dy,	
\end{equation}
where the Haar measure on $X$ is the unique Haar measure such that $\mathcal F_1(\mathcal F_1(\varphi))(x) = \varphi(-x)$ for $x \in X$ and all $\varphi \in \mathcal S(X)$. We can extend $\omega_1$ to all of $R = \{ (g,h) \in \gl(2,L) \otimes \go(X) \mid \det(g) = \lambda(h)\}$ via the formula

\begin{equation}
		\label{omega1extendeq}
		\omega_1(g,h) \varphi = |\lambda(h)|^{-1} \omega_1\big( g [\begin{smallmatrix} 1 & \\ & \det(g)^{-1} \end{smallmatrix}],1\big)  ( \varphi \circ h^{-1} ).
\end{equation} 

The following formulas determine a unique \emph{Weil representation} $\omega$ of $\SP(4,L) \times \OO(X)$ on  $\mathcal S^2(X^2) = \mathcal S^2(X \times X)$ with respect to $\psi$:
\begin{align}
		&\big(\omega(1,h)\varphi\big)(x_1,x_2) = \varphi(h^{-1}x_1,h^{-1}x_2),\label{weilrep1}\\
		&\big( \omega(
		    \begin{bmatrix} 
		    [\begin{smallmatrix} a_1 & a_2 \\ a_3 & a_4 \end{smallmatrix}]&  \\ 
		    &{[\begin{smallmatrix} a_1 & a_2 \\ a_3 & a_4 \end{smallmatrix}]^t}^{-1} 
		    \end{bmatrix}, 1) \varphi\big) (x_1,x_2)\label{weilrep2}\\ \nonumber
		&\qquad=\chi_{E/L}(a_1a_4-a_2a_3)|a_1a_4-a_2a_3|^{\dim X/2}
		    \varphi(a_1x_1+a_3x_2,a_2x_1+a_4x_2),\\
		&\big(\omega(\left[\begin{smallmatrix} 1 & & b_1 &b_2 \\ &1& b_2& b_3 \\&&1& \\ &&&1 \end{smallmatrix}\right],1)\varphi \big)(x_1,x_2)\label{weilrep3}\\ \nonumber
		&\qquad=\psi(b_1\langle x_1,x_1 \rangle+ 2b_2\langle x_1,x_2 \rangle  +b_3 \langle x_2,x_2 \rangle) \varphi(x_1,x_2),\\
		&\big(\omega(\left[\begin{smallmatrix}&&1& \\ &&&1 \\ -1&&& \\ &-1&& \end{smallmatrix}\right],1)\varphi\big)(x_1,x_2)  =\gamma(X) \mathcal{F}(\varphi)(x_1,x_2)\label{weilrep4}
		\end{align}
where $g \in \gsp(4), h \in \so(X), x_1,x_2 \in X,$ and $\varphi \in \mathcal L^2(X^2)$. Here $\chi_{E/L} : L^\times \to \C^\times$ is the quadratic character associated to the discriminant of $X$. That is, $\chi_{E/L}$ is the unique quadratic character on $L$ that is trivial on $\norm_{L}^{E}(E^\times)$. In particular if $\det(X) = 1 \in L^{\times}/L^{\times 2}$ then $\chi_{E/L}$ is trivial. Also, $\gamma(X)$ is a particular fourth root of unity as in remark 1 in \cite{Yoshida1979}. In \eqref{weilrep4} the Fourier transform $\mathcal F(\varphi)$ is defined by $\mathcal{F}(\varphi)(x_1,x_2)=\int_{X^2} \varphi(y_1,y_2) \psi( 2 \langle x_1,y_1\rangle +2  \langle x_2,y_2 \rangle)\, dy_1\,dy_2$,
where $dy_1\,dy_2$ is the unique Haar measure so that $\mathcal F( \mathcal F( \varphi ) )(x) = \varphi(-x)$. Let $R(L)$ be the subgroup of  $\GSp(4,L) \times \GO(X)$ given by
	\begin{equation}
		\label{eq:Rdef}
		R(L)=\{(g,h) \in \GSp(4,L) \times \GO(X): \lambda (g) = \lambda (h) \}.
	\end{equation}
 Then $\omega$ extends to $R(L)$ via the formula
	\begin{equation}
		\label{weilrep5}
		\omega(g,h) \varphi = |\lambda(h)|^{-\frac{\dim X}{2}} \omega \big( g \left[\begin{smallmatrix} 1 &&& \\ &1&& \\ &&\lambda(g)^{-1} & \\ &&& \lambda (g)^{-1} \end{smallmatrix}\right],1 \big)(\varphi \circ h^{-1}) 
	\end{equation}
for $\varphi \in \mathcal S^2(X^2)$ and $(g,h) \in R$. 

\begin{lemma}
\label{seesaw_lemma}
	the map 
	\begin{equation*}
		T: \mathcal S(X) \otimes \mathcal S(X) \to \mathcal S(X^2),
	\end{equation*}
	determined by the formula 
	\begin{equation*}
		T(\varphi_1 \otimes \varphi_2)(x_1 , x_2) = \varphi_1(x_1) \varphi_2(x_2)
	\end{equation*}
	for $\varphi_1$ and $\varphi_2$ in $\mathcal S(X)$ and $x_1$ and $x_2$ in $X$, is a well defined complex linear isomorphism such that 
	\begin{align}
		&T\circ (\omega_1\big([\begin{smallmatrix} a_1 &b_1 \\ c_1 & d_1 \end{smallmatrix}],h) \otimes \omega_1(]\begin{smallmatrix} a_2 &b_2 \\ c_2 & d_2 \end{smallmatrix}],h)\big) 
		=\omega(\left[\begin{smallmatrix} a_1&&b_1&\\&a_2&&b_2\\c_1&&d_1&\\&c_2&&d_2\end{smallmatrix}\right],h)\circ T \label{gsp4gl2eq3}
	\end{align}
  for $g_1 = [\begin{smallmatrix}
    a_1&b_1\\ c_1&d_1
  \end{smallmatrix}]$ and $g_2 = [\begin{smallmatrix}
    a_2&b_2\\ c_2&d_2
  \end{smallmatrix}]$ in $\gl(2,L)$ and $h \in \go(X)$ such that 
  \begin{equation*}
    \det(g_1) = \det(g_2) = \lambda(h).
  \end{equation*}
\end{lemma}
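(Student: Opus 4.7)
The plan is to first check that $T$ is a well-defined linear isomorphism and then to verify the intertwining identity \eqref{gsp4gl2eq3} on a generating set for the constrained subgroup
\begin{equation*}
\{(g_1,g_2,h) \in \gl(2,L)^2 \times \go(X) \mid \det(g_1) = \det(g_2) = \lambda(h)\}.
\end{equation*}
That $T$ is an isomorphism is standard: in the non-archimedean setting $\mathcal{S}(X^2)$ is spanned over $\C$ by characteristic functions of products $U_1 \times U_2$ of compact open subsets of $X$, which gives surjectivity, while injectivity is the universal property of the algebraic tensor product. In the archimedean case the statement concerns the algebraic tensor product and so also follows.

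For the intertwining relation, both sides of \eqref{gsp4gl2eq3} are group (anti-)representations of the constrained subgroup, so it suffices to check on generators. Let $\iota(g_1,g_2)$ denote the block matrix appearing in \eqref{gsp4gl2eq3}. I would take as generators: (a) $(1,1,h)$ with $h\in\oo(X)$; (b) the torus elements $([\begin{smallmatrix}a&\\&a^{-1}\end{smallmatrix}],1,1)$ and the mirror in the second $\gl(2)$; (c) the unipotents $([\begin{smallmatrix}1&b\\&1\end{smallmatrix}],1,1)$ and its mirror; (d) the Weyl elements $([\begin{smallmatrix}&1\\-1&\end{smallmatrix}],1,1)$ and its mirror; and (e) a single similitude lift, e.g.\ $([\begin{smallmatrix}a&\\&1\end{smallmatrix}],[\begin{smallmatrix}a&\\&1\end{smallmatrix}],h_a)$ with $\lambda(h_a)=a$, accessed via the extension formulas \eqref{omega1extendeq} and \eqref{weilrep5}. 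For cases (a)--(c) the verification is direct substitution: under $\iota$ the block matrix falls into the shape of \eqref{weilrep2} or \eqref{weilrep3} with the cross entries $a_2,a_3,b_2$ equal to zero, and the product of the corresponding $\omega_1$ formulas matches the $\omega$ formula exactly. Case (e) is a routine consistency check between the two similitude extension formulas, using that the Haar-measure normalizations on $X$ and $X^2$ are compatible (via Lemma \ref{Haar_measure_constant_lemma}).

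The main obstacle is case (d), the Weyl element, where we must compare one- and two-variable Fourier transforms. Since $\iota$ sends the pair of $\gl(2)$ Weyl elements to the element appearing in \eqref{weilrep4}, applying both sides of \eqref{gsp4gl2eq3} to $\varphi_1 \otimes \varphi_2$ reduces the required identity to
\begin{equation*}
\gamma(X)\, \mathcal{F}(\varphi_1 \otimes \varphi_2)(x_1, x_2) = \gamma_1(X)^2\, \mathcal{F}_1(\varphi_1)(x_1)\, \mathcal{F}_1(\varphi_2)(x_2).
\end{equation*}
The pairing on $X^2$ used in \eqref{weilrep4} is precisely the orthogonal sum of two copies of the pairing on $X$ used in \eqref{eq:sl2_fourier}, so Fubini factors the two-variable Fourier transform as a product of two one-variable Fourier transforms, once the self-dual Haar measure on $X^2$ is identified with the product of the self-dual measures on the factors. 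That identification follows from uniqueness of the self-dual measure for the orthogonal sum. The remaining identity $\gamma(X) = \gamma_1(X)^2$ is the multiplicativity of the Weil index on orthogonal sums, a classical fact (see \cite{Weil64}). This Fourier/Weil-index comparison is the only nontrivial content of the lemma.
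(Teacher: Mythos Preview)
Your approach matches the paper's, which also reduces to checking \eqref{gsp4gl2eq3} on generators and is in fact terser (it merely names the pairs $(g,g)$ and $(1,g)$ for $\SL(2)$-generators $g$, together with a similitude pair, without carrying out any verification). One presentational slip to fix: in (d) you list $(w,1,1)$ and $(1,w,1)$ as generators but then verify only the diagonal $(w,w,1)$, whose image under $\iota$ is the element in \eqref{weilrep4}; the images $\iota(w,1)$ and $\iota(1,w)=s_2$ are not among the $\Sp(4)$ generators in \eqref{weilrep2}--\eqref{weilrep4}, so a separate argument would be needed for them. You can either add the partial-Fourier-transform check for $s_2$, or simply replace your Weyl generators by the single diagonal element $(w,w)$ and note that $B\times B$ together with $(w,w)$ already generate $\SL(2,L)\times\SL(2,L)$: conjugating $B\times B$ by $(w,w)$ gives $B^-\times B^-$, and then $(N^+,1)\subset B\times B$ and $(N^-,1)\subset B^-\times B^-$ generate $\SL(2,L)\times\{1\}$.
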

	\begin{proof}
		It is not hard to demonstrate that $T$ is an isomorphism. It suffices to prove \eqref{gsp4gl2eq3} holds for $(g_1 , g_2) = (g,g) \in \SL(2,L)\times \SL(2,L)$, $(g_1 , g_2) = (1,g) \in \SL(2,L)\times \SL(2,L)$ for $g$  a generator for $\SL(2,L)$, and $(g_1,g_2) = ([\begin{smallmatrix}
    1& \\ & \lambda  
    \end{smallmatrix}] , [\begin{smallmatrix}
    1&\\&\lambda  
    \end{smallmatrix}])$ and $h \in \GO(X)$ with $\lambda(h) = \lambda$. 
	  \end{proof}
\section{Intertwining Maps} \label{sec:intertwiningmaps}
In this section, we study the intertwining maps which define an explicit integral model of the local theta lift.

\subsection{The non-Archimedean Case}
Let the notation be as in Section \ref{sec:notation}. For this section, make the further assumption that $L$ is non-archimedean, though we will occasionally note when results and their proofs are identical in the case that $L = \mathbb{R}$. Let $R$ be as in Section \ref{sec:introduction}. Let $(\omega, \mathcal S(X^2))$ be the Weil representation of $R$ associated to $X$ and $\psi$ as in Section \ref{sec:weil_reps} and let $V$ be the space of the Whittaker models, as in Section \ref{sec:whittakermodels}.

The space $ \mathcal S(X^2) \otimes V $ is an  $R' = R\cap \big(\GSp(4,L) \times \GSO(X)\big)$-space with the action being determined by
	$(g , h) \cdot (\varphi \otimes v) = (\omega(g , h) \cdot \varphi) \otimes (\pi(h) \cdot v)$
for $(g,h) \in R', \varphi \in \mathcal S(X^2)$ and $v \in V$.
Similarly we find that $ \mathcal S(X^2) \otimes (V \times V) $ is also an $R$-space with the actions determined by
	$(g , h) \cdot (\varphi \otimes (v_1 , v_2)) = (\omega(g , h)\varphi) \otimes (\sigma(h) \cdot (v_1 , v_2))$
with $(g,h) \in R, \varphi \in \mathcal S(X^2)$ and $v_1,v_2 \in V$. 

Let $H, x_1, x_2$ be as in Section \ref{sec:notation}. Assume $s \in \mathbb{C}$ be such that $\Re(s) > M$. Assume that $X$ is split. Let $g \in \GSp(4,L)^{+} = \gsp(4,L)$ and $\varphi \in \mathcal S(X^2)$. Let $W \in V$ and let $Z(s,W)$ be as in \eqref{eq:split_zeta_def}. We define
\begin{equation}
\label{eq:splitbesselintegral}
	B(g, \varphi, W, s) = \int\limits_{H\backslash \SO(X)} \big(\omega(g, hh') \varphi \big)(x_1,x_2) Z(s, \pi(hh') W) \, dh
\end{equation}
where $h' \in \GSO(X)$ is chosen so that the similitude factor $\lambda(h') = \lambda(g)$. We use the Haar measures on $\so(X)$ and $H$ for which 
$\SO(X) \cap \rho(\GL(2,{\mathfrak o}_L) \times \GL(2, {\mathfrak o}_L))$ and $H\cap \rho(\GL(2,{\mathfrak o}_L) \times \GL(2, {\mathfrak o}_L))$ both have measure 1, and the  integral in \eqref{eq:splitbesselintegral} uses their quotient measure.

Assume that $X$ is non-split. Let $W \in V$ and let $Z(s,W)$ be as in \eqref{eq:nonsplit_zeta_def}. For $g\in \GSp(4,L)^{+}$ and $\varphi \in \mathcal S(X^2)$ we define  
\begin{equation}
	\label{eq:nonsplitbesselintegral}
	B(g, \varphi, W, s) = \int\limits_{H\backslash \SO(X)} \big(\omega(g, hh') \varphi \big)(x_1,x_2) Z(s, \pi(hh') W) \, dh
\end{equation}
where $h = \rho(t,h_0)$, $h' \in \GSO(X)$ is chosen so that similitude factor $\lambda(h') = \lambda(g)$. 
Furthermore we use the Haar measures on $\so(X)$ and $H$ for which 
$\SO(X) \cap \rho({\mathfrak o}_E^\times \times \GL(2, {\mathfrak o}_E))$ and $H\cap \rho({\mathfrak o}_E^\times \times \GL(2, {\mathfrak o}_E))$ both have measure 1 and the integral in \eqref{eq:nonsplitbesselintegral} uses their quotient measure. We will occasionally refer to $B(g,\varphi,W,s)$ as the \emph{Bessel integral}.

\begin{lemma} \label{lemma:Bessel_abs_convergence}
	Let $s \in \mathbb{C}$ be such that $\Re(s) > M$. For $g \in \GSp(4,L)^+, \varphi \in \mathcal S(X^2)$, and $W \in V$ the integral defining $B(g,\varphi,W,s)$ is well defined and converges absolutely. 
\end{lemma}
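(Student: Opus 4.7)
The statement has two parts: the integrand must descend to a function on $H\backslash \SO(X)$, and the resulting integral must converge absolutely for $\Re(s) \gg 0$.

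For the first part, given $h_1 \in H \subset \SO(X)$, we have $\lambda(h_1) = 1$, so $(1, h_1) \in R(L)$ and $(g, h_1 h h') = (1, h_1)\cdot (g, h h')$ as a product in $R(L)$. Hence $\omega(g, h_1 h h') = \omega(1, h_1)\, \omega(g, h h')$, and evaluating at $(x_1, x_2)$ via \eqref{weilrep1} yields
\[
(\omega(g, h_1 h h')\varphi)(x_1, x_2) = (\omega(g, h h')\varphi)(h_1^{-1} x_1, h_1^{-1} x_2) = (\omega(g, h h')\varphi)(x_1, x_2),
\]
since $H$ stabilizes $(x_1, x_2)$ by definition. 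The zeta factor is $H$-invariant by Lemma \ref{zetaintegrals}(c). Thus the integrand is a well defined function on $H\backslash \SO(X)$.

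For absolute convergence, decompose $(g, h h') = (1, h) \cdot (g, h')$ in $R(L)$ and set $\varphi_0 := \omega(g, h')\varphi \in \mathcal S(X^2)$, so that the Weil-representation factor becomes $\varphi_0(h^{-1} x_1, h^{-1} x_2)$. The map
\[
H\backslash \SO(X) \longrightarrow X^2, \qquad H h \longmapsto (h^{-1} x_1, h^{-1} x_2),
\]
is a continuous injection onto the $\SO(X)$-orbit of $(x_1, x_2)$ (well defined and injective by the stabilizer property, as in the first step). Using the surjections $\rho$ of \eqref{eq:Base_exact_sequence} or \eqref{eq:extended_exact_sequence} together with Lemma \ref{lemma:stab} to realize $H\backslash \SO(X)$ as an explicit quotient of $\GL(2, L)\times \GL(2, L)$ (respectively of $L^\times \times \GL(2, E)$), one shows that the preimage of any compact subset of $X^2$ is compact in $H\backslash \SO(X)$. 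Since $\varphi_0$ has compact support, the integrand is supported on a compact-open subset $\Omega \subseteq H\backslash \SO(X)$. On $\Omega$ the integrand is locally constant — the Weil action is smooth in the non-archimedean setting and the zeta factor is locally constant by Corollary \ref{locallyconstantzeta} — hence takes only finitely many values on $\Omega$ and is bounded. Finiteness of the quotient Haar measure on a compact-open set then yields absolute convergence.

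The main obstacle is verifying the properness of the orbit map, that is, that a Schwartz support constraint in $X^2$ really forces compact support in $H\backslash \SO(X)$. The cleanest route is a direct computation in $\rho$-coordinates: Iwasawa-decompose each $\GL(2, L)$-factor in the split case (respectively $\GL(2, E)$ alongside the $L^\times$ factor in the non-split case), absorb the one-parameter $H$-direction into the diagonal torus, and compare the resulting coordinates of $h^{-1} x_1$ and $h^{-1} x_2$ against the size constraint imposed by $\supp(\varphi_0)$. The support condition bounds both the unipotent and torus coordinates, while the remaining maximal compact direction is compact to begin with, giving compactness in the quotient.
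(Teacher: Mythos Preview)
Your proposal is correct and follows essentially the same strategy as the paper: show the integrand descends to $H\backslash\SO(X)$, then prove the Weil-representation factor is compactly supported and locally constant there, and combine with Corollary~\ref{locallyconstantzeta} to conclude.

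The one substantive difference is how you establish properness of the orbit map $Hh\mapsto (h^{-1}x_1,h^{-1}x_2)$. You propose an explicit Iwasawa-coordinate computation in the $\rho$-model; the paper instead argues abstractly. It observes that the orbit $\SO(X)(x_1,x_2)$ is \emph{closed} in $X^2$ (being the fiber of the Gram-matrix map $(z_1,z_2)\mapsto\bigl[\begin{smallmatrix}\langle z_1,z_1\rangle & \langle z_1,z_2\rangle\\ \langle z_2,z_1\rangle & \langle z_2,z_2\rangle\end{smallmatrix}\bigr]$ over $\bigl[\begin{smallmatrix}0&1/2\\1/2&0\end{smallmatrix}\bigr]$), and then invokes 5.14 of Bernshtein--Zelevinskii to conclude that the bijection $H\backslash\SO(X)\to\SO(X)(x_1,x_2)$ is a homeomorphism. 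Composing with the restriction of $\omega(g,h')\varphi$ to this closed orbit immediately gives a locally constant, compactly supported function on $H\backslash\SO(X)$, with no coordinate computation needed. Your hands-on route would also work (and is close in spirit to the explicit arguments in Lemmas~\ref{lemma:rho_maximal_compact}, \ref{lemma:H_coset_split}, \ref{lemma:H_coset_inert}), but the paper's version is shorter and uniform in the split and non-split cases.
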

\begin{proof}
	Fix a $(g,h')\in R, \varphi \in \mathcal S(X^2),$ and $s \in \mathbb{C}$ so that $\Re(s) > M$. We consider the function $f: H\backslash \so(X) \to \mathbb{C}$ defined by $f(Hh) = (\omega(g , hh') \varphi) (x_1,x_2)$ for $h \in \so(X)$. We claim that $f$ is compactly supported and locally constant. To see this, we note that $f = f_1 \circ f_2$, where $f_2: H \backslash \so(X) \to \so(X)(x_1,x_2)$ is defined by $f_2(Hh) = (h^{-1} x_1, h^{-1}x_2)$ for $h \in \so(X)$, and $f_1: \so(X)(x_1,x_2) \to \mathbb{C}$ is defined by $f_1(x) = \omega(g,h') \varphi(x)$ for $x \in \so(X)(x_1,x_2) \subset X^2$. The map $f_2$ is a homeomorphism by 5.14 of \cite{Bernshtein_Zelevinskii_1976}. The map $f_1$ is compactly supported and locally constant because the map $X^2 \to \mathbb{C}$ defined by $x \mapsto (\omega(g,h') \varphi)(x)$ is compactly supported and $\so(X)(x_1,x_2)$ is a closed subset of $X^2$. Indeed, $\so(X)(x_1,x_2)$ is the inverse image of $[\begin{smallmatrix}
		0&1/2\\1/2&0
	\end{smallmatrix}]$ under the map $(z_1,z_2) \mapsto [\begin{smallmatrix}
		(z_1,z_1)& (z_1,z_2)\\
		(z_2,z_1) & (z_2,z_2)
	\end{smallmatrix}]$ for $(z_1,z_2) \in \so(X)(x_1,x_2)$. In particular the function $|f_1 \circ f_2|: h \mapsto |\omega(g,hh')\varphi(x_1,x_2)|$ is locally constant and compactly supported on $H\backslash \SO(X)$. From Lemma \ref{locallyconstantzeta} we have that $h \mapsto Z(\Re(s), |\pi(h)W|)$ is also locally constant. Therefore, for $(g,h') \in R, h \in H, \varphi \in \mathcal S(X^2), W \in V$ and $s\in \mathbb{C}$ so that $\Re(s)>M$ we have that
	\begin{equation*}
		\int\limits_{H\backslash \SO(X)} |\big(\omega(g, hh') \varphi \big)(x_1,x_2)| \cdot  Z(\Re(s), |\pi(hh') W|) \, dh
	\end{equation*}
	is finite. By the triangle inequality we have that integral defining $B(g,\varphi,W,s)$ converges absolutely. 
  \end{proof}

\begin{lemma}
\label{bessel_diag}
	Let $\varphi \in \mathcal S(X^2), W \in V$ and suppose that $t = \left[\begin{smallmatrix}
		t_1&&&\\
		&t_2&&\\
		&&t_2&\\
		&&&t_1
	\end{smallmatrix}\right] \in \gsp(4,L)^+$. Let $s \in \mathbb{C}$ such that $\Re(s) > M$ and set $B(\cdot) = B(\cdot, \varphi, W, s)$. Then we have the following transformation property for the Bessel integral
	\begin{equation*}
		B(tg) = |t_1/t_2|^{\frac{1}{2} - s} B(g)
	\end{equation*}
	for $g \in \gsp(4,L)^+$.
\end{lemma}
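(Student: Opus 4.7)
The plan is to substitute $tg$ into the definition of $B$ and simplify in four steps: (i) factor $(tg, hh'')$ in $R$ as $(t, h_\lambda)(g, h_\lambda^{-1} h h_\lambda h')$ for a cleverly chosen $h_\lambda \in \GSO(X)$; (ii) compute the action of $\omega(t, h_\lambda)$ at $(x_1, x_2)$; (iii) perform the change of variables $h \mapsto h_\lambda h h_\lambda^{-1}$ on the integral; and (iv) extract the scalar factor by Lemma \ref{zetaintegrals}. Since $t \in \gsp(4,L)^+$, the similitude $\lambda(t) = t_1 t_2$ lies in $\lambda(\GO(X))$, which in the non-split case equals $\norm_{E/L}(E^\times)$. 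I would choose $h_\lambda \in \GSO(X)$ with $\lambda(h_\lambda) = t_1 t_2$ and $h_\lambda x_i = t_i x_i$ for $i = 1, 2$: explicitly, $h_\lambda = \rho([\begin{smallmatrix}t_1&\\&1\end{smallmatrix}], [\begin{smallmatrix}1&\\&t_2\end{smallmatrix}])$ in the split case, and $h_\lambda = \rho(t_1, [\begin{smallmatrix}t_1&\\&c\end{smallmatrix}])$ in the non-split case for any $c \in E^\times$ with $\norm_{E/L}(c) = t_1 t_2$. Fix $h' \in \GSO(X)$ with $\lambda(h') = \lambda(g)$ and take $h'' = h_\lambda h'$.

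Applying \eqref{weilrep5} and \eqref{weilrep2} with $A = [\begin{smallmatrix}t_1&\\&t_2\end{smallmatrix}]$, the element $t \cdot \mathrm{diag}(1, 1, (t_1 t_2)^{-1}, (t_1 t_2)^{-1}) = \mathrm{diag}(t_1, t_2, 1/t_1, 1/t_2)$ of $\SP(4)$ acts on Schwartz functions by $(x_1, x_2) \mapsto (t_1 x_1, t_2 x_2)$ with prefactor $\chi_{E/L}(t_1 t_2)|t_1 t_2|^2$, and the factor $|t_1 t_2|^{-2}$ in \eqref{weilrep5} cancels. Combined with $h_\lambda x_i = t_i x_i$ and $\chi_{E/L}(t_1 t_2) = 1$ (since $t_1 t_2 \in \norm_{E/L}(E^\times)$, the kernel of $\chi_{E/L}$), this yields $(\omega(t, h_\lambda) \Phi)(x_1, x_2) = \Phi(x_1, x_2)$. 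Using the factorization $\omega(tg, h h'') = \omega(t, h_\lambda) \omega(g, h_\lambda^{-1} h h_\lambda h')$ gives
\begin{equation*}
B(tg) = \int_{H \backslash \SO(X)} (\omega(g, h_\lambda^{-1} h h_\lambda h') \varphi)(x_1, x_2) \, Z(s, \pi(h h_\lambda h') W) \, dh.
\end{equation*}

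The change of variables $h \mapsto h_\lambda h h_\lambda^{-1}$ now completes the argument. Conjugation by $h_\lambda$ fixes $H$ pointwise in both cases (a direct calculation using Lemma \ref{lemma:stab}, since the diagonal matrices entering $h_\lambda$ commute with those of $H$), and because $\SO(X)$ is unimodular the Haar measure is preserved; hence the change descends to $H \backslash \SO(X)$. After the substitution the integrand becomes $(\omega(g, h h') \varphi)(x_1, x_2) \, Z(s, \pi(h_\lambda) \pi(h h') W)$, and Lemma \ref{zetaintegrals} gives $Z(s, \pi(h_\lambda) W') = |t_1/t_2|^{1/2 - s} Z(s, W')$ for every $W' \in V$. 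In the non-split case this uses the identity $|c/t_1|_E = |t_2/t_1|_L$, coming from $|c|_E = |\norm_{E/L}(c)|_L = |t_1 t_2|_L$ and $|t_1|_E = |t_1|_L^2$. Pulling the scalar out of the integral yields $B(tg) = |t_1/t_2|^{1/2 - s} B(g)$. The main technical obstacle is the existence and explicit description of $h_\lambda$ in the non-split case, where the $L^\times$-component must be chosen equal to $t_1$ (rather than $1$) to avoid needing $t_1$ itself to be a norm from $E$.
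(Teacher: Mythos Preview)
Your proof is correct and follows essentially the same strategy as the paper's: choose an element of $\GSO(X)$ with the right similitude, verify that $\omega(t,h_\lambda)$ acts trivially at $(x_1,x_2)$, change variables by conjugation, and extract the scalar via Lemma~\ref{zetaintegrals}. The only real difference is that the paper first establishes invariance under the center and reduces to the case $t_2=1$, which in the non-split case allows the simpler choice $h_t=\rho(1,[\begin{smallmatrix}a&\\&1\end{smallmatrix}])$ with $\norm_{E/L}(a)=t_1$; your direct treatment of general $(t_1,t_2)$ avoids this reduction at the cost of the slightly subtler choice $h_\lambda=\rho(t_1,[\begin{smallmatrix}t_1&\\&c\end{smallmatrix}])$, and your closing remark correctly identifies why the $L^\times$-component must be $t_1$ rather than $1$. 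One small point: in the split case Lemma~\ref{zetaintegrals}(a) is stated only for $\rho([\begin{smallmatrix}*&\\&*\end{smallmatrix}],1)$, whereas your $h_\lambda$ has a nontrivial second factor, so strictly speaking you are invoking the same one-line computation from its proof rather than the lemma as stated.
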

\begin{proof}
	First let us handle the case when $t =  z \cdot 1_4$. Then $\lambda(t) = z^2$. Set 
	\begin{align*}
		h_{z} = \begin{cases}
			\rho(z^{-1},1), & X = X_{ns}\\
			\rho(z,1), & X = X_M,
		\end{cases}  	
	\end{align*}  
	which acts as multiplication by $z$ and has similitude factor $\lambda(h_z) = z^2$. Using the formulas \eqref{weilrep2} and \eqref{weilrep5} we calculate that for $(y_1,y_2) \in X^2$,
	\begin{align*}
		\omega(z,h_z) \varphi'(y_1,y_2) & = |\lambda(h_z)|^{-2} \omega(z \cdot \left[\begin{smallmatrix}
			1&&&\\
			&1&&\\
			&&z^{-2}&\\
			&&&z^{-2}
		\end{smallmatrix}\right],1)\varphi'(\frac{1}{z} y_1, \frac{1}{z}y_2)\\
		& = |\lambda(h_z)|^{-2} \omega( \left[\begin{smallmatrix}
			z&&&\\
			&z&&\\
			&&z^{-1}&\\
			&&&z^{-1}
		\end{smallmatrix}\right],1)\varphi'(\frac{1}{z} y_1, \frac{1}{z}y_2)\\
		& = |\lambda(h_z)|^{-2} \chi_{E/L}(z^2) |z^2|^2 \varphi'(y_1,y_2)\\
		& = \varphi'(y_1,y_2)
	\end{align*}
	for all $\varphi' \in \mathcal S(X^2)$. We have $\pi(z)W = W$, for all $W \in V$, so that 
	\begin{align*}
		B(zg) & = \int\limits_{H\backslash \so(X)} (\omega(zg,hh_zh') \varphi)(x_1,x_2)  Z(s,\pi(hh') \big( \pi(z)W \big)) \, dh \\
		& = \int\limits_{H\backslash \so(X)} \omega(g,hh') \big( \omega(z,h_z) \varphi\big) (x_1,x_2)  Z(s,\pi(hh') \big( \pi(z)W \big)) \,dh\\
		& =  B(g)
	\end{align*}
	for all $g \in \gsp(4)^+$. For $g \in \gsp(4,L)^+$ we have 
	\begin{align*}
	 	B(\left[\begin{smallmatrix}
	 		t_1&&&\\&t_2&&\\&&t_2&\\&&&t_2
	 	\end{smallmatrix}\right]g) & = B(\left[\begin{smallmatrix}
	 			t_1&&&\\&t_2&&\\&&t_2&\\&&&t_2
	 	 	\end{smallmatrix}\right] \left[\begin{smallmatrix}
	 	 		t_1t_2^{-1}&&&\\&1&&\\&&1&\\&&&t_1t_2^{-1}
		 	\end{smallmatrix}\right] g)  B(\left[\begin{smallmatrix}
	 	 		t_1t_2^{-1}&&&\\&1&&\\&&1&\\&&&t_1t_2^{-1}
		 	\end{smallmatrix}\right] g).
	 \end{align*} 
	Thus, we may assume that $t_2 = 1$.
	
	We determine how $t$ acts on $\varphi$ and $Z(s,W)$, individually. We will need to treat the case when $X$ is split and when $X$ is non-split separately. Assume that $X$ is split. Let $g \in \gsp(4,L) = \gsp(4,L)^+$ and choose $h' \in \gso(X)$ such that $\lambda(h') = \lambda(g)$. Set $h_t = \rho([\begin{smallmatrix}
		t_1& \\&1
	\end{smallmatrix}],1)$. Then $\lambda(h_t) = t_1 = \lambda(t)$. Also, 
		$\omega(t,h_t)\varphi'(x_1,x_2) = |t_1|^{-2}\omega(\left[\begin{smallmatrix}
			t_1&&&\\&1&&\\&&t_1^{-1}&\\&&&1
		\end{smallmatrix}\right], 1)\varphi'(\frac{1}{t_1}x_1, x_2)\varphi'(x_1,x_2)$,	for any $\varphi' \in \mathcal S(X^2)$. By Lemma \ref{zetaintegrals} we have that
		$Z(s, \pi(h_t)W')  = |1/t_1|^{s - \frac{1}{2}} Z(s, W')$, for any $W' \in V$. 

	Assume that $X$ is non-split. Let $g \in \gsp(4,L)^+$ and let $h'\in \gso(X)$ with $\lambda(h') = \lambda(g)$. Suppose that $t = \left[\begin{smallmatrix}
		t_1&&&\\
		&1&&\\
		&&1&\\
		&&&t_1
	\end{smallmatrix}\right] \in \gsp(4,L)^+$. Then there exists some $a \in E$ such that $\norm_L^E(a) = \lambda(t) = t_1$. Set $h_t = \rho(1, [\begin{smallmatrix}
		a&\\&1
	\end{smallmatrix}])$ and notice that $\lambda(h_t) = \norm_L^E(a) = \lambda(t).$ We calculate that
	\begin{align*}
	 	\omega(\left[\begin{smallmatrix}
	 		t_1&&&\\
			&1&&\\
			&&1&\\
			&&&t_1
	 	\end{smallmatrix}\right], h_t) \varphi'(x_1,x_2) & = |t_1|^{-2} \omega(\left[\begin{smallmatrix}
	 		t_1&&&\\&1&&\\&&t_1^{-1}&\\&&&1
	 	\end{smallmatrix}\right], 1) \varphi'(\norm_L^E(a^{-1}) x_1, x_2)\\
	 	& = \chi_{E/L}(t_1) |t_1|^{2}|t_1|^{-2} \varphi'(t_1 \norm_E^L(a^{-1}) x_1, x_2)\\
		& = \varphi'(x_1,x_2),
	\end{align*} 
	for any $\varphi' \in \mathcal S(X^2)$. From Lemma \ref{zetaintegrals} we know that 
		$Z(s, \pi(h_t)W') = |1/t_1|^{\frac{1}{2} - s}Z(s,W')$, for any $W' \in V$. 
	
	We have determined that $\omega(t,h_t)\varphi(x_1,x_2) = \varphi(x_1,x_2)$ and that $Z(s,\pi(h_t)W) = |1/t_1|^{s - \frac{1}{2}}Z(s, W)$ when $X$ is split or non-split. Hence, we find that for any $X$
	\begin{align*}
		B(tg) & = \int\limits_{H \backslash \so(X)}(\omega(tg,hh'h_t) \varphi)(x_1,x_2) Z(s,\pi(hh'h_t) W) \, dh\\
		& = \int\limits_{H \backslash \so(X)} \omega(t,h_t) (\omega(g, h_t^{-1}hh'h_t)\varphi) (x_1,x_2) Z(s, \pi(hh'h_t) W) \, dh \\
		& = \int\limits_{H \backslash \so(X)} \omega(t,h_t) (\omega(g, h_t^{-1}hh_th_t^{-1}h'h_t) \varphi) (x_1,x_2) Z(s, \pi(hh'h_t) W) \, dh \\
		& = \int\limits_{H \backslash \so(X)}  (\omega(g, hh_t^{-1}h'h_t) \varphi) (x_1,x_2) Z(s, \pi(h_thh_t^{-1}h'h_t) W) \, dh \\
		& = |1/t_1|^{\frac{1}{2} - s} \int\limits_{H \backslash \so(X)} \omega(g, hh^{-1}_t h'h_t)\varphi(x_1,x_2) Z(s, \pi(hh_t^{-1}h'h_t)W) \, dh\\
		& = |1/t_1|^{\frac{1}{2} - s} B(g) 
	\end{align*}
	since $\lambda(h') = \lambda(h_t^{-1} h'h_t)$. Here we have used that $\int_{H\backslash \SO(X)} f(h_t^{-1} x h_t) \,dx = \int_{H\backslash \SO(X)} f(x) \,dx$, for all measurable function on $H\backslash \so(X)$. This completes the proof. 
  \end{proof}
\begin{corollary}\label{cor:Bessel_Extension}
	Let $X$ be non-split. Let $\varphi \in \mathcal S(X^2), W \in V, g \in\gsp(4,L)^+$ and let $g_1$ be as in \eqref{eq:g0}. Let $s \in \mathbb{C}$ be such that $\Re(s) > M$. Then $B(g, \varphi, W, s) = |\lambda(g)|^{-s + \frac{1}{2}} B(g_1, \varphi, W, s).$ 
\end{corollary}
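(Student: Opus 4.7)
The plan is to deduce the corollary as a direct consequence of Lemma \ref{bessel_diag} by writing $g$ as a product of a suitable diagonal matrix and a symplectic element. The element $g_1$ referenced in \eqref{eq:g0} should be the ``unimodular'' factor of $g$, that is, an element of $\SP(4,L)$ such that $g = t g_1$ for a diagonal matrix $t$ of the form $\operatorname{diag}(t_1, t_2, t_2, t_1)$ considered in Lemma \ref{bessel_diag}.

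First I would fix the natural factorization: given $g \in \GSp(4,L)^{+}$ with similitude factor $\lambda := \lambda(g)$, I would set $t = \operatorname{diag}(\lambda, 1, 1, \lambda)$ and $g_1 = t^{-1} g$. A direct matrix calculation against the symplectic form $J$ shows that $t \in \GSp(4,L)$ with $\lambda(t) = \lambda$, and because $\lambda \in \lambda(\GO(X))$ we have $t \in \GSp(4,L)^{+}$. Consequently $g_1$ has similitude factor $1$, placing it in $\SP(4,L) \subset \GSp(4,L)^{+}$. Then I would apply Lemma \ref{bessel_diag} with $t_1 = \lambda$ and $t_2 = 1$ to the element $g_1$, obtaining
\[
B(g, \varphi, W, s) = B(t g_1, \varphi, W, s) = |\lambda|^{\frac{1}{2} - s} B(g_1, \varphi, W, s) = |\lambda(g)|^{-s + \frac{1}{2}} B(g_1, \varphi, W, s),
\]
which is precisely the claimed identity.

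There is no real obstacle here: the corollary repackages Lemma \ref{bessel_diag} in the form most useful for subsequent computations. The essential content, namely the construction of the auxiliary orthogonal element $h_t = \rho(1, \operatorname{diag}(a, 1))$ with $\norm_{E/L}(a) = \lambda$ in the non-split case and the verification that $\omega(t, h_t)$ acts trivially on $\varphi$ while $\pi(h_t)$ scales $Z(s, \cdot)$ by $|\lambda|^{\frac{1}{2} - s}$, has already been established in the proof of the preceding lemma. The only thing to check is that every element of $\GSp(4,L)^{+}$ admits such a factorization, which is immediate from the existence of the auxiliary orthogonal element $h_t$ in the non-split case (the reason for the non-split hypothesis is that the specific form of $t$ used above, with $t_2 = 1$, is compatible with the non-split construction of $h_t$). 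Thus the transformation law of Lemma \ref{bessel_diag} effectively determines $B$ on $\GSp(4,L)^{+}$ from its restriction to $\SP(4,L)$, which is the content of the corollary.
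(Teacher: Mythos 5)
Your proof is correct and is exactly the argument the paper intends: Corollary \ref{cor:Bessel_Extension} is the specialization of Lemma \ref{bessel_diag} to $t = \mathrm{diag}(\lambda(g),1,1,\lambda(g))$, $t_1=\lambda(g)$, $t_2=1$, applied to $g_1=t^{-1}g\in\SP(4,L)\subset\gsp(4,L)^+$, and you verify the needed memberships ($\lambda(t)=\lambda(g)\in\lambda(\GO(X))$, $\lambda(g_1)=1$) correctly. Your closing aside about why the non-split hypothesis appears is slightly off (the same identity holds in the split case, where $\gsp(4,L)^+=\gsp(4,L)$ and no extension is needed), but this does not affect the validity of the proof.
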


We extend $B(\cdot) = B(\cdot, \varphi,W,s)$  to all of $\GSp(4,L)$ via the formula 
\begin{equation}\label{eq:B_extend}
	B(g) = |\lambda(g)|^{-s + \frac 12}B(g_1)
\end{equation}
for
\begin{equation}
\label{eq:g0}
	g_1 = \left[\begin{smallmatrix}
		\lambda(g)^{-1} &  &  & \\
		  & 1 &  & \\
		  &  & 1 & \\
		  &  & & \lambda(g)^{-1}
	\end{smallmatrix}\right]g
\end{equation}
and $g \in \GSp(4,L)$. Clearly, Corollary \ref{cor:Bessel_Extension} indicates that this extension is well defined.

Part \textit{(a)} of the following lemma uses the choice of the canonical $\GO(X)$-representation $\pi^+$ made in Section \ref{sec:notation}. Let $s\in \OO(X)$ be defined by $s(x) = x^*$ and $T:V \to V$ be defined by $T(v_1, v_2) = (v_2, v_1)$ in the split case and by $T(W)(g) = W(\alpha (g))$ for $g \in \GL(2,E)$, $W \in \mathcal W(\tau ,\psi_E)$, and $\alpha$ is the non-trivial element of the Galois group of $E/L$ in the non-split case. 

\begin{lemma}
\label{bessle_trasformation_lemma}
	Let $(\pi, V)$ be the infinite-dimensional, irreducible, admissible representation of $\gso(X)$ obtained from either $\tau_0$ or the pair $\tau_1,\tau_2$, as in Section \ref{sec:notation}. Assume that the space $V$ of $\pi$ is either $\mathcal W(\tau_0, \psi_E)$ or $\mathcal W(\tau_1, \psi) \otimes \mathcal W(\tau_2, \psi)$, respectively. Let $z \in \mathbb{C}$ be such that $\Re(z) > M$. Then, for all $g \in \gsp(4,L) \varphi' \in \mathcal S(X^2), W' \in V$ we have
	\begin{enumerate}
		\item[(a)]   $B(g, \omega (1 , s) \varphi',  T(W'), z) = B(g, \varphi' ,W', z)$,
		\item[(b)] if $b = [\begin{smallmatrix}
			1 & B \\ & 1
		\end{smallmatrix}]$ where $B = [\begin{smallmatrix}
			b_1 & b_2\\ b_2& b_3
		\end{smallmatrix}] \in \mat (2,L)$, we have $B(bg, \varphi', W' , z) = \psi(b_2) B(g, \varphi', W', z)$, and
		\item[(c)] if $t = \left[\begin{smallmatrix}
			t_1&&&\\&t_2&&\\&&t_2&\\&&&t_1
		\end{smallmatrix}\right]$ for $t_1,t_2 \in L^\times$ we have $B(tg, \varphi', W', z) = |t_2/t_1|^{z - \frac 12} B(g, \varphi', W', z)$.
	\end{enumerate}
	
\end{lemma}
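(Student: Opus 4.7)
The plan is to treat the three parts (c), (b), (a) from easiest to hardest.

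For part (c), Lemma~\ref{bessel_diag} already gives $B(tg) = |t_1/t_2|^{1/2 - z} B(g) = |t_2/t_1|^{z - 1/2} B(g)$ when $g, tg \in \gsp(4, L)^+$. For general $g \in \gsp(4, L)$, a short computation shows that the renormalization from~\eqref{eq:B_extend} satisfies $(tg)_1 = \mathrm{diag}(t_2^{-1}, t_2, t_2, t_2^{-1}) \cdot g_1$, so combining this identity with Lemma~\ref{bessel_diag} applied to $g_1$ and the scalar prefactor $|\lambda(tg)|^{-z+1/2}$ from~\eqref{eq:B_extend} reduces the general case to the positive-similitude case already handled.

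For part (b), use $\lambda(b) = 1$ to factor $\omega(bg, hh') \varphi' = \omega(b, 1) \omega(g, hh') \varphi'$, and apply~\eqref{weilrep3} to the first factor. This produces the scalar $\psi(b_1 \langle x_1, x_1 \rangle + 2 b_2 \langle x_1, x_2 \rangle + b_3 \langle x_2, x_2 \rangle)$. A direct computation using~\eqref{eq:ip_expand_split} in the split case and~\eqref{eq:ip_expand_nonsplit} in the non-split case gives $\langle x_1, x_1 \rangle = \langle x_2, x_2 \rangle = 0$, so only the middle term contributes and the resulting constant $\psi(b_2)$ factors out of the Bessel integral.

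Part (a) is the substantive step and rests on four ingredients: (i) $s(x_i) = x_i^* = -x_i$ for $i = 1, 2$ by direct computation, so $s$ normalizes both $\SO(X)$ and $H$ and $h \mapsto s h s^{-1}$ is a measure-preserving homeomorphism of $H \backslash \SO(X)$; (ii) the construction of $\pi^+$ yields $\pi^+(s) = T$, hence the intertwining relation $\pi(h_0) T = T \pi(s^{-1} h_0 s)$ for $h_0 \in \GSO(X)$; (iii) the zeta integral is $T$-invariant, by the factor-swap formula in the split case and by the substitution $a \mapsto \alpha(a)$ in~\eqref{eq:nonsplit_zeta_def} in the non-split case (using $|\alpha(a)|_E = |a|_E$); and (iv) the central element $-1_4 \in \GSp(4, L)$ acts trivially on $B$, which is the $z = -1$ case of the scalar-matrix calculation already carried out in the proof of Lemma~\ref{bessel_diag}. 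To execute the argument, start with the LHS and use the composition $\omega(g, hh')\omega(1, s) = \omega(g, hh' s)$ together with $\pi(hh') T(W') = \pi^+(hh' s) W'$; then substitute $h \mapsto s h s^{-1}$ in the integral; apply (ii) and (iii) to reduce the zeta factor to $Z(z, \pi(h \tilde{h'}) W')$, where $\tilde{h'} = s^{-1} h' s \in \GSO(X)$ is a legitimate replacement for $h'$ since $\lambda(\tilde{h'}) = \lambda(g)$; and use the decomposition $hh's = s \cdot (s^{-1}hs)(s^{-1}h's)$ together with~\eqref{weilrep5} to rewrite the Schwartz factor as $(\omega(g, h \tilde{h'}) \varphi')(-x_1, -x_2)$. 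Finally, by~\eqref{weilrep2} the substitution $(x_1, x_2) \mapsto (-x_1, -x_2)$ amounts to the action of $-1_4 \in \GSp(4, L)$, so the resulting integral equals $B(-g, \varphi', W', z)$, which by (iv) equals $B(g, \varphi', W', z)$.

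The main obstacle is the bookkeeping in part (a): the element $s$ interacts simultaneously with the Weil representation on $\varphi'$, the representation $\pi^+$ on $W'$, and the Haar measure on $H \backslash \SO(X)$, and tracking these transformations consistently is where the work lies. The key conceptual step is recognizing the residual sign change at the end as the central action of $-1_4 \in \GSp(4)$, which by Lemma~\ref{bessel_diag} is already known to act trivially on the Bessel integral.
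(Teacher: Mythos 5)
Your proposal is correct and takes essentially the same approach as the paper: part (c) via Lemma~\ref{bessel_diag} together with the renormalization \eqref{eq:B_extend} and the identity $(tg)_1=\mathrm{diag}(t_2^{-1},t_2,t_2,t_2^{-1})g_1$, part (b) via \eqref{weilrep3} and $\langle x_1,x_1\rangle=\langle x_2,x_2\rangle=0$, and part (a) via composing with $\omega(1,s)$, using $s\,x_i=-x_i$, substituting $h\mapsto shs^{-1}$ (and replacing $h'$ by its $s$-conjugate), and invoking the compatibility of $T$ with the zeta integral. The only cosmetic deviation is that you dispose of the residual sign through the central element $-1_4\in\GSp(4,L)$ and the scalar case of Lemma~\ref{bessel_diag}, whereas the paper absorbs it as $\rho(-1,1)\in\SO(X)$ and uses the trivial central character; the two devices are interchangeable.
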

\begin{remark} \label{remark:arch}
  In Section \ref{sec:intertwining_maps_archimedean} we will define a similar intertwining map in the case that $L = \mathbb{R}$. In order to refrain from reproducing the same work multiple times we note that Lemma \ref{bessle_trasformation_lemma} will have an identical proof to the analogous result.
\end{remark}
\begin{proof}
	We first prove (a). Let $\varphi' \in \mathcal S(X^2) , W' \in V$. First see that $Z(z, (s\cdot \pi)(h), W') = Z(z, \pi(h)T(W'))$ for all $h \in \gso(X)$ and $W' \in V$. This is because in the split case $\pi(s) \rho(h_1,h_2) = \rho(h_2, h_1)$, for $h_1,h_2 \in \gl(2,L)$ and in the non-split case $\pi(s) \rho(t,h) = \rho(t, h^*)$ for $t \in L^\times$ and $h \in \gl(2,E)$. Suppose that $g\in \gsp(4,L)^+$.  We calculate that
	\begin{align*}
		B(g, \omega(1 , s) \varphi', W', z)
		 & = \int\limits_{H\backslash\SO(X)}(\omega(g , hh')\omega(1 , s) \varphi')(x_1 , x_2)Z(z , \pi(hh')  W') \, dh \\
		& = \int\limits_{H\backslash\SO(X)} (\omega(g, s s^{-1} h ss^{-1} h's) \varphi')(x_1 , x_2)Z(z , \pi(hh')    W') \, dh \\
		& = \int\limits_{H\backslash\SO(X)} (\omega(1 , s)\omega(g,  s^{-1} h ss^{-1} h's)   \varphi')(x_1 , x_2)Z(z , \pi(hh')    W') \, dh \\
		& = \int\limits_{H\backslash\SO(X)} (\omega(g , s^{-1} h ss^{-1} h's)   \varphi')(s    x_1 ,s    x_2)Z(z , \pi(hh')    W') \, dh \\
		& = \int\limits_{H\backslash\SO(X)} (\omega(g , s^{-1} h ss^{-1} h's)   \varphi')(- x_1 ,- x_2)Z(z , \pi(hh')    W') \, dh. \\
		\intertext{As in the proof of Lemma \ref{bessel_diag} we change variables $h \mapsto shs^{-1}$. Also, we replace $h'$ with $h'' = s h' s^{-1}$. We may write that the above is}
		& = \int\limits_{H\backslash\SO(X)} (\omega(g , hh'')   \varphi'(-x_1 , -x_2) Z(z , s\cdot\pi(hh'')W')  \,dh.
		\intertext{Denote by $-1 \in \SO(X)$ the action which sends $x$ to $-x$ so that we have $-1 = \rho(-1 , 1)$. Therefore the above is}
		& =  \int\limits_{H\backslash\SO(X)} (\omega(g , hh''(-1))   \varphi'(x_1 , x_2) Z(z , s\cdot\pi(hh'')W') \,dh\\
		& = \int\limits_{H\backslash\SO(X)} (\omega(g , hh'')   \varphi'(x_1 , x_2) Z(z , s\cdot\pi(-hh'')W')  \,dh\\
		& = \int\limits_{H\backslash\SO(X)} (\omega(g , hh'')   \varphi'(x_1 , x_2) Z(z , s\cdot \pi(hh'')W')  \,dh\\
		& =  B(g, \varphi', T(W'), z).
	\end{align*}	

	Hence we have shown that 
	\begin{equation*}
		B(g, \omega (1 , s)  \varphi',  W', z) = B(g,  \varphi',  T(W'), z)
	\end{equation*}
	so that
	\begin{equation*}
		B(g, \omega (1 , s)  \varphi',  T(W'), z) = B(g,  \varphi', W', z).
	\end{equation*}

	Now for all $g \in \gsp(4,L), \varphi' \in \mathcal S(X^2),$ and $W' \in V$ we use \eqref{eq:g0} to see that
	\begin{align*}
		B(g, \omega(1,s) \varphi',T(W),z) & = |\lambda(g)|^{\frac{1}{2} - z} B(g_1, \omega(1,s) \varphi', T(W), z)\\
		& = |\lambda(g)|^{\frac{1}{2} - z}  B(g_1, \varphi', W', z)\\
		& =  B(g, \varphi', W', z)
	\end{align*}
	which proves \textit{(a)}.

	\textit{Proof of (b).} Set $b = [\begin{smallmatrix}
			1 & B \\ & 1
		\end{smallmatrix}]$ where $B = [\begin{smallmatrix}
			b_1 & b_2\\ b_2& b_3
		\end{smallmatrix}] \in \mat (2,L)$ is symmetric. First notice that $\lambda(b) = 1$ so that $B(bg, \varphi', W', z)$  only differs from $B(g, \varphi', W', z)$ in the Weil representation factor of the integrand. Recall formula \eqref{weilrep3} to see that 
	\begin{align*}
		(\omega(b,1)\varphi')(x_1,x_2) & = \psi(b_1 \langle x_1, x_1 \rangle) + 2b_2 \langle x_1,x_2 \rangle + b_3\langle x_2, x_2 \rangle) \, \varphi'(x_1,x_2)\\
		& = \psi(b_2) \varphi'(x_1,x_2)
		\intertext{for all $\varphi' \in \mathcal S(X^2)$. Therefore it is evident that if $g \in \gsp(4,L)^+$ then by \eqref{eq:g0} }
		B(bg, \varphi', W' , z) & = \psi(b_2) B(g, \varphi', W', z).
		\intertext{If $g \in \gsp(4,L)$ then}
		B(bg)& = |\lambda(g)|^{\frac{1}{2} - s} B(bg_1)\\
		& = |\lambda(g)|^{\frac{1}{2} - s} \psi(b_2) B(g_1)\\
		& = \psi(b_2) B(g).
	\end{align*}
	This finishes the proof of \textit{(b)}.

	\textit{Proof of (c)} The case when $g\in \gsp(4,L)^+$ is proved in Lemma \ref{bessel_diag}. Assume that $g \in \gsp(4,L)$ and set 
	\[
		t = \left[\begin{smallmatrix}
			t_1&&&\\
			&t_2&&\\
			&&t_2&\\
			&&&t_1
		\end{smallmatrix}\right].
	\] Then $\lambda(t) = t_1t_2$. Similar to \eqref{eq:g0} set
	\[
		(tg)_1 = \left[\begin{smallmatrix}
			\lambda(tg)^{-1}&&&\\
			&1&&\\
			&&1&\\
			&&&\lambda(tg)^{-1}
		\end{smallmatrix}\right] tg = \left[\begin{smallmatrix}
			t_2^{-1} &&&\\
			&t_2&&\\
			&&t_2&\\
			&&&t_2^{-1}
		\end{smallmatrix}\right]g_1.
	\]
	Therefore,
	\begin{align*}
		B(tg) & = |\lambda(tg)|^{\frac{1}{2} - s} B((tg)_1)\\
		& = |\lambda(g)|^{\frac{1}{2} - s}|t_1t_2|^{\frac{1}{2} - s} B(\left[\begin{smallmatrix}
			t_2^{-1} &&&\\
			&t_2&&\\
			&&t_2&\\
			&&&t_2^{-1}
		\end{smallmatrix}\right]g_1).
  \end{align*}
	Finally, since $\left[\begin{smallmatrix}
			t_2^{-1} &&&\\
			&t_2&&\\
			&&t_2&\\
			&&&t_2^{-1}
		\end{smallmatrix}\right]g_1 \in \SSp(4,L)$ we get that 
	$B(tg)  = |t_1/t_2|^{\frac{1}{2} - s} |\lambda(g)|^{\frac{1}{2} - s} B(g_1)$, which is just $|t_1/t_2|^{\frac{1}{2} - s} B(g)$,
	completing the proof. 
  \end{proof}

\begin{proposition} \label{lemma:Bessel_nonzero} 
	Assume that $L$ is non-archimedean. Let $s \in \mathbb{C}$ be such that $\Re(s) \gg 1$. Assume that $X$ is split. Suppose that $W_i \in \mathcal W(\tau_i, \psi)$ are such that $Z(s,W_i)$ are not zero for $i \in \{1,2\}$. Set $W = W_1\otimes W_2$. Then there exists $\varphi \in \mathcal S(X^2)$ such that $B(1,\varphi,W,s) \ne 0$. 
	Assume that $X$ is non-split. Suppose that $W \in \mathcal W(\tau_0 ,\psi_E)$ is such that $Z(s,W)$ is not zero, then there exists $\varphi \in \mathcal S(X^2)$ such that $B(1,\varphi,W,s) \ne 0$. 
\end{proposition}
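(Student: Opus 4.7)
The plan is to localize the Bessel integral so that the integrand becomes a constant nonzero multiple of $Z(s,W)$ on a small set and vanishes elsewhere. Because $\lambda(1)=1$, I may take $h'=1$ in \eqref{eq:splitbesselintegral} and \eqref{eq:nonsplitbesselintegral}, and by \eqref{weilrep1} the integrand simplifies to
\[
\varphi(h^{-1}x_1,h^{-1}x_2) \cdot Z(s,\pi(h)W).
\]
The idea is then to choose $\varphi$ to be the characteristic function of a sufficiently small open compact subset of $X^2$ so that the $\varphi$-factor cuts out a small neighborhood of the identity coset in $H\backslash \SO(X)$, on which the zeta-integral factor is constantly $Z(s,W)$.

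By Corollary \ref{locallyconstantzeta}, the map $Hh \mapsto Z(s,\pi(h)W)$ is well defined and locally constant on $H\backslash \SO(X)$, so there is an open compact neighborhood $N$ of the identity coset on which it takes the constant value $Z(s,W)$. This value is nonzero: in the split case $Z(s,W)=Z(s,W_1)Z(s,W_2)$ by \eqref{eq:split_zeta_def} and both factors are nonzero by hypothesis, while in the non-split case the nonvanishing is the hypothesis directly.

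Next, as invoked in the proof of Lemma \ref{lemma:Bessel_abs_convergence} via 5.14 of \cite{Bernshtein_Zelevinskii_1976}, the orbit map $Hh \mapsto (h^{-1}x_1,h^{-1}x_2)$ is a homeomorphism of $H\backslash \SO(X)$ onto the closed subset $\mathcal O = \SO(X)\cdot(x_1,x_2) \subset X^2$. Let $N'$ be the image of $N$, a compact open subset of $\mathcal O$. Since $X^2$ is locally compact and totally disconnected and $\mathcal O\setminus N'$ is closed in $X^2$, a standard separation argument produces an open compact $U \subset X^2$ with $U \cap \mathcal O = N'$. Setting $\varphi = \mathbf 1_U \in \mathcal S(X^2)$, the function $h \mapsto \varphi(h^{-1}x_1,h^{-1}x_2)$ equals $\mathbf 1_N(Hh)$, and hence
\[
B(1,\varphi,W,s) = \int_N Z(s,W)\,dh = Z(s,W)\cdot \vol(N) \neq 0,
\]
since nonempty open subsets of $H\backslash \SO(X)$ have positive Haar measure. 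The argument is straightforward once one has in hand the orbit homeomorphism and the local constancy of $h \mapsto Z(s,\pi(h)W)$ on $H\backslash \SO(X)$; there is no substantive obstacle beyond this standard localization trick.
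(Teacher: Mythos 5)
Your proposal is correct and follows essentially the same route as the paper: both use the local constancy of $Hh \mapsto Z(s,\pi(h)W)$ (Corollary \ref{locallyconstantzeta}), the orbit homeomorphism $Hh \mapsto (h^{-1}x_1,h^{-1}x_2)$ onto the closed set $\SO(X)(x_1,x_2)$, and then take $\varphi$ to be the characteristic function of a compact open subset of $X^2$ cutting out exactly a small neighborhood of the identity coset, so that $B(1,\varphi,W,s)$ equals a positive volume times $Z(s,W)$. Your ``standard separation argument'' for producing the compact open $U$ is precisely the finite-cover construction the paper carries out explicitly, so there is no substantive difference.
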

\begin{proof}
	Assume that $X$ is non-split.  Denote $h = \rho(t,h_0) \in \GSO(X)$, for $t \in L^\times$ and $h_0 \in \gl(2,E)$. By Lemma \ref{locallyconstantzeta} the function $h \mapsto Z(s, \pi(h)W)$ is locally constant on $H\backslash\SO(X)$ and, furthermore, is nonzero at the point $H\cdot 1$. Recall that $f_2: H\backslash \SO(X) \to \SO(X)(x_1,x_2)$, defined by $f_2(Hh) = h^{-1}(x_1,x_2)$, is a homeomorphism and $\SO(X)(x_1,x_2)$ is closed. Let $A \subset H\backslash \SO(X)$ be a compact open neighborhood of $H\cdot 1$ so that function $h \mapsto Z(s, \pi(h)W)$ is constant on $A$. Now, $f_2(A)$ is a compact open subset of $\SO(X)(x_1,x_2)$ and there exists some open $U \subset X^2$ such that $f_2(A) = U \cap \SO(X)(x_1,x_2)$. For each $u \in U$ choose an open compact neighborhood $Y_u$ of $u$ such that $Y_u \subset U$. Clearly $\{Y_u \cap \SO(X)(x_1,x_2)\}_{u \in U}$ is an open cover of $f_2(A)$ so there exists a finite subset $I \subset U$ such that $\{Y_u\cap \SO(X)(x_1,x_2)\}_{u \in I}$ is a finite cover of $f_2(A)$. Therefore $f_2(A) = \cup_{u \in I} (Y_u \cap \SO(X)(x_1,x_2))$.  Set $Y  = \bigcup_{u \in I} Y_u$. Then the characteristic function $\varphi = \mathbf{1}_{Y}$ is in $\mathcal S(X^2)$ and fulfills the requirements. Indeed, we calculate that
	\begin{align*}
	 	B(1,\mathbf{1}_Y,W,s) & = \int\limits_{H\backslash \SO(X)} \omega(1,h) \mathbf{1}_Y(x_1,x_2) Z(s, \tau_0(h_0)W) \, dh\\
	 	& = \vol(A) Z(s,W).
	 \end{align*}

	Assume that $X$ is split. The argument is similar. Let $h = \rho(h_1,h_2)$ for $h_1, h_2 \in \gl(2,L)$. Choose compact neighborhood $A$, of $1$ so that the functions $h \mapsto Z(s,\tau_1(h_1)W_1)$ and $ h \mapsto Z(s,\tau_1(h_2)W_2)$ are both constant on $A$, so that $h \mapsto Z(s, \pi(h)W)$ is also constant on $A$, and the argument runs verbatim. 
  \end{proof}

Now that we have proved that $B(\cdot, \varphi, W, s)$ is a well defined and non-zero map, for some choices of $\varphi, W$ and $s$ we define $\Theta(V)$ to be the space generated by all such functions for all choices of $\varphi \in \mathcal S(X^2)$ and $W \in V$. Fix $s \in \mathbb{C}$ such that $\Re(s) \gg M$. We define the map
\begin{align}\label{eq:localthetalift}
	\vartheta: \mathcal S(X^2) \otimes V &\to \Theta(V)\\
	\varphi \otimes W & \mapsto B(\cdot, \varphi, W, s). \nonumber
\end{align}
It is easy to verify that if $B \in \Theta(V)$, then by Lemma \ref{bessle_trasformation_lemma} 
\begin{align}\label{eq:extended_bessel1}
B(tg)  &= |t_1/t_2|^{\frac{1}{2} - s}B(g)	\quad \text{for $t = \left[\begin{smallmatrix}
	t_1&&&\\&t_2&&\\&&t_2&\\&&&t_1
\end{smallmatrix}\right]$ for $t_1,t_2 \in L^\times$ and}\\
\label{eq:extended_bessel2}
B(bg) & = \psi(b_2)B(g)\quad \text{for $b =[\begin{smallmatrix}
	1 & B \\ &1
\end{smallmatrix}]$ where $B = [\begin{smallmatrix}
	b_1 & b_2 \\ b_2 & b_3
\end{smallmatrix}] \in \mat(2,L)$.}  
\end{align}

\begin{theorem} \label{Besselrepresentation}
	Let $s \in \mathbb{C}$ with $\Re(s) \gg M$. Let $V = \mathcal W(\tau_1, \psi)\otimes \mathcal W(\tau_2, \psi)$ or $V = \mathcal W(\tau_0, \psi_E)$ in the split case or non-split case, respectively. Then
	\begin{enumerate}
		\item[(a)] the map $\vartheta$ from \eqref{eq:localthetalift} is a non-zero $R'$-map,
		\item[(b)] for each $f \in \Theta(V)$ there is an open and compact $K \subset \gsp(4,L)$ such that, for each $k \in K$ we have $f(g) = f(gk)$ for all $g \in \gsp(4,L)$, and
		\item[(c)] the image of $\vartheta$ lies inside $\mathcal B(\gsp(4,L) , \psi)$.
	\end{enumerate}
	It then follows that $\Theta(V)$ is a smooth representation of $\gsp(4,L)$ sitting inside $\mathcal B(\gsp(4,L), \psi)$.

\end{theorem}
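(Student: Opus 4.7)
The plan is to verify the three clauses in order. Linearity of $\vartheta$ is immediate from the defining integrals \eqref{eq:splitbesselintegral} and \eqref{eq:nonsplitbesselintegral}, and non-vanishing is Proposition \ref{lemma:Bessel_nonzero}. For the $R'$-equivariance, fix $(g_0, h_0) \in R'$; then $\lambda(g_0) = \lambda(h_0) \in \lambda(\gso(X))$, so $g_0 \in \gsp(4,L)^+$. For $g \in \gsp(4,L)^+$ choose $h' \in \gso(X)$ with $\lambda(h') = \lambda(g)$, so that $h' h_0 \in \gso(X)$ has similitude factor $\lambda(gg_0)$ and is a permitted choice in the integral defining $B(gg_0, \varphi, W, s)$. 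Because $\omega$ is a representation of $R$ and $\pi$ of $\gso(X)$, the integrand factors as
\[
\bigl(\omega(g, h_1 h')\bigl(\omega(g_0, h_0)\varphi\bigr)\bigr)(x_1, x_2) \cdot Z\bigl(s,\ \pi(h_1 h')(\pi(h_0) W)\bigr),
\]
which integrated against $dh_1$ over $H \backslash \so(X)$ yields $B(g, \omega(g_0, h_0)\varphi,\ \pi(h_0) W,\ s)$. The identity then extends to all $g \in \gsp(4,L)$ by direct comparison via \eqref{eq:B_extend}: the similitude prefactor $|\lambda(g g_0)|^{-s+1/2}$ splits multiplicatively and the central element in $g_1 = \mathrm{diag}(\lambda(g)^{-1}, 1, 1, \lambda(g)^{-1}) g$ is reabsorbed using that $\omega(g_0, h_0)\varphi$ and $\pi(h_0)W$ are transported consistently.

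For (b), smoothness of $(\omega, \mathcal S(X^2))$ and of $(\pi, V)$ produces a compact open subgroup $U \subset R'$ simultaneously fixing $\varphi$ and $W$. The first projection $p_1 \colon R' \to \gsp(4,L)$ has image $\gsp(4,L)^+$, which is open in $\gsp(4,L)$ since $\lambda(\go(X))$ is an open subgroup of $L^\times$; its kernel $\{1\} \times \so(X)$ is open in $R'$, so $p_1$ is an open map and $p_1(U)$ is a compact open subgroup of $\gsp(4,L)$. By (a) this subgroup stabilizes $B(\cdot, \varphi, W, s)$ under right translation, giving (b). For (c), the Bessel space $\mathcal B(\gsp(4,L), \psi)$ is defined by smoothness together with the transformation laws \eqref{eq:extended_bessel1}--\eqref{eq:extended_bessel2} under the Bessel subgroup, which are furnished by (b) and by parts (b) and (c) of Lemma \ref{bessle_trasformation_lemma}. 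The concluding assertion that $\Theta(V)$ is a smooth $\gsp(4,L)$-representation then follows: closure under right translation by $g_0 \in \gsp(4,L)^+$ is the content of (a), and right translation by any $g_0 \in \gsp(4,L)$ is handled by factoring $g_0$ through the extension \eqref{eq:B_extend}, which expresses $B(\cdot g_0)$ as a scalar times a Bessel integral with modified Schwartz and Whittaker data.

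The principal obstacle is the similitude bookkeeping that makes the $R'$-equivariance survive passage from $\gsp(4,L)^+$ to the whole of $\gsp(4,L)$ through the extension formula \eqref{eq:B_extend}. Once this is verified, (b), (c) and the final smoothness assertion follow routinely from Lemma \ref{bessle_trasformation_lemma} and the smoothness of the Weil representation.
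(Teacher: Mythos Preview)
Your treatment of (a) and (c) follows the paper's line essentially verbatim, including the passage from $\gsp(4,L)^+$ to all of $\gsp(4,L)$ via \eqref{eq:B_extend}.

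For (b) you diverge from the paper. The paper uses the exact sequence $1 \to L^\times \cdot \Sp(4,L) \to \gsp(4,L) \to L^\times/L^{\times 2} \to 1$ to reduce to finding a compact open subgroup of $\Sp(4,L)$ fixing $B$; smoothness of the Weil representation then supplies the full congruence subgroup $\Gamma(\mathfrak p^N)$ with $\omega(k,1)\varphi=\varphi$, and a one-line computation gives $B(gk)=B(g)$. Your idea of projecting a compact open $U\subset R'$ down via $p_1$ and invoking (a) is sound in spirit, but the stated reason that $p_1$ is open is wrong: the kernel $\{1\}\times\so(X)$ is \emph{not} open in $R'$, since every neighborhood of $(1,1)$ in $R'$ contains points $(g,1)$ with $1\neq g\in\Sp(4,L)$. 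The map $p_1$ is in fact open---for instance because it admits a continuous local section $g\mapsto(g,h')$ with $\lambda(h')=\lambda(g)$ near the identity, or because $p_1(U)$ visibly contains the open set $V_1\cap\lambda^{-1}(\lambda(V_2))$ whenever $(V_1\times V_2)\cap R'\subset U$---so your conclusion survives, but the justification needs repair. A clean alternative is to intersect $U$ with $\Sp(4,L)\times\{1\}\subset R'$, which hands you a compact open subgroup of $\Sp(4,L)$ fixing $\varphi$ under $\omega$; that is exactly the paper's input.
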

\begin{proof} 
	\textit{Proof of (a).} Let $(g_0, h_0) \in R'$, let $g \in \gsp(4,L)^+$,  and let $h' \in \go(X)$ so that $(g,h')\in R$. We see that
	\begin{align*}
		B(g, \omega(g_0,h_0) \varphi, \pi(h_0) W, s) & = \int\limits_{H \backslash \so(X)} \omega(gg_0, h(h'h_0)) \varphi(x_1,x_2) Z(s, \pi(h (h'h_0))W) \, dh\\
		& = B(gg_0, \varphi, W, s)
	\end{align*}
	since $(gg_0, h'h_0) \in R$. Now suppose that $g \in \gsp(4,L)$ and let $g_1 \in \gsp(4,L)^+$ be as in \eqref{eq:g0}. Then using the above result, we find that 
	\begin{align*}
		B(g,  \omega(g_0, h_0) \varphi, \pi(h_0) W, s) 
		& = |\lambda(g)|^{-s + \frac{1}{2}} B(g_1,  \omega(g_0, h_0) \varphi, \pi(h_0) W, s) \\
		& = |\lambda(g)|^{-s + \frac{1}{2}} B(g_1g_0, \varphi,  W, s) \\
		& = B(gg_0, \varphi,  W, s).
	\end{align*}
	By Lemma \ref{lemma:Bessel_nonzero} we have that $B(\cdot)$ is nonzero as long as there is some $W \in V$ such that $Z(s, W)$ is not zero for $s \gg M$. Since $\tau_0, \tau_1, \tau_2$ have Whittaker models it is clear that $Z(s,W)$ is not zero. 

	\textit{Proof of (b).} Let $\varphi \otimes W \in \mathcal S(X^2) \otimes V$. It suffices to prove $(b)$ for $B = B(\cdot, \varphi, W, s)$. Given the following exact sequence
	\[
		1 \xrightarrow{\ \ \ } L^\times \cdot \SSp(4,L) \hookrightarrow \gsp(4,L) \xrightarrow{\ \ \lambda \ \ } L^\times / L^{\times 2} \xrightarrow{ \ \ \ } 1
	\]
	it suffices to find a compact open subgroup of $\SSp(4,L)$ which stabilizes $B(\cdot)$. Since the Weil representation is smooth there exists some $N \in \mathbb{Z}_{>0}$ so that the open and compact full congruence subgroup 
	\begin{equation*}
		\Gamma({\mathfrak p}^N) = \{ k \in \Sp(4,{\mathfrak o}_L) \mid k\equiv 1_4 \pmod{{\mathfrak p}^N} \}
	\end{equation*}
	fixes $\varphi$ under the action of the Weil representation. Since $\Gamma({\mathfrak p}^N)$ is also contained in $\SSp(4,L)$ we determine that for all $k \in \Gamma({\mathfrak p}^N)$ and all $g \in \gsp(4,L)$  that
	\begin{align*}
		B(gk) & = |\lambda(g)|^{-s + \frac{1}{2}}\int \limits_{H \backslash \so(X)} \omega(g_1k, hh') \varphi(x_1,x_2) Z(s, \pi(hh')W) \, dh \\
		& = |\lambda(g)|^{-s + \frac{1}{2}}\int \limits_{H \backslash \so(X)} \omega(g_1, hh') \big(\omega(k,1)\varphi(x_1,x_2)) Z(s, \pi(hh')W) \, dh \\
		& = |\lambda(g)|^{-s + \frac{1}{2}}\int \limits_{H \backslash \so(X)} \omega(g_1, hh') \varphi(x_1,x_2) Z(s, \pi(hh')W) \, dh \\
		& = B(g).
	\end{align*}
	\textit{Proof of (c)} This follows from part (b) of this lemma and from \eqref{eq:extended_bessel1} and \eqref{eq:extended_bessel2}. 
  \end{proof}

\begin{lemma}\label{lemma:rho_maximal_compact} Assume that the residual characteristic of $L$ is odd. Let $h \in \so(X)$. If $X = X_{ns}$ is non-split and $E/L$ is unramified then, $h \in \rho ({\mathfrak o}_L^\times \times \GL(2,{\mathfrak o}_{E}))H$ if and only if $h(x_1,x_2) \in [\mat(2, {\mathfrak o}_{E}) \cap X_{ns}]^2$. If $X = X_M$ is split then, $h \in \rho(\gl(2 ,{\mathfrak o}_L), \gl(2 ,{\mathfrak o}_L))H$ if and only if $h(x_1,x_2) \in \mat(2, {\mathfrak o}_L)^2$.
\end{lemma}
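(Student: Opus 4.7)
The plan is to establish both directions of each equivalence. For the forward direction, first verify by direct computation from the formulas for $\rho$ that $H$ fixes $(x_1, x_2)$ pointwise, so that $h(x_1, x_2) = \rho(\cdot)(x_1, x_2)$ whenever $h = \rho(\cdot) h_0$ with $h_0 \in H$. The standing hypotheses imply that $2 \in {\mathfrak o}_L^\times$ and, choosing $\delta \in {\mathfrak o}_L^\times$ a unit non-square representing the unramified extension, that $\sqrt{\delta} \in {\mathfrak o}_E^\times$; hence $x_1, x_2$ have entries in ${\mathfrak o}_E$. Then $\rho(t,b) x_i = t^{-1} b x_i \alpha(b)^*$ and $\rho(d_1, d_2) x_i = d_1 x_i d_2^*$ yield integral outputs for any $(t,b) \in {\mathfrak o}_L^\times \times \GL(2, {\mathfrak o}_E)$ or $(d_1, d_2) \in \GL(2, {\mathfrak o}_L)^2$, respectively. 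The substantive work is the converse, which requires exploiting the freedom in the representative of $h$ modulo the kernel of $\rho$ and modulo $H$.

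For the converse in the non-split case, write $h = \rho(t, b)$ with $b = [\begin{smallmatrix}a_1 & a_2 \\ a_3 & a_4\end{smallmatrix}] \in \GL(2, E)$ and expand $\rho(t,b) x_i$ for $i = 1, 2$. Using $\nu_E(\sqrt{\delta}) = 0$ and $\nu_E(\alpha(a)) = \nu_E(a)$ in the unramified case, the integrality of all entries translates to $\nu_L(t) \leq 2\nu_E(a_i)$ for each $i \in \{1,2,3,4\}$. Since $h \in \SO(X)$, the constraint $\lambda(h) = 1$ gives $t^2 = \norm_L^E(\det b)$, so $\nu_L(t) = \nu_E(\det b) \geq 2\min_i \nu_E(a_i)$. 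The two inequalities force the equality $\nu_L(t) = 2\min_i \nu_E(a_i)$; in particular $\nu_L(t)$ is even. Using the kernel $\{(\norm(c), c) \mid c \in E^\times\}$ of $\rho$, replace $(t,b)$ by $(t\norm(c), cb)$ with $\nu_E(c) = -\min_i \nu_E(a_i)$; the new representative satisfies $t \in {\mathfrak o}_L^\times$, all $a_i \in {\mathfrak o}_E$, and $\det b \in {\mathfrak o}_E^\times$, i.e.\ $b \in \GL(2, {\mathfrak o}_E)$.

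For the converse in the split case, write $h = \rho(d_1, d_2)$ with $d_1 = [\begin{smallmatrix}d_{11}&d_{12}\\d_{21}&d_{22}\end{smallmatrix}]$ and $d_2 = [\begin{smallmatrix}e_1&e_2\\e_3&e_4\end{smallmatrix}]$, and expand the action. Integrality translates to $\nu(d_{i1}) + \nu(e_j) \geq 0$ for $(i,j) \in \{1,2\} \times \{1,3\}$ (from $x_1$) and $\nu(d_{i2}) + \nu(e_k) \geq 0$ for $(i,k) \in \{1,2\} \times \{2,4\}$ (from $x_2$). The kernel $\{(c, c^{-1}) \mid c \in L^\times\}$ of $\rho$ and right multiplication by the $H$-element $\rho([\begin{smallmatrix}a&\\&1\end{smallmatrix}], [\begin{smallmatrix}a^{-1}&\\&1\end{smallmatrix}])$ — which rescales the first columns of $d_1$ and $d_2$ by $a$ and $a^{-1}$, respectively — together give enough freedom to simultaneously normalize $\min_i \nu(d_{i1}) = 0$ and $\min_i \nu(d_{i2}) = 0$. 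The constraints then force $\nu(e_j) \geq 0$ for all $j$ and $\nu(d_{ij}) \geq 0$ for all $i, j$, so $d_1, d_2 \in \mat(2, {\mathfrak o}_L)$; the identity $\det(d_1)\det(d_2) = \lambda(h) = 1$ finally upgrades both to $\GL(2, {\mathfrak o}_L)$. The main technical delicacy is the parity argument in the non-split case, where the matched upper and lower bounds force $\nu_L(t)$ to be even — precisely what allows rescaling by a norm from $E$ to land $t$ in ${\mathfrak o}_L^\times$.
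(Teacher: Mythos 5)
Your proof is correct, but it is organized differently from the paper's. The paper first applies an Iwasawa-type factorization, writing the $\GL(2)$-component as an integral matrix times an upper-triangular matrix, absorbs the scalar factor into the kernel of $\rho$, and then does the valuation analysis only on the remaining Borel element $\rho(\varpi^k,[\begin{smallmatrix}x&y\\0&1\end{smallmatrix}])$ (resp.\ $\rho([\begin{smallmatrix}w_1&y_1\\&1\end{smallmatrix}],[\begin{smallmatrix}w_2&y_2\\&1\end{smallmatrix}])$ in the split case), using $|\lambda(h)|=1$ to force $k=0$. You instead work with a completely general representative, read off entrywise valuation inequalities from the explicit images of $x_1,x_2$ (the norm entries $t^{-1}\norm_L^E(a_i)$, resp.\ the column products $d_{i1}e_j$, $d_{i2}e_k$), and play them against the similitude identity $t^2=\norm_L^E(\det b)$ (resp.\ $\det d_1\det d_2=1$), normalizing via the kernel of $\rho$ and, in the split case, the torus $H$. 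The two arguments use the same three ingredients (explicit action on $(x_1,x_2)$, the similitude condition, and the freedom from $\ker\rho$ and $H$), but yours avoids the matrix decomposition entirely, makes the parity of $\nu_L(t)$ explicit as the reason a norm from $E$ can rescale $t$ into ${\mathfrak o}_L^\times$, and in the non-split unramified case lands directly in $\rho({\mathfrak o}_L^\times\times\GL(2,{\mathfrak o}_E))$ without invoking $H$ at all (harmless, since $E^1\subset{\mathfrak o}_E^\times$ there); the paper's decomposition, on the other hand, reduces the bookkeeping to three parameters and is the template it reuses in both cases. One cosmetic caveat: integrality of the image gives more inequalities than the ones you state (the cross terms such as $a_1\alpha(a_3)$ also appear), so ``translates to'' should be ``implies'' --- but the inequalities you keep are exactly the ones needed, so nothing breaks.
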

\begin{proof} In both the split and non-split case the forward direction is clear. 

Assume that $X = X_{ns}$ is non-split and that $E/L$ is unramified. 
	For the converse notice that for any $[\begin{smallmatrix}
		a&b\\c&d
	\end{smallmatrix}] \in \GL(2,E)$ we can find $[\begin{smallmatrix}
		a'&b'\\c'&d'
	\end{smallmatrix}] \in \GL(2,{\mathfrak o}_E)$ and $[\begin{smallmatrix}
		x&y\\0&z
	\end{smallmatrix}] \in \GL(2,E)$ so that
		$[\begin{smallmatrix}
		a&b\\c&d
	\end{smallmatrix}] = [\begin{smallmatrix}
		a'&b'\\c'&d'
	\end{smallmatrix}] [\begin{smallmatrix}
		x&y\\0&z
	\end{smallmatrix}]=  [\begin{smallmatrix}
		a'&b'\\c'&d'
	\end{smallmatrix}] [\begin{smallmatrix}
		x/z&y/z\\0&1
	\end{smallmatrix}][\begin{smallmatrix}
		z&\\&z
	\end{smallmatrix}]$.
	Set $h = \rho(t, [\begin{smallmatrix}
		a&b\\c&d
	\end{smallmatrix}])$, for some $t \in L$ and $[\begin{smallmatrix}
		a & b \\ c & d
	\end{smallmatrix}] \in \gl(2,E)$ for which $h(x_i) \in X_{ns}\cap \mat(2,{\mathfrak o}_E)$ for $i \in \{1,2\}$. 
 
 Since $\rho(\norm_L^E(z), [\begin{smallmatrix}
		z&\\&z
	\end{smallmatrix}]) = 1$ for any $z \in E^\times$ we have that 
	 	$h  = \rho(t \norm_L^E(z)^{-1},1)\rho(1,[\begin{smallmatrix}
		a'&b'\\c'&d'
	\end{smallmatrix}] [\begin{smallmatrix}
		x&y\\0&1
	\end{smallmatrix}])= \rho(u, [\begin{smallmatrix}
		a'&b'\\c'&d'
	\end{smallmatrix}]) \rho(\varpi^k, [\begin{smallmatrix}
		x&y\\0&1
	\end{smallmatrix}])$,
	for some $u \in {\mathfrak o}_E^\times$ and $k \in \mathbb{Z}$. Therefore, $\rho(\varpi^k, [\begin{smallmatrix}
		x&y\\0&1
	\end{smallmatrix}])(x_i) \in X_{ns} \cap \mat(2,{\mathfrak o}_E)$ for $i =1,2$. 
 
 It suffices to show that $k = 0, x\in {\mathfrak o}_E^\times$ and $y \in {\mathfrak o}_E$. Since $h \in \SO(X_{ns})$ we have that $1 = |\lambda(h)| = |\varpi^{-2k} \norm_L^E(x)|$ so that $\norm_L^E(x) = |\varpi|^{2k}$. Since $E/L$ is unramified, we know $\sqrt{\delta} \in {\mathfrak o}_E^\times$. Furthermore, we have that
		$$\rho(\varpi^k, [\begin{smallmatrix}
			x & y\\0&1
		\end{smallmatrix}])(x_1) = \varpi^{-k}[\begin{smallmatrix}
			0&\norm_L^E(x)\sqrt{\delta}\\0&0
		\end{smallmatrix}] \quad\text{and}\quad\rho(\varpi^k, [\begin{smallmatrix}
			x & y\\0&1
		\end{smallmatrix}])(x_2)  = 2\varpi^{-k}/\delta [\begin{smallmatrix}
			-y\sqrt {\delta} &\norm_L^E(y)\sqrt{\delta}\\
			-\sqrt{\delta} & \alpha(y)\sqrt(\delta)
		\end{smallmatrix}]$$
	are both matrices with integral entries. Therefore $\varpi^{-k}, \varpi^{-k}\norm_L^E(x), \varpi^{-k}\norm_L^E(y), \varpi^{-k}y \in {\mathfrak o}_E$. Clearly $k\le 0$. Furthermore, since $|\varpi^{-k}\norm_L^E(x)| = |\varpi^{k}|$ it follows that $k \ge 0$, so in fact $k = 0$. It follows that $x \in {\mathfrak o}_E^\times$ and $y \in {\mathfrak o}_E$.

	Assume that $X = X_M$ is split and set $h = \rho([\begin{smallmatrix}
		a_1 & b_1\\ c_1& d_1
	\end{smallmatrix}], [\begin{smallmatrix}
		a_2 & b_2\\ c_2& d_2
	\end{smallmatrix}])$ for some $[\begin{smallmatrix}
		a_1 & b_1\\ c_1& d_1
	\end{smallmatrix}]$ and $ [\begin{smallmatrix}
		a_2 & b_2\\ c_2& d_2
	\end{smallmatrix}]$ for which  $h(x_i) \in \mat(2,{\mathfrak o}_L)$ for $i \in \{1 , 2\}$. For $i \in \{1, 2\}$ we can write $$[\begin{smallmatrix}
			a_i & b_i \\ c_i & d_i
		\end{smallmatrix}] = [\begin{smallmatrix}
			a_i' & b_i' \\ c_i' & d_i'
		\end{smallmatrix}] [\begin{smallmatrix}
			w_i & y_i\\0 & z_i 
		\end{smallmatrix}]=[\begin{smallmatrix}
			a_i' & b_i' \\ c_i' & d_i'
		\end{smallmatrix}] [\begin{smallmatrix}
			w_i/z_i & y_i/z_i\\ & 1
		\end{smallmatrix}] [\begin{smallmatrix}
			z_i & \\ & z_i
		\end{smallmatrix}],$$
  for some $[\begin{smallmatrix}
      a_i' & b_i' \\ c_i' & d_i'
    \end{smallmatrix}] \in \gl(2,{\mathfrak o}_L)$ and $[\begin{smallmatrix}
      w_i & y_i \\ 0 & z_i        
      \end{smallmatrix}] \in \gl(2,L)$.
	Therefore for some, possibly different, $w_i,y_i,z_i \in L$ we have that
	\begin{align*}
		h &= \rho([\begin{smallmatrix}
			a_1' & b_1' \\ c_1' & d_1'
		\end{smallmatrix}], [\begin{smallmatrix}
			a_2' & b_2' \\ c_2' & d_2'
		\end{smallmatrix}]) \cdot \rho([\begin{smallmatrix}
			w_1 & y_1\\ & 1
		\end{smallmatrix}], [\begin{smallmatrix}
			w_2 & y_2\\ & 1
		\end{smallmatrix}]) \cdot \rho([\begin{smallmatrix}
			z_1 & \\ & z_1
		\end{smallmatrix}], [\begin{smallmatrix}
			z_2 & \\ & z_2
		\end{smallmatrix}])\\
		& = z_1z_2 \cdot \rho([\begin{smallmatrix}
			a_1' & b_1' \\ c_1' & d_1'
		\end{smallmatrix}], [\begin{smallmatrix}
			a_2' & b_2' \\ c_2' & d_2'
		\end{smallmatrix}]) \cdot \rho([\begin{smallmatrix}
			w_1 & y_1\\ & 1
		\end{smallmatrix}], [\begin{smallmatrix}
			w_2 & y_2\\ & 1
		\end{smallmatrix}])\\
		& = \rho([\begin{smallmatrix}
			a_1' & b_1' \\ c_1' & d_1'
		\end{smallmatrix}], [\begin{smallmatrix}
			a_2' & b_2' \\ c_2' & d_2'
		\end{smallmatrix}]) \, h'
	\end{align*}
	for an appropriate choice of $h'\in \gso(X)$. Set $z_1z_2 = u \varpi_L^k$ for some $u \in {\mathfrak o}_L^\times$ and $k \in \mathbb{Z}$. 
 
  Since $h \in \so(X)$ we calculate that that $1 = |\lambda(h)| = |w_1w_2 \varpi^{2k}|$. Furthermore we have that $h'(x_1) = u\varpi^k [\begin{smallmatrix}
			0 & w_1w_2\\
			0 & 0
		\end{smallmatrix}]$ and $h'(x_2)  = u \varpi^k [\begin{smallmatrix}
			-2y_1 & 2y_1y_2\\ -2 & 2y_2
		\end{smallmatrix}]$
	are both matrices with integral entries. Since $-2u \varpi^k \in {\mathfrak o}_E$, we know $k \ge 0$. We also have $|w_1w_2 \varpi^{k}| = |\varpi^{-k}|$, and since $w_1w_2\varpi^{k} \in {\mathfrak o}_E$, we conclude that $k = 0$. It follows that $w_1w_2 \in {\mathfrak o}_L$. By Lemma \ref{lemma:stab}, the observation that 
  \begin{align*}
    h = \rho([\begin{smallmatrix}
      a_1' & b_1' \\ c_1' & d_1'
    \end{smallmatrix}], [\begin{smallmatrix}
      a_2' & b_2' \\ c_2' & d_2'
    \end{smallmatrix}]) \rho(z_1z_1 [\begin{smallmatrix}
      1 & u_1 \\ & 1
    \end{smallmatrix}], [\begin{smallmatrix}
      w_2w_1 & u_2 \\ & 1
    \end{smallmatrix}]) 
      \rho([\begin{smallmatrix}
        w_1 & \\ & 1
      \end{smallmatrix}], [\begin{smallmatrix}
        w_1^{-1} & \\ & 1 
      \end{smallmatrix}])
  \end{align*}
  completes the proof.  \end{proof}

 The following theorem specifies a choice for $\varphi \in \mathcal S(X^2)$ so that when $W \in V$ is unramified, so is the local theta lift $B(\cdot, \varphi, W, s)$.
\begin{theorem} \label{thm:B1}
	If $X = X_{ns}$ is non-split then assume that $E/L, \tau_0$, and $\psi_E$ are all unramified, and set $Z = \Mat(2,{\mathfrak o}_{E}) \cap X(E)$. If  $X = X_M$ is split then assume that $\tau_1, \tau_2$, and $\psi$ are all unramified and set $Z = \mat(2, {\mathfrak o}_{L})$.

	Let $\varphi = f_{Z^2}$ be the characteristic function of $Z^2 \subset X^2$. Let $W_{un} \in  V$ be the standard unramified vector as in Theorem 11 of \cite{godement}. Then $B(\cdot, f_{Z^2}, W_{un},s)$ satisfies $B(gk, f_{Z^2},W_{un},s) = B(g, f_{Z^2}, W_{un},s)$ for $g \in \GSp(4,L_v)$ and $k \in \GSp(4,{\mathfrak o}_{L_v})$. Also
	\begin{equation}
		B(1,f_{Z^2}, W_{un},s) = \begin{cases}
			L(s, \tau_0) & $X$ \ \text{is non-split,}\\
			L(s, \tau_1) \cdot L(s, \tau_2) & $X$ \ \text{is split.}
		\end{cases}
	\end{equation}
\end{theorem}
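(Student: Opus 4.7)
My plan is to split the argument into (1) the right-$\GSp(4, {\mathfrak o}_{L})$-invariance of $B(\cdot, f_{Z^2}, W_{un}, s)$ and (2) the explicit evaluation at $g = 1$. For (1) I would exploit the $R'$-equivariance of $\vartheta$ from Theorem \ref{Besselrepresentation}(a), which gives
\[
  B(gk, \varphi, W, s) = B(g, \omega(k, h_k)\varphi, \pi(h_k)W, s)
\]
whenever $(k, h_k) \in R'$. It therefore suffices to exhibit, for each $k$ in a set of generators of $\GSp(4, {\mathfrak o}_{L})$, an element $h_k \in \GSO(X)$ with $\lambda(h_k) = \lambda(k)$ that fixes both $f_{Z^2}$ under $\omega$ and $W_{un}$ under $\pi$.

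For $k \in \SP(4, {\mathfrak o}_{L})$ I would take $h_k = 1$ and check $\omega(k, 1)f_{Z^2} = f_{Z^2}$ on a standard set of generators. The Siegel Levi $\mathrm{diag}(A, {}^tA^{-1})$ with $A \in \GL(2, {\mathfrak o}_{L})$ preserves $f_{Z^2}$ by \eqref{weilrep2} because $\chi_{E/L}$ is unramified, so trivial on ${\mathfrak o}_{L}^\times$, $|\det A| = 1$, and $Z$ is an ${\mathfrak o}_{L}$-stable lattice. The Siegel unipotent acts trivially by \eqref{weilrep3} because the character therein evaluates on combinations of $\langle x_i, x_j\rangle$ which, by \eqref{eq:ip_expand_split} or \eqref{eq:ip_expand_nonsplit} together with $\delta \in {\mathfrak o}_{L}^\times$ in the unramified case, lie in ${\mathfrak o}_{L}$ whenever $x_1, x_2 \in Z$; and $\psi$ is unramified. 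The long Weyl element reduces via \eqref{weilrep4} to $\mathcal F(f_{Z^2}) = f_{Z^2}$, which is the self-duality of $Z$ under $(x, y) \mapsto \psi(2\langle x, y\rangle)$ with the Haar measure of Lemma \ref{Haar_measure_constant_lemma} (whose constant is $c = 1$ in the unramified setting), combined with the standard vanishing of $\gamma(X)$-type phase for a self-dual unramified quadratic space. For a similitude element $d = \mathrm{diag}(u, u, 1, 1)$ with $u \in {\mathfrak o}_{L}^\times$, I would choose $h_d = \rho(\mathrm{diag}(u, 1), 1)$ in the split case, and $h_d = \rho(1, g_u)$ in the non-split case with $g_u \in \GL(2, {\mathfrak o}_E)$ and $\norm_{E/L}(\det g_u) = u$, which exists because $\norm_{E/L}({\mathfrak o}_E^\times) = {\mathfrak o}_{L}^\times$ when $E/L$ is unramified. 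Invariance $\pi(h_d) W_{un} = W_{un}$ then follows from the $\GL(2, {\mathfrak o}_E)$- (or $\GL(2, {\mathfrak o}_L)^2$-) invariance of the unramified Whittaker vector together with the trivial central character, and a short calculation using \eqref{weilrep5} gives $\omega(d, h_d) f_{Z^2} = f_{Z^2}$.

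For (2) I would take $h' = 1$ and compute
\[
  B(1, f_{Z^2}, W_{un}, s) = \int_{H \backslash \SO(X)} f_{Z^2}(h^{-1}x_1, h^{-1}x_2) \, Z(s, \pi(h) W_{un}) \, dh.
\]
The integrand vanishes unless $h^{-1}(x_1, x_2) \in Z^2$, which by Lemma \ref{lemma:rho_maximal_compact} forces $h$ to lie in $H \cdot \rho({\mathfrak o}_{L}^\times \times \GL(2, {\mathfrak o}_E))$ in the non-split case, and in $H \cdot \rho(\GL(2, {\mathfrak o}_L) \times \GL(2, {\mathfrak o}_L))$ in the split case. On this support $\pi(h) W_{un} = W_{un}$, because $W_{un}$ is preserved by the $\rho$-action of the unramified maximal compact on each factor of \eqref{eq:Base_exact_sequence} and \eqref{eq:extended_exact_sequence} (the $\GL(2,{\mathfrak o}_E)$- or $\GL(2,{\mathfrak o}_L)$-part by Theorem 11 of \cite{godement}, and the $L^\times$-part acts trivially through the trivial central character of $\tau_0$). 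The zeta integral therefore factors out as the constant $Z(s, W_{un})$; the chosen normalizations on $\SO(X)$ and $H$ make the volume of the support equal to $1$; and the classical unramified Godement--Jacquet calculation yields $Z(s, W_{un}) = L(s, \tau_0)$ in the non-split case and $Z(s, W_{un}) = L(s, \tau_1) L(s, \tau_2)$ in the split case.

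The hard part will be matching every normalization consistently: verifying that the phase $\gamma(X)$ in \eqref{weilrep4} is trivial in the unramified setting, that the Fourier-self-duality of $Z$ is correctly calibrated with the Haar measure of Lemma \ref{Haar_measure_constant_lemma}, and that the quotient measure on $H \backslash \SO(X)$ assigns volume exactly $1$ to the compact support identified via Lemma \ref{lemma:rho_maximal_compact}. Once these normalization issues are resolved, the remaining work is essentially bookkeeping on generators.
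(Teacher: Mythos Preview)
Your proposal is correct and follows essentially the same strategy as the paper: use the $R'$-equivariance of $\vartheta$ to reduce right-$\GSp(4,{\mathfrak o}_L)$-invariance to finding a matching $h_k\in\GSO(X)$ that fixes both $f_{Z^2}$ and $W_{un}$, and then evaluate $B(1,\cdot)$ by identifying the support via Lemma~\ref{lemma:rho_maximal_compact}, pulling out the constant $Z(s,W_{un})$, and applying the unramified zeta computation of \cite{godement}. The paper's argument is terser than yours for part~(1)---it simply asserts the existence of a lattice-preserving $j$ with $\lambda(j)=\lambda(k)$ and uses $\omega(k,j)f_{Z^2}=f_{Z^2}$ without checking generators---so your plan to verify invariance on generators and to track the $\gamma(X)$ phase and the self-duality of $Z$ is a more explicit version of exactly what the paper is invoking implicitly.
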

\begin{proof}
	Let $g \in \GSp(4,L_v)$, and $k \in \GSp(4, {\mathfrak o}_{L_v})$. Suppose that $g_1$ is as in \eqref{eq:g0}. Assume that $X = X_{ns}$ is non-split. Because $E_v/L_v$ is unramified, $\norm_{L_v}^{E_v}({\mathfrak o}_{E,v}^\times) = {\mathfrak o}_{L,v}^\times$. This allows one to choose $j \in \GSO(X,L_v)$ such that $\lambda(j) = \lambda(k)$ and $jM = M$, using Lemma \ref{lemma:rho_maximal_compact}. We calculate
	\begin{align*}
		B(gk,\varphi,W_{un},s) & = |\lambda(gk)|^{-(s-\frac12)}B(\left[\begin{smallmatrix}
		\lambda(gk)^{-1} &  &  & \\
		  & 1 &  & \\
		 &  & 1 & \\
		  &  &  & \lambda(gk)^{-1}\end{smallmatrix}\right]gk, \varphi,W_{un},s)\\
		 & = |\lambda(g)|^{-(s-\frac12)}B(g_1k, \varphi, W_{un}, s)\\
		 & =  |\lambda(g)|^{-(s-\frac12)} \int\limits_{H\backslash \SO(X)} \big(\omega(g_1k, hj) \varphi \big)(x_1,x_2) Z(s, \pi(hj) W_{un}) \, dh\\
		 & =  |\lambda(g)|^{-(s-\frac12)} \int\limits_{H\backslash \SO(X)} \big(\omega(g_1, h)\omega(k,j) \varphi \big)(x_1,x_2) Z(s, \pi(h)\pi(j) W_{un}) \, dh\\
		 & =  |\lambda(g)|^{-(s-\frac12)} \int\limits_{H\backslash \SO(X)} \big(\omega(g_1, h) \varphi \big)(x_1,x_2) Z(s, \pi(h) W_{un}) \, dh\\
		 & = B(g,\varphi,W_{un},s).
	\end{align*}

	Let $h \in \SO(X)$. By Lemma \ref{lemma:rho_maximal_compact}, $h(x_1,x_2) \in Z^2$ if and only if $h \in \rho({\mathfrak o}_{L,v}^\times \times \GL(2, {\mathfrak o}_{E,v}))H$. By Lemma \ref{zetaintegrals} we have that the map $H \to \mathbb{C}$ given by $h \mapsto Z(s, \pi(h) W_{un})$ is constant. Therefore,
	\begin{align*}
		B(1,\varphi,W,s) & = \int\limits_{H\backslash \SO(X)}  f_{Z^2}\big( h^{-1} (x_1,x_2) \big) Z(s, \pi(h) W_{un}) \, dh\\
		& = \int\limits_{H\backslash [ \SO(X) \cap H\rho({\mathfrak o}_{L,v}^\times \times \GL(2, {\mathfrak o}_{E,v}))]} Z(s, \pi(h) W_{un}) \, dh\\
		& = Z(s,W_{un}),
	\end{align*}
	since the Haar measure was chosen so that $\vol\big({H\backslash (\SO(X) \cap H\rho({\mathfrak o}_{L,v} \times \gl(2,{\mathfrak o}_E)}))\big) = 1$. It is observed in Theorem 11 of \cite{godement} that $Z(s, W_{un}) = L(s, \tau_0)$, which completes the proof in the non-split case. 

	This proof of the split case is similar.
  \end{proof}

\begin{lemma} \label{lemma:homspaces}
	Let $(\pi , V)$ be a representation of $\GSO(X)$, let $(\sigma , V \times V)$ be the induced representation of $\GO(X)$ obtained from $\pi$ as in Section \ref{sec:notation}. Let $(\Pi , W)$ be a representation of $\GSp(4 , L)$. Then we have the following $\C$-linear isomorphism
	\begin{align*}
		M: \Hom_{R'}( \mathcal S(X^2) \otimes V , W)   &\xrightarrow{ \ \sim \ } \Hom_{R} ( \mathcal S(X^2) \otimes (V \times V) , W)
	\end{align*}
	determined by $M(f)(\varphi \otimes (v_1 , v_2)) = f(\varphi \otimes v_1) + f(\omega(1 , s)\cdot \varphi \otimes v_2)$ for $\varphi \in \mathcal S(X^2), f \in \Hom_{R'}( \mathcal S(X^2) \otimes V , W),$ and $v_1,v_2 \in V$, and extended linearly. Here $(1 , s)$ is a non-trivial coset representative of $R/R'$. For example we could take $s$ the map that takes $x$ to $x^\ast$. Additionally the inverse map
	\begin{align*}
		N: \Hom_{R}( \mathcal S(X^2) \otimes (V \times V) , W)   &\xrightarrow{ \ \sim \ } \Hom_{R'} ( \mathcal S(X^2) \otimes V  , W)
	\end{align*}
	is given by $N(f)(\varphi \otimes v) = f(\varphi \otimes (v, 0))$ for $f \in \Hom_{R} ( \mathcal S(X^2) \otimes (V \times V) , W), \varphi \in \mathcal S(X^2)$, and $v \in V$.
\end{lemma}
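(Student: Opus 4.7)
The plan is to verify directly that both $M$ and $N$ are well-defined — that is, that $M(f)$ is genuinely $R$-equivariant and $N(f)$ is genuinely $R'$-equivariant — and then to check the two compositions on generators. No analytic input is needed beyond the explicit formulas for the Weil representation and the definition of $\sigma$ as the representation of $\GO(X)$ induced from $\pi$ on $\GSO(X)$.

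First I would handle $N$. The realization of $\sigma$ on $V\times V$ is such that for $h\in\GSO(X)$ the first coordinate of $\sigma(h)$ acts by $\pi(h)$; consequently the inclusion $V\hookrightarrow V\times V$, $v\mapsto(v,0)$, is $R'$-equivariant, and the $R'$-equivariance of $N(f)$ follows immediately from that of $f$.

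The substantive step is checking that $M(f)$ is $R$-equivariant. Since $R$ is generated by $R'$ together with the coset representative $(1,s)$, I would verify equivariance separately on each. For $(g,h)\in R'$ the key commutation relation is $\omega(1,s)\omega(g,h)=\omega(g,shs^{-1})\omega(1,s)$ in the Weil representation, combined with the formula $\sigma(h)(v_1,v_2)=(\pi(h)v_1,\pi(shs^{-1})v_2)$ coming from induction; plugging these into the two summands in the definition of $M(f)$ and using the $R'$-equivariance of $f$ produces the required identity. For the generator $(1,s)$ one uses $s^2=1$, which gives $\omega(1,s)^2=\id$ and $\sigma(s)(v_1,v_2)=(v_2,v_1)$; then the two summands defining $M(f)\bigl(\omega(1,s)\varphi\otimes(v_2,v_1)\bigr)$ simply swap and reproduce $M(f)(\varphi\otimes(v_1,v_2))$.

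Finally I would verify $M\circ N=\id$ and $N\circ M=\id$. The first is instantaneous: $N(M(f))(\varphi\otimes v)=f(\varphi\otimes v)+f(\omega(1,s)\varphi\otimes 0)=f(\varphi\otimes v)$ by linearity of $f$ in the second slot. For the second, the only observation needed is that $(1,s)\cdot(\varphi\otimes(0,v_2))=\omega(1,s)\varphi\otimes(v_2,0)$ in the $R$-module $\mathcal S(X^2)\otimes(V\times V)$; applying the $R$-equivariance of $f$ rewrites the second summand of $M(N(f))(\varphi\otimes(v_1,v_2))$ as $f(\varphi\otimes(0,v_2))$, and bilinearity yields $f(\varphi\otimes(v_1,v_2))$.

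The only real obstacle is bookkeeping, namely keeping the two copies of $V$ in $V\times V$ straight under the induced action and making sure that $(1,s)$ intertwines the subspaces $V\oplus 0$ and $0\oplus V$ in the correct way; once the formulas for $\sigma$ and for the $R$-action on $\mathcal S(X^2)\otimes(V\times V)$ are written down explicitly, each verification is a short direct calculation.
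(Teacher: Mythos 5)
Your proposal is correct and takes essentially the same route as the paper's proof: you check $R'$-equivariance of $N(f)$, check $R$-equivariance of $M(f)$ separately on $R'$ (via $\omega(1,s)\omega(g,h)=\omega(g,shs^{-1})\omega(1,s)$ and $\sigma(h)(v_1,v_2)=(\pi(h)v_1,\pi(shs^{-1})v_2)$) and on the coset representative $(1,s)$, and then verify both compositions are the identity, which also renders the paper's separate injectivity checks unnecessary. The only blemish is a label swap in your final paragraph: the composition you call $M\circ N$ and compute as $N(M(f))$ is in fact $N\circ M$, while the computation requiring the $(1,s)$-equivariance of $f$ is $M\circ N$ — but the two computations themselves are exactly the ones needed.
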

\begin{proof}
	Let us start with proving that $M$ is well defined. Let $f \in \Hom_{R'} ( \mathcal S(X^2) \otimes V  , W), (g , h) \in R', \varphi \in \mathcal S(X^2),$ and $v_1,v_2 \in V$. We see that
	\begin{align*}
		M(f)\big( (g , h) \cdot (\varphi \otimes (v_1 , v_2)) \big) & = M(f)\big(  \omega(g , h) \cdot \varphi \otimes (\pi(h)\cdot v_1 , \pi(s h s^{-1}) \cdot v_2) \big)\\
		& = f(\omega (g  ,h) \cdot \varphi \otimes \pi(h)\cdot v_1) + f(\omega(1,s)\omega(g , h)\varphi \otimes \pi(s h s^{-1}) \cdot v_2)\\
		& =  f(\omega (g  ,h) \cdot \varphi \otimes \pi(h) \cdot v_1) + f(\omega(g,s h s^{-1})\omega(1 , s)\varphi \otimes \pi(s h s^{-1}) \cdot v_2)\\
		& = g \cdot f(\varphi \otimes v_1) + g \cdot f(\omega(1 , s) \cdot \varphi \otimes v_2)\\
		& = g\cdot M(f)(\varphi \otimes (v_1 , v_2)).
	\end{align*}
	Hence, we see that $M(f) \in \Hom_{R'} ( \mathcal S(X^2) \otimes (V \times V) , W)$ so it only remains to show that
	\begin{equation*}
		(1 , s) \cdot M(f)(\varphi \otimes (v_1 , v_2)) = M(f)(\omega(1 , s) \cdot \varphi \otimes \sigma(s)\cdot (v_1 , v_2)).
	\end{equation*}
	We calculate
	\begin{align*}
		M(f)\big( (1 , s) \cdot (\varphi \otimes (v_1 , v_2)) \big) & = M(f)\big( \omega(1 , s)\cdot \varphi \otimes(v_2 , v_1) \big)\\
		& = f(\omega(1 , s) \cdot \varphi  \otimes v_1) + f(\varphi \otimes v_2)
		\\
		& = M(f)(\varphi \otimes (v_2 , v_1))\\
		& = (1 , s)\cdot M(f)(\varphi \otimes (v_1 , v_2)).
	\end{align*}

	Now we have to prove that $M$ is one-to-one and onto. Let $f,f' \in \Hom_{R'} ( \mathcal S(X^2) \otimes V  , W)$ and assume that $M(f) = M(f')$. For all $\varphi \in \mathcal S(X^2)$ and for all $v_1 \in V$ we have that 
	\begin{align*}
		M(f)(\varphi \otimes (v_1 , 0)) &= M(f')(\varphi \otimes (v_1 , 0)), \quad \text{so that}\\
		 f( \varphi \otimes v_1) &= f'( \varphi \otimes v_1), \quad \text{and}\\
		  f & = f'.
	\end{align*} 
	
  Now we show that $N$ is well defined and one-to-one. Let $f \in \Hom_{R}( \mathcal S(X^2) \otimes (V \times V)  , W), (g,h) \in R', \varphi \in \mathcal S(X^2)$ and $v \in V$. Then $N(f) \in \Hom_{R'}( \mathcal S(X^2) \otimes V  , W)$ since
  \begin{align*}
    N(f) ((g,h) \cdot ( \varphi \otimes v ))  & = N(f) ((g , h) \cdot \varphi \otimes \pi(h)v)\\
    & = f((g,h) \varphi \otimes (\pi(h)v,0))\\
    & = f((g,h) \varphi \otimes \sigma(h)(v,0))\\
    & = g\cdot f(\varphi \otimes (v,0))\\
    & = g \cdot N(f)(\varphi \otimes v). 
  \end{align*} 
  Let $f,f' \in \Hom_{R} ( \mathcal S(X^2) \otimes (V \times V)  , W)$ and assume that $N(f) = N(f')$. Then for all $\varphi \in \mathcal S(X^2)$ and $v\in V$
	\begin{align*}
		N(f) & = N(f'), \quad \text{so that}\\
		f(\varphi \otimes (v,0)) & = f'(\varphi \otimes (v,0)), \\
		(1,s) \cdot f(\varphi \otimes (v,0)) & = (1,s)\cdot f'(\varphi \otimes (v,0)), \quad \text{and}\\
		f(\varphi \otimes (0,v)) & =  f'(\varphi \otimes (0,v)).
	\end{align*}
  We conclude that $f = f'$, so that $N$ is one-to-one. 

	Next we verify that $N = M^{-1}$.
	\begin{align*}
		\big[(M \circ N)(f)\big] (\varphi \otimes (v_1 , v_2)) & = N(f)(\varphi \otimes v_1) + N(f)(\omega(1 , s)\cdot\varphi \otimes v_2)\\
		& = f(\varphi \otimes v_1 , 0) + f(\omega(1 , s)\cdot \varphi \otimes (v_2 , 0))\\
		& = f(\varphi \otimes v_1 , 0) + f(\varphi \otimes (0 , v_2))\\
		& = f(\varphi \otimes (v_1 , v_2)).
		\intertext{So we have that $M\circ N = {\rm Id}$. On the other hand,}
		\big[ (N \circ M)(f) \big](\varphi \otimes v) &= M(f)(\varphi \otimes (v,0))\\
		& = f(\varphi \otimes v) + f(\omega(1 , s) \cdot \varphi \otimes 0)\\
		& = f(\varphi \otimes v).
	\end{align*}
	Since $M$ is an isomorphism, so is $N$. 

  \end{proof}

From Theorem \ref{Besselrepresentation} and Lemma \ref{lemma:homspaces} we can easily establish the following result.
\begin{corollary} \label{cor:piplusmap}
  Let $(\pi , V)$ be the representation of $\GSO(X)$ as in Section \ref{sec:notation} so that $V$ is either equal to $\mathcal W(\tau_1, \psi) \otimes \mathcal W(\tau_2,\psi)$ or equal to $\mathcal W(\tau_0, \psi_E)$. Let $(\sigma , V \times V) = \Ind_{\gso(X)}^{\go(X)} \pi$  and let $\pi^+$ be the canonical irreducible subrepresentation of $\sigma$. Let $\vartheta$ be as in \eqref{eq:localthetalift} and let $M$ be the map in lemma \ref{lemma:homspaces}. The composition 
  \begin{align*}
     M(\vartheta): \mathcal S(X^2) \otimes (V \times V) \to \Theta(V)
  \end{align*} 
  is a non-zero $R$-map. Furthermore, the restriction of $M(\vartheta)$ to $\pi^+$ is a non-zero $R$-map. 
\end{corollary}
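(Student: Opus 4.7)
The plan is to directly combine the two preceding results. Theorem~\ref{Besselrepresentation}(a) asserts that $\vartheta$ is a non-zero element of $\Hom_{R'}(\mathcal S(X^2) \otimes V, \Theta(V))$, and Lemma~\ref{lemma:homspaces} establishes that
\[
	M: \Hom_{R'}(\mathcal S(X^2) \otimes V, \Theta(V)) \xrightarrow{\sim} \Hom_{R}(\mathcal S(X^2) \otimes (V \times V), \Theta(V))
\]
is a $\mathbb{C}$-linear isomorphism. Applying $M$ to $\vartheta$ therefore produces an $R$-map $M(\vartheta): \mathcal S(X^2) \otimes (V \times V) \to \Theta(V)$, and since $M$ is an isomorphism, $M(\vartheta)$ is automatically non-zero. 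This establishes the first assertion.

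For the second assertion, let $V^+ \subset V \times V$ denote the underlying space of $\pi^+$, and consider the restriction of $M(\vartheta)$ to $\mathcal S(X^2) \otimes V^+$, which is tautologically an $R$-map. When $\sigma$ is irreducible as a $\go(X)$-representation, $\pi^+ = \sigma$ and the restriction coincides with $M(\vartheta)$ itself, which is non-zero by the first part. When $\sigma$ decomposes as $\pi^+ \oplus \pi^-$, the map $M(\vartheta)$ splits as a sum of its restrictions to $\mathcal S(X^2) \otimes V^+$ and $\mathcal S(X^2) \otimes V^-$; since $M(\vartheta) \ne 0$, at least one restriction is non-zero. The identification of $\pi^+$ as the canonical component supporting the theta correspondence, together with the uniqueness of the non-zero local intertwining map established in \cite{Roberts2001}, then forces the non-vanishing restriction to be the one associated to $\pi^+$.

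The bulk of the work has already been carried out in the earlier results of this section: the construction of the Bessel integral and the convergence analysis of Lemma~\ref{lemma:Bessel_abs_convergence}, the non-vanishing argument of Proposition~\ref{lemma:Bessel_nonzero}, and the verification that $M$ is an isomorphism of $\Hom$ spaces in Lemma~\ref{lemma:homspaces}. The only conceptual point requiring care is the identification of $\pi^+$ as the correct summand in the reducible case, which is not a computation but an appeal to Roberts' uniqueness theorem; this is the step I would expect to be the main obstacle if the corollary were being proved in isolation from the rest of the theta-correspondence literature.
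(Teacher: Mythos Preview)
Your first paragraph matches the paper's argument exactly.

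For the second assertion the paper takes a different, more direct route. Instead of splitting into the irreducible and reducible cases and invoking \cite{Roberts2001}, the paper uses the explicit description $V_{\pi^+}=\{(v,T(v)):v\in V\}$ (available here because the central characters are trivial) together with part~(a) of Lemma~\ref{bessle_trasformation_lemma}, namely $B(g,\omega(1,s)\varphi,T(W'),z)=B(g,\varphi,W',z)$. Plugging into the defining formula for $M$ gives
\[
M(\vartheta)\bigl(\varphi\otimes(W,T(W))\bigr)=B(\cdot,\varphi,W,s)+B(\cdot,\omega(1,s)\varphi,T(W),s)=2\,B(\cdot,\varphi,W,s),
\]
which is nonzero for suitable $\varphi,W$ by Proposition~\ref{lemma:Bessel_nonzero}. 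This is entirely self-contained within the section; indeed Lemma~\ref{bessle_trasformation_lemma}(a) was set up precisely for this step.

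Your argument is not wrong in spirit, but the appeal to \cite{Roberts2001} is doing real work that you leave vague. ``Uniqueness of the intertwining map'' by itself does not single out $\pi^+$ over $\pi^-$; what you actually need is that the big theta lift $\Theta(\pi^-)$ vanishes, together with the fact that any $R$-map out of $\mathcal S(X^2)\otimes\pi^-$ factors through $\Theta(\pi^-)$. Both statements are available in the literature, but you should state them explicitly rather than folding them into a one-line appeal. The paper's computation sidesteps this dependence altogether, at the cost of having proved Lemma~\ref{bessle_trasformation_lemma}(a) earlier.
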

\begin{proof}
  By part (a) of Theorem \ref{Besselrepresentation} we have that $\vartheta \in \Hom_{R'}( \mathcal S(X^2) \otimes V , \Theta(V))$ so that $M(\vartheta) \in \Hom_{R}( \mathcal S(X^2) \otimes (V \times V) , \Theta(V))$. Furthermore $M(\vartheta)$ is non-zero by Lemma \ref{bessle_trasformation_lemma}. Indeed, for $\varphi \in \mathcal S(X^2)$ and $W , W' \in V$ we have that
  \begin{align*}
    M(\vartheta)(\varphi \otimes (W, W')) & = \vartheta  (\varphi \otimes W) + \vartheta (\omega(1 , s) \cdot \varphi \otimes  W')
  \\
    & = B(\cdot , \varphi , W , s) + B(\cdot , \omega(1 , s) \varphi , T(W') , s)
  \\
    & =  B(\cdot , \varphi , W , s) + B(\cdot , \varphi , T(W') , s).
  \end{align*}
  Since $\tau_i$ has trivial central character we have that $\pi^{+}$ has space $V_{\pi^{+}} = \{(v , T(v)) \mid v \in V\}$, where $T$ is as in Lemma \ref{bessle_trasformation_lemma}. Using part (a) of Lemma \ref{bessle_trasformation_lemma} we calculate that $M(\vartheta)(\mathcal S(X^2) \otimes V_{\pi^+}) \ne 0$ since for $\varphi \in \mathcal S(X^2)$ and $W \in V$
  \begin{align*}
    M(\vartheta)(\varphi \otimes (W, \pi(s) W)) &= 2 B(\cdot , \varphi , W , s),
  \end{align*} 
  which is not zero by Theorem \ref{Besselrepresentation}.
  \end{proof}

\subsection{The Archimedean Case} 
\label{sec:intertwining_maps_archimedean}

In this section we make the  assumption that $L = \mathbb{R}$ and that $X = X_M$ is split, so that $E = \mathbb{R} \times \mathbb{R}$. Fix the Lie algebras ${\mathfrak h}$, ${\mathfrak g}$, ${\mathfrak r}$ of $\go(X(L)), \gsp(4,L),$ and $R(L)$, respectively, and maximal compact subgroups $ J \subset \go(X(L)), K \subset \gsp(4, L),$ and $F \subset R$. Let $(\omega , \mathcal S(X^2))$ be the $({\mathfrak r} , F)$-module given by the archimedean Weil representation. Let $\tau_1, \tau_2$ be infinite-dimensional, irreducible, admissible $({\mathfrak g} , K)$-modules for $\gl(2,\mathbb{R})$ that have the same central character, but are not isomorphic. Let $\mathcal W(\tau_i,\psi)$ be the Whittaker model for $\tau_i$. Set $V = \mathcal W(\tau_1, \psi) \otimes \mathcal W(\tau_2, \psi)$. Let $W = W_1 \otimes W_2 \in V$ and let $Z(s,W)$ be defined as in \eqref{eq:split_zeta_def}.

The primary purpose of this section is to define the intertwining map analogous to \eqref{eq:splitbesselintegral}, in the archimedean case, and to verify some necessary results. In particular we want to establish that the integral that defines the intertwining map is absolutely convergent, non-zero, and corresponds to an intertwining map in the theta correspondence. Let $g \in \gsp(4,\mathbb{R})^+ = \gsp(4,\mathbb{R})$ and $\varphi \in \mathcal S(X^2)$. Let $W \in V$. We define
\begin{equation}\label{eq:archbesselintegral}
  B(g,\varphi,W,s) = \int\limits_{H \backslash \so(X)} (\omega(g,hh')\varphi)(x_1,x_2) Z(s, \pi(hh')W) \,dh
\end{equation}
where $h'\in \gso(X)$ is chosen so that $\lambda(h') = \lambda(g).$

\begin{lemma} \label{lemma:bessel_abs_convergence}
  For $g \in \gsp(4,\mathbb{R}), \varphi \in \mathcal S(X^2), W = W_1\otimes W_2 \in V$ such that $Z(s,W)\ne 0$, and $s > \frac{3}{2}$ the integral in \eqref{eq:archbesselintegral}, defining $B(g,\varphi,W,s)$, converges absolutely. In fact
  \begin{equation}\label{eq:arch_abs_intertwing}
    \int\limits_{H(\mathbb{R}) \backslash \so(X, \mathbb{R})} |\big(\omega(g, hh') \varphi \big)(x_1,x_2)| \cdot  Z(\Re(s), |\pi(hh') W|) \, dh
  \end{equation}
  is finite. 
\end{lemma}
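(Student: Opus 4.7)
The plan mirrors the proof of Lemma \ref{lemma:Bessel_abs_convergence}, with compact support and local constancy replaced by archimedean Schwartz decay and standard bounds on Whittaker functions. Writing $\Phi = \omega(g,h')\varphi \in \mathcal S(X^2)$ and $W'' = \pi(h')W \in V$, we can absorb the fixed elements $g$ and $h'$ into the data and reduce to showing that
\[
\int\limits_{H\backslash \SO(X,\mathbb R)} \bigl|\Phi(h^{-1}x_1, h^{-1}x_2)\bigr|\cdot Z\bigl(\Re(s), |\pi(h) W''|\bigr)\,dh
\]
is finite. As in the non-archimedean case, the orbit map $f_2\colon H\backslash \SO(X,\mathbb R)\to \SO(X,\mathbb R)\cdot(x_1,x_2)$ is a homeomorphism onto a closed real algebraic subvariety of $X^2$, so the Weil--representation factor is the restriction of a Schwartz function and decays faster than any polynomial in $\|h^{-1}(x_1,x_2)\|$.

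Next I would parametrize $H\backslash \SO(X,\mathbb R)$ by Iwasawa coordinates. Since $\SO(X_M,\mathbb R)\cong \SO(2,2)$ is split, we have $\SO(X,\mathbb R) = KAN$ with $A$ a two-dimensional split torus containing $H$, $N$ two-dimensional unipotent, and $K$ maximal compact, giving $H\backslash \SO(X)\simeq (H\backslash A)\times N\times K$ with quotient Haar measure factoring (up to the modular function) as $\delta(a)^{-1}\,db\,dn\,dk$ where $b\in\mathbb R^\times$ parametrizes $H\backslash A$. A direct calculation with $\rho(h_1,h_2)x = h_1 x h_2^\ast$ expresses the entries of $h^{-1}(x_1,x_2)$ as polynomials in $b^{\pm 1}$, the entries of $n$, and bounded entries of $k$, so $|\Phi(h^{-1}x_1,h^{-1}x_2)|$ is rapidly decreasing in $|b|\to\infty$, in $|b|\to 0$, and in $\|n\|\to\infty$.

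For the zeta factor, I would combine Lemma \ref{zetaintegrals}(a) and (c) with the standard archimedean Whittaker estimates (Chapter 2 of \cite{bump}, or Jacquet--Langlands \cite{JacquetLanglandsgl2}): each $W_i([\begin{smallmatrix}a&\\&1\end{smallmatrix}]g)$ is of rapid decay as $|a|\to\infty$ and of at most polynomial growth as $|a|\to 0$, uniformly for $g$ in a fixed compact set. Writing $h = \rho(h_1,h_2)$ with each $h_i$ in Iwasawa form and using $Z(s,\pi(h)W'') = Z(s,\tau_1(h_1)W_1)\cdot Z(s,\tau_2(h_2)W_2)$ then gives an estimate of the shape
\[
Z\bigl(\Re(s), |\pi(h)W''|\bigr)\le C\,(1+|b|+|b|^{-1})^{N_1}(1+\|n\|)^{N_2},
\]
valid once $\Re(s)>3/2$, which is the threshold at which the defining zeta integrals converge uniformly after absorbing the polynomial factors introduced by the orbit parametrization.

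Multiplying a rapidly decreasing Schwartz factor against this polynomially bounded Whittaker factor and the tame measure $\delta(a)^{-1}\,db\,dn\,dk$ then yields absolute integrability, proving \eqref{eq:arch_abs_intertwing}. The principal obstacle is technical bookkeeping: one must choose Iwasawa coordinates on $H\backslash\SO(X,\mathbb R)$ compatible with both the orbit map into $X^2$ and the factorization of $Z(s,\pi(h)W'')$ through $\gl(2,\mathbb R)\times\gl(2,\mathbb R)$, so that Schwartz bounds and Whittaker bounds can be applied uniformly on the same coordinates. The underlying comparison of Schwartz decay against polynomial Whittaker growth is classical.
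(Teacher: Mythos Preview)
Your outline is correct and follows the same strategy as the paper. The paper resolves the ``technical bookkeeping'' you flag by passing through the isogeny $\rho$ to work directly in $\Delta T\backslash(\SL(2,\mathbb R)\times\SL(2,\mathbb R))$, parametrized as $T\times N^2\times\SO(2)^2$, which makes both the orbit $h^{-1}(x_1,x_2)$ and the factorization $Z(s,\pi(h)W)=Z(s,\tau_1(h_1)W_1)Z(s,\tau_2(h_2)W_2)$ explicit; it then invokes $\SO(2)$-finiteness of the $W_i$ (rather than a generic ``uniform over compact sets'' claim) and an explicit Schwartz semi-norm $\|\cdot\|_Q$ to close the estimate.
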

\begin{proof} 
  This proof appears in the unpublished manuscript \cite{Roberts_Bessel_manuscript}. Since we are in the split case we have that the integral \eqref{eq:arch_abs_intertwing} is
  \begin{align*}
    I&= \int_{H(\mathbb{R}) \backslash \so(X,\mathbb{R})} |\big(\omega(1, h) \varphi' \big)(x_1,x_2)|\cdot 
    \\
    & \hspace{2cm}\int\limits_{\mathbb{R}^\times} |x|^{\Re(s) - \frac{1}{2}}|W_1([\begin{smallmatrix}
      x & \\ & 1
    \end{smallmatrix}h_1)]| \,d^{\times}x \int\limits_{\mathbb{R}^\times} |x|^{\Re(s) - \frac{1}{2}}|W_2([\begin{smallmatrix}
      x & \\ & 1
    \end{smallmatrix}]h_2)| \,d^{\times}x \,dh
  \end{align*}
  where $h = \rho(h_1,h_2)$ and $\varphi' = \omega(g,h')\varphi$. Let $T$ be the group $\{[\begin{smallmatrix}
      y & \\ &y^{-1}
    \end{smallmatrix}] ,\,y \in \mathbb{R}^\times\}$
  and let $\Delta T = \{ (t , t^{-1})  \mid t \in T\}$. Then $\rho$ produces the following homeomorphism
  \begin{equation*}
    \rho: \Delta T\backslash (\SL(2,\mathbb{R}) \times \SL(2,\mathbb{R})) \xrightarrow{ \ \sim \ } H(\mathbb{R}) \backslash \so(X,\mathbb{R}).
  \end{equation*} 
  We have $\Delta \subset T\times T \subset \SL(2,\mathbb{R}) \times \SL(2,\mathbb{R})$. Moreover, $\Delta T$ and $T\times T$ are closed unimodular subgroups of $\SL(2,\mathbb{R})$ and $\SL(2,\mathbb{R})$ is unimodular. Therefore, we can write \eqref{eq:arch_abs_intertwing} as
  \begin{multline*}
I=\int\limits_{T\backslash \SL(2, \mathbb{R}) \times T\backslash \SL(2, \mathbb{R})} \int\limits_{T}  \bigg(|\big(\omega(1, \rho(th_1,h_2)) \varphi' \big)(x_1,x_2)| 
        \int\limits_{\mathbb{R}^\times} |x|^{\Re(s) - \frac{1}{2}}|W_1([\begin{smallmatrix}
          x & \\ & 1
        \end{smallmatrix}]th_1)| \,d^{\times}x \\\int\limits_{\mathbb{R}^\times} |x|^{\Re(s) - \frac{1}{2}}|W_2([\begin{smallmatrix}
          x & \\ & 1
        \end{smallmatrix}]h_2)| \,d^{\times}x \bigg) \,dt \,dh_1\,dh_2.
  \end{multline*}
  Let $N=\{[\begin{smallmatrix}
      1&z \\ &1
    \end{smallmatrix}], \, z \in \mathbb{R}\}$.
 Then, a standard integration formula gives
  \begin{align} \label{eq:Big_itegrand}
   I = & \int\limits_{\so(2,\mathbb{R}) \times \so(2,\mathbb{R})} \int\limits_{N\times N} \int\limits_{T}  \bigg(|\big(\omega(1, \rho(tn_1k_1,n_2k_2)) \varphi' \big)(x_1,x_2)|\cdot 
        \\
        & \hspace{.5cm}\int\limits_{\mathbb{R}^\times} |x|^{\Re(s) - \frac{1}{2}}|W_1([\begin{smallmatrix}
          x & \\ & 1
        \end{smallmatrix}]t_1n_1k_1)| \,d^{\times}x \int\limits_{\mathbb{R}^\times} |x|^{\Re(s) - \frac{1}{2}}|W_2([\begin{smallmatrix}
          x & \\ & 1
        \end{smallmatrix}]n_2k_2)| \,d^{\times}x \bigg) \,dt \,dn_1\,dn_2\,dk_1\,dk_2. \nonumber
  \end{align}
  This is an improvement since $N$ and $T$ are explicitly defined. Since $\mathcal W(\tau_i,\psi)$ is $\so(2,\mathbb{R})$-finite, the subspace spanned by $\so(2,\mathbb{R})$-translates of $W_i$, for $i \in \{1,2\}$, is finite dimensional vector space. Therefore we can choose a positive integer $n$ and functions $W_{1,i}, W_{2,j} \in \mathcal W(\pi,\psi)$, for $1 \le i,j \le n$, so thatc$W_1 = \sum_{i = 1}^n W_{1,i}$ and $W_2 = \sum_{j = 1}^n W_{2,j}$,
  where each $W_{1,i}$ and $W_{2,j}$ transforms according to a character of $\so(2, \mathbb{R})$. Let the functions $F_{i,j}: \so(X,\mathbb{R}) \to \mathbb{C}$ be the inner integrals of \eqref{eq:Big_itegrand} but with $W_{1,i}$ and $W_{2,j}$ replacing $W_{1}$ and $W_{2}$, respectively. With this \eqref{eq:arch_abs_intertwing} is 
  \begin{align*}
    \le & \sum_{\substack{1 \le i \le n \\ 1 \le j \le n}} \int\limits_{\so(2,\mathbb{R}) \times \so(2,\mathbb{R})} \int\limits_{N\times N} \int\limits_{T} F_{i,j}(\rho(tn_1k_1,n_2k_2)) \,dt \,dn_1\, dn_2 \, dk_1 \, dk_2.
  \end{align*}
  For any pair $(i,j)$, with $1\le i \le n$ and $1 \le j \le n$, we have
  \begin{align*}
    & \int\limits_{\so(2,\mathbb{R}) \times \so(2,\mathbb{R})} \int\limits_{N\times N} \int\limits_{T} F_{i,j}(\rho(tn_1k_1,n_2k_2)) \,dt \,dn_1\, dn_2 \, dk_1 \, dk_2
  \\
    = & \int\limits_{\so(2,\mathbb{R})\times \so(2,\mathbb{R})} \int\limits_{\mathbb{R}\times \mathbb{R}} \int\limits_{\mathbb{R^\times}} F_{i,j}(\rho([\begin{smallmatrix}
      t&\\&t^{-1}
    \end{smallmatrix}][\begin{smallmatrix}
      1 & z_1 \\ &1
    \end{smallmatrix}]k_1, [\begin{smallmatrix}
      1 & z_2 \\ &1
    \end{smallmatrix}]k_2 ) \, \frac{dt}{|t|} \, dz_1 \, dz_2 \, dk_1 \,dk_2
  \\
    =& \int\limits_{\mathbb{R}\times \mathbb{R}} \int\limits_{\mathbb{R}^\times} \int\limits_{\so(2,\mathbb{R})\times \so(2,\mathbb{R})} |(\omega(1, \rho(k_1,k_2)) \varphi')([\begin{smallmatrix}
      &t^{-1}\\&
    \end{smallmatrix}], -2 [\begin{smallmatrix}
      z_1t & z_1z_2 \\ t & tz_2
    \end{smallmatrix}])| \,dk_1\,dk_2 \cdot
    \\
    &  (\int\limits_{R^\times} |x|^{\Re(s) - \frac{1}{2}}|W_{1,i}([\begin{smallmatrix}
      x&\\&1
    \end{smallmatrix}][\begin{smallmatrix}
      t&\\&t^{-1}
    \end{smallmatrix}][\begin{smallmatrix}
      1&z_1\\&1
    \end{smallmatrix})]|\,d^\times x)
    (\int\limits_{R^\times} |x|^{\Re(s) - \frac{1}{2}}|W_{2,j}([\begin{smallmatrix}
      x&\\&1
    \end{smallmatrix}][\begin{smallmatrix}
      1&z_2\\&1
    \end{smallmatrix}])|\,d^\times x) \, \frac{dt}{|t|} \,dz_1 \,dz_2.
    \end{align*}
  Since each $W_{u,i}$ is in $\mathcal W(\pi, \psi)$ and transforms according to a character of $\so(2, \mathbb{R})$, it can be show that there are positive constants $C_{1,i}, C_{2,j}$ so that the above is
  \begin{align*}
    I\le & C_{1,i} C_{2,j} \int\limits_{\so(2,\mathbb{R})\times \so(2,\mathbb{R})} \int\limits_{\mathbb{R}\times \mathbb{R}} \int\limits_{\mathbb{R}^\times}  |(\omega(1, \rho(k_1,k_2)) \varphi')([\begin{smallmatrix}
      &t^{-1}\\&
    \end{smallmatrix}], -2 [\begin{smallmatrix}
      z_1t & z_1z_2 \\ t & tz_2
    \end{smallmatrix}])| \,\frac{dt}{|t|} \,dz_1 \,dz_2 \,dk_1\,dk_2.
  \end{align*} 
  Define $C = \sum_{1 \le i,j \le n} C_{1,i} C_{2,j}$ so that \eqref{eq:arch_abs_intertwing} is 
  \begin{align*}
  I  \le C \int\limits_{\so(2,\mathbb{R})\times \so(2,\mathbb{R})} \int\limits_{\mathbb{R}\times \mathbb{R}} \int\limits_{\mathbb{R}^\times}  |(\omega(1, \rho(k_1,k_2)) \varphi')([\begin{smallmatrix}
      &t^{-1}\\&
    \end{smallmatrix}], -2 [\begin{smallmatrix}
      z_1t & z_1z_2 \\ t & tz_2
    \end{smallmatrix}])| \,\frac{dt}{|t|} \,dz_1 \,dz_2 \,dk_1\,dk_2.
  \end{align*}

  Let $L,M,$ and $N$ be positive integers which we will determine momentarily. Define a polynomial on $X$ by
  \begin{equation*}
    Q([\begin{smallmatrix}
      x_1 & x_2 \\ x_3 & x_4
    \end{smallmatrix}] , [\begin{smallmatrix}
      x_1' & x_2' \\ x_3' & x_4'
    \end{smallmatrix}]) = (1 + (\frac{1}{2} x'_3)^{2N} + x_2^{2M})(1 + (\frac{1}{2}x_1'x_2)^{2L} + (\frac{1}{2}x_2x_4')^{2L})
  \end{equation*}
  where $x_i$ and $x'_i$ in $L$ for $i \in \{1,2\}$. Let $||\cdot||_{Q}$ be the semi-norm corresponding to $Q$, on the space of $\mathcal S(X^2)$, given explicitly by
  \begin{equation*}
    ||\varphi||_Q = \sup_{x \in X^2}|Q(x)\varphi (x)|.
  \end{equation*}
  Then we have that the integral from  \eqref{eq:arch_abs_intertwing} is bounded as
  \begin{align*}
   I \le & C\int\limits_{\so(2,\mathbb{R})^2} ||  \omega(1, \rho(k_1,k_2)) \varphi'  ||_{Q} \int\limits_{\mathbb{R} \times \mathbb{R}} \int\limits_{\mathbb{R}^\times} \frac{|t|^{-2\Re(s - \frac{1}{2}) - 1}}{Q([\begin{smallmatrix}
      &t^{-1}\\&
    \end{smallmatrix}], -2 [\begin{smallmatrix}
      -z_1t & -z_1z_2 \\ t & tz_2
    \end{smallmatrix}])} \,dt\,dz_1\,dz_2 \,dk_1\,dk_2
  \\
    = & C\int\limits_{\so(2,\mathbb{R})^2} ||  \omega(1, \rho(k_1,k_2)) \varphi'  ||_{Q} \int\limits_{\mathbb{R} \times \mathbb{R}} \frac{1}{1 + |z_1|^{2L} + |z_2|^{2L}} \,dz_1 \,dz_2 \int\limits_{\mathbb{R}^\times} \frac{|t|^{2M - 2\Re(s - \frac{1}{2}) - 1}}{1 + |t|^{2(N + M)} + |t|^{2M}} \,dt   \,dk_1\,dk_2. \end{align*}
We choose $L$ to be large enough so that the above integral over $\mathbb{R} \times \mathbb{R}$ converges. Also, we choose $M$ to be large enough so that $2M - 2(\Re(s - \frac{1}{2})) - 1$ is positive and then choose $N$ large enough so that the above integral over $\mathbb{R}^\times$ converges. Therefore, there is a constant $C' \in \mathbb{R}$ so that $I\le  C' \int_{\so(2,\mathbb{R})^2} ||  \omega(1, \rho(k_1,k_2)) \varphi'  ||_{Q} \,dk_1\,dk_2$.
 
  Define an action of $\so(2,\mathbb{R})^2$ on polynomials on $X_{M}$ by $(k_1,k_2)P(y) = R(\rho(k_1,k_2)^{-1}y)$. There exist an integer $m$ and polynomials $\{Q_i\}_{i = 1}^m$ which all transform according to a character of $\so(2,\mathbb{R})^2$ such that $Q = \sum_{i = 1}^m Q_i$.
  Furthermore, for all $(k_1,k_2) \in \so(2,\mathbb{R})^2$ , we have $$|| \omega(1, \rho(k_1,k_2))\varphi' ||_{Q} \le \sum_{i = 1}^m || \varphi' ||_{Q_i}.$$
  Therefore we conclude that integral \eqref{eq:arch_abs_intertwing} is bounded above by
  $C' \vol(\so(X,\mathbb{R})) \sum_{i = 1}^m ||  \omega(g ,h') \varphi  ||_{Q_i}$ which is finite.
    \end{proof}

\begin{lemma}\label{lemma:archimedean_nonvanishing}
  Suppose that $W \in \mathcal V$ such that $Z(s, W) \ne 0$ and for $s \gg M$. There exists $\varphi \in \mathcal S(X^2)$ such that $B(1,\varphi,W,s) \ne 0$. 
\end{lemma}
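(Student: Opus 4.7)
The plan is to adapt the argument of Proposition \ref{lemma:Bessel_nonzero} to the archimedean setting. In the non-archimedean case one exploited local constancy of $h \mapsto Z(s,\pi(h)W)$ and built $\varphi$ from a characteristic function of a small compact open neighborhood; since characteristic functions of open sets are not Schwartz in the archimedean case, I would replace this by a smooth bump function and use continuity in place of local constancy.

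First I would establish that the map $h \mapsto Z(s,\pi(h)W)$ is continuous on $\SO(X,\mathbb{R})$ for $\Re(s) \gg M$. This follows from the fact that every vector in $\mathcal W(\tau_i,\psi)$ is a smooth vector under right translation by $\gl(2,\mathbb{R})$, together with continuity of the zeta integral as a functional on the Whittaker model in its Fr\'echet topology (the convergence estimates being uniform on compact subsets of $\SO(X,\mathbb{R})$). As in the non-archimedean case, the map $f_2: H \backslash \SO(X,\mathbb{R}) \to \SO(X,\mathbb{R})(x_1,x_2)$ defined by $Hh \mapsto (h^{-1}x_1, h^{-1}x_2)$ is a homeomorphism onto its image, which is cut out inside $X^2$ by the quadratic equations $\langle z_i, z_j\rangle = \langle x_i, x_j\rangle$ and is therefore closed in $X^2$.

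Next, since $Z(s,W) \ne 0$, by continuity I can choose an open neighborhood $U$ of the coset $H \cdot 1$ in $H\backslash \SO(X,\mathbb{R})$ on which
\begin{equation*}
 \Re\bigl( Z(s,\pi(h)W)/Z(s,W) \bigr) > \frac{1}{2}.
\end{equation*}
Since $\SO(X,\mathbb{R})(x_1,x_2)$ is closed in $X^2$ and $f_2(U)$ is open in it, there exists an open neighborhood $V$ of $(x_1,x_2)$ in $X^2$ with $V \cap \SO(X,\mathbb{R})(x_1,x_2) \subset f_2(U)$. I would then choose a smooth, non-negative bump function $\varphi \in \mathcal S(X^2)$ with support in $V$ and $\varphi(x_1,x_2) > 0$. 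By Lemma \ref{lemma:bessel_abs_convergence} the integral defining $B(1,\varphi,W,s)$ converges absolutely, and by construction the integrand is supported inside $U$, so
\begin{equation*}
  \Re\bigl(B(1,\varphi,W,s)/Z(s,W)\bigr) = \int_U \varphi(h^{-1}x_1,h^{-1}x_2)\,\Re\bigl( Z(s,\pi(h)W)/Z(s,W)\bigr)\,dh \ge \frac{1}{2}\int_U \varphi(h^{-1}x_1,h^{-1}x_2)\,dh.
\end{equation*}
Continuity and non-negativity of $\varphi$, combined with $\varphi(x_1,x_2) > 0$, force the right-hand side to be strictly positive, and therefore $B(1,\varphi,W,s) \ne 0$.

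The main technical obstacle compared to the non-archimedean case is verifying the continuity of $h \mapsto Z(s,\pi(h)W)$ on $\SO(X,\mathbb{R})$; this amounts to controlling the zeta integral uniformly as one varies $W$ inside a compact family of smooth translates, and is where the archimedean Fr\'echet structure must be used rather than the elementary local-constancy argument that sufficed in the non-archimedean setting. The construction of a suitable Schwartz bump function on $X^2$ adapted to the locally closed submanifold $\SO(X,\mathbb{R})(x_1,x_2)$ is then a routine partition-of-unity argument.
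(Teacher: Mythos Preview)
Your bump-function construction is a clean way to produce a nonvanishing integral, but it overlooks what $\mathcal S(X^2)$ means in this archimedean section. Here $(\omega,\mathcal S(X^2))$ is being treated as a $(\mathfrak r,F)$-module, so $\mathcal S(X^2)$ consists of the $F$-finite vectors in the Weil representation (polynomials times the standard Gaussian), and the paper's own proof makes this explicit when it writes that ``$\mathcal S(X^2)$ is dense in the Schwartz space on $X^2$.'' Such functions are never compactly supported unless identically zero, so the step ``choose a smooth, non-negative bump function $\varphi\in\mathcal S(X^2)$ with support in $V$'' cannot be carried out as stated.

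What your argument does accomplish is precisely the step the paper passes over with the word ``certainly'': it shows that some $\bar\varphi$ in the \emph{full} Schwartz space gives $B(1,\bar\varphi,W,s)\ne 0$. The paper then finishes by invoking the seminorm estimate established at the end of the proof of Lemma~\ref{lemma:bessel_abs_convergence}, which bounds $|B(1,\bar\varphi,W,s)-B(1,\varphi,W,s)|$ by a finite sum of Schwartz seminorms of $\bar\varphi-\varphi$; density of $\mathcal S(X^2)$ in the Schwartz space then supplies a $K$-finite $\varphi\in\mathcal S(X^2)$ with $B(1,\varphi,W,s)\ne 0$. So your proposal is correct in spirit but incomplete: you have supplied the half of the argument the paper omits, and omitted the half the paper supplies. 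Splicing the two together gives a complete proof, with your continuity argument for $h\mapsto Z(s,\pi(h)W)$ replacing the paper's unjustified ``certainly.''
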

\begin{proof}
  There certainly exists a smooth rapidly decreasing function $\bar\varphi:X^2 \to \mathbb{C}$ so that the Bessel integral $B(1,\bar\varphi,W,s) \ne 0$. Since $\mathcal S(X^2)$ is dense in the Schwartz space on $X^2$ there exists a sequence $\{\varphi_i\}_{i \ge 1} \subset \mathcal S(X^2)$ so that $\varphi_i \to \bar\varphi$. As in the proof of Lemma \ref{lemma:bessel_abs_convergence} we have that
  \begin{equation*}
    |B(1,\bar\varphi,W,s) - B(1,\varphi_n,W,s)| \le C C' \vol(2,\mathbb{R})^2 \sum_{i = 1}^m || \varphi - \varphi_n ||_{Q_i}.
  \end{equation*}
  Since $B(1,\bar\varphi,W,s) \ne 0$ there is some integer $n$ so that $B(1,\varphi_n,W,s) \ne 0$.
  \end{proof}

Now we are sure that $B(\cdot, \varphi,W,s)$ is well defined and non-zero, for some choice of $\varphi \in \mathcal S(X^2)$, $W \in V$, and $s \in \mathbb{C}$. We define $\Theta(V)$ to be the $({\mathfrak g}, K)$-module, for $\gsp(4,L)$, of smooth functions generated by $B(\cdot,\varphi,W,s)$ for all choices of $\varphi \in \mathcal S(X^2) , W \in V$, and $\Re(s) > \frac{3}{2}$. We define the map
\begin{align}\label{eq:localthetalift_arch}
  \vartheta: \mathcal S(X^2) \otimes V &\to \Theta(V)\\
  \varphi \otimes W & \mapsto B(\cdot, \varphi, W, s). \nonumber
\end{align}
If $B \in \Theta(V)$ then by Remark \ref{remark:arch} we are justified in using  Lemma \ref{bessle_trasformation_lemma} to verify that 
\begin{align} \label{eq:extended_bessel_arch}
B(tg)  &= |t_1/t_2|^{\frac{1}{2} - s}B(g) \quad \text{for $t = \left[\begin{smallmatrix}
  t_1&&&\\&t_2&&\\&&t_2&\\&&&t_1
\end{smallmatrix}\right]$ for $t_1,t_2 \in L^\times$, and}\\
\label{eq:extended_bessel2_arch}
B(bg) & = \psi(b_2)B(g)\quad \text{for $b = [\begin{smallmatrix}
  1 & B \\ &1
\end{smallmatrix}]$ where $B = [\begin{smallmatrix}
  b_1 & b_2 \\ b_2 & b_3
\end{smallmatrix}] \in \mat(2,L)$.}  
\end{align}
With this in place we can extend the results of Theorem \ref{Besselrepresentation} to the archimedean case.
\begin{theorem}
\label{Besselrepresentation_arch}
Let $s \in \mathbb{C}$ with $\Re(s) > \frac{3}{2}$.  Let $V = \mathcal W(\tau_1 , \psi) \otimes \mathcal W(\tau_2, \psi)$. Then
\begin{enumerate}
  \item[(a)] The map $\vartheta$ from \eqref{eq:localthetalift_arch} is a non-zero $({\mathfrak r} , F')$-map.
  \item[(b)] The image of $\varphi$ lies inside $\mathcal B(\gsp(4, \mathbb{R}) , \psi)$.
\end{enumerate}
It follows that $\Theta(V)$ is a $({\mathfrak r} , F')$-module sitting inside $\mathcal B(\gsp(4,\mathbb{R}), \psi)$.
\end{theorem}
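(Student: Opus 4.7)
The plan is to mirror the structure of the proof of Theorem \ref{Besselrepresentation}, with the modifications required by passing from a group action on a smooth representation to a $({\mathfrak r},F')$-module action.

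For part (a), I would first verify the $F'$-equivariance at the group level: for $(g_0,h_0)\in R'$, $g\in\gsp(4,\mathbb{R})$, $\varphi\in\mathcal S(X^2)$, $W\in V$, and a choice of $h'\in\gso(X)$ with $\lambda(h')=\lambda(g)$, note that $h'h_0\in\gso(X)$ satisfies $\lambda(h'h_0)=\lambda(gg_0)$. Then using the cocycle property of $\omega$ and of $\pi$, together with the invariance of the quotient measure on $H\backslash\so(X)$ under right translation by $h_0$, one gets
\begin{equation*}
  B(g,\omega(g_0,h_0)\varphi,\pi(h_0)W,s) = \int_{H\backslash\so(X)}(\omega(gg_0,h(h'h_0))\varphi)(x_1,x_2)Z(s,\pi(h(h'h_0))W)\,dh = B(gg_0,\varphi,W,s),
\end{equation*}
which is exactly the calculation from the proof of Theorem \ref{Besselrepresentation}(a). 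For the ${\mathfrak r}$-equivariance I would differentiate this identity at the identity of $R'$, the main technical point being the justification of differentiation under the integral sign. This is where I expect the real work to be: one must produce a uniform dominating function for $(\omega(g\exp(tX),hh')\varphi)(x_1,x_2)\,Z(s,\pi(hh')W)$ and its $t$-derivatives on a neighbourhood of $0$. The seminorm estimates developed in the proof of Lemma \ref{lemma:bessel_abs_convergence} extend directly to derivatives (since differentiating $\omega$ in the $\gsp$ or $\go$ variable only produces polynomial factors on $X^2$, which can be absorbed into a modified polynomial $Q$), so dominated convergence applies. Non-vanishing of $\vartheta$ is immediate from Lemma \ref{lemma:archimedean_nonvanishing}, since the zeta integrals $Z(s,W)$ are not identically zero on $V$.

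For part (b), recall that membership in $\mathcal B(\gsp(4,\mathbb{R}),\psi)$ amounts to the Bessel transformation laws together with smoothness and $K$-finiteness. The transformation formulas \eqref{eq:extended_bessel_arch} and \eqref{eq:extended_bessel2_arch}, which are precisely the archimedean analogues of \eqref{eq:extended_bessel1} and \eqref{eq:extended_bessel2}, are already recorded; by Remark \ref{remark:arch} their proofs are identical to the verification of parts (b) and (c) of Lemma \ref{bessle_trasformation_lemma}, so they may be invoked directly. Smoothness of $g\mapsto B(g,\varphi,W,s)$ in the sense of $({\mathfrak g},K)$-modules follows from the ${\mathfrak r}$-equivariance in (a) applied in the first coordinate of $R'$, which lets one pass ${\mathfrak g}$-derivatives across the intertwiner. $K$-finiteness is then inherited from the fact that $\mathcal S(X^2)\otimes V$ is $F$-finite (the Weil representation and $V$ are both $F$- and $K$-finite modules), so the image $\Theta(V)$ is generated by $K$-translates of finitely many vectors; composing with $\vartheta$ gives a $K$-finite image. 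Combining these, the image of $\vartheta$ lies in $\mathcal B(\gsp(4,\mathbb{R}),\psi)$, and the final assertion that $\Theta(V)$ is a $({\mathfrak r},F')$-submodule of $\mathcal B(\gsp(4,\mathbb{R}),\psi)$ follows formally from (a) and (b).

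The principal obstacle is the analytic one, namely legitimizing differentiation under the integral in the ${\mathfrak r}$-equivariance argument; once the seminorm bounds from Lemma \ref{lemma:bessel_abs_convergence} are upgraded to estimates that also control derivatives of the Schwartz function in the $\gsp$-variable, the algebraic manipulations are identical to the non-archimedean case. Everything else in the statement is either a direct translation of results already proved or a routine consequence of the $({\mathfrak g},K)$-module formalism.
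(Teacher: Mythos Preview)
Your proposal is correct and follows essentially the same approach as the paper: the group-level equivariance computation for part (a) is identical, and part (b) is derived from the transformation formulas \eqref{eq:extended_bessel_arch} and \eqref{eq:extended_bessel2_arch}. The paper is terser---it simply asserts that the group-level identity induces the $({\mathfrak r},F')$-map and absorbs smoothness and $K$-finiteness into the definition of $\Theta(V)$---so your discussion of differentiation under the integral and $K$-finiteness supplies detail the paper omits rather than charting a different route.
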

\begin{proof}
  Since $\gsp(4,\mathbb{R})^+ = \gsp(4,\mathbb{R})$ it follows that $\vartheta$ is an $R$-map from the simple calculation: if $(g_0,h_0) \in R' , (g , h') \in R , \varphi \in \mathcal S(X^2), W \in V,$ and $\Re(S) > \frac{3}{2}$ then 
  \begin{align*}
    B(g, \omega(g_0,h_0) \varphi, \pi(h_0) W, s) & = \int\limits_{H \backslash \so(X)} \omega(gg_0, h(h'h_0)) \varphi(x_1,x_2) Z(s, \pi(h (h'h_0))W) \, dh\\
    & = B(gg_0, \varphi, W, s)
  \end{align*}
  since $(gg_0, h'h_0) \in R$. It follows that $\vartheta$ induces a $({\mathfrak r}\ , F')$-equivariant map.

  Part (b) follows from the definition of $\Theta(V)$ and from \eqref{eq:extended_bessel_arch} and \eqref{eq:extended_bessel2_arch}.
  \end{proof}

Lastly, we need to extend the results of Corollary \ref{cor:piplusmap} to the case when $L = \mathbb{R}$.

\begin{lemma}
  \label{lemma:homspaces_arch}
  Let $(\pi , V)$ be a $({\mathfrak h} ,  J_1)$-module, let $(\sigma , V \times V)$ be the induced $({\mathfrak h},J)$-module obtained from $\pi$. Let $(\Pi , W)$ be a $({\mathfrak g} , K)$-module. Then we have the following $\C$-linear isomorphism
  \begin{align*}
    M: \Hom_{({\mathfrak r} , F')}( \mathcal S(X^2) \otimes V , W)   &\xrightarrow{ \ \sim \ } \Hom_{({\mathfrak r} , F)} ( \mathcal S(X^2) \otimes (V \times V) , W)
  \end{align*}
  determined by $M(f)(\varphi \otimes (v_1 , v_2)) = f(\varphi \otimes v_1) + f(\omega(1 , s)\cdot \varphi \otimes v_2)$ for $\varphi \in \mathcal S(X^2), f \in \Hom_{({\mathfrak r} , K')}( \mathcal S(X^2) \otimes V , W),$ and $v_1,v_2 \in V$, and extended linearly. Here $(1 , s)$ is a non-trivial coset representative of $R/R'$. For example we could take $s$ the map that takes $x$ to $x^\ast$. Additionally the inverse map
  \begin{align*}
    N: \Hom_{({\mathfrak r} , F)}( \mathcal S(X^2) \otimes (V \times V) , W)   &\xrightarrow{ \ \sim \ } \Hom_{({\mathfrak r}  , F')} ( \mathcal S(X^2) \otimes V  , W)
  \end{align*}
  is given by $N(f)(\varphi \otimes v) = f(\varphi \otimes (v, 0))$ for $f \in \Hom_{({\mathfrak r} , K)} ( \mathcal S(X^2) \otimes (V \times V) , W), \varphi \in \mathcal S(X^2)$, and $v \in V$.
\end{lemma}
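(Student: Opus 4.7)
The plan is to transcribe the proof of Lemma \ref{lemma:homspaces} essentially verbatim, since the definitions of $M$ and $N$ are given by identical formulas and the only change is the replacement of group equivariance by $(\mathfrak{r},F)$- and $(\mathfrak{r},F')$-equivariance. The crucial observation is that the Weil representation $\omega$ on $\mathcal{S}(X^2)$ is already given as a $(\mathfrak{r},F)$-module, and the formulas appearing in Lemma \ref{lemma:homspaces} are all linear in the representation-theoretic arguments, so they differentiate cleanly to Lie algebra actions.

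First I would show $M$ is well defined. Given $f \in \Hom_{(\mathfrak{r},F')}(\mathcal{S}(X^2)\otimes V, W)$, one repeats the computation in Lemma \ref{lemma:homspaces} to check that $M(f)$ is $F'$-equivariant; the $\mathfrak{r}$-equivariance follows by differentiating the same calculation along one-parameter subgroups in $F'$ (equivalently, replacing the group multiplication by the Leibniz rule under the $\mathfrak{r}$-action, which is legitimate because the Weil and Whittaker module structures are $(\mathfrak{r},F)$- and $(\mathfrak{h},J_1)$-compatible by construction). For equivariance under the full $F$ it suffices to check equivariance under a coset representative of $F/F'$, and one may take $(1,s)$ where $s(x)=x^*$; this $s$ is an isometry and lies in the maximal compact subgroup of $\OO(X)$, so $(1,s) \in F$, and the swap computation from the non-archimedean proof applies unchanged. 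The well-definedness of $N$ is even easier: restriction of an $(\mathfrak{r},F)$-equivariant map to the subspace $\mathcal{S}(X^2)\otimes (V\times 0)$ automatically yields an $(\mathfrak{r},F')$-equivariant map after identifying $V \cong V\times 0$.

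Finally, the identities $M\circ N = \mathrm{id}$ and $N\circ M = \mathrm{id}$ are purely formal and follow from the same three-line computations at the end of the proof of Lemma \ref{lemma:homspaces}, since they only use linearity and the effect of $(1,s)$ on $V\times V$. The main obstacle, such as it is, is bookkeeping: one must verify that the Weil representation's $\mathfrak{r}$-action genuinely intertwines with the tensor structure on $\mathcal{S}(X^2)\otimes V$ in the sense needed to differentiate the group-equivariance calculation. This is standard and follows because $\omega$ restricts to a smooth $F$-representation on each $K$-finite subspace, so derivations in the Lie algebra direction can be taken inside the integral/sum defining $M(f)$ without difficulty.
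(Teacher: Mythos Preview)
Your proposal is correct and matches the paper's approach exactly: the paper's proof consists of the single sentence ``This is similar to the proof of Lemma \ref{lemma:homspaces},'' and you have supplied precisely the details needed to transport that argument to the $(\mathfrak{r},F)$-module setting.
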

\begin{proof}
  This is similar to the proof of Lemma \ref{lemma:homspaces}.  
  \end{proof}
\begin{corollary}
  \label{cor:piplusmap_arch}
  Let $(\pi , V)$ be the $({\mathfrak h} , J')$-module associated to the representation of $\GSO(X)$ as in Section \ref{sec:notation} so that $V$ is equal to $\mathcal W(\tau_1, \psi) \otimes \mathcal W(\tau_2,\psi)$. Let $(\sigma , V \times V)$ be the $({\mathfrak h} , J)$-module, associated to the representation of $\GO(X)$ that is isomorphic to $\Ind_{\gso(X)}^{\go(X)} \pi$. We are justified in applying this result to the real case. Let $\pi^+$ be the canonical irreducible subrepresentation of $\sigma$. Let $\vartheta$ be as in \eqref{eq:localthetalift_arch} and let $M$ be the map in \ref{lemma:homspaces_arch}. The composition 
  \begin{align*}
     M(\vartheta): \mathcal S(X^2) \otimes (V \times V) \to \Theta(V)
  \end{align*} 
  is a non-zero $({\mathfrak r} , F)$-map. Furthermore, the restriction of $M(\vartheta)$ to $\pi^+$ is a non-zero $({\mathfrak r} , F)$-map. 
\end{corollary}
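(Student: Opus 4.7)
The plan is to mirror the proof of Corollary \ref{cor:piplusmap} essentially verbatim, with $R$-equivariance replaced throughout by $({\mathfrak r}, F)$-equivariance. First I would observe that Theorem \ref{Besselrepresentation_arch}(a) places $\vartheta$ in $\Hom_{({\mathfrak r}, F')}(\mathcal S(X^2) \otimes V, \Theta(V))$, so the isomorphism $M$ from Lemma \ref{lemma:homspaces_arch} carries it to an element of $\Hom_{({\mathfrak r}, F)}(\mathcal S(X^2) \otimes (V \times V), \Theta(V))$. This settles the equivariance portion of the claim immediately.

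For non-vanishing, the strategy is to evaluate $M(\vartheta)$ on vectors belonging to $\pi^+$. By the definition of $M$,
\[
M(\vartheta)(\varphi \otimes (W_1, W_2)) = B(\cdot, \varphi, W_1, s) + B(\cdot, \omega(1, s)\varphi, W_2, s),
\]
and Remark \ref{remark:arch} allows me to apply Lemma \ref{bessle_trasformation_lemma}(a) in the archimedean setting to rewrite the second term as $B(\cdot, \varphi, T(W_2), s)$. Since $\tau_1$ and $\tau_2$ have equal (trivial) central character, the standard identification shows that the canonical subrepresentation $\pi^+$ is realized on the diagonal $V_{\pi^+} = \{(v, T(v)) : v \in V\}$. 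Evaluating $M(\vartheta)$ on such a vector and using $T^2 = \id$ yields
\[
M(\vartheta)(\varphi \otimes (W, T(W))) = 2\, B(\cdot, \varphi, W, s),
\]
which is non-zero for a suitable choice of $\varphi \in \mathcal S(X^2)$ and $W \in V$ by Lemma \ref{lemma:archimedean_nonvanishing}. Hence the restriction of $M(\vartheta)$ to $\mathcal S(X^2) \otimes V_{\pi^+}$ is a non-zero $({\mathfrak r}, F)$-map, and a fortiori the full map $M(\vartheta)$ is non-zero.

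The main obstacle here is conceptual rather than computational: one must be confident that the transformation identity of Lemma \ref{bessle_trasformation_lemma}(a) and the identification of the space of $\pi^+$ both port intact from the non-archimedean to the archimedean context. The former is granted by Remark \ref{remark:arch}, while the latter depends only on the central character condition, which is independent of the local field. Once these are in hand, the corollary reduces to a short bookkeeping computation exactly parallel to the non-archimedean proof.
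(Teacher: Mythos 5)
Your proposal is correct and follows essentially the same route as the paper: the paper's own proof is a one-line deferral to Corollary \ref{cor:piplusmap}, and your write-up is exactly that transfer, with the archimedean ingredients (Theorem \ref{Besselrepresentation_arch}, Lemma \ref{lemma:homspaces_arch}, Remark \ref{remark:arch} granting Lemma \ref{bessle_trasformation_lemma}(a), and Lemma \ref{lemma:archimedean_nonvanishing} for non-vanishing) correctly substituted. The identification of $V_{\pi^+}$ with the diagonal $\{(v,T(v))\}$ and the computation $M(\vartheta)(\varphi\otimes(W,T(W)))=2B(\cdot,\varphi,W,s)$ match the paper's argument in the non-archimedean case.
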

\begin{proof}
  It is clear, from what is presented in Corollary \ref{cor:piplusmap}, that $M(\vartheta)|_{\pi^+}$ is a non-zero $({\mathfrak r} , F)$-equivariant map. 
  \end{proof}


\section{The explicit Schwartz functions}\label{sec:testdata}
In this section, we construct explicit Schwartz functions, $\varphi \in \mathcal S(X^2)$, which produce a nonzero paramodular invariant vector. We will work separately in the case that $E/L$ is split, inert, and tamely ramified. Unfortunately, when $E/L$ is wildly ramified we do not find any such Schwartz functions. For our global application, we note that the only real quadratic number field for which we do not attain an explicit local lift at every place is $\mathbb{Q}(\sqrt{2})$. 

 In the split case, assume that $W \in  V$ is $\Gamma_0({\mathfrak p}^{n_1})\times \Gamma_0({\mathfrak p}^{n_2})$-invariant for some non-negative integers $n_1,n_2$. In the nonsplit case assume that $W \in V$ is $\Gamma_0({\mathfrak P}^n)$-invariant for some non-negative integers $n$. For any $\varphi \in \mathcal S(X^2)$, let $B(\cdot, \varphi, W, s)$ be as in Section \ref{sec:intertwiningmaps}. 

There are three important considerations. First, we want the support of $\varphi$ to be simple, with regard to $Z(s, W)$. To be specific, if $h \in H\backslash \so(X)$ so that $(h^{-1}x_1,h^{-1}x_2) \in \supp (\varphi)$ then we want $Z(s,\pi(h)W)$ to be simple to compute. When $L$ has odd residual characteristic we are able to make choices so that $Z(s, \varphi(h)W) = Z(s, W)$, for all such $h$. Our second consideration is invariance under the Weil representation for $(g,h) \in R \cap (K({\mathfrak p}^N) \times \gso(X))$. When $E/L$ is unramified these first two goals have almost perfect overlap, while if $E/L$ is tamely ramified there is still enough overlap to create good candidate Schwartz functions. 

If we make a natural choice of Schwartz function in the ramified case we do not get full paramodular invariance in the Weil representation, but we do get invariance under a rather large subgroup of $K({\mathfrak p}^N)$. We can sum over coset representatives of this subgroup to produce a fully paramodular invariant Schwartz function. Of course, we must verify that our choice of $\varphi$ does not make $B(\cdot, \varphi, W, s) = 0$. More specifically, we are able to verify that $B(1, \varphi, W, s)$ is a nonzero multiple of $Z(s,W)$.

\subsection{The Split Case} 
\label{sec:the_split_case_intertwinning_map}

Suppose that $X = X_{M}$ as in Section \ref{sec:notation}. Let $\tau_1$ and $\tau_2$ be irreducible admissible representations of $\gl(2,L)$ with trivial central character. Suppose the space of $\tau_i$ is its Whittaker model $\mathcal W_{\tau_i}$ and further suppose that there are non-negative integers $n_i$ such that $W_i \in \mathcal W_{\tau_i}$ that is invariant under $\Gamma_0({\mathfrak p}^{n_i})$ for $i \in \{1,2\}$. Set $W = W_1 \otimes W_2$. Let $\varphi_1$ and $\varphi_2$ be in $\mathcal S(X)$ and let $\varphi = T(\varphi_1 \otimes \varphi_2) \in \mathcal S(X^2)$. Set $N = n_1 + n_2$, which we will show is the paramodular level of the lift.  Using the Lemma \ref{seesaw_lemma} and Lemma \ref{lemma:paramodular_generators} we can easily calculate the action of the generators of $\K({\mathfrak p}^N)$ on $\varphi$. With these calculations in mind we can specify $\varphi_1$ and $\varphi_2$ so that $B(\cdot, \varphi, W,s)$ is paramodular invariant and non-zero. Choose
\begin{equation}
\label{eq:phi_choice_split}
	\varphi_1 = f_{\left[\begin{smallmatrix}
		{\mathfrak p}^{n_2} & {\mathfrak o}_L\\ {\mathfrak p}^N & {\mathfrak p}^{n_1}
	\end{smallmatrix}\right]} \quad \text{and} \quad \varphi_2 = f_{\mat(2,{\mathfrak o}_L)},
\end{equation} 
where $f_S$ is the characteristic function of a set S.

\begin{lemma}
\label{weil_paramodular_invariance_lemma_split}
	Let $\varphi$ be as in \eqref{eq:phi_choice_split}. For every $(k,h) \in R$ such that $k \in \K({\mathfrak p}^N)$ we have that $\omega(k,h)\varphi = \varphi$.
\end{lemma}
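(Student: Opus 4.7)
The approach is to invoke Lemma~\ref{lemma:paramodular_generators}, which reduces invariance to the four explicit generator types of $\K({\mathfrak p}^N)$. For each such generator $k$, I would exhibit an $h \in \gso(X)$ with $\lambda(h) = \lambda(k)$ for which $\omega(k,h)\varphi = \varphi$, using the Weil-representation formulas \eqref{weilrep1}--\eqref{weilrep5} and the seesaw identification of Lemma~\ref{seesaw_lemma}. The supports $S = [\begin{smallmatrix} {\mathfrak p}^{n_2} & {\mathfrak o}_L \\ {\mathfrak p}^N & {\mathfrak p}^{n_1} \end{smallmatrix}]$ and $\mat(2,{\mathfrak o}_L)$ of $\varphi_1, \varphi_2$ are chosen precisely so that each generator's action is absorbed.

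For generator~(a), split $k = [\begin{smallmatrix} A & \\ & A^{-t}\end{smallmatrix}]\cdot[\begin{smallmatrix} I & \\ & \lambda I\end{smallmatrix}]$; the second factor, paired with $h = \rho(1, [\begin{smallmatrix}1 & \\ & \lambda\end{smallmatrix}])$, acts via \eqref{weilrep5} by rescaling a column of each $x_i$ by the unit $\lambda$, preserving both supports. For the first factor I would apply \eqref{weilrep2} piecewise under the decomposition $A = [\begin{smallmatrix} 1 & \\ c/a & 1\end{smallmatrix}][\begin{smallmatrix} a & \\ & \det(A)/a\end{smallmatrix}][\begin{smallmatrix} 1 & b/a \\ & 1\end{smallmatrix}]$ (with $c/a \in {\mathfrak p}^N$, $b/a \in {\mathfrak o}_L$, and both diagonal entries units): the lower-unipotent translates $x_1$ by $(c/a)x_2 \in {\mathfrak p}^N\mat(2,{\mathfrak o}_L) \subset S$; the diagonal rescales by units; the upper-unipotent translates $x_2$ by $(b/a)x_1 \in {\mathfrak o}_L\cdot S \subset \mat(2,{\mathfrak o}_L)$. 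For generator~(b), take $h = 1$: by \eqref{weilrep3} the action multiplies $\varphi$ by $\psi(\varpi^{-N}b_1\langle x_1,x_1\rangle + 2b_2\langle x_1,x_2\rangle + b_3\langle x_2,x_2\rangle)$, and via \eqref{eq:ip_expand_split} one checks on the support that $\langle x_1,x_1\rangle \in {\mathfrak p}^{n_1+n_2} = {\mathfrak p}^N$ while $\langle x_1,x_2\rangle, \langle x_2,x_2\rangle \in {\mathfrak o}_L$, so the exponent lies in ${\mathfrak o}_L$ and the character is trivial since $\psi$ has conductor ${\mathfrak o}_L$.

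Generators~(c) and~(d) fit directly into the seesaw form: comparing entries one sees that $s_2$ corresponds to $(g_1,g_2) = (I, [\begin{smallmatrix}0&1\\-1&0\end{smallmatrix}])$ and $t_N$ to $([\begin{smallmatrix}0&\varpi^{-N}\\-\varpi^N&0\end{smallmatrix}], I)$, both with $\det g_1 = \det g_2 = 1$, so $h=1$ works. For~(c), the seesaw action produces $\varphi_1\otimes\gamma_1(X)\mathcal F_1(\varphi_2)$; Lemma~\ref{Haar_measure_constant_lemma} gives $\mathcal F_1(f_{\mat(2,{\mathfrak o}_L)}) = f_{\mat(2,{\mathfrak o}_L)}$, and $\gamma_1(X_M) = 1$ since $X_M$ is a sum of hyperbolic planes. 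For~(d), factor $g_1 = [\begin{smallmatrix}\varpi^{-N}&\\&\varpi^N\end{smallmatrix}][\begin{smallmatrix}&1\\-1&\end{smallmatrix}]$; a coordinate-wise application of Lemma~\ref{simplegausschilemma} to \eqref{eq:sl2_fourier} yields $\mathcal F_1(\varphi_1) = q^{-2N}f_{S'}$ with $S' = [\begin{smallmatrix}{\mathfrak p}^{-n_1}&{\mathfrak p}^{-N}\\{\mathfrak o}_L&{\mathfrak p}^{-n_2}\end{smallmatrix}]$, and the diagonal factor then supplies a scalar $q^{2N}$ together with the substitution $x \mapsto \varpi^{-N}x$, which together recover $\varphi_1$ exactly. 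The main obstacle is the case~(d) bookkeeping: the exponents in $S$ were pinned down precisely so that Fourier inversion followed by the $t_N$-scaling reproduces $\varphi_1$ with no residual constant, and any other choice of $n_1, n_2$ relative to $N = n_1 + n_2$ would destroy this self-duality; a minor technicality is verifying $\gamma_1(X_M) = 1$, which follows from the standard formulas for Weil indices of split quadratic forms.
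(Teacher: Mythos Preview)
Your proposal is correct and follows essentially the same strategy as the paper: reduce to the generators of Lemma~\ref{lemma:paramodular_generators}, then verify invariance for each using the seesaw identification and the explicit Weil-representation formulas. The only cosmetic differences are that in case~(a) you pass through an LDU decomposition of $A$ whereas the paper checks directly that the map $(x,y)\mapsto(a_1x+a_3y,\,a_2x+a_4y)$ preserves $\supp(\varphi)$ (the direct check is slightly cleaner), and that you make explicit the point $\gamma_1(X_M)=1$, which the paper silently absorbs in parts~(c) and~(d).
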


\begin{proof} Let us go through the generators listed in Lemma \ref{lemma:paramodular_generators}.
	\begin{enumerate}
		\item[(a)] Let  $A = [\begin{smallmatrix}
		a_1&a_2\\
		a_3&a_4
	\end{smallmatrix}] \in \Gamma_0({\mathfrak p}^N) \subset \gl(2,{\mathfrak o}_L)$ so that $a_1,a_4 \in {\mathfrak o}_L^\times, a_3 \in {\mathfrak p}^N$ and $a_2 \in {\mathfrak o}_L$. Note that $A^{-1} = \frac{1}{\det(A)}\left[\begin{smallmatrix}
		a_4&-a_2\\
		-a_3&a_1
	\end{smallmatrix}\right] \in \Gamma_0({\mathfrak p}^{N})$. If $(x,y) \in \supp(\varphi)$ then 
	\begin{align*}
		a_1x + a_3y \in [\begin{smallmatrix}
			{\mathfrak p}^{n_2} & {\mathfrak o}_L\\ {\mathfrak p}^N & {\mathfrak p}^{n_1}
		\end{smallmatrix}] \quad \text{and} \quad a_2x + a_4y \in \mat(2,{\mathfrak o}_L)
	\end{align*}
	so that $(a_1x + a_3y , a_2x + a_4y) \in \supp(\varphi)$. On the other hand, if $(a_1x + a_3y , a_2x + a_4y) \in \supp(\varphi)$ then 
	\begin{align*}
		& A^{-1}\cdot (a_1x + a_3y , a_2x + a_4y)
  \\ 
    =& \frac{1}{\det(A)}(a_4(a_1x + a_3y) + -a_3(a_2x + a_4y), -a_2(a_1x + a_3y) + a_1(a_2x + a_4y))
  \\
		=& ( x, y).
	\end{align*} Since $A^{-1} \in \Gamma_0({\mathfrak p}^N)$ we conclude that $(x,y) \in \supp(\varphi)$.
	So, $(x,y) \in \supp(\varphi)$ if and only if $(a_1x + a_3y , a_2x + a_4y) \in \supp(\varphi)$. Since $\det(A) = a_1a_4 - a_2a_3 \in {\mathfrak o}_L^\times$ and $\chi_{E/L}$ is trivial in the split case we see that 
	\begin{align*}
		\omega([\begin{smallmatrix}
			A&\\
			&{}^tA^{-1}
		\end{smallmatrix}] , 1)\varphi(x,y) & =  \chi_{E/L}(\det A)|\det A|^2 \varphi(a_1x + a_3y, a_2x + a_4y)\\
		& = \varphi(x,y).
	\end{align*}

Let $u \in {\mathfrak o}_L^\times$ and set
\begin{equation*}
	g_u = \left[\begin{smallmatrix}
      1&&&\\
      &1&&\\
      &&u&\\
      &&&u
    \end{smallmatrix}\right] \quad \text{and} \quad h_u = \rho([\begin{smallmatrix}
		u&\\
		&1
	\end{smallmatrix}], [\begin{smallmatrix}
		1&\\
		&1
	\end{smallmatrix}])
\end{equation*}
so that $(g_u,h_u) \in R$. Since $u \in {\mathfrak o}_L^\times$ we have $(h_u^{-1} x, h_u^{-1} y) \in \supp(\varphi)$ if an only if $(x,y) \in \supp(\varphi)$. Hence,
\begin{align*}
	\omega(g_u,h_u)\varphi(x,y) & = |u|^{-2} \omega(1,1) \varphi(h_u^{-1} x, h_u^{-1} y)\\
	& = \varphi(x,y).
\end{align*} 

\item[(b)] Let $B  = [\begin{smallmatrix}
	b_1&b_2\\b_2&b_3
\end{smallmatrix}] \in [\begin{smallmatrix}
	{\mathfrak p}^{-N}&{\mathfrak o}_L\\
	{\mathfrak o}_L & {\mathfrak o}_L
\end{smallmatrix}]$. If $(x,y) \in \supp(\varphi)$ then $\langle x , x \rangle \in {\mathfrak p}^N$ and $\langle x,y \rangle, \langle y , y \rangle \in {\mathfrak o}_L$. Hence,
\begin{align*}
	\omega([\begin{smallmatrix}
		1&B\\&1
	\end{smallmatrix}], 1) \varphi(x,y) & = \psi(b_1 \langle x , x \rangle + 2b_2 \langle x , y \rangle + b_3 \langle y , y \rangle) \varphi(x,y)\\
	& = \varphi(x,y).
\end{align*}

\item[(c)] In the proof of Lemma \ref{Haar_measure_constant_lemma} we verified that $\mathcal F_1(\varphi_2) = \varphi_2$. 
Now, using Lemma \ref{seesaw_lemma} we find that 
\begin{align*}
	\omega(s_2,1) \varphi(x,y) &= \omega_1(1,1)\varphi_1(x) \omega_1([\begin{smallmatrix}
		&1\\-1&
	\end{smallmatrix}],1)\varphi_2(y)\\
	& = \varphi_1(x) \mathcal F_1(\varphi_2)(y)\\
	& = \varphi(x,y).
\end{align*}

\item[(d)] To determine $\omega(t_N,1)$ we need the following preliminary calculation. Let $\varphi_1^{a}(x) = \varphi_1(ax)$. We claim that that $\mathcal F_1(\varphi_1^{\varpi^N})(x) = q^{2N} \varphi_1(x)$. The simpliest way to do this is to use \eqref{eq:ip_expand_split} along with the corresponding considerations for the Haar measure made in Lemma \ref{Haar_measure_constant_lemma}. With these in mind we calculate that
\begin{align*}
		\mathcal F_1(\varphi_1^{\varpi^N})(x)& = \int\limits_{X} \varphi_1(\varpi^N y) \psi(2  \langle x , y \rangle) \,dy = \int \limits_{\left[ \begin{smallmatrix}
		{\mathfrak p}^{-n_1} & {\mathfrak p}^{-N}\\
		{\mathfrak o}_L & {\mathfrak p}^{-n_2}
	\end{smallmatrix} \right]} \psi(2 \langle x , y \rangle) \,dy\\
	& = \int\limits_{{\mathfrak p}^{-n_1}} \psi(x_4y_1) \,dy_1 \int\limits_{{\mathfrak p}^{-N}} \psi(-x_3y_2) \,dy_2 \int\limits_{{\mathfrak o}_L} \psi(-x_2y_3) \,dy_3 \int\limits_{{\mathfrak p}^{-n_2}} \psi(x_1y_4) \,dy_4.  
\end{align*}
By Lemma \ref{simplegausschilemma}, we conclude that the above is  $\mathcal F_1(\varphi_1^{\varpi^N})(x) = q^{2N} \varphi_1(x).$ Now, again using the seesaw embedding in Lemma \ref{seesaw_lemma} we have
\begin{align*}
	\omega(t_n, 1)\varphi(x,y) &= \omega_1([\begin{smallmatrix}
		&\varpi^{-N}\\
		-\varpi^N&
	\end{smallmatrix}],1)\varphi_1(x) \varphi_2(y)\\
	& = q^{-2N} \mathcal F_1(\varphi_1^{\varpi^{N}})(x) \varphi_2(y)\\ 
	&= \varphi(x,y).
\end{align*}
	\end{enumerate}
  This completes the proof.
	  \end{proof}

\begin{corollary}
\label{cor:bessel_invariance_split}
	For all $k \in \K({\mathfrak p}^N)$ and all $g \in \gsp(4,L)$ we have that $B(gk,\varphi,W,s) = B(g,\varphi,W,s)$.
\end{corollary}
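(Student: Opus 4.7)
The plan is to derive the corollary essentially as a direct consequence of Lemma~\ref{weil_paramodular_invariance_lemma_split}, together with the defining integral \eqref{eq:splitbesselintegral}. The key observation is that in the split case $\gsp(4,L)^+ = \gsp(4,L)$, so no invocation of the extension formula \eqref{eq:B_extend} is needed; the entire argument lives at the level of the integrand of $B(g,\varphi,W,s)$.

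First I would fix $g \in \gsp(4,L)$ and $k \in \K({\mathfrak p}^N)$. Because $k \in \Sp(4,L) \subset \K({\mathfrak p}^N)$ we have $\lambda(k)=1$, hence $\lambda(gk)=\lambda(g)$, and in particular the pair $(k,1_X) \in R$. Applying Lemma~\ref{weil_paramodular_invariance_lemma_split} with the orthogonal component equal to $1_X$ yields $\omega(k,1_X)\varphi = \varphi$.

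Next I would choose a common $h' \in \GSO(X)$ with $\lambda(h') = \lambda(g) = \lambda(gk)$, so that both $(g,h')$ and $(gk,h')$ lie in $R$. For any $h \in \SO(X)$ the factorization
\begin{equation*}
(gk,\,hh') = (1,h)\,(g,h')\,(k,1_X)
\end{equation*}
in $R$, together with the representation property of $\omega$ and $\omega(k,1_X)\varphi = \varphi$, gives
\begin{equation*}
\omega(gk,hh')\varphi \;=\; \omega(1,h)\,\omega(g,h')\,\omega(k,1_X)\varphi \;=\; \omega(1,h)\,\omega(g,h')\,\varphi \;=\; \omega(g,hh')\varphi.
\end{equation*}
Substituting into \eqref{eq:splitbesselintegral} leaves $Z(s,\pi(hh')W)$ untouched and replaces the Weil-representation factor by an identical expression, so the integrands agree pointwise in $h$ and therefore
\begin{equation*}
B(gk,\varphi,W,s) \;=\; B(g,\varphi,W,s).
\end{equation*}

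There is essentially no obstacle here: all the difficulty has already been absorbed into Lemma~\ref{weil_paramodular_invariance_lemma_split}, which verified the invariance on each paramodular generator listed in Lemma~\ref{lemma:paramodular_generators}. The only item that might deserve a brief comment is that the conclusion of the lemma holds for \emph{every} $(k,h) \in R$ with $k \in \K({\mathfrak p}^N)$, so specializing to $h = 1_X$ is legitimate; this is what allows the clean factorization through $(k,1_X)$ and makes the proof of the corollary a one-line consequence.
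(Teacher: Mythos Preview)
Your argument has a genuine gap: you assert that every $k \in \K({\mathfrak p}^N)$ has $\lambda(k)=1$, but this is false. The local paramodular group is defined in the paper as a subgroup of $\GSp(4,L)$, not of $\Sp(4,L)$; generator~(a) of Lemma~\ref{lemma:paramodular_generators} is $[\begin{smallmatrix} A & \\ & \lambda\,{}^tA^{-1}\end{smallmatrix}]$ with $A\in\Gamma_0({\mathfrak p}^N)$ and $\lambda\in{\mathfrak o}_L^\times$, so $\lambda(k)$ ranges over all of ${\mathfrak o}_L^\times$. Consequently the pair $(k,1_X)$ need not lie in $R$, and you cannot apply Lemma~\ref{weil_paramodular_invariance_lemma_split} with $h=1_X$. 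Your proof is valid only for the subgroup $\K({\mathfrak p}^N)\cap\Sp(4,L)$, which is exactly the ``easy'' case the paper disposes of in its first sentence.

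What is missing is precisely the extra work the paper does for general $k$: writing $\lambda(k)=u\in{\mathfrak o}_L^\times$, one factors off $\mathrm{diag}(u^{-1},1,1,u^{-1})$ to reduce to an element $k_1$ with $\lambda(k_1)=1$, at the cost of conjugating $g_1$ by $\mathrm{diag}(u,1,1,u)$, and then checks that this conjugation does not change $\omega(\,\cdot\,,1)\varphi$ on the $\Sp(4)$ generators. An alternative repair, closer in spirit to your approach, would be to pair $k$ not with $1_X$ but with some $h_k\in\GSO(X)$ satisfying both $\lambda(h_k)=\lambda(k)$ and $\pi(h_k)W=W$ (e.g.\ $h_k=\rho(b_1,b_2)$ with $b_i\in\Gamma_0({\mathfrak p}^{n_i})$ and $\det(b_1)\det(b_2)=\lambda(k)$); then $\omega(k,h_k)\varphi=\varphi$ by the lemma and the zeta integral is also unchanged. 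Either way, the case $\lambda(k)\neq 1$ must be addressed.
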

\begin{proof}
	When $\lambda(k) = 1$ the result follows from Lemma \ref{weil_paramodular_invariance_lemma_split}, the definition of $B(\cdot, \varphi,W,s)$, and the fact that for every $u \in {\mathfrak o}_L^\times$ there is some $b \in \Gamma_0({\mathfrak p}^n)$ such that $\det(b) = u$. For a general $k \in K({\mathfrak p}^N)$ we have to do a bit more work. Suppose that $\lambda(k) = u \in {\mathfrak o}_L^\times$ and let $g \in \gsp(4,L)$ and let $g_1 \in \gsp(4,L)$ be as in \eqref{eq:g0} then 
	\begin{align*}
		B(gk) & = |\lambda(g)|^{-s + \frac{1}{2}} B(g_2k_1) = |\lambda(g)|^{-s + \frac{1}{2}} B(g_2)
	\end{align*}
	where 
	\begin{equation*}
		k_1 = \left[\begin{smallmatrix}
			u^{-1}&&&\\
			&1&&\\
			&&1&\\
			&&&u^{-1}
		\end{smallmatrix}\right] k \quad \text{and} \quad g_2 =\left[\begin{smallmatrix}
			u^{-1}&&&\\
			&1&&\\
			&&1&\\
			&&&u^{-1}
		\end{smallmatrix}\right] g_1 \left[\begin{smallmatrix}
			u&&&\\
			&1&&\\
			&&1&\\
			&&&u
		\end{smallmatrix}\right].
	\end{equation*}
	It remains to verify that $B(g_2) = B(g_1)$. To do this we only need to check that $\omega(g_1,1) \varphi = \omega(g_2 , 1) \varphi$ for the $\varphi$ chosen in \eqref{eq:phi_choice_split} and for each of the generators with similitude factor equal to 1, namely the the elements of $\gsp(4,L)$ found in \eqref{weilrep2}, \eqref{weilrep3}, and \eqref{weilrep4}.
	\begin{enumerate}
		\item[(a)] Suppose that
		\begin{equation*}
			g_1 = \begin{bmatrix} 
		    [\begin{smallmatrix} a_1 & a_2 \\ a_3 & a_4 \end{smallmatrix}]&  \\ 
		    &{[\begin{smallmatrix} a_1 & a_2 \\ a_3 & a_4 \end{smallmatrix}]^t}^{-1} 
		    \end{bmatrix} \quad \text{so that} \quad g_2 = \begin{bmatrix} 
		    [\begin{smallmatrix} a_1 & a_2u^{-1} \\ a_3u & a_4 \end{smallmatrix}]&  \\ 
		    &{[\begin{smallmatrix} a_1 & a_2u^{-1} \\ a_3u & a_4 \end{smallmatrix}]^t}^{-1} 
		    \end{bmatrix}.
		\end{equation*}
		With $\varphi$ chosen as in \eqref{eq:phi_choice_split} it is clear that $\omega(g_1,1) \varphi = \omega(g_2 , 1) \varphi$ by examining \eqref{weilrep2}. 

		\item[(b)] Suppose that
		\begin{equation}
			g_1 = \left[\begin{smallmatrix}
				1&&b_2&b_2\\
				&1&b_2&b_3\\
				&&1&\\
				&&&1
			\end{smallmatrix}\right] \quad \text{so that} \quad g_2 = \left[\begin{smallmatrix}
				1&&b_1u^{-1}&b_2\\
				&1&b_2&b_3u\\
				&&1&\\
				&&&1
			\end{smallmatrix}\right].
		\end{equation}
		With $\varphi$ chosen as in \eqref{eq:phi_choice_split} it is clear that $\omega(g_1,1) \varphi = \omega(g_2 , 1) \varphi$ by examining \eqref{weilrep3}

		\item[(c)] Suppose that $g_1 = J$ so that 
		$g_2  = J \left[\begin{smallmatrix}
				u&&&\\
				&u^{-1}&&\\
				&&u^{-1}&\\
				&&&u
			\end{smallmatrix}\right]$.
		For $\varphi$ as chosen in \eqref{eq:phi_choice_split} we have that $\varphi(u^{-1}x , u y) = \varphi(x,y)$ so we conclude that
		\begin{align*}
			\omega(g_2) \varphi(x,y) & = \omega(J,1) \omega(\left[\begin{smallmatrix}
				u&&&\\
				&u^{-1}&&\\
				&&u^{-1}&\\
				&&&u
			\end{smallmatrix}\right]) \varphi(x,y)= \omega(J,1) \varphi(u^{-1}x , uy)= \omega(J,1) \varphi(x,y).
		\end{align*}
	\end{enumerate}
  This completes the proof.
  \end{proof}

\begin{lemma}
\label{lemma:H_coset_split}
	Let $x_1,x_2$ and $H$ be as in Section \ref{sec:notation}. Let $h = \rho(h_1,h_2) \in \so(X)$ then $h^{-1}(x_1,x_2) \in \supp(\varphi)$ if and only if there is some $h' = \rho(h_1',h_2') \in H$ such that $h_1'h_1 \in \Gamma_0({\mathfrak p}^{n_1})$ and $h_2'h_2 \in \left[  \begin{smallmatrix}
		2^{-1} &\\ & 1
	\end{smallmatrix}\right] \Gamma_0({\mathfrak p}^{n_2})$.
\end{lemma}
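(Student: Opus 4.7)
The plan is to handle both directions by an explicit computation of $h^{-1}(x_1,x_2)$ in coordinates. Since $h = \rho(h_1,h_2) \in \SO(X)$ satisfies $\det h_1 \det h_2 = 1$, one verifies $h^{-1}x = h_1^{\ast} x h_2$ where $h_1^{\ast}$ is the classical adjoint of $h_1$. Writing $h_1 = [\begin{smallmatrix} a & b \\ c & d\end{smallmatrix}]$ and $h_2 = [\begin{smallmatrix} a' & b' \\ c' & d'\end{smallmatrix}]$, this produces the rank-one matrices
\[
h^{-1} x_1 = \begin{bmatrix} dc' & dd' \\ -cc' & -cd' \end{bmatrix}, \qquad h^{-1} x_2 = \begin{bmatrix} 2ba' & 2bb' \\ -2aa' & -2ab' \end{bmatrix}.
\]
The support condition $h^{-1}(x_1,x_2) \in \supp(\varphi)$ then amounts to eight explicit valuation inequalities: four linking $\{\nu(c),\nu(d)\}$ with $\{\nu(c'),\nu(d')\}$ (from the entries of $h^{-1}x_1$) and four linking $\{\nu(a),\nu(b)\}$ with $\{\nu(a'),\nu(b')\}$ (from the entries of $h^{-1}x_2$, using $2\in{\mathfrak o}_L^\times$, which is standing in this subsection).

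The direction $(\Leftarrow)$ is direct: since $H$ stabilizes $(x_1,x_2)$, one has $(h'h)^{-1}(x_1,x_2) = h^{-1}(x_1,x_2)$, so substituting the prescribed form of $h_1'h_1$ and $h_2'h_2$ into the formulas above makes the membership of each entry in the prescribed ideal immediate, with the $2^{-1}$ in $h_2'h_2$ cancelling the $2$'s appearing in $x_2$. For $(\Rightarrow)$, I would parametrize $H$ at the level of $\rho$-preimages: since $\ker\rho$ is the diagonal $L^\times$ (by \eqref{eq:Base_exact_sequence}), every $h'\in H$ lifts non-uniquely to a pair $([\begin{smallmatrix} s & \\ & t\end{smallmatrix}], [\begin{smallmatrix} s^{-1} & \\ & t^{-1}\end{smallmatrix}])$ with $s,t\in L^\times$, yielding two free scalar parameters. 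Translating the two target membership conditions into requirements on $\sigma=\nu(s)$ and $\tau=\nu(t)$ produces two intervals $\sigma\in[A,B]$ and $\tau\in[C,D]$ together with one linking equation $\sigma+\tau = \nu(\det h_2)$, whose consistency over both subgroups uses $2\in{\mathfrak o}_L^\times$ and $\det h_1 \det h_2 = 1$. Non-emptiness of the intervals is equivalent to the eight inequalities from the first paragraph, so what remains is to check the feasibility condition $\nu(\det h_2) \in [A+C,B+D]$ and then select any admissible integer point on that line.

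The main obstacle is precisely this feasibility check. Neither bound $A+C \leq \nu(\det h_2) \leq B+D$ is a direct consequence of the eight inequalities in isolation; they must be combined with the ultrametric estimate $\nu(\det h_2) = \nu(a'd' - b'c') \geq \min(\nu(a')+\nu(d'), \nu(b')+\nu(c'))$ and, dually, the upper bound $\nu(\det h_2) \leq \max(-\nu(a)-\nu(d), -\nu(b)-\nu(c))$ obtained from $\nu(\det h_1) = -\nu(\det h_2)$ via the ultrametric on $\det h_1 = ad - bc$. The verification then proceeds by a short case analysis: each of the four possible values of $A+C$ (according to which index realizes $\min(\nu(a),\nu(b))$ and $\max(n_1-\nu(c), -\nu(d))$) is dominated by whichever of $\nu(a')+\nu(d')$ or $\nu(b')+\nu(c')$ realizes the lower bound on $\nu(\det h_2)$, via a suitable pair of the original eight inequalities; the symmetric argument handles $B+D$. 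Once feasibility is established, the forward direction is complete.
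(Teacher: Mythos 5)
Your coordinate formulas for $h^{-1}x_1$ and $h^{-1}x_2$, the reverse implication, and the structure of the forward implication are sound, and the feasibility check you defer does close: the eight support inequalities give $A+C\le\min(\nu(a')+\nu(d'),\nu(b')+\nu(c'))\le\nu(\det h_2)$, while the ultrametric bound on $\det h_1=ad-bc$ gives the reverse inequality, so in fact $A+C=\nu(\det h_2)$ and the corner $(\sigma,\tau)=(A,C)$ is itself an admissible point. That corner is precisely the paper's choice (its maximal $r_1,r_2$ are $r_2=\min(\nu(a),\nu(b))$ and $r_1=\min(\nu(c),\nu(d)+n_1)$), so your interval formulation is a systematized version of the paper's argument rather than a genuinely different route; what it buys is that the existence of a suitable $h'$ falls out of interval arithmetic instead of being exhibited by hand.

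The genuine gap is the parenthetical claim that $2\in{\mathfrak o}_L^\times$ ``is standing in this subsection.'' No such hypothesis is in force: the split case is exactly the one needed at even residual characteristic (the Local Main Theorem excludes only wild ramification, the paper's own proof of this lemma introduces $m$ with $2{\mathfrak o}_L={\mathfrak p}^m$, and Theorem \ref{MT_split} carries the factor $|2|^{s-\frac12}$); the $2$ in $x_2$ and the twist by $\left[\begin{smallmatrix}2^{-1}&\\&1\end{smallmatrix}\right]$ exist precisely to accommodate $\nu(2)>0$. Moreover you invoke the assumption at the one step where it truly matters: the two determinant requirements $st\det h_1\in{\mathfrak o}_L^\times$ and $2(st)^{-1}\det h_2\in{\mathfrak o}_L^\times$ are compatible only when $\nu(2)=0$, because $\det h_1\det h_2=1$. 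Indeed, since any lift of an element of $H\subset\SO(X)$ satisfies $\det h_1'\det h_2'=\lambda(h')=1$, the two memberships $h_1'h_1\in\Gamma_0({\mathfrak p}^{n_1})$ and $h_2'h_2\in\left[\begin{smallmatrix}2^{-1}&\\&1\end{smallmatrix}\right]\Gamma_0({\mathfrak p}^{n_2})$ would force $1=\det(h_1'h_1)\det(h_2'h_2)$ to have valuation $-\nu(2)$; so for $\nu(2)>0$ the right-hand side of the lemma is unsatisfiable while $h=1$ satisfies the left-hand side, and the statement must be reformulated there (your construction, like the paper's, only lands $h_2'h_2$ in the set of matrices in $\left[\begin{smallmatrix}2^{-1}{\mathfrak o}_L&2^{-1}{\mathfrak o}_L\\ {\mathfrak p}^{n_2}&{\mathfrak o}_L\end{smallmatrix}\right]$ with unit determinant). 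Asserting the odd-residue hypothesis as standing both misreads the setting and hides this issue; you need either to state the restriction explicitly, or to track $\nu(2)$ through the eight inequalities as the paper does and prove the correspondingly corrected statement when $\nu(2)>0$.
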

\begin{proof}
	Let $m \in \mathbb{Z}_{\ge 0}$ be such that $2 {\mathfrak o}_L = {\mathfrak p}^m$. First, suppose that $h = \rho(h_1,h_2)$ where $ h_1\in \Gamma_0({\mathfrak p}^{n_1})$ and $h_2 \in \left[  \begin{smallmatrix}
		2^{-1} &\\ & 1
	\end{smallmatrix}\right] \Gamma_0({\mathfrak p}^{n_2})$ where $ h_1 = [\begin{smallmatrix}
		a_1 & b_1\\ c_1 & d_1
	\end{smallmatrix}],$ and $h_2 = [\begin{smallmatrix}
		a_2/2 & b_2/2\\ c_2& d_2
	\end{smallmatrix}]$, for some $a_i, b_i , c_i , d_i \in L$, for $i \in \{1,2\}$. Then we have  
		$h^{-1} (x_1) = [\begin{smallmatrix}
			d_1c_2 & d_1d_2\\
			-c_1c_2 & -c_1d_2
		\end{smallmatrix}] \in \supp(\varphi_1)$ and $h^{-1}(x_2) = [\begin{smallmatrix}
			b_1a_2 & b_1b_2\\
			-a_1a_2 & -a_1b_2
		\end{smallmatrix}] \in \supp(\varphi_2)$.
  On the other hand, assume that $h^{-1}(x_1,x_2) \in \supp(\varphi)$. Then we have the following congruences $
  		c_1c_2 \in {\mathfrak p}^N, c_1d_2 \in {\mathfrak p}^{n_1}, d_1c_2 \in {\mathfrak p}^{n_2}, d_1d_2 \in {\mathfrak o}_L, a_1a_2\in {\mathfrak o}_L, a_1b_2\in {\mathfrak o}_L, b_1a_2\in {\mathfrak o}_L,$ and $ b_1b_2 \in {\mathfrak o}_L.
	$ Choose $r_1 \in \mathbb{Z}$ maximally so that $c_1 \in {\mathfrak p}^{r_1}$ and $d_1 \in {\mathfrak p}^{r_1 - n_1}$. Since at least one of $c_1$ and $d_1$ is not equal to $0$ we know that $c_2 \in {\mathfrak p}^{N - r_1} = {\mathfrak p}^{n_2 - (r_1 - n_1)}$ and $d_2 \in {\mathfrak p}^{n_1 - r_1} = {\mathfrak p}^{0 - (r_1 - n_1)}$. Similarly, choose $r_2 \in \mathbb{Z}$ maximally so that $a_1,b_1 \in {\mathfrak p}^{r_2}$. Since at least one of $a_1$ and $b_1$ is not equal to $0$ we know that $a_2,b_2 \in {\mathfrak p}^{-r_2-m}$.
	Therefore $h_1 \in [\begin{smallmatrix}
			{\mathfrak p}^{r_2}& {\mathfrak p}^{r_2}\\
			{\mathfrak p}^{r_1}& {\mathfrak p}^{r_1 - n_1}
		\end{smallmatrix}]$ and $h_2 \in [\begin{smallmatrix}
			2^{-1} {\mathfrak p}^{-r_2}& 2^{-1}{\mathfrak p}^{-r_2}\\
			{\mathfrak p}^{N - r_1} & {\mathfrak p}^{n_1 - r_1}
		\end{smallmatrix})] \cap \so(X)$.
We choose $h_1' = [\begin{smallmatrix}
			\varpi^{-r_2} & \\
			& \varpi^{n_1 - r_1}
		\end{smallmatrix}]$ and $h_2' =  [\begin{smallmatrix}
			\varpi^{r_2}&\\
			& \varpi^{r_1 - n_1}
		\end{smallmatrix}]$ so that $h' = \rho(h_1',h_2') \in H$
and easily see that that $h_1'h_1 \in \Gamma_0({\mathfrak p}^{n_1})$ and that $h_2'h_2 \in   \left[\begin{smallmatrix}
			2^{-1}&\\&1
		\end{smallmatrix} \right]\Gamma_0({\mathfrak p}^{n_2})$
\end{proof}
The following theorem establishes our main local result in the split case. 
\begin{theorem}\label{MT_split}
	Suppose that $E/L$ is split and that $W \in \mathcal W_{\tau_1} \otimes \mathcal W_{\tau_2}$ is $\Gamma_0({\mathfrak p}^{n_1}) \times \Gamma_0({\mathfrak p}^{n_2})$-invariant. Let $\varphi$ be as in \eqref{eq:phi_choice_split} and assume that $\Re(s)> M$, then the intertwining map $B$ defined in \eqref{eq:splitbesselintegral} is non-zero and $\K({\mathfrak p}^N)$-invariant. In particular $B(1,\varphi,W,s) \ne 0$.
\end{theorem}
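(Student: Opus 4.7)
My plan is to split the argument into two independent parts: paramodular invariance of $B(\cdot, \varphi, W, s)$, and its non-vanishing at the identity. The first is already essentially in hand. Corollary \ref{cor:bessel_invariance_split} asserts exactly that $B(gk, \varphi, W, s) = B(g, \varphi, W, s)$ for all $g \in \gsp(4, L)$ and all $k \in \K({\mathfrak p}^N)$; this in turn rests on the Weil-representation invariance $\omega(k, h)\varphi = \varphi$ verified for each generator of $\K({\mathfrak p}^N)$ in Lemma \ref{weil_paramodular_invariance_lemma_split}. So this half of the theorem reduces to a citation.

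The substantive content is to evaluate
\[
B(1, \varphi, W, s) = \int_{H \backslash \SO(X)} \varphi(h^{-1} x_1, h^{-1} x_2) \, Z(s, \pi(h) W) \, dh.
\]
By Lemma \ref{lemma:H_coset_split}, the integrand is supported precisely on the set of cosets $Hh$ admitting a representative of the form $\rho(h_1, h_2)$ with $h_1 \in \Gamma_0({\mathfrak p}^{n_1})$ and $h_2 \in [\begin{smallmatrix} 2^{-1} & \\ & 1 \end{smallmatrix}] \Gamma_0({\mathfrak p}^{n_2})$. Since Lemma \ref{zetaintegrals}(c) guarantees that $Z(s, \pi(h) W)$ depends only on the coset $Hh$, I may freely compute the integrand on such a representative. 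On the support the factor $\varphi(h^{-1} x_1, h^{-1} x_2)$ equals $1$ by the choice \eqref{eq:phi_choice_split}, and the $\Gamma_0({\mathfrak p}^{n_i})$-invariance of $W_i$ together with the second-slot analogue of Lemma \ref{zetaintegrals}(a) collapse the zeta factor to
\[
Z(s, \pi(h) W) = Z(s, W_1) \, Z(s, \tau_2([\begin{smallmatrix} 2^{-1} & \\ & 1 \end{smallmatrix}]) W_2) = |2|^{s - 1/2} \, Z(s, W).
\]

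Putting these together, $B(1, \varphi, W, s) = c \cdot |2|^{s - 1/2} \cdot Z(s, W)$, where $c$ is the volume in $H \backslash \SO(X)$ of the support identified above. Since $W$ is a newform for which $Z(s, W) \ne 0$ for $\Re(s) \gg 0$, it remains only to verify that $c > 0$. The described support is a non-empty open subset of $H \backslash \SO(X)$: when the residual characteristic is odd it manifestly contains the identity coset (since $[\begin{smallmatrix} 2^{-1} & \\ & 1 \end{smallmatrix}] \in \Gamma_0({\mathfrak p}^{n_2})$ in that case), and when it is even, one exhibits representatives of the form $\rho(h_1, h_2)$ with $h_2$ in the fixed nontrivial coset of $\Gamma_0({\mathfrak p}^{n_2})$ shifted by $[\begin{smallmatrix} 2^{-1} & \\ & 1 \end{smallmatrix}]$. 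In either case the support is open and non-empty, so positivity of $c$ is automatic. I anticipate the only mild complication is a careful bookkeeping of the quotient measure in the even residual characteristic case, but this is a finite combinatorial check rather than a conceptual obstacle.
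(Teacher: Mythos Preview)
Your proposal is correct and follows essentially the same approach as the paper's own proof: cite Corollary~\ref{cor:bessel_invariance_split} for the $\K({\mathfrak p}^N)$-invariance, then use Lemma~\ref{lemma:H_coset_split} to identify the support of the integrand at $g=1$, collapse the zeta factor to $|2|^{s-1/2}Z(s,W)$ via Lemma~\ref{zetaintegrals}(a) and the $\Gamma_0$-invariance of $W$, and conclude that $B(1,\varphi,W,s)$ equals a positive volume times this nonzero quantity. Your added remarks on why the volume $c$ is strictly positive (including the even residual characteristic case) are more explicit than the paper's treatment, which simply records the product and asserts it is nonzero.
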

\begin{proof}
By Corollary \ref{cor:bessel_invariance_split} We have already show that $B(\cdot,\varphi,W,s)$ is paramodular invariant. Lemma \ref{lemma:H_coset_split} can be used determine the support of $B(1, \varphi, W, s)$. Indeed, we see that
\begin{align*}
	B(1,\varphi,W,s) & = \int\limits_{H \backslash \so(X)} \omega(1,h) \varphi(x_1,x_2) Z(s,\pi(h)W) = \int\limits_{H \backslash \rho(\Gamma_0({\mathfrak p}^{n_1}) \times \left[\begin{smallmatrix}
			2^{-1}&\\&1
		\end{smallmatrix}\right]\Gamma_0({\mathfrak p}^{n_2}))} Z(s,\pi(h) W)\\
	& = \vol[H\backslash \rho(\Gamma_0({\mathfrak p}^{n_1}) \times \left[\begin{smallmatrix}
			2^{-1}&\\&1
		\end{smallmatrix}\right]\Gamma_0({\mathfrak p}^{n_2}))] \cdot |2|^{s - \frac{1}{2}} \cdot Z(s, W)\neq 0.
\end{align*}
Note that the additional constant $|2|^{s - \frac{1}{2}}$ comes from an application of Lemma \ref{zetaintegrals} part (a). 
  \end{proof}


\subsection{The Inert Case} 
\label{sec:the_inert_case_intertwinning_mapc}

We now let $X=X_{ns}$ and apply the same method to prove that if $E/L$ is inert that there exists a $\varphi \in \mathcal S(X^2)$ such that $B(\cdot,\varphi,W,s) \ne 0$, in particular we will show that that $B(1) \ne 0$. Suppose that $1 \ne \delta \in {\mathfrak o}_L$ is square-free and that the field extensions $E = L(\sqrt{\delta})$ is inert. In particular this means that $\delta \in {\mathfrak o}_L^\times$. Let $\tau_0$ be an irreducible admissible representation of $\gl(2,E)$ with trivial central character. We assume that the space of $\tau_0$ is its Whittaker model $\mathcal W_{\tau_0}$ and that there is some $W \in \mathcal W_{\tau_0}$ that is $\Gamma_0({\mathfrak p}^n)$-invariant, for some non-negative integer $n$. Set $N = 2n$ which will be the paramodular level of $B(\cdot, \varphi, W, s)$. With this information we choose Schwartz function $\varphi = T(\varphi_1 \otimes \varphi_2)$ where 
\begin{equation}
	\label{eq:phi_choice_inert}
	\varphi_1 = f_{\left[\begin{smallmatrix}
		{\mathfrak p}^n & {\mathfrak o}_L\\
		{\mathfrak p}^N & {\mathfrak p}^n
	\end{smallmatrix}\right] \cap X} \quad \text{and} \quad \varphi_2 = f_{\mat(2,{\mathfrak o}_E) \cap X}, 
\end{equation} 
where $f_S$ is the characteristic function of a set S.
Evidently we see that we can also write the support of the $\varphi_i$ as
\begin{equation}
	\supp(\varphi_1) = \{[\begin{smallmatrix}
		x_1 + x_2 \sqrt{\delta} & x_3\\ x_4 & x_1 - x_2 \sqrt{\delta}
	\end{smallmatrix}] \mid x_1 \in {\mathfrak p}^{n}, x_2 \in {\mathfrak p}^{n}, x_3 \in {\mathfrak o}_L , x_4 \in {\mathfrak p}^{N}\}
\end{equation}
and
\begin{equation}
	\supp(\varphi_2) = \{[\begin{smallmatrix}
		x_1 + x_2 \sqrt{\delta} & x_3\\ x_4 & x_1 - x_2 \sqrt{\delta}
	\end{smallmatrix}] \mid x_1,x_2,x_3,x_4 \in {\mathfrak o}_L\}.
\end{equation}
\begin{lemma}
	\label{weil_paramodular_invariance_lemma_inert}
	Let $\varphi$ be as chosen above. Then, for every $(k,h) \in R$ such that $k \in \K({\mathfrak p}^N)$ we have that $\omega(k,h)\varphi = \varphi$.
\end{lemma}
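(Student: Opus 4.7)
The plan is to mirror the proof of Lemma \ref{weil_paramodular_invariance_lemma_split}, running through the four families of generators of $\K({\mathfrak p}^N)$ supplied by Lemma \ref{lemma:paramodular_generators} and, for each generator $k$, producing an explicit $h \in \gso(X)$ with $\lambda(h)=\lambda(k)$ such that $\omega(k,h)\varphi = \varphi$. The exact sequence \eqref{eq:extended_exact_sequence} supplies enough elements of $\gso(X_{ns})$ with prescribed similitude factor: for $\lambda \in {\mathfrak o}_L^\times$, pick $a \in {\mathfrak o}_E^\times$ with $\norm_L^E(a)=\lambda$ (possible since $E/L$ is unramified) and take $h = \rho(1,[\begin{smallmatrix} a&\\&1\end{smallmatrix}])$ or a suitable variant.

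For generators of type (a), namely $[\begin{smallmatrix}A&\\ & \lambda\,{}^tA^{-1}\end{smallmatrix}]$ with $A \in \Gamma_0({\mathfrak p}^N)$, apply the seesaw embedding of Lemma \ref{seesaw_lemma} to rewrite $\omega([\begin{smallmatrix}A&\\&{}^tA^{-1}\end{smallmatrix}],h)$ as the tensor of two copies of $\omega_1(A,h)$ acting on $\varphi_1$ and $\varphi_2$. One then checks directly that $\supp(\varphi_1) = [\begin{smallmatrix}{\mathfrak p}^n&{\mathfrak o}_L\\{\mathfrak p}^N&{\mathfrak p}^n\end{smallmatrix}]\cap X$ and $\supp(\varphi_2)=\Mat(2,{\mathfrak o}_E)\cap X$ are stable under the left-right action $x \mapsto AxA^*$ (with $A \in \Gamma_0({\mathfrak p}^N)\subset \gl(2,{\mathfrak o}_L)$), and that $\chi_{E/L}(\det A)|\det A|^2 = 1$ since $\chi_{E/L}$ is unramified in the inert case and $\det A \in {\mathfrak o}_L^\times$. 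The similitude twist by $\lambda \in {\mathfrak o}_L^\times$ is absorbed by the choice of $h$ above, which acts on $X_{ns}$ as the identity up to an integral isomorphism.

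For type (b), the action is pure multiplication by $\psi(b_1\langle x_1,x_1\rangle + 2b_2\langle x_1,x_2\rangle + b_3\langle x_2,x_2\rangle)$, and using \eqref{eq:ip_expand_nonsplit} together with the definitions of $\supp(\varphi_1)$ and $\supp(\varphi_2)$ one verifies $\langle x_1,x_1\rangle \in {\mathfrak p}^N$, $\langle x_1,x_2\rangle \in {\mathfrak o}_L$, $\langle x_2,x_2\rangle \in {\mathfrak o}_L$ on the support, so the character is trivial against $b_i \in {\mathfrak p}^{-N}, {\mathfrak o}_L, {\mathfrak o}_L$ respectively. For type (c), the Weyl element $s_2$, Lemma \ref{seesaw_lemma} reduces the action to applying $\mathcal F_1$ in the second slot to $\varphi_2$; since $E/L$ is unramified we have $\nu_L(\delta)=0$ so the constant in Lemma \ref{Haar_measure_constant_lemma} is $1$ and $\mathcal F_1(\varphi_2)=\varphi_2$.

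For type (d), the element $t_N$ acts via $\omega_1([\begin{smallmatrix}&\varpi^{-N}\\-\varpi^N&\end{smallmatrix}],1)$ on $\varphi_1$, which is the composition of a scaling and a Fourier transform. Using Lemma \ref{simplegausschilemma} together with \eqref{eq:ip_expand_nonsplit}, I expect $\mathcal F_1(\varphi_1^{\varpi^N})(x) = q_E^{2N}\varphi_1(x)$, exactly matching the normalization factor $|\varpi^N|^{\dim X/2}=q^{-2N}=q_E^{-2N}$ that appears from the scaling, so that the two cancel and the generator acts trivially. The main obstacle will be the bookkeeping in this last step, ensuring the $\gamma_1(X)$, the scaling normalizations, and the volume of $\supp(\varphi_1)$ assemble to give exactly $\varphi_1$; the $\chi_{E/L}$ and $\gamma_1(X)$ factors must be checked carefully, but in the unramified case they pose no obstruction.
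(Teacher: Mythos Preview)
Your treatment of generators (b), (c), (d) matches the paper's: the direct use of \eqref{weilrep3} for the unipotent, the seesaw reduction of $s_2$ to $\mathcal F_1(\varphi_2)=\varphi_2$, and the seesaw reduction of $t_N$ to the scaled Fourier transform of $\varphi_1$ are exactly the paper's arguments. (One bookkeeping correction: in the inert case $q_E=q_L^2$, so the Fourier identity is $\mathcal F_1(\varphi_1^{\varpi^N})=q_L^{2N}\varphi_1$, not $q_E^{2N}\varphi_1$; this is what cancels $|\varpi_L^{-N}|_L^{\dim X/2}=q_L^{2N}$.)

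However, your argument for generators of type (a) has a genuine gap. The seesaw embedding of Lemma~\ref{seesaw_lemma} sends $(g_1,g_2)\in\GL(2)\times\GL(2)$ to the element
\[
\left[\begin{smallmatrix} a_1&&b_1&\\&a_2&&b_2\\c_1&&d_1&\\&c_2&&d_2\end{smallmatrix}\right],
\]
which is \emph{not} $[\begin{smallmatrix}A&\\&{}^tA^{-1}\end{smallmatrix}]$ for a general $A=[\begin{smallmatrix}a_1&a_2\\a_3&a_4\end{smallmatrix}]\in\Gamma_0({\mathfrak p}^N)$; the off-diagonal entries $a_2,a_3$ of $A$ sit at positions $(1,2)$ and $(2,1)$, outside the seesaw image. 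Consequently $\omega([\begin{smallmatrix}A&\\&{}^tA^{-1}\end{smallmatrix}],1)$ does \emph{not} factor as $\omega_1(A,\cdot)\otimes\omega_1(A,\cdot)$, and the Levi action is not the conjugation $x\mapsto AxA^*$ on each $X$-slot that you describe. What it actually does, by \eqref{weilrep2}, is mix the two slots:
\[
\omega([\begin{smallmatrix}A&\\&{}^tA^{-1}\end{smallmatrix}],1)\varphi(x,y)=\chi_{E/L}(\det A)\,|\det A|^2\,\varphi(a_1x+a_3y,\ a_2x+a_4y).
\]
The paper uses this formula directly: since $a_3\in{\mathfrak p}^N$ and $a_1,a_2,a_4\in{\mathfrak o}_L$, one checks that $(x,y)\in\supp(\varphi_1)\times\supp(\varphi_2)$ if and only if $(a_1x+a_3y,\,a_2x+a_4y)$ lies there (the key point being $a_3\cdot[\Mat(2,{\mathfrak o}_E)\cap X]\subset\supp(\varphi_1)$ and $\supp(\varphi_1)\subset\supp(\varphi_2)$), and then $\chi_{E/L}(\det A)=1$ because $\det A\in{\mathfrak o}_L^\times=\norm_L^E({\mathfrak o}_E^\times)$. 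Replace your seesaw argument for (a) with this direct computation and the proof goes through.
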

\begin{proof}
	Just as in the split case it suffices to check this for each of the of the generators of $\K({\mathfrak p}^N)$, as listed in Lemma \ref{lemma:paramodular_generators}.

\begin{enumerate}
	\item[(a)] Let  $A = [\begin{smallmatrix}
		a_1&a_2\\
		a_3&a_4
	\end{smallmatrix}] \in \Gamma_0({\mathfrak p}^N) \subset \gl(2,{\mathfrak o}_L)$ for some choices of $a_i \in {\mathfrak o}_L$. Note that $A^{-1} = \frac{1}{\det(A)}\left[\begin{smallmatrix}
		a_4&-a_2\\
		-a_3&a_1
	\end{smallmatrix}\right] \in \Gamma_0({\mathfrak p}^{N})$. If $(x,y) \in \supp(\varphi)$ then 
		$a_1x + a_3y \in \left[\begin{smallmatrix}
			{\mathfrak p}^{n} & {\mathfrak o}_L\\ {\mathfrak p}^N & {\mathfrak p}^{n}
		\end{smallmatrix}\right]$ and a$ a_2x + a_4y \in \mat(2,{\mathfrak o}_L)$
	so that $(a_1x + a_3y , a_2x + a_4y) \in \supp(\varphi)$. On the other hand, if $(a_1x + a_3y , a_2x + a_4y) \in \supp(\varphi)$ then 
	\begin{align*}
		A^{-1} & \cdot (a_1x + a_3y , a_2x + a_4y)\\ &= \frac{1}{\det(A)}(a_4(a_1x + a_3y) + -a_3(a_2x + a_4y), -a_2(a_1x + a_3y) + a_1(a_2x + a_4y))\\
		& = (x,y).
	\end{align*} Since $A^{-1} \in \Gamma_0({\mathfrak p}^N)$ we conclude that $(x,y) \in \supp(\varphi)$.
	So, we have just proved that $(x,y) \in \supp(\varphi)$ if and only if $(a_1x + a_3y , a_2x + a_4y) \in \supp(\varphi)$. Recall that when $E/L$ is unramified that $\norm_L^E: {\mathfrak o}_E^\times \to {\mathfrak o}_L^\times$ is surjective. Since $\det(A) \in {\mathfrak o}_E^\times$ we see that 
	\begin{align*}
		\omega([\begin{smallmatrix}
			A&\\
			&{}^tA^{-1}
		\end{smallmatrix}] , 1)\varphi(x,y) & =  \chi_{E/L}(\det(A))|\det(A)|^2 \varphi(a_1x + a_3y, a_2x + a_4y)\\
		& = \varphi(x,y).
	\end{align*}

	Let $u \in {\mathfrak o}_L^\times$ such that $g_u = \left[\begin{smallmatrix}
		1&&&\\
		&1&&\\
		&&u&\\
		&&&u
	\end{smallmatrix}\right] \in \gsp(4)^+$. Choose $u_E \in {\mathfrak o}_E^\times$ such that $\norm_L^E(u_E) = u$ and set $h_u = \rho(1,[\begin{smallmatrix}
		u_E&\\&1
	\end{smallmatrix})]$. Then $(g_u,h_u) \in R$. Since $u_E$ is a unit in ${\mathfrak o}_E$ we can conclude that
	\begin{align*}
	 	\omega(g_u,h_u)\varphi(x,y) &= |u|^{-2} \varphi(h_u^{-1}x , h_u^{-1}y)\\
	 	& = \varphi(x,y)
	\end{align*} 
	for all $(x,y) \in X^2$.

	 \item[(b)] Suppose that $B = [\begin{smallmatrix}
	 	b_1&b_2\\ b_2&b_3
	 \end{smallmatrix}] \in [\begin{smallmatrix}
	 	{\mathfrak p}^{-N} & {\mathfrak o}_L\\
	 	{\mathfrak o}_L & {\mathfrak o}_L
	 \end{smallmatrix}]$. Then for all $(x,y) \in \supp(\varphi)$ we can easily calculate that 
	\[
		b_1 \langle x,x \rangle + 2b_2 \langle x , y \rangle + b_3 \langle y,y \rangle \in {\mathfrak o}_L, \quad \text{so that}
	\]
	\begin{align*}
		\omega([\begin{smallmatrix}
			1 & B \\ & 1
		\end{smallmatrix}],1) \varphi(x,y) & = \psi(b_1 \langle x,x \rangle + 2b_2 \langle x , y \rangle + b_3 \langle y,y \rangle) \varphi(x,y)\\ & = \varphi(x,y).
	\end{align*}

	\item[(c)] As in part $(c)$ in the proof of the the split case (Lemme \ref{weil_paramodular_invariance_lemma_split}) it suffices to show that $\mathcal F_1(\varphi_2) = \varphi_2$, which we did in the proof of Lemma \ref{Haar_measure_constant_lemma}.

	\item[(d)] Again, similar to the split case it suffices to show that $\mathcal F_1(\varphi_1^{\varpi^N}) = q^{2N} \varphi_1$. For a generic element $y \in X$ we write $y = \left[ \begin{smallmatrix}
		y_1 + y_2 \sqrt{\delta} & y_3\\
		y_4 & y_1 - y_2 \sqrt{\delta}
	\end{smallmatrix}  \right]$ with $y_i \in L$ for $i \in \{1,2,3,4\}$. Using \eqref{eq:ip_expand_nonsplit} and the corresponding considerations for the Haar measure in Lemma \ref{Haar_measure_constant_lemma} we can calculate this as follows:
	\begin{align*}
		(\mathcal F_1 \varphi_1^{\varpi^N})(x) & = \int\limits_X \varphi_1(\varpi^N x) \psi(2 \langle x , y \rangle) \,dy = \int\limits_{ \left [ \begin{smallmatrix}
			{\mathfrak p}^{-n} & {\mathfrak p}^{-N}\\
			{\mathfrak o}_E & {\mathfrak p}^{-n}
		\end{smallmatrix} \right] \cap X} \psi(2 \langle x , y  \rangle ) \,dy\\
		& = \int\limits_{{\mathfrak p}^{-n}} \psi(2x_1y_1) \,dy_1 \int\limits_{{\mathfrak p}^{-n}} \psi(-2x_2y_2 \delta) \,dy_2 \int\limits_{{\mathfrak p}^{-N}} \psi(-x_4y_3) \,dy_3 \int\limits_{{\mathfrak o}_L} \psi(-x_3y_4 \delta) \,dy_4 .
	\end{align*} 
    \end{enumerate}
    Using Lemma \ref{simplegausschilemma} we can conclude that the above is 
    $(\mathcal F_1 \varphi_1^{\varpi^N})(x) = q^{2N} \varphi_2(x)$
  \end{proof}

\begin{corollary}
	\label{cor:Bessel_invariance_inert}
	For all $k \in \K({\mathfrak p}^N)$ and all $g \in \gsp(4,L)$ we have that $B(gk,\varphi,W,s) = B(g,\varphi,W,s)$.
\end{corollary}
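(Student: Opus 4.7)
The plan is to follow the proof of Corollary \ref{cor:bessel_invariance_split} almost verbatim, since the choice of $\varphi$ in \eqref{eq:phi_choice_inert} is tailored so that each of the four generator types from Lemma \ref{lemma:paramodular_generators} behaves under the Weil representation in the same structural way as in the split case. First I would dispose of the case $\lambda(k)=1$: here $(k,1_{\GO(X)}) \in R$, so Lemma \ref{weil_paramodular_invariance_lemma_inert} yields $\omega(k,1)\varphi=\varphi$, and unfolding the integrand of $B(gk)$ the factor $\omega(k,1)$ splits off, giving $B(gk)=B(g)$ directly. The surjectivity of $\Norm_L^E:{\mathfrak o}_E^\times \to {\mathfrak o}_L^\times$ in the unramified setting guarantees that a matching $h' \in \GSO(X)$ with $\lambda(h')=\lambda(g)$ can always be chosen when unfolding the definition.

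For a general $k \in \K({\mathfrak p}^N)$ with $\lambda(k)=u \in {\mathfrak o}_L^\times$, the plan is to reduce to the previous case. With $g_1$ as in \eqref{eq:g0} I set
\[
k_1 = \left[\begin{smallmatrix} u^{-1}&&&\\&1&&\\&&1&\\&&&u^{-1}\end{smallmatrix}\right]k \in \SSp(4,L)\cap \K({\mathfrak p}^N), \qquad g_2 = \left[\begin{smallmatrix} u^{-1}&&&\\&1&&\\&&1&\\&&&u^{-1}\end{smallmatrix}\right] g_1 \left[\begin{smallmatrix} u&&&\\&1&&\\&&1&\\&&&u\end{smallmatrix}\right].
\]
Because $|u|=1$, the extension formula \eqref{eq:B_extend} gives $B(gk)=|\lambda(g)|^{-s+\frac12}B(g_2k_1)$, and then $B(g_2k_1)=B(g_2)$ by the similitude-one case just established. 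The remaining task is to show $B(g_2)=B(g_1)$, for which it suffices to verify $\omega(g_1,1)\varphi=\omega(g_2,1)\varphi$ on the similitude-one generators of $\K({\mathfrak p}^N)$, namely types (a) (with $\lambda=1$), (b), and (c) of Lemma \ref{lemma:paramodular_generators}.

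I would run through the three verifications exactly as in the split case. For type (a), the conjugation replaces the block $A = [\begin{smallmatrix} a_1 & a_2 \\ a_3 & a_4 \end{smallmatrix}]$ by $[\begin{smallmatrix} a_1 & a_2u^{-1} \\ a_3u & a_4 \end{smallmatrix}]$; since $u \in {\mathfrak o}_L^\times \subset {\mathfrak o}_E^\times$, this unit twist preserves each of the ideals ${\mathfrak p}^n, {\mathfrak o}_L, {\mathfrak p}^N$ appearing in the support description of $\varphi_1$ and ${\mathfrak o}_E$ in the description of $\varphi_2$. For (b), only the $b_1,b_3$ entries of the symmetric matrix get rescaled by $u^{\pm 1}$, and since $b_1\langle x,x\rangle$ still lies in ${\mathfrak o}_L$ (as $\langle x,x\rangle \in {\mathfrak p}^N$ on $\supp\varphi$) and $b_3\langle y,y\rangle$ still lies in ${\mathfrak o}_L$, the character $\psi$ is trivial on the exponent. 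For (c), the identity $\varphi(u^{-1}x, uy)=\varphi(x,y)$ follows from unit-invariance of each ideal defining the two supports, and conjugation by the diagonal factor is absorbed. The only real obstacle is bookkeeping: one must confirm that every unit appearing in the conjugated generators truly preserves all the ideals of ${\mathfrak o}_E$ cut out by intersecting with $X_{ns}$, and this is precisely where the unramified hypothesis ($\sqrt{\delta}\in{\mathfrak o}_E^\times$ and $\Norm_L^E$ surjective on units) is used.
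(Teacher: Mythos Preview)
Your proposal is correct and mirrors the paper's approach, which is simply to transplant the proof of Corollary \ref{cor:bessel_invariance_split} to the inert setting. One small slip in labeling: the three cases you run through in the final paragraph are the $\Sp(4,L)$ generators from \eqref{weilrep2}--\eqref{weilrep4} (exactly as in the split proof), not the $\K({\mathfrak p}^N)$ generators of Lemma \ref{lemma:paramodular_generators} that you cite, though your actual verifications are the right ones.
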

\begin{proof}
	The proof is very similar to the proof of Corollary \ref{cor:bessel_invariance_split}. 
  \end{proof}

\begin{lemma} \label{lemma:H_coset_inert} 
	Let $x_1,x_2$ and $H$ be as in Section \ref{sec:notation} and let $m \in \mathbb{Z}$ be such that $2 {\mathfrak o}_L = {\mathfrak p}^m$ and set $\bar m = \lfloor \frac{m}{2} \rfloor$. Let $t \in L^\times$ and $h_0 \in \gl(2,E)$ and set $h = \rho(t, h_0) \in \so(X)$. Then, $h^{-1}(x_1,x_2) \in \supp(\varphi)$ if and only if there is some $t' \in L^\times$ and $h'_0 \in \gl(2, E)$ such that $h' = \rho(t', h'_0) \in H$, $t't = 1$, and $h'_0h_0 = \left[ \begin{smallmatrix}
		\varpi^{-\bar m} & \\ & 1
	\end{smallmatrix} \right] A$, for some $A \in \Gamma_0({\mathfrak p}^n)$.
\end{lemma}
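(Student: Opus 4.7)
The plan is to reduce the biconditional to a valuation analysis of the entries of $h_0$, closely paralleling the proof of Lemma \ref{lemma:H_coset_split}. The first task is to compute $h^{-1}x_1$ and $h^{-1}x_2$ explicitly. Writing $h_0 = [\begin{smallmatrix} a & b \\ c & d\end{smallmatrix}]$ and using $\rho(t,h_0)^{-1} = \rho(t^{-1},h_0^{-1})$ together with $(h_0^{-1})^* = h_0/\det h_0$, a direct computation---simplified by $t^2 = \norm_L^E(\det h_0)$ coming from $h \in \so(X)$---yields
\[
  h^{-1}x_1 = \frac{\sqrt{\delta}}{t}\begin{bmatrix} d\,\alpha(c) & \norm_L^E(d) \\ -\norm_L^E(c) & -c\,\alpha(d) \end{bmatrix},\qquad h^{-1}x_2 = \frac{2}{t\sqrt{\delta}}\begin{bmatrix} b\,\alpha(a) & \norm_L^E(b) \\ -\norm_L^E(a) & -a\,\alpha(b) \end{bmatrix},
\]
both of which lie in $X$ because $\alpha(\sqrt{\delta}) = -\sqrt{\delta}$ produces the required symmetry between top-left and bottom-right entries.

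For the ``if'' direction, I parametrize any element of $H$ as $(t', h'_0) = (\norm_L^E(z'), [\begin{smallmatrix} z' & \\ & z'u\end{smallmatrix}])$ with $z' \in E^\times$ and $u \in E^1$. Given the hypotheses $t't = 1$ and $h'_0 h_0 = [\begin{smallmatrix} \varpi^{-\bar m} & \\ & 1\end{smallmatrix}] A$ with $A = [\begin{smallmatrix} a''&b''\\c''&d''\end{smallmatrix}] \in \Gamma_0({\mathfrak p}^n)$, I read off $a,b,c,d$ in terms of $z', u$ and the entries of $A$; substituting into the formulas above and using $\norm_L^E(z')\cdot t = 1$ together with $\norm_L^E(u) = 1$, each entry of $h^{-1}x_i$ collapses to a product of $\sqrt{\delta}$ (or $1/\sqrt{\delta}$) with an entry of $A$ or its $\alpha$-conjugate, whose integrality in the appropriate ideal is immediate from $A \in \Gamma_0({\mathfrak p}^n)$.

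For the ``only if'' direction, set $T := \nu_L(t) = \nu_E(\det h_0)$, the second equality following from $t^2 = \norm_L^E(\det h_0)$ and $E/L$ unramified. The support conditions $h^{-1}x_i \in \supp(\varphi_i)$ translate into explicit valuation inequalities relating $\nu_E(a), \nu_E(b), \nu_E(c), \nu_E(d)$ to $T$, $m$, and $n$. Because $n \ge 1$ ensures $\nu_E(bc)$ strictly exceeds the lower bound on $\nu_E(ad)$, the identity $\nu_E(ad-bc) = T$ pins down $\nu_E(ad) = T$ exactly; combined with the individual lower bounds and the constraint $\nu_E(d) - \nu_E(a) = \bar m$ imposed by the target form $[\begin{smallmatrix}\varpi^{-\bar m}&\\&1\end{smallmatrix}]A$, this determines $\nu_E(a)$ and $\nu_E(d)$ exactly. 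I then choose $z' \in E^\times$ with $\nu_E(z') = -\nu_E(d)$ and $\norm_L^E(z') = 1/t$---possible because $\norm_L^E \colon {\mathfrak o}_E^\times \twoheadrightarrow {\mathfrak o}_L^\times$ in the unramified case---set $u = 1$, and take $t' = \norm_L^E(z')$. Then $\rho(t',h'_0) \in H$, $t't = 1$, and direct substitution verifies $h'_0 h_0 \in [\begin{smallmatrix} \varpi^{-\bar m} & \\ & 1\end{smallmatrix}]\Gamma_0({\mathfrak p}^n)$.

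The main obstacle is the careful bookkeeping of the factor $2$ in $h^{-1}x_2$, which is responsible for the $\varpi^{-\bar m}$ twist appearing in the target coset, together with the factor of $\sqrt{\delta}$ in both formulas; a further delicate point is extracting the exact valuation identities $\nu_E(ad) = T$ from the determinant relation rather than merely a lower bound, which requires using $n \ge 1$ to separate $\nu_E(ad)$ from $\nu_E(bc)$.
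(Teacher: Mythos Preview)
Your overall strategy---compute $h^{-1}x_1$ and $h^{-1}x_2$ explicitly and do a valuation analysis---is exactly the paper's strategy, and your ``if'' direction is fine. But the ``only if'' direction has real gaps.

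First, you invoke ``$n \ge 1$'' to separate $\nu_E(ad)$ from $\nu_E(bc)$, yet the lemma carries no such hypothesis; the case $n=0$ (and hence $N=0$) must be covered as well.

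Second, and more seriously, the sentence ``combined with the individual lower bounds and the constraint $\nu_E(d)-\nu_E(a)=\bar m$ imposed by the target form'' is either circular or unjustified. That relation is part of what you are trying to prove; it is not something you may use as an input. From the support conditions you only get \emph{lower} bounds of the type $2\nu_E(d)\ge T$ and $2\nu_E(a)+m\ge T$, together with $\nu_E(a)+\nu_E(d)=T$ (assuming your $bc$ separation). These do not pin down $\nu_E(a)$ and $\nu_E(d)$ individually.

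Third, your construction of $z'$ demands simultaneously $\nu_E(z')=-\nu_E(d)$ and $\norm_L^E(z')=1/t$. The second condition forces $2\nu_E(z')=-T$, hence $T$ even and $\nu_E(z')=-T/2$. You have not shown $T$ is even, nor that $\nu_E(d)=T/2$; both are needed before you can invoke surjectivity of the norm on units.

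The paper handles the ``only if'' direction differently and in two steps: it first argues that $\nu_L(t)$ must be even (via the parity of $\nu(c\alpha(c))$, which is always even since it equals $2\nu_E(c)$), and then uses a specific element of $H$ to reduce to the case $t=1$. Only after this normalization does it carry out the valuation analysis on the entries of $A$. This two-step reduction avoids the circularity you run into: once $t=1$, the determinant constraint and the entrywise bounds directly force $a,b,d\in{\mathfrak o}_E$ and $c\in{\mathfrak P}^n$, with no need to solve for $\nu_E(a)$ and $\nu_E(d)$ separately. You should restructure your argument along these lines: establish parity of $T$ first, normalize $t$ away using $H$, and only then read off the $\Gamma_0({\mathfrak p}^n)$ membership.
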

\begin{proof}
	First suppose that there is such an $h'$ and an $A = \left[\begin{smallmatrix}
      \varpi^{-\bar m}a& \varpi^{- \bar m}b\\c&d
    \end{smallmatrix}\right] \in \left[ \begin{smallmatrix}
		\varpi^{-\bar m} & \\ & 1
	\end{smallmatrix} \right] \Gamma_0({\mathfrak p}^n)$ such that $h'h = \rho(1,A)$. We calculate
	\begin{align} 
		h^{-1} x_1 & = h^{-1} (h'^{-1} x_1)= (h'h)^{-1} x_1= A x_1 \alpha(A)^* \label{eq:hmat1}
		= \sqrt{\delta}[\begin{smallmatrix}
			d \alpha(c)  & d \alpha(d) \\
			-c \alpha(c)  & -c \alpha(d) 
		\end{smallmatrix}]
		\intertext{and,}
		h^{-1} x_2 & = h^{-1} (h'^{-1} x_2) = (h'h)^{-1} x_2= A x_2 \alpha(A)^*\label{eq:hmat2}
		 = \frac{\sqrt{\delta}}{\delta} \left[\begin{smallmatrix}
			\varpi^{m - 2\bar m}b \alpha(a)  & \varpi^{m - 2\bar m} b \alpha(b) \\
			-\varpi^{m - 2\bar m} a \alpha(a)  & -\varpi^{m - 2\bar m} a \alpha(b) 
		\end{smallmatrix}\right].
	\end{align}
	Since $m - 2\bar m \ge 0$ we conclude that $h^{-1}(x_1,x_2) \in \supp(\varphi)$. 

	On the other hand, let $h = \rho(1,A)$ with $A = \left[\begin{smallmatrix}
		\varpi^{-\bar m}a& \varpi^{- \bar m}b\\c&d
	\end{smallmatrix} \right] \in \gl(2,E)$ and assume that $h^{-1}(x_1,x_2) \in \supp (\varphi)$. From \eqref{eq:hmat1} and \eqref{eq:hmat2}, we immediately find the following congruences
	\begin{align*}
		& d \alpha(c) \in {\mathfrak p}^n, d \alpha (d) \in {\mathfrak o}_L, c \alpha(c) \in {\mathfrak p}^{N}\\
		& b \alpha(a), a \alpha(a), b \alpha(b) \in {\mathfrak p}^{2\bar m - m} = \begin{cases}
			{\mathfrak o}_L; & m \ \text{is even} \\
			{\mathfrak p}^{-1}; & m \ \text{is odd}.
		\end{cases}
	\end{align*}
	Since $|\alpha(x)| = |x|$ for all $x \in E$ we conclude that $a,b,d \in {\mathfrak o}_L$ and $c \in {\mathfrak p}^n$. Let $t \in L^\times$ and let $A$ be as above. Set $h = \rho(t,A)$ and suppose that $h^{-1}(x_1,x_2) \in \supp(\varphi)$. If $\nu(t)$ is odd then $\nu(c \alpha (c))$ is odd, which is impossible. Then $t = \varpi^{2k}u$ for some $k \in \mathbb{Z}$ and some $u \in {\mathfrak o}_L^\times$. Set $h' = \rho(t^{-1},[\begin{smallmatrix}
		\varpi^{k}&\\&\varpi^{k}u
	\end{smallmatrix}]) \in H$. A priori, $h'h \in \rho(1,\gl(2,E))$ and $(h'h)^{-1}(x_1,x_2) = h^{-1}(x_1,x_2) \in \supp(\varphi)$. By the above calculation when $t = 1$ we can conclude that $h'h \in \rho(1,  \left[ \begin{smallmatrix}
		\varpi^{-\bar m} & \\ & 1
	\end{smallmatrix} \right] \Gamma_0({\mathfrak p}^n))$.
  \end{proof}
  The following theorem establishes our main local result in the inert case. 
\begin{theorem} \label{MT_inert}
	Suppose that $E/L$ is inert and that $W \in \mathcal W_{\tau_0}$ has $\Gamma_0({\mathfrak p}^n)$-invariance. Let $\varphi$ be  as in \eqref{eq:phi_choice_inert} and assume that $s \gg M$, then the intertwining map $B$ defined in \eqref{eq:nonsplitbesselintegral} is non-zero and $K({\mathfrak p}^N)$-invariant. In particular $B(1,\varphi,W,s) \ne 0$.
\end{theorem}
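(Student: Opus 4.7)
The approach is entirely parallel to the split case treated in Theorem \ref{MT_split}. The paramodular invariance $B(gk,\varphi,W,s)=B(g,\varphi,W,s)$ for $k\in \K({\mathfrak p}^N)$ is already supplied by Corollary \ref{cor:Bessel_invariance_inert}, so the whole task is to establish non-vanishing, and for this it suffices to evaluate $B(1,\varphi,W,s)$ explicitly and read off that it is a non-zero multiple of $Z(s,W)$.

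First I would use Lemma \ref{lemma:H_coset_inert} to pin down the support, as a function on $H\backslash \SO(X)$, of $h\mapsto (\omega(1,h)\varphi)(x_1,x_2)=\varphi(h^{-1}x_1,h^{-1}x_2)$. That lemma identifies this support exactly with the single right coset
\[
H\big\backslash H\,\rho\!\left(1,\left[\begin{smallmatrix}\varpi^{-\bar m}&\\&1\end{smallmatrix}\right]\Gamma_0({\mathfrak p}^n)\right),
\]
and on this set the characteristic function $\varphi$ takes the constant value $1$. So $B(1,\varphi,W,s)$ collapses to the integral of $Z(s,\pi(h)W)$ over that coset.

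Next I would compute the $Z$-factor on the support. For $h=\rho\big(1,\left[\begin{smallmatrix}\varpi^{-\bar m}&\\&1\end{smallmatrix}\right]A\big)$ with $A\in\Gamma_0({\mathfrak p}^n)$, the $\Gamma_0({\mathfrak p}^n)$-invariance of $W$ together with the trivial central character give $\pi(\rho(1,A))W=W$, so applying Lemma \ref{zetaintegrals}(b) yields
\[
Z(s,\pi(h)W)=\big|\varpi^{\bar m}\big|_E^{\,s-\tfrac12}\,Z(s,W)=q_E^{-\bar m(s-\tfrac12)}\,Z(s,W),
\]
independent of $A$. Pulling this constant out and taking the volume of the coset, one obtains
\[
B(1,\varphi,W,s)\;=\;\vol\!\left[H\backslash H\rho\!\left(1,\left[\begin{smallmatrix}\varpi^{-\bar m}&\\&1\end{smallmatrix}\right]\Gamma_0({\mathfrak p}^n)\right)\right]\cdot q_E^{-\bar m(s-\tfrac12)}\cdot Z(s,W).
\]
Since $W$ is in its (unique) Whittaker model and $\Re(s)\gg M$, standard properties of $\gl(2,E)$ zeta integrals (Lemma \ref{zeta_convergence_2}) give $Z(s,W)\neq 0$, and the volume is a positive constant by our normalization, so $B(1,\varphi,W,s)\neq 0$.

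The genuinely new content sits entirely in Lemma \ref{lemma:H_coset_inert} and Lemma \ref{weil_paramodular_invariance_lemma_inert}; once those are in hand, the argument above is a short bookkeeping step. The only mild subtlety I would be careful about is the factor $q_E^{-\bar m(s-\tfrac12)}$ coming from the $\left[\begin{smallmatrix}\varpi^{-\bar m}&\\&1\end{smallmatrix}\right]$ piece, which is non-trivial precisely when the residual characteristic of $L$ is even (and $E/L$ is unramified inert, as permitted by the hypothesis). This plays the same role as the $|2|^{s-\tfrac12}$ factor in the proof of Theorem \ref{MT_split}, and since it is manifestly non-zero it does not obstruct the conclusion. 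There is no serious obstacle; the only thing to guard against is an off-by-one error in interpreting $\bar m=\lfloor m/2\rfloor$ when the valuation of $2$ in $L$ is odd.
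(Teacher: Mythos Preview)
Your proposal is correct and follows essentially the same route as the paper: invoke Corollary \ref{cor:Bessel_invariance_inert} for the $\K({\mathfrak p}^N)$-invariance, use Lemma \ref{lemma:H_coset_inert} to identify the support of the integrand as the single coset $H\backslash H\rho(1,\left[\begin{smallmatrix}\varpi^{-\bar m}&\\&1\end{smallmatrix}\right]\Gamma_0({\mathfrak p}^n))$, and then pull out the constant $Z$-factor via Lemma \ref{zetaintegrals}. Your citation of Lemma \ref{zetaintegrals}(b) is in fact the correct one for the non-split setting (the paper cites part (a), which appears to be a slip), and your constant $q_E^{-\bar m(s-\frac12)}=|\varpi|_E^{\bar m(s-\frac12)}$ is the right normalization.
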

\begin{proof}
By Corollary \ref{cor:Bessel_invariance_inert} We have already show that $B(\cdot,\varphi,W,s)$ is paramodular invariant. Lemma \ref{lemma:H_coset_inert} can be used to determine the support of $B(1,\varphi,W,s)$. Indeed, we see that
\begin{align*}
	B(1,\varphi,W,s) &= \int\limits_{H\backslash \so(X)} \omega(1,h) \varphi(x_1,x_2) Z(s, \pi(h)W) \, dh= \int\limits_{H \backslash \rho(1, \left[ \begin{smallmatrix}
		\varpi^{-\bar m} & \\ & 1
	\end{smallmatrix} \right] \Gamma_0({\mathfrak p}^n))} Z(s,\pi(h)W) \,dh\\
	& = \vol[H \backslash \rho(1, \left[ \begin{smallmatrix}
		\varpi^{-\bar m} & \\ & 1
	\end{smallmatrix} \right] \Gamma_0({\mathfrak p}^n))] \cdot |\varpi|^{\bar m(s - \frac{1}{2}) } \cdot Z(s,W) \ne 0.
\end{align*}
Note that the additional constant $|\varpi|^{\bar m(s - \frac{1}{2})}$ comes from an application of Lemma \ref{zetaintegrals} part (a). 
  \end{proof}


\subsection{The Ramified Case} 
\label{sec:the_ramified_case}

Suppose that $\delta \in {\mathfrak o}_L$ is square-free and that the field extensions $E = L(\sqrt{\delta})$ is ramified, so that $\delta \in {\mathfrak p}$. Let $\tau_0$ be an irreducible admissible representation of $\gl(2,E)$ with trivial central character. We assume that the space of $\tau_0$ is its Whittaker model $\mathcal W_{\tau_0}$ and that there is some $W \in \mathcal W_{\tau_0}$ that is invariant under $\Gamma_0({\mathfrak P}^n)$ for some non-negative integer $n$. Set $N = n + 2$, which we will prove is the paramodular level of $B(\cdot, \varphi, W, s)$. In the previous two sections we were able to find Schwartz functions for which $B(g,\varphi, W,s) \ne 0$ is paramodular invariant by inspection. In the ramified case we take a more systematic approach. Define $\tilde\varphi(x,y)  = T(\varphi_1(x) \otimes \varphi_2(y))$, where
	\begin{align}
 \label{eq:tilde_phi}
 \varphi_1(x) & = \chi(x_3)f_{\left[\begin{smallmatrix}
			{\mathfrak P}^{n + 1} &  {\mathfrak P}\\
			\varpi_E^{2n + 1} {\mathfrak o}_E^\times & {\mathfrak P}^{n + 1}
		\end{smallmatrix}\right] \cap X} (x) = \chi(x_3) f_{{\mathfrak P}^{n + 1}}(x_1) f_{{\mathfrak o}_L}(x_2) f_{\varpi_L^n {\mathfrak o_L^\times}}(x_3), 
	\\  
		\varphi_2(y) & = \chi(y_3) f_{\left[ \begin{smallmatrix}
			{\mathfrak P}^{-1} & {\mathfrak P}^{-1} \\ \varpi_E^{-1} {\mathfrak o}_E^\times & {\mathfrak P}^{-1}
		\end{smallmatrix} \right] \cap X} (y),
	 = \chi(y_3) f_{{\mathfrak P}^{-1}}(y_1) f_{{\mathfrak p}^{-1}} (y_2) f_{\varpi^{-1}_L {\mathfrak o}_L^\times}(y_3), \nonumber
\end{align}
with $x = \left[\begin{smallmatrix}
	x_1 & x_2 \sqrt{\delta} \\ x_3 \sqrt{\delta} & \alpha(x_1)
\end{smallmatrix}\right]$ and $y= \left[\begin{smallmatrix}
	y_1 & y_2 \sqrt{\delta} \\ y_3 \sqrt{\delta} & \alpha(y_1)
\end{smallmatrix}\right]$. 

Let $T$ be the subgroup of $\GSp(4,L)$ generated by
\begin{equation*}
	\{ \left[\begin{smallmatrix}
      A&\\&{}^tA^{-1}
    \end{smallmatrix}\right], \left[\begin{smallmatrix}
        1&B\\&1
      \end{smallmatrix}\right], \left[\begin{smallmatrix}
          1&\\
          C&1
        \end{smallmatrix}\right] \mid A \in \Gamma_{0}({\mathfrak p}^{N}), B \in \left[\begin{smallmatrix}
            {\mathfrak p}^{-N + 1} & {\mathfrak o}_L\\ {\mathfrak o}_L & {\mathfrak p} \end{smallmatrix}\right] \cap \sym(2), C \in \left[\begin{smallmatrix}
                  {\mathfrak p}^N & {\mathfrak p}^{N-1}\\ {\mathfrak p}^{N - 1} & {\mathfrak o}_L
                \end{smallmatrix}\right]\cap \sym(2)  \}.
\end{equation*}
The following lemma computes the Fourier transforms of $\varphi_1$ and $\varphi_2$.
\begin{lemma} \label{lemma:Fourier_identites}
	Let $\varphi_1$ and $\varphi_2$ be as above. Then
	\begin{align*}
		\mathcal F_1(\varphi_1)(\varpi^{-N} x) & = q_E^{-N} \mathcal F_1(\varphi_1^{\varpi^{N}})(x)  = q_E^{-N} \chi(x_2) f_{\left[\begin{smallmatrix}
					{\mathfrak P}^{n + 2} & \varpi_E {\mathfrak o}_E^\times \\ {\mathfrak P}^{2n + 3} & {\mathfrak P}^{n + 2}
				\end{smallmatrix}\right] \cap X}(x)\\
	\intertext{where $\varphi_1^a(x) = \varphi_1(ax)$, and}
		\mathcal F_1(\varphi_2)(x) & = \chi(x_2) f_{\left [\begin{smallmatrix}
					{\mathfrak o}_E & \varpi^{-1}_E {\mathfrak o}_E^\times \\ {\mathfrak P} & {\mathfrak o}_E
				\end{smallmatrix} \right] \cap X}(x). 
	\end{align*}
\end{lemma}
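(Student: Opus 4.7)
Both identities are proved by direct Fourier-analytic calculation, exploiting the fact that $\varphi_1$ and $\varphi_2$ each factor as a product of four factors in the four $L$-coordinates of $X_{ns}$. The strategy is to expand the pairing $2\langle x,y\rangle$ in coordinates using \eqref{eq:ip_expand_nonsplit}, so that $\psi(2\langle x,y\rangle)$ also factors as a product of four one-variable characters, and to use the product Haar measure $dy = q_E^{-1}\,dy_1\,dy_2\,dy_3\,dy_4$ on $X_{ns}$ from Lemma \ref{Haar_measure_constant_lemma} (with the factor $c=q_E^{-1}$ coming from $\nu_L(\delta)=1$ in the tamely ramified case). This reduces each Fourier transform to a product of four one-variable integrals over $L$.

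Three of the four component integrals involve only $\psi$ and indicator functions, and are evaluated immediately by Lemma \ref{simplegausschilemma}; they produce the support conditions on the $(1,1)$- and $(2,1)$-entries of $x$ (i.e.\ $x_1 \in \mathfrak{P}^{n+2}$ and $x_3 \in \mathfrak{P}^{2n+3}$ for $\varphi_1$, or $x_1 \in \mathfrak{o}_E$ and $x_3 \in \mathfrak{P}$ for $\varphi_2$). The fourth integral, arising from the $\chi(y_3)$ factor, is a local Gauss sum of the form
\begin{equation*}
  \int_{\mathfrak{o}_L^\times} \chi(u)\,\psi(c\,u)\,du,
\end{equation*}
where $c$ depends linearly on $x_2$ and on the relevant uniformizer. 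Because $E/L$ is tamely ramified with odd residual characteristic, $\chi$ has conductor $\mathfrak{p}$, while $\psi$ has conductor $\mathfrak{o}_L$; standard Gauss-sum theory then gives that the integral vanishes unless $\nu_L(c)=-1$, in which case it equals $\chi(c\varpi_L)$ times a non-zero constant. The integer $N=n+2$ and the powers $\varpi_L^n, \varpi_L^{-1}$ in the supports of $\varphi_1$ and $\varphi_2$ have been chosen precisely so that this valuation condition forces $x_2$ into the claimed unit coset, producing the factor $\chi(x_2)$ in both identities. The first equality $\mathcal F_1(\varphi_1)(\varpi^{-N}x) = q_E^{-N}\mathcal F_1(\varphi_1^{\varpi^N})(x)$ then follows from the change of variable $y\mapsto \varpi^{-N}y$ in the integral defining $\mathcal F_1(\varphi_1^{\varpi^N})(x)$, using the measure scaling on $X_{ns}$ from Lemma \ref{Haar_measure_constant_lemma}.

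The main technical obstacle is the careful bookkeeping of normalization constants: the factor $q_E^{-1}$ in the Haar measure, the powers of $q$ from Lemma \ref{simplegausschilemma}, the character values $\chi(\varpi_L)^k$ that arise when stripping uniformizers out of $\chi(y_3)$, and the Gauss-sum constant $G(\chi,\psi)$. These must combine to give exactly the scalar $q_E^{-N}$ in the first identity and $1$ in the second, and the support conditions must align precisely as stated. Because the Gauss-sum integral vanishes unless its coefficient has valuation exactly $-1$, a misalignment by even a single power of $\varpi_L$ in any of the four one-variable integrals would either annihilate the Fourier transform or shift its support off the claimed ideal; keeping track of this in the ramified setting, where powers of $\sqrt{\delta}$ interconvert between $L$- and $E$-valuations, is where the delicate part of the computation lies.
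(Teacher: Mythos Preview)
Your plan is correct and follows essentially the same route as the paper: factor the Fourier integral into one-variable integrals, evaluate the non-twisted ones by Lemma~\ref{simplegausschilemma}, and recognize the $\chi(y_3)$-integral as a Gauss sum whose support condition forces $x_2$ into the unit coset and produces the $\chi(x_2)$ factor; the first equality for $\varphi_1$ is the change of variable $y\mapsto\varpi^{-N}y$ you describe. The only cosmetic difference is that the paper groups the diagonal entry into a single $E$-variable and integrates against $\psi_E^{\varpi_E^{-1}}$ (which has conductor $\mathfrak o_E$ in the ramified case), rather than splitting into four $L$-coordinates via \eqref{eq:ip_expand_nonsplit}; since the ramified section parametrizes $X_{ns}$ as $x=\bigl[\begin{smallmatrix} x_1 & x_2\sqrt\delta\\ x_3\sqrt\delta & \alpha(x_1)\end{smallmatrix}\bigr]$ with $x_1\in E$, the paper's choice matches the definitions of $\varphi_1,\varphi_2$ more directly and avoids the extra $\sqrt\delta$-bookkeeping you would incur. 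On the constants, the paper does not actually pin down the Gauss-sum scalar to be $1$: it only observes via Fourier inversion that the scalar $k'$ satisfies $(k')^4=1$, so your concern about normalization is justified but not fully resolved in the paper either.
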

\begin{proof}
	First note that, since $E/L$ is ramified,  $\psi_E^{\varpi_E^{-1}}$ has conductor ${\mathfrak o}_E$. Evaluations of character sums and Gauss sums in this proof rely on the relevant formulas proven in Section \ref{sec:notation}. We begin with the easier calculation:
	\begin{align*}
		\mathcal F_1(\varphi_2) (x) & = \int\limits_{X} \varphi_2(y) \psi(2 \langle x , y \rangle) \, dy
	\\ 
		& = k\int\limits_{{\mathfrak P}^{-1}} \psi_E(x_1 \alpha (y_1)) \,dy_1 \int\limits_{{\mathfrak p}^{-1}} \psi(-\delta x_3y_3) \,dy_2 \int\limits_{\varpi_L^{-1} {\mathfrak o}_L^\times} \chi(y_3) \psi(-\delta x_2 y_3) \,dy_3
	\\
		& = k \int\limits_{{\mathfrak P}^{-1}} \psi_E^{\varpi^{-1}_E}( \varpi_E x_1 \alpha (y_1)) \,dy_1 \int\limits_{{\mathfrak p}^{-1}} \psi(-\delta x_3y_2) \,dy_2 \int\limits_{\varpi_L^{-1} {\mathfrak o}_L^\times} \chi(y_3) \psi(-\delta x_2 y_3) \,dy_3
	\\
		& = k'  \chi(x_2) f_{{\mathfrak o}_E}(x_1) f_{\varpi_L^{-1}{\mathfrak o}_L^\times}(x_2) f_{{\mathfrak o}_L}(x_3)  
	\\
		& = k' \chi(x_2) f_{\left[\begin{smallmatrix}
					{\mathfrak o}_E & \varpi^{-1}_E {\mathfrak o}_E^\times \\ {\mathfrak P} & {\mathfrak o}_E
				\end{smallmatrix}\right] \cap X}(x).
    \end{align*}
	We know that $(k')^4 = 1$ since $\mathcal F_1(\mathcal F_1 \varphi_2)(x) = \varphi_2(-x) = \chi(-1)\varphi_2(x)$. For $\varphi_1$ we see that the first equality is true because $[\begin{smallmatrix}
			& \varpi_L^{-N}\\ -\varpi_L^N &
		\end{smallmatrix}] = [\begin{smallmatrix}
			&1\\-1&
		\end{smallmatrix}][\begin{smallmatrix}
			\varpi_L^{-N} & \\ & \varpi_L^N
		\end{smallmatrix}] = [\begin{smallmatrix}
			\varpi_L^N & \\
			& \varpi_L^{-N}
		\end{smallmatrix}][\begin{smallmatrix}
			&-1\\1&
		\end{smallmatrix}]$. For the second equality we calculate that
		\begin{align*}\mathcal F_1(\varphi_1) (x) & = \int\limits_{X} \varphi_1(y) \psi(2 \langle x , y \rangle) \,dy
	\\
		& = k \int\limits_{{\mathfrak P}^{n + 1}} \psi_E^{\varpi^{-1}_E}( \varpi_E x_1 \alpha (y_1)) \,dy_1 \int\limits_{{\mathfrak o}_L} \psi(-\delta x_3y_2) \,dy_2 \int\limits_{\varpi_L^{n} {\mathfrak o}_L^\times} \chi(y_3) \psi(-\delta x_2 y_3) \,dy_3
	\\
		& = q_E^{-N} k' \chi(x_2) f_{{\mathfrak P}^{-n-2}}(x_1) f_{\varpi_L^{-n - 2}{\mathfrak o}_L^\times}(x_2) f_{{\mathfrak p}^{-1}}(x_3).
	\intertext{Therefore we have that}
		\mathcal F_1(\varphi_1)(\varpi_L^{-N}x) & = q_E^{-N} k' \chi(x_2) f_{{\mathfrak P}^{n+2}}(x_1) f_{{\mathfrak o}_L^\times}(x_2) f_{{\mathfrak p}^{n+1}}(x_3).
	\\
		& = q_E^{-N}\chi(x_2) f_{\left[\begin{smallmatrix}
					{\mathfrak P}^{n + 2} & \varpi_E {\mathfrak o}_E^\times \\ {\mathfrak P}^{2n + 3} & {\mathfrak P}^{n + 2}
				\end{smallmatrix}\right] \cap X}(x).
	\end{align*} 
	So, both equalities are verified.
  \end{proof}
\begin{lemma}
	Let $\tilde \varphi$ be as defined before \eqref{eq:tilde_phi}. For every $g \in T$ we have that $\omega(g, 1)\tilde\varphi = \tilde\varphi$. 
\end{lemma}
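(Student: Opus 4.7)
The plan is to verify $\omega(g,1)\tilde\varphi = \tilde\varphi$ separately for each of the three generating families of $T$, exploiting Lemma \ref{seesaw_lemma} to move block-diagonal elements of $\GSp(4,L)$ through the tensor factorization $\tilde\varphi = T(\varphi_1 \otimes \varphi_2)$ into pairs of $\GL(2,L)$-Weil actions on $\varphi_1$ and $\varphi_2$.

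For the Levi generators $\bigl[\begin{smallmatrix} A & \\ & {}^{t}A^{-1}\end{smallmatrix}\bigr]$ with $A = \bigl[\begin{smallmatrix} a & b \\ c & d \end{smallmatrix}\bigr] \in \Gamma_0({\mathfrak p}^N)$, formula \eqref{weilrep2} reads $\tilde\varphi(x_1,x_2) \mapsto \chi_{E/L}(\det A)|\det A|^2 \tilde\varphi(ax_1+cx_2,\, bx_1+dx_2)$. Since $a,d \in {\mathfrak o}_L^\times$ and $c \in {\mathfrak p}^N = {\mathfrak p}^{n+2}$, componentwise valuation inequalities show that the mixing contributions $cx_2$ and $bx_1$ fall into strictly smaller ideals than the tolerance of each lattice appearing in \eqref{eq:tilde_phi}, so the supports of $\varphi_1$ and $\varphi_2$ are preserved. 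Since $\chi$ has conductor ${\mathfrak p}$, the character factors transform as $\chi(a x_{13}+c y_3)=\chi(a)\chi(x_{13})$ and $\chi(b x_{13}+d y_3)=\chi(d)\chi(y_3)$, because the relevant ratios lie in ${\mathfrak p}$; combined with $\chi_{E/L}(\det A)=\chi(ad-bc)=\chi(ad)$, the accumulated scalar is $\chi(a^2 d^2)=1$. For the upper unipotent generators $\bigl[\begin{smallmatrix} 1 & B \\ & 1\end{smallmatrix}\bigr]$, formula \eqref{weilrep3} multiplies $\tilde\varphi$ by $\psi(b_1 \langle x_1,x_1\rangle + 2b_2 \langle x_1,x_2\rangle + b_3 \langle x_2,x_2\rangle)$. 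Using \eqref{eq:ip_expand_nonsplit} on the supports of $\varphi_1$ and $\varphi_2$, one finds $\langle x_1,x_1\rangle \in {\mathfrak p}^{N-1}$, $\langle x_1,x_2\rangle \in {\mathfrak o}_L$, and $\langle x_2,x_2\rangle \in {\mathfrak p}^{-1}$; the constraints $b_1 \in {\mathfrak p}^{-N+1}$, $b_2 \in {\mathfrak o}_L$, $b_3 \in {\mathfrak p}$ then force the argument of $\psi$ into ${\mathfrak o}_L$, which is the conductor of $\psi$.

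The main obstacle is the lower unipotent family $\bigl[\begin{smallmatrix} 1 & \\ C & 1\end{smallmatrix}\bigr]$, since such elements are not among the Weil-representation generators \eqref{weilrep1}--\eqref{weilrep4}. The plan is to use the conjugation identity $\bigl[\begin{smallmatrix} I & 0 \\ C & I\end{smallmatrix}\bigr] = J \bigl[\begin{smallmatrix} I & -C \\ 0 & I \end{smallmatrix}\bigr] J^{-1}$, where $J$ is the Weyl element of \eqref{weilrep4} whose Weil action is the Fourier transform $\mathcal F$ up to the constant $\gamma(X)$. By the seesaw applied to $J$ and the explicit computation of Lemma \ref{lemma:Fourier_identites}, the function $\mathcal F(\tilde\varphi) = T(\mathcal F_1(\varphi_1)\otimes \mathcal F_1(\varphi_2))$ is again a character-times-lattice-indicator function, now supported on the dual lattices listed there. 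Re-running the upper-unipotent argument on $\mathcal F(\tilde\varphi)$, a direct valuation check via \eqref{eq:ip_expand_nonsplit} shows that the three new inner products land in ${\mathfrak p}^{-N}$, ${\mathfrak p}^{-N+1}$, and ${\mathfrak o}_L$ respectively, so that the constraints $c_1\in{\mathfrak p}^N$, $c_2\in{\mathfrak p}^{N-1}$, $c_3\in{\mathfrak o}_L$ again force $\psi(-c_1 \langle\cdot,\cdot\rangle - 2c_2 \langle\cdot,\cdot\rangle - c_3\langle\cdot,\cdot\rangle)$ to be identically $1$. Conjugating back by $J^{-1}$ recovers $\tilde\varphi$. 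The delicate step is tracking the normalization constants $q_E^{\pm N}$, the scalar $\gamma(X)^{\pm 1}$, and the quadratic-character shifts appearing in Lemma \ref{lemma:Fourier_identites}, but these arise symmetrically under $J$ and $J^{-1}$; the self-Fourier-duality already exploited in the proof of Lemma \ref{Haar_measure_constant_lemma} is exactly what guarantees their cancellation.
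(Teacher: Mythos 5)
Your argument is correct and follows the same overall strategy as the paper: check invariance generator by generator, with the Levi and upper-unipotent families handled exactly as in the paper's proof (support preservation from the valuation of $c\in{\mathfrak p}^N$, the conductor-${\mathfrak p}$ argument showing the $\chi$-factors contribute $\chi(ad)$ against $\chi_{E/L}(\det A)=\chi(ad)$, and the inner-product bounds $\langle x,x\rangle\in{\mathfrak p}^{N-1}$, $\langle x,y\rangle\in{\mathfrak o}_L$, $\langle y,y\rangle\in{\mathfrak p}^{-1}$). The one place you genuinely deviate is the lower-unipotent family: the paper writes $[\begin{smallmatrix}1&\\C&1\end{smallmatrix}]=(t_Ns_2)^{-1}[\begin{smallmatrix}1&C'\\&1\end{smallmatrix}]t_Ns_2$, so its multiplier computation takes place on the support of the rescaled transform $\mathcal F_1(\varphi_1)(\varpi^{-N}\cdot)\otimes\mathcal F_1(\varphi_2)$ with $C'$ having entries in ${\mathfrak p}^{-N},{\mathfrak p}^{-1},{\mathfrak o}_L$, whereas you conjugate by the Weyl element $J$ of \eqref{weilrep4}, working with $\mathcal F_1(\varphi_1)\otimes\mathcal F_1(\varphi_2)$ itself and the matrix $-C$; since $t_Ns_2=J\cdot\mathrm{diag}(\varpi^{N},1,\varpi^{-N},1)$, the two bookkeepings differ only by a diagonal rescaling, and your valuation claims (${\mathfrak p}^{-N}$, ${\mathfrak p}^{-N+1}$, ${\mathfrak o}_L$ for the three inner products on $\supp\mathcal F_1(\varphi_1)\times\supp\mathcal F_1(\varphi_2)$, using Lemma \ref{lemma:Fourier_identites} and the fact that $\trace_{E/L}({\mathfrak P}^{-N})\subseteq{\mathfrak p}^{-N+1}$ in the tame case) do check out, so the $\psi$-multiplier is identically $1$. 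One simplification over what you wrote: there is nothing delicate about the constants $\gamma(X)$, $q_E^{\pm N}$, $k'$, or the sign $x\mapsto-x$, and no appeal to Lemma \ref{Haar_measure_constant_lemma} is needed; since $\omega(J,1)\,\omega([\begin{smallmatrix}1&-C\\&1\end{smallmatrix}],1)\,\omega(J^{-1},1)\tilde\varphi=\omega([\begin{smallmatrix}1&\\C&1\end{smallmatrix}],1)\tilde\varphi$ and the middle factor acts by the scalar $1$ on $\omega(J^{-1},1)\tilde\varphi$ (whose support is $-\supp\mathcal F(\tilde\varphi)$, and the lattice and unit conditions are stable under negation), the product collapses to $\tilde\varphi$ with no normalization to track — exactly as in the paper's $(t_Ns_2)^{-1}\cdots t_Ns_2$ computation.
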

\begin{proof}
\begin{enumerate}
	\item[(a)] Let $A = [\begin{smallmatrix}
			a_1 & a_2\\ a_3 & a_4
		\end{smallmatrix}] \in \Gamma_0({\mathfrak p}^{N})$. Then $(x,y) \in \supp (\tilde\varphi)$ if and only if $(a_1x + a_3y, a_2x + a_4y) \in \supp (\tilde\varphi)$. Furthermore, Assuming that $(x,y) \in \supp (\tilde\varphi)$, we have that
	\begin{align*}
		& \chi((a_1x_3 + a_3y_3)(a_2x_3 + a_4y_3)) \\
		= & \chi(a_1a_2x_3^2 + 2a_2a_3 x_3y_3 + a_3a_4y_3^2 + x_3y_3 (\det A))\\
		= & \chi\left[ x_3y_3  \det A) \big( 1 + (x_3y_3\det A)^{-1} (a_1a_2x_3^2 + 2a_2a_3 x_3y_3 + a_3a_4y_3^2) \big) \right]\\
		= & \chi( x_3y_3\det A)
	\end{align*}
	since $(x_3y_3\det A)^{-1} (a_1a_2x_3^2 + 2a_2a_3 x_3y_3 + a_3a_4y_4^2) \in {\mathfrak p}$. Therefore
	\begin{align*}
		\omega([\begin{smallmatrix}
			A&\\&{}^tA^{-1}
		\end{smallmatrix}], 1) \tilde\varphi(x,y) & = \chi(\det A) \tilde\varphi(a_1 x + a_3 y, a_2x + a_4 y)\\
		& =  \tilde\varphi(x,y).
	\end{align*}
	
	\item[(b)] Let $B = [\begin{smallmatrix}
			b_1 & b_2 \\ b_2 & b_3
		\end{smallmatrix}] \in [\begin{smallmatrix}
			{\mathfrak p}^{-N + 1} & {\mathfrak o}_L\\
			{\mathfrak o}_L & {\mathfrak p}
		\end{smallmatrix}]$. If $(x , y ) \in \supp (\tilde\varphi)$ then we get the following congruences
		\begin{align*}
			\langle x , x \rangle \in {\mathfrak p}^{n + 1} = {\mathfrak p}^{N - 1}, \quad
			\langle y , y \rangle &\in {\mathfrak p}^{-1}, \quad \text{and} \quad
			\langle x , y \rangle \in {\mathfrak o}_L
		\intertext{so that,}
			 b_1 \langle x , x \rangle + 2b_2 \langle x , y \rangle + b_3 \langle y , y \rangle & \in {\mathfrak o}_L. 	
		\end{align*} 
		Therefore it is now clear that for all $(x,y) \in X^2$ we have
		\begin{align*}
			\omega([\begin{smallmatrix}
				1&B\\&1
			\end{smallmatrix}],1) \tilde\varphi(x,y)& = \psi(\langle b_1 \langle x , x \rangle + 2b_2 \langle x , y \rangle + b_3 \langle y , y \rangle)\tilde\varphi(x,y)\\ &= \tilde\varphi(x,y)
		\end{align*}

	\item[(c)] Let $C = [\begin{smallmatrix}
		c_1&c_2\\c_2&c_3
	\end{smallmatrix}] \in [\begin{smallmatrix}
		{\mathfrak p}^N & {\mathfrak p}^{N - 1}\\ {\mathfrak p}^{N - 1} & {\mathfrak o}_L
	\end{smallmatrix}]$ and set $C' = [\begin{smallmatrix}
		-c_1\varpi^{-2N} & -c_2 \varpi^{-N} \\ -c_2\varpi^{-N} & -c_3
	\end{smallmatrix}].$  If $(x,y) \in \supp (\omega(t_Ns_2,1) \tilde\varphi) = \supp (\mathcal F_1(\tilde\varphi_1)(\varpi^{-N}\cdot) \otimes \mathcal F_1(\tilde\varphi_2)(\cdot))$ then we have the congruences
	\begin{align*}
		\langle x , x \rangle  \in {\mathfrak p}^{n + 2}, \quad
		\langle y , y \rangle  &\in {\mathfrak o}_L, \quad \text{and} \quad
		\langle x , y \rangle  \in {\mathfrak p}
	\intertext{so that,}
		-c_1 \varpi^{-2N} & \langle x , x \rangle - 2 c_2 \varpi^{-N} \langle x , y \rangle - c_3 \langle y , y \rangle \in {\mathfrak o}_L.
	\end{align*}
	Therefore for all $(x,y) \in X^2$ we find that
	\begin{align*}
		& \ \ \ \ \omega([\begin{smallmatrix}
			1&\\C&1
		\end{smallmatrix}],1)\tilde\varphi(x,y) \\ & = \omega((t_Ns_2)^{-1} [\begin{smallmatrix}
			1&C' \\ &1
		\end{smallmatrix}]t_Ns_2 , 1)\tilde\varphi(x,y)\\
		& = \omega((t_Ns_2)^{-1} , 1) \psi(-c_1 \varpi^{-2N}  \langle x , x \rangle - 2 c_2 \varpi^{-N} \langle x , y \rangle - c_3 \langle y , y \rangle) [\omega(t_Ns_2 , 1)\tilde\varphi](x,y) \\
		& = \omega((t_Ns_2)^{-1} , 1) [\omega(t_Ns_2 , 1)\tilde\varphi](x,y)\\
		& = \tilde\varphi(x,y).
	\end{align*}
\end{enumerate}
	We have verified the claim for all the generators of $T$, which means that we are done. 
  \end{proof}

\begin{lemma} \label{lemma:klingen_invariance_cosets}
	The set $\{ s_2 , \left[\begin{smallmatrix}
			1&&&\\
			&1&&v\\
			&&1&\\
			&&&1
		\end{smallmatrix}\right] \mid  u,v \in {\mathfrak o}_L /{\mathfrak p}\}$ is a complete list of coset representatives of $\Kl({\mathfrak p}^N) / [\Kl({\mathfrak p}^N)  \cap T]$.
\end{lemma}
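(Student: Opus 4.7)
The plan is to reduce an arbitrary $k \in \Kl({\mathfrak p}^N)$ to one of the listed representatives by right-multiplication by elements of $\Kl({\mathfrak p}^N) \cap T$. Note that although the indexing set mentions both $u$ and $v$, only $v$ appears in the displayed matrix; the argument below will produce exactly $q+1$ cosets, parametrized by $v \in {\mathfrak o}_L/{\mathfrak p}$ together with $s_2$.

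First I would compute $\Kl({\mathfrak p}^N) \cap T$ generator-by-generator. The Siegel-Levi generators $[\begin{smallmatrix} A & \\ & {}^tA^{-1} \end{smallmatrix}]$ with $A \in \Gamma_0({\mathfrak p}^N)$ sit inside $\Kl({\mathfrak p}^N)$ automatically. The lower unipotent generators of $T$ intersected with $\Kl({\mathfrak p}^N)$ cover the entire lower unipotent of $\Kl({\mathfrak p}^N)$, since the $T$-condition $C \in [\begin{smallmatrix} {\mathfrak p}^N & {\mathfrak p}^{N-1} \\ {\mathfrak p}^{N-1} & {\mathfrak o}_L \end{smallmatrix}] \cap \sym(2)$ is weaker than the $\Kl({\mathfrak p}^N)$-condition $C \in [\begin{smallmatrix} {\mathfrak p}^N & {\mathfrak p}^N \\ {\mathfrak p}^N & {\mathfrak o}_L \end{smallmatrix}] \cap \sym(2)$. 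The upper unipotent generators of $T$ intersected with $\Kl({\mathfrak p}^N)$ give symmetric $B$ with $b_1,b_2 \in {\mathfrak o}_L$ and $b_3 \in {\mathfrak p}$, whereas the upper unipotent of $\Kl({\mathfrak p}^N)$ itself allows $b_3 \in {\mathfrak o}_L$.

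Next I would invoke the Bruhat decomposition $\Kl({\mathfrak p}^N) = I \sqcup I s_2 I$ relative to the Iwahori subgroup $I$, since $s_2$ is the Klingen reflection and the Klingen parahoric is generated by $I$ and $s_2$. For $k \in I$, the Iwahori factorization writes $k$ uniquely as a lower unipotent element times a diagonal torus element times an upper Siegel unipotent element; by the preceding paragraph, the first two factors lie in $\Kl({\mathfrak p}^N) \cap T$, and after right-multiplying by an appropriate upper unipotent generator of $T\cap \Kl({\mathfrak p}^N)$, only the residue class $v = b_3 \bmod {\mathfrak p}$ survives, yielding the $q$ representatives $[\begin{smallmatrix} 1 & & & \\ & 1 & & v \\ & & 1 & \\ & & & 1 \end{smallmatrix}]$. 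For $k \in I s_2 I$, write $k = u_1 s_2 u_2$ with $u_1, u_2 \in I$; after absorbing $u_1$ into $T \cap \Kl({\mathfrak p}^N)$ from the left, one rewrites $s_2 u_2 = (s_2 u_2 s_2^{-1}) s_2$, showing that $s_2 u_2 s_2^{-1}$ lies in a subgroup in which the roles of the upper- and lower-unipotent lattice conditions are swapped, and verifying that this lands inside $T \cap \Kl({\mathfrak p}^N)$ yields $s_2$ as the unique representative of this Bruhat cell.

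Distinctness is then a short verification: for $v_1 \ne v_2$ in ${\mathfrak o}_L/{\mathfrak p}$, the quotient of the two upper-unipotent matrices has $(2,4)$-entry $v_2 - v_1 \notin {\mathfrak p}$, so it cannot be a product of generators of $T$ lying in $\Kl({\mathfrak p}^N)$; and the coset of $s_2$ is disjoint from those of the upper-unipotents because $s_2 \in I s_2 I$ while every upper unipotent is in $I$.

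The main obstacle will be the $I s_2 I$ reduction: one must verify that the conjugation $s_2 u_2 s_2^{-1}$ sends the upper Siegel unipotent of $u_2$ (with $B$-entries in ${\mathfrak o}_L$) into a lower unipotent whose $C$-entries satisfy the precise lattice condition $C \in [\begin{smallmatrix} {\mathfrak p}^N & {\mathfrak p}^{N-1} \\ {\mathfrak p}^{N-1} & {\mathfrak o}_L \end{smallmatrix}] \cap \sym(2)$ required of $T$-generators, and conversely that the lower-unipotent part of $u_2$ is converted to an upper unipotent compatible with $B \in [\begin{smallmatrix} {\mathfrak p}^{-N+1} & {\mathfrak o}_L \\ {\mathfrak o}_L & {\mathfrak p} \end{smallmatrix}] \cap \sym(2)$. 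The asymmetric choice of level conditions in the definition of $T$ was designed precisely so that this compatibility holds.
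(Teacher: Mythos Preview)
Your strategy has the right shape—reduce to a $q+1$ count via a Bruhat-type dichotomy—but the specific decomposition $\Kl({\mathfrak p}^N) = I \sqcup I s_2 I$ with the standard Iwahori $I$ is not available here. For $N > 1$ the group $\Kl({\mathfrak p}^N)$ is not a parahoric, so there is no reason for such a double-coset decomposition to hold; and even for $N = 1$, in the paper's conventions $\Kl({\mathfrak p})$ requires the $(3,4)$ entry to lie in ${\mathfrak p}$ while the upper-triangular Iwahori does not, so $I \not\subset \Kl({\mathfrak p})$. A second problem is the reduction of the cell $I s_2 I$: you propose to ``absorb $u_1$ into $T \cap \Kl({\mathfrak p}^N)$ from the left,'' but we are computing left cosets $k\,[T\cap\Kl({\mathfrak p}^N)]$, so only factors on the \emph{right} can be absorbed; moreover $I \not\subset T\cap\Kl({\mathfrak p}^N)$, since your own analysis shows the upper Siegel unipotent of $T$ forces $b_3 \in {\mathfrak p}$ while $I$ allows $b_3 \in {\mathfrak o}_L$.

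The paper circumvents both issues by using the Iwahori factorization of $\Kl({\mathfrak p}^N)$ itself, writing each $k$ as a product (Klingen upper unipotent) $\times$ (Klingen Levi) $\times$ (Klingen lower unipotent), where the Klingen Levi carries an embedded $\SL(2,{\mathfrak o}_L)$ in positions $\{2,4\}\times\{2,4\}$. The lower-unipotent factor lies in $T\cap\Kl({\mathfrak p}^N)$ and is absorbed on the right. The genuine Bruhat split is then performed \emph{inside the embedded $\SL(2,{\mathfrak o}_L)$}: writing the Levi block as $[\begin{smallmatrix} s_1 & s_2 \\ s_3 & s_4 \end{smallmatrix}]$, one separates the cases $s_4 \in {\mathfrak o}_L^\times$ and $s_4 \in {\mathfrak p}$, and in each case an explicit sequence of right multiplications by generators of $T\cap\Kl({\mathfrak p}^N)$ reduces $k$ to the representative $\left[\begin{smallmatrix} 1&&&\\ &1&&v\\ &&1&\\ &&&1 \end{smallmatrix}\right]$ (with $v = s_4^{-1}s_2 \bmod {\mathfrak p}$) or to $s_2$, respectively. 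Your intuition that the asymmetric level conditions on $B$ and $C$ in the definition of $T$ are tailored to make this work is correct, but the computation has to be carried out inside the Klingen Iwahori factorization rather than the $\GSp(4)$ one.
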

\begin{proof}
Let $k \in \Kl({\mathfrak p}^N)$. Then, there exists $a,b,c, a',b',c' \in {\mathfrak o}_L, u \in {\mathfrak o}_L^\times$ and $s = [\begin{smallmatrix}
		s_1 & s_2 \\ s_3 & s_4
	\end{smallmatrix}] \in \SL(2,{\mathfrak o}_L)$ such that
	\begin{equation} \label{eq:klingen_iwahori}
	 	k = \left[\begin{smallmatrix}
	 		1&a&c&b\\
	 		&1&b&\\
	 		&&1&\\
	 		&&-a&1
	 	\end{smallmatrix}\right] 
	 	\left[\begin{smallmatrix}
	 		u_1 &&&\\
	 		&s_1&&s_2\\
	 		&&u^{-1}&\\
	 		&s_3&&s_4
	 	\end{smallmatrix}\right]
	 	\left[\begin{smallmatrix}
	 		1 &&&\\
	 		\varpi^Na'&1&&\\
	 		\varpi^Nc'&\varpi^Nb'&1&-\varpi^Na'\\
	 		\varpi^Nb'&&&1
	 	\end{smallmatrix}\right]
	\end{equation} 
	by the Iwahori factorization (\cite{Roberts_Schmidt2007}, for example). Clearly the matrix on the right is in $T \cap \Kl({\mathfrak p}^N)$. First, assume that $s_4 \in {\mathfrak o}_L^\times$. In that case we find that
	\begin{align*}
		& \left[\begin{smallmatrix}
	 		1&a &c &b \\
	 		&1&b &\\
	 		&&1&\\
	 		&&-a &1
	 	\end{smallmatrix} \right]
	 	\left[\begin{smallmatrix}
	 		u_1 &&&\\
	 		&s_1&&s_2\\
	 		&&u^{-1}&\\
	 		&s_3&&s_4
	 	\end{smallmatrix}\right] (T\cap \Kl({\mathfrak p}^N))
	\\
		= &\left[\begin{smallmatrix}
	 		1&a &c &b \\
	 		&1&b &\\
	 		&&1&\\
	 		&&-a &1
	 	\end{smallmatrix}\right]
	 	\left[\begin{smallmatrix}
	 		1&&&\\
	 		&1&&s_4^{-1}s_2\\
	 		&&1&\\
	 		&&&1
	 	\end{smallmatrix}\right]
	 	\left[\begin{smallmatrix}
	 		u&&&\\
	 		&s_1 - s_4^{-1}s_2s_3&&\\
	 		&&u^{-1}&\\
	 		&s_3&&s_4
	 	\end{smallmatrix}\right](T\cap \Kl({\mathfrak p}^N))
	\\
		= &\left[\begin{smallmatrix}
	 		1&a &c &b \\
	 		&1&b &\\
	 		&&1&\\
	 		&&-a &1
	 	\end{smallmatrix}\right]
	 	\left[\begin{smallmatrix}
	 		1&&&\\
	 		&1&&s_4^{-1}s_2\\
	 		&&1&\\
	 		&&&1
	 	\end{smallmatrix} \right](T\cap \Kl({\mathfrak p}^N))
	\\
		=& \left[\begin{smallmatrix}
			1&&c +a b &b \\
			&1&b &&\\
			&&1&\\
			&&&1
		\end{smallmatrix}\right]
		\left[\begin{smallmatrix}
			1&a &&\\
			&1&&\\
			&&1&\\
			&&-a &1
		\end{smallmatrix}\right]
		\left[\begin{smallmatrix}
			1&&&\\
			&1&&s_4^{-1}s_2\\
			&&1&\\
			&&&1
		\end{smallmatrix}\right] (T\cap \Kl({\mathfrak p}^N))
	\\
		= & \left[\begin{smallmatrix}
			1&&c +a b &b \\
			&1&b&&\\
			&&1&\\
			&&&1
		\end{smallmatrix}\right]
		\left[\begin{smallmatrix}
			1&& a ^2s_4^{-1}s_2 & a s_4^{-1}s_2\\
			&1&a s_4^{-1}s_2 & s_4^{-1}s_2\\
			&&1&\\
			&&&1
		\end{smallmatrix}\right]
		\left[\begin{smallmatrix}
			1&a&&\\
			&1&&\\
			&&1&\\
			&&-a&1
		\end{smallmatrix}\right](T\cap \Kl({\mathfrak p}^N))
	\\
		= & \left[\begin{smallmatrix}
			1&& c  + a b  + a^2s_4^{-1}s_2 & b + as_4^{-1}s_2\\
			&1&b + as_4^{-1}s_2 & s_4^{-1}s_2\\
			&&1&\\
			&&&1
		\end{smallmatrix}\right] (T\cap \Kl({\mathfrak p}^N))
	\\
		= & \left[\begin{smallmatrix}
			1&& &\\
			&1&&s_4^{-1}s_2\\
			&&1&\\
			&&&1
		\end{smallmatrix}\right] (T\cap \Kl({\mathfrak p}^N)).
	\end{align*}
	Now assume that $s_4 \in {\mathfrak p}$ so that $s_2 \in {\mathfrak o}_L^\times$. In this case we find that
	\begin{align*}
		& \left[\begin{smallmatrix}
	 		1&a &c &b \\
	 		&1&b &\\
	 		&&1&\\
	 		&&-a &1
	 	\end{smallmatrix} \right]
	 	\left[\begin{smallmatrix}
	 		u_1 &&&\\
	 		&s_1&&s_2\\
	 		&&u^{-1}&\\
	 		&s_3&&s_4
	 	\end{smallmatrix}\right] (T\cap \Kl({\mathfrak p}^N))
	\\
		= & \left[\begin{smallmatrix}
	 		1&a &c &b \\
	 		&1&b &\\
	 		&&1&\\
	 		&&-a &1
	 	\end{smallmatrix} \right]
	 	\left[\begin{smallmatrix}
	 	 	1&&&\\
	 	 	&&&1\\
	 	 	&&1&\\
	 	 	&-1&&
	 	\end{smallmatrix}\right]
	 	\left[\begin{smallmatrix}
	 	 	1&&&\\
	 	 	&1&&-s_2^{-1}s_4\\
	 	 	&&1&\\
	 	 	&&&1
	 	\end{smallmatrix}\right]
	 	\left[\begin{smallmatrix}
	 		u&&&\\
	 		&-s_3 + s_2^{-1}s_1s_4 &&\\
	 		&&u^{-1}&\\
	 		&s_1&&s_2 	
	 	\end{smallmatrix} \right](T\cap \Kl({\mathfrak p}^N))
	\\
		= & \left[\begin{smallmatrix}
	 		1&a &c &b \\
	 		&1&b &\\
	 		&&1&\\
	 		&&-a &1
	 	\end{smallmatrix} \right]
	 	\left[\begin{smallmatrix}
	 	 	1&&&\\
	 	 	&&&1\\
	 	 	&&1&\\
	 	 	&-1&&
	 	\end{smallmatrix}\right] (T\cap \Kl({\mathfrak p}^N))
	\\
		=& \left[\begin{smallmatrix}
	 	 	1&&&\\
	 	 	&&&1\\
	 	 	&&1&\\
	 	 	&-1&&
	 	\end{smallmatrix}\right]
	 	\left[\begin{smallmatrix}
	 		1&-b &c &a \\
	 		&1&a &\\
	 		&&1&\\
	 		&&b &1
	 	\end{smallmatrix}\right](T\cap \Kl({\mathfrak p}^N))
	\\
		=& \left[\begin{smallmatrix}
	 	 	1&&&\\
	 	 	&&&1\\
	 	 	&&1&\\
	 	 	&-1&&
	 	\end{smallmatrix} \right]
	 	\left[\begin{smallmatrix}
	 		1&&c -b a &a \\
	 		&1&a &\\
	 		&&1&\\
	 		&&&1
	 	\end{smallmatrix}\right]
	 	\left[\begin{smallmatrix}
	 		1&-b &&\\
	 		&1&&\\
	 		&&1&\\
	 		&&b &1
	 	\end{smallmatrix}\right] (T\cap \Kl({\mathfrak p}^N))
	\\
		=& \left[\begin{smallmatrix}
	 	 	1&&&\\
	 	 	&&&1\\
	 	 	&&1&\\
	 	 	&-1&&
	 	\end{smallmatrix} \right]
	 	(T\cap \Kl({\mathfrak p}^N)).
	\end{align*}
	Which gives us exactly the cosets we described. 
  \end{proof}

\begin{lemma} \label{lemma:paramodoular_invariance_cosets}
	The following is an exact list of coset representatives of $\K({\mathfrak p}) / [\K({\mathfrak p}^N)  \cap T]$:
	\begin{equation*}
		\{ \left[\begin{smallmatrix}
			1&&\varpi^{-N}u&\\
			&1&&v\\
			&&1&\\
			&&&1
		\end{smallmatrix}\right], s_2 \left[\begin{smallmatrix}
			1&&\varpi^{-N}u&\\
			&1&&\\
			&&1&\\
			&&&1
		\end{smallmatrix}\right], t_N \left[\begin{smallmatrix}
			1&&&\\
			&1&&v\\
			&&1&\\
			&&&1
		\end{smallmatrix}\right], t_Ns_2 \mid u,v \in {\mathfrak o}_L/{\mathfrak p}  \}
	\end{equation*}
\end{lemma}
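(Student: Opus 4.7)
My plan is to show (i) every element of $\K({\mathfrak p}^N)$ lies in a coset of $\K({\mathfrak p}^N) \cap T$ represented by one of the listed matrices, and (ii) that the listed $(q+1)^2$ representatives are pairwise distinct modulo $\K({\mathfrak p}^N) \cap T$. I am reading the $\K({\mathfrak p})$ in the statement as a typo for $\K({\mathfrak p}^N)$, since otherwise the quotient is not well defined: the subgroup $\K({\mathfrak p}^N) \cap T$ need not sit inside $\K({\mathfrak p})$ when $N \geq 2$.

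For (i), the starting point is the decomposition \eqref{eq:KKlcosets}, which writes each $k \in \K({\mathfrak p}^N)$ either as $\left[\begin{smallmatrix} 1&&u\varpi^{-N}&\\ &1&&\\ &&1&\\ &&&1 \end{smallmatrix}\right] k'$ with $u \in {\mathfrak o}_L/{\mathfrak p}^N$, or as $t_N \left[\begin{smallmatrix} 1&&u\varpi^{-N+1}&\\ &1&&\\ &&1&\\ &&&1 \end{smallmatrix}\right] k'$ with $u \in {\mathfrak o}_L/{\mathfrak p}^{N-1}$, where $k' \in \Kl({\mathfrak p}^N)$. I would then apply Lemma \ref{lemma:klingen_invariance_cosets} to reduce $k'$ modulo $\Kl({\mathfrak p}^N) \cap T$ to either $s_2$ or $\left[\begin{smallmatrix} 1&&&\\ &1&&v\\ &&1&\\ &&&1 \end{smallmatrix}\right]$ with $v \in {\mathfrak o}_L/{\mathfrak p}$. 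This produces four cases, which collapse to the four families in the statement using three elementary observations: (a) $\left[\begin{smallmatrix} 1&&u'\varpi^{-N+1}&\\ &1&&\\ &&1&\\ &&&1 \end{smallmatrix}\right] \in T$ for any $u' \in {\mathfrak o}_L$, since the corresponding symmetric block lies in $[\begin{smallmatrix} {\mathfrak p}^{-N+1} & {\mathfrak o}_L \\ {\mathfrak o}_L & {\mathfrak p} \end{smallmatrix}]$; (b) Siegel-unipotent translations commute with one another; and (c) a direct matrix calculation shows that $s_2$ centralizes the one-parameter subgroup $\{\left[\begin{smallmatrix} 1&&a&\\ &1&&\\ &&1&\\ &&&1 \end{smallmatrix}\right] : a \in L\}$. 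In the first-type case, writing $u = u_0 + \varpi u_1$ and applying (a)--(c) reduces the parameter $u$ to $u_0 \in {\mathfrak o}_L/{\mathfrak p}$; in the $t_N$-type case, the initial translation is already in $T$ by (a), so no $u$-parameter survives.

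For (ii), I would use an index argument. From Lemma \ref{lemma:klingen_invariance_cosets} and \eqref{eq:KKlcosets}, $[\Kl({\mathfrak p}^N) : \Kl({\mathfrak p}^N) \cap T] = q+1$ and $[\K({\mathfrak p}^N) : \Kl({\mathfrak p}^N)] = q^N + q^{N-1}$. By observation (a), the translations $\left[\begin{smallmatrix} 1&&u\varpi^{-N}&\\ &1&&\\ &&1&\\ &&&1 \end{smallmatrix}\right]$ with $u \in {\mathfrak p}/{\mathfrak p}^N$ lie in $\K({\mathfrak p}^N) \cap T$, and their distinct $(1,3)$-entries show they land in $q^{N-1}$ distinct cosets of $\Kl({\mathfrak p}^N) \cap T$. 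A case analysis parallel to part (i), restricted to $\K({\mathfrak p}^N) \cap T$, shows these exhaust the cosets, so $[\K({\mathfrak p}^N) \cap T : \Kl({\mathfrak p}^N) \cap T] = q^{N-1}$ and hence
$$[\K({\mathfrak p}^N) : \K({\mathfrak p}^N) \cap T] = \frac{(q+1)(q^N + q^{N-1})}{q^{N-1}} = (q+1)^2,$$
which matches the number of listed representatives. The main obstacle will be establishing $[\K({\mathfrak p}^N) \cap T : \Kl({\mathfrak p}^N) \cap T] = q^{N-1}$ cleanly, since $T$ itself is not contained in $\K({\mathfrak p}^N)$ (the $\left[\begin{smallmatrix} 1 & \\ C & 1 \end{smallmatrix}\right]$ generators have $(3,2)$ entries in ${\mathfrak p}^{N-1}$, not ${\mathfrak p}^N$) and its internal structure must be unpacked carefully. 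A robust fallback is to verify distinctness directly by checking that no product $r_1^{-1} r_2$ of two distinct representatives lies in $T$, using the $(3,1)$ and $(2,4)$ entries to distinguish the four families and the $(1,3)$, $(2,4)$ entries to distinguish representatives within a family.
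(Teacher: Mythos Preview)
Your approach is correct and is essentially the same as the paper's: the paper's proof is a two-line sketch that simply says to combine Lemma \ref{lemma:klingen_invariance_cosets} with the decomposition \eqref{eq:KKlcosets}, which is exactly your part (i). You have filled in the details the paper leaves implicit (your observations (a)--(c) and the reduction of the $u$-parameter), and you have added a distinctness argument (part (ii)) that the paper omits entirely; your reading of the $\K({\mathfrak p})$ as a typo for $\K({\mathfrak p}^N)$ is also correct.
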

\begin{proof}
	Use the Lemma \ref{lemma:klingen_invariance_cosets} and the decomposition of $\K({\mathfrak p}^N)/ \Kl({\mathfrak p}^N)$ found in \cite{Roberts_Schmidt2007}, Lemma 3.3.1. The later is reproduced in \eqref{eq:KKlcosets}.
  \end{proof}
Define the Schwartz function
\begin{equation}\label{eq:phi_ramified}
	\varphi(x,y) = \sum_{g \in \K({\mathfrak p}^N)/ T \cap \K({\mathfrak p}^N)} \omega(g,1) \tilde\varphi(x,y).
\end{equation}
It is clear from the above discussion that $ \varphi$ is invariant under all of $K_1({\mathfrak p}^N)$. The lemma below shows that $\varphi\neq0$
\begin{lemma}
	We have that
	\begin{align} \label{eq:phi_ramified_explicit}
		 \varphi & = \varphi^{(1)} + \varphi^{(2)} + \varphi^{(3)} + \varphi^{(4)}
	\intertext{where}
	 \nonumber \varphi^{(1)}(x,y)& =  q_L^2  f_{{\mathfrak p}^{n + 2}}(\langle x , x \rangle) \varphi_1 (x) \ f_{{\mathfrak o}_L} (\langle y , y \rangle) (y) \varphi_2(y),
	\\ \nonumber
		\varphi^{(2)}(x,y) & = q_L \chi(y_2) f_{{\mathfrak p}^{n + 2}}(\langle x , x \rangle) \varphi_1(x) \ f_{\left[\begin{smallmatrix}
			{\mathfrak o}_E & \varpi_E^{-1}{\mathfrak o}_E^\times\\
			{\mathfrak P} & {\mathfrak o}_E	
		\end{smallmatrix}\right] \cap X}(y),
	\\ \nonumber
		\varphi^{(3)}(x,y)& = q \chi(x_2) f_{\left[\begin{smallmatrix}
			{\mathfrak P}^{n + 2} & \varpi_E {\mathfrak o}_E^\times \\ {\mathfrak P}^{2n + 3} & {\mathfrak P}^{n + 2}
		\end{smallmatrix}\right]}(x) \ f_{{\mathfrak o}_L} (\langle y , y \rangle) \varphi_2(y), \ \ \text{and}
	\\\nonumber
		\varphi^{(4)}(x,y) &  = \chi(x_2y_2) f_{\left[\begin{smallmatrix}
			{\mathfrak P}^{n + 2} & \varpi_E {\mathfrak o}_E^\times \\ {\mathfrak P}^{2n + 3} & {\mathfrak P}^{n + 2}
		\end{smallmatrix}\right]}(x)f_{\left[\begin{smallmatrix}
			{\mathfrak o}_E & \varpi_E^{-1}{\mathfrak o}_E^\times\\
			{\mathfrak P} & {\mathfrak o}_E	
		\end{smallmatrix}\right] \cap X}(y).
	\end{align}
	Moreover, the supports of the four summands in this formula are pairwise disjoint. 
\end{lemma}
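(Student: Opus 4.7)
The plan is to evaluate the sum defining $\varphi$ family by family, using the explicit list of coset representatives furnished by Lemma \ref{lemma:paramodoular_invariance_cosets}. Write the four types of representatives as
\begin{equation*}
  g^{(1)}_{u,v}, \quad s_2\,g^{(2)}_{u}, \quad t_N\,g^{(3)}_{v}, \quad t_Ns_2,
\end{equation*}
where $g^{(1)}_{u,v},g^{(2)}_u,g^{(3)}_v$ are the unipotent translations appearing in Lemma \ref{lemma:paramodoular_invariance_cosets}, with $u,v\in {\mathfrak o}_L/{\mathfrak p}$. Then $\varphi=\sum_{i=1}^4 \varphi^{(i)}$ with
\begin{equation*}
  \varphi^{(i)}(x,y)=\sum_{u,v} \omega(g_{u,v}^{(i)},1)\tilde\varphi(x,y),
\end{equation*}
the sum being over whichever of $u,v$ are free in the $i$-th family.

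The second step is to compute each Weil-representation action. For the unipotent translations, formula \eqref{weilrep3} turns them into pure character multiplication by $\psi(\varpi^{-N}u\langle x,x\rangle+v\langle y,y\rangle)$ (or the appropriate subset of terms), so summing over $u,v\in{\mathfrak o}_L/{\mathfrak p}$ is an elementary Gauss-sum exercise controlled by Lemma \ref{simplegausschilemma}: the sum produces a factor of $q_L$ times the indicator of $\langle x,x\rangle\in{\mathfrak p}^{n+2}$ in the $u$-variable, and a factor of $q_L$ times the indicator of $\langle y,y\rangle\in{\mathfrak o}_L$ in the $v$-variable. Using the seesaw embedding of Lemma \ref{seesaw_lemma}, the element $s_2$ acts as $\omega_1([\begin{smallmatrix}&1\\-1&\end{smallmatrix}],1)$ on the second Schwartz factor only; similarly $t_N$ decomposes as a diagonal dilation composed with the analogous reflection on the first factor. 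Thus $\omega(s_2,1)\tilde\varphi=\varphi_1\otimes \gamma_1(X)\mathcal F_1(\varphi_2)$ and $\omega(t_N,1)\tilde\varphi$ equals a scalar multiple of $\mathcal F_1(\varphi_1)(\varpi^{-N}\,\cdot)\otimes\varphi_2$. Lemma \ref{lemma:Fourier_identites} rewrites both Fourier transforms explicitly as characteristic functions of the required ${\mathfrak o}_E$-lattices times the character $\chi$, and this is precisely what appears in $\varphi^{(2)},\varphi^{(3)}$, and $\varphi^{(4)}$.

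For the disjoint-support claim, the key observation is that the $(2,1)$-entry of $x$ (respectively the $(2,1)$-entry of $y$) satisfies $\nu_E(x_3\sqrt{\delta})=2n+1$ inside $\supp(\varphi_1)$, whereas inside the support of $\mathcal F_1(\varphi_1)(\varpi^{-N}\,\cdot)$ we have $\nu_E(x_3\sqrt{\delta})\ge 2n+3$; the same parity argument separates $\supp(\varphi_2)$ from $\supp(\mathcal F_1(\varphi_2))$ via the valuation of $y_3\sqrt{\delta}$. Hence the four summands have pairwise disjoint supports, and in particular none of the terms can cancel.

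The main obstacle I anticipate is purely bookkeeping: tracking the product of the Weil constant $\gamma_1(X)$, the quadratic character $\chi_{E/L}(\varpi_L^{-N})$, the dilation factor $|\varpi_L|^{-N\cdot\dim X/2}$, and the explicit constant in Lemma \ref{lemma:Fourier_identites} (so that $q_E^{-N}\cdot q_L^{2N}=q_L^N$ in the tamely ramified case where $q_E=q_L$), and verifying these conspire to produce exactly the coefficients $q_L^2,\, q_L,\, q_L,\, 1$ stated for $\varphi^{(1)},\varphi^{(2)},\varphi^{(3)},\varphi^{(4)}$. This requires evaluating $\gamma_1(X)$ and $\chi_{E/L}(\varpi_L^{-N})$ for the particular quadratic space $X_{ns}$ attached to the tamely ramified extension, and pairing these with the yet-undetermined fourth root of unity $k'$ of Lemma \ref{lemma:Fourier_identites}. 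Once these constants are pinned down, the identity \eqref{eq:phi_ramified_explicit} follows by combining the four family computations.
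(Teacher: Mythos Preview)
Your approach is essentially the same as the paper's: split the sum over $\K({\mathfrak p}^N)/(T\cap\K({\mathfrak p}^N))$ into the four families from Lemma \ref{lemma:paramodoular_invariance_cosets}, evaluate the unipotent pieces via \eqref{weilrep3} and a finite character sum, and handle $s_2$, $t_N$ via the seesaw (Lemma \ref{seesaw_lemma}) together with the Fourier identities of Lemma \ref{lemma:Fourier_identites}. Your disjoint-support argument via the valuation of the $(2,1)$-entries is also the right observation (the paper simply asserts disjointness without spelling this out).

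The obstacle you flag---reconciling $\gamma_1(X)$, $\chi_{E/L}(\varpi_L^{-N})$, the dilation factor, and the fourth root of unity $k'$ from Lemma \ref{lemma:Fourier_identites}---is genuine, and the paper does not resolve it any more explicitly than you do: it simply writes down the answers with coefficients $q_L^2,\,q_L,\,q_L,\,1$ and absorbs the remaining roots of unity. So your proposal matches the paper's proof in substance and in level of detail; the constant bookkeeping is left implicit in both.
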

\begin{proof}
	 We calculate 
	\begin{align*}
		\sum_{u \in {\mathfrak o}_L/{\mathfrak p}} \psi(\varpi^{-N}u \langle x , x \rangle)\varphi_1(x) & = q_L f_{{\mathfrak p}^N}(\langle x , x \rangle) \varphi_1(x),
	\\
		\sum_{v \in {\mathfrak o}_L/{\mathfrak p}}\psi(v \langle y , y \rangle) \varphi_2(y) & = q_L f_{{\mathfrak o}_L}(\langle y , y \rangle)\varphi_2(y).
	\end{align*}
	Therefore, applying Lemma \ref{lemma:Fourier_identites}, we have
	\begin{align*}
		\varphi^{(1)}=\sum_{u,v \in ({\mathfrak o}_L/{\mathfrak p})^2} \omega(\left[\begin{smallmatrix}
			1&&\varpi^{-N}u&\\
			&1&&v\\
			&&1&\\
			&&&1
		\end{smallmatrix}\right])\varphi(x,y) & = q_L^2 f_{{\mathfrak p}^N}(\langle x , x \rangle) \varphi_1(x) f_{{\mathfrak o}_L}(\langle y , y \rangle) \varphi_2(y)
	\\
		\varphi^{(2)}=\sum_{u \in {\mathfrak o}_L/{\mathfrak p}} \omega(s_2 \left[\begin{smallmatrix}
			1&&\varpi^{-N}u&\\
			&1&&\\
			&&1&\\
			&&&1
		\end{smallmatrix}\right])\varphi(x,y) & = q_L \chi(y_2) f_{{\mathfrak p}^N}(\langle x , x \rangle)\varphi_1(x) \ f_{\left[\begin{smallmatrix}
			{\mathfrak o}_E & \varpi^{-1}{\mathfrak o}_E^\times\\
			{\mathfrak P} & {\mathfrak o}_E	
		\end{smallmatrix}\right] \cap X}(y)
	\\
		\varphi^{(3)}=\sum_{v \in {\mathfrak o}_L/{\mathfrak p}} \omega (t_N\left[\begin{smallmatrix}
					1&&&\\
					&1&&v\\
					&&1&\\
					&&&1
				\end{smallmatrix}\right]) \varphi(x,y) & = q \chi(x_2) f_{\left[\begin{smallmatrix}
			{\mathfrak P}^{N} & \varpi_E {\mathfrak o}_E^\times \\ {\mathfrak P}^{2N - 1} & {\mathfrak P}^{N}
		\end{smallmatrix}\right]}(x) \ f_{{\mathfrak o}_L} (\langle y , y \rangle) \varphi_2(y)
	\\
	\\
		\varphi^{(4)}=\omega(t_Ns_2)\varphi(x,y)& = \chi(x_2y_2) f_{\left[\begin{smallmatrix}
			{\mathfrak P}^{N} & \varpi_E {\mathfrak o}_E^\times \\ {\mathfrak P}^{2N - 1} & {\mathfrak P}^{N}
		\end{smallmatrix}\right]}(x)f_{\left[\begin{smallmatrix}
			{\mathfrak o}_E & \varpi^{-1}{\mathfrak o}_E^\times\\
			{\mathfrak P} & {\mathfrak o}_E	
		\end{smallmatrix}\right] \cap X}(y).
	\end{align*}
	Since the cosets of $\K({\mathfrak p}^N)/(\K({\mathfrak p}^N) \cap T)$ are represented exactly once in the above sums we have proven that \ref{eq:phi_ramified_explicit} is valid. 
  \end{proof}
Recall the choice of $x_1 , x_2$ and $H$ from Section \ref{sec:notation}. We will now determine for which $h \in H\backslash\SO(X)$ we have $(h^{-1}x_1, h^{-1}x_2) \in \supp ( \varphi)$. This will help us evaluate the value of the intertwining map, as in \eqref{eq:splitbesselintegral}, evaluated at $ \varphi$.

\begin{lemma} \label{lemma:H_coset_ramified}
	Let $h \in \SO(X)$. Then 
	\begin{enumerate}
		\item[(a)] $(h^{-1}x_1 , h^{-1}x_2) \in \supp(\varphi^{(1)})$ if and only if there exists some $h' \in H$ such that $h'h = \rho(t,A)$ for some $t \in {\mathfrak o}_L^\times$ and $A \in [\begin{smallmatrix}
			{\mathfrak o}_E & {\mathfrak o}_E \\ \varpi^n {\mathfrak o}_E^\times& {\mathfrak o}_E^\times				
		\end{smallmatrix}]$ such that $\norm_L^E(\det A) = t^2$,

		\item[(b)] $(h^{-1}x_1 , h^{-1}x_2) \in \supp(\varphi^{(2)})$ if and only if $n = 0$ and there exists some $h' \in H$ such that $h'h = \rho(t,A)$ for some $t \in {\mathfrak o}_L^\times$ and $A \in [\begin{smallmatrix}
			{\mathfrak o}_E & {\mathfrak o}_E^\times \\ {\mathfrak o}_E^\times&  {\mathfrak P}
		\end{smallmatrix}]$ such that $\norm_L^E(\det A) = t^2$,

		\item[(c)] $(h^{-1}x_1 , h^{-1}x_2) \in \supp(\varphi^{(3)})$ if and only if there exists some $h' \in H$ such that $h'h = \rho(t,A)$ for some $t \in {\mathfrak o}_L^\times$ and $A \in [\begin{smallmatrix}
			{\mathfrak o}_E^\times & {\mathfrak o}_E \\ {\mathfrak P}^{n + 1} & {\mathfrak o}_E^\times				
		\end{smallmatrix}]$ such that $\norm_L^E(\det A) = t^2$, and

		\item[(d)] $(h^{-1}x_1 , h^{-1}x_2) \in \supp(\varphi^{(4)})$ for no $h \in \SO(X)$.
	\end{enumerate} 

	In particular we can assume that $t$ is one of the two representatives of $L^\times/\norm_L^E(E^\times)$. 
\end{lemma}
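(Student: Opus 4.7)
The plan is to carry out the same analysis as in Lemma \ref{lemma:H_coset_inert} for each of the four pieces in the decomposition \eqref{eq:phi_ramified_explicit}, with an additional obstruction argument handling parts (b) and (d). Write $h = \rho(t, A)$ via \eqref{eq:extended_exact_sequence}, so that $h \in \SO(X)$ translates to $\norm_L^E(\det A) = t^2$. The kernel relation $\rho(\norm_L^E(z), zI_2) = 1$ for $z \in E^\times$ together with the description $H = \{\rho(1, [\begin{smallmatrix} 1 & \\ & u \end{smallmatrix}]) : u \in E^1\}$ from Lemma \ref{lemma:stab} allow one to absorb column scaling of $A$ into $H$ and $t$, reducing modulo $H$ to checking a handful of standard forms of $A$.

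For $h = \rho(1, A)$ with $A = [\begin{smallmatrix} a & b \\ c & d \end{smallmatrix}]$, the matrices $h^{-1}x_1$ and $h^{-1}x_2$ are computed explicitly in terms of products of the entries of $A$ and their Galois conjugates, exactly as in \eqref{eq:hmat1}--\eqref{eq:hmat2}. For parts (a) and (c), the conditions $h^{-1}x_1 \in \supp(\varphi_1)$ and $h^{-1}x_2 \in \supp(\varphi_2)$ (respectively the Fourier-transformed analogs in $\varphi^{(3)}$) translate by entrywise valuation bookkeeping into the stated matrix forms for $A$. The quadratic constraints $\langle x,x\rangle \in {\mathfrak p}^{n+2}$ and $\langle y,y\rangle \in {\mathfrak o}_L$ built into the definitions of $\varphi^{(1)}$ and $\varphi^{(3)}$ are automatic, because $h$ is an isometry and $\langle x_1,x_1\rangle = \langle x_2,x_2\rangle = 0$.

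The key new ingredient for parts (b) and (d) is the constraint $\langle h^{-1}x_1, h^{-1}x_2\rangle = \langle x_1,x_2\rangle = 1$. Expanding $2\langle x,y\rangle = \trace(xy^*) = ad' + a'd - bc' - cb'$ and computing valuations in $E$ for $x \in \supp(\varphi^{(4)}_1)$ and $y \in \supp(\varphi^{(4)}_2)$, every one of the four terms lies in ${\mathfrak P}^2 = \varpi_L {\mathfrak o}_E$; this forces $\langle x,y\rangle \in \varpi_L {\mathfrak o}_L$, contradicting $\langle x,y\rangle = 1$ and proving (d). For (b), the analogous computation identifies the term $cb'$ as having exact $E$-valuation $2n$, while the other three terms have $E$-valuation at least $2$. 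If $n \geq 1$ the same contradiction arises, so $n = 0$ is forced, after which the remaining entrywise analysis proceeds as in (a) and (c).

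The main obstacle will be careful bookkeeping of the ramification in the valuation estimates: one may identify $\varpi_E$ with $\sqrt{\delta}$ so that ${\mathfrak P}^2 = \varpi_L{\mathfrak o}_E$, elements of $X$ have off-diagonal parts in $\sqrt{\delta}\,L$, and distinguishing ${\mathfrak P}^k$ from $\varpi_E^k{\mathfrak o}_E^\times$ matters throughout the entrywise comparison. The final observation that $t$ may be taken as one of the two representatives of $L^\times / \norm_L^E(E^\times)$ follows from the constraint $\norm_L^E(\det A) = t^2$ combined with the freedom $t \mapsto t\cdot\norm_L^E(z)$ afforded by the kernel relation.
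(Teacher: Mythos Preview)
Your proposal is correct and follows the same overall framework as the paper: reduce $h$ to the form $\rho(t,A)$ with $t \in {\mathfrak o}_L^\times$ using the kernel relation and $H$, compute $h^{-1}x_1$ and $h^{-1}x_2$ explicitly as in \eqref{eq:hmat1}--\eqref{eq:hmat2}, and then translate the support conditions for each $\varphi^{(i)}$ into valuation constraints on the entries of $A$.

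The one place where you take a genuinely different route is the obstruction in parts (b) and (d). The paper first derives from the support conditions that $A$ must lie in $[\begin{smallmatrix} {\mathfrak o}_E & {\mathfrak o}_E^\times \\ \varpi_E^n{\mathfrak o}_E^\times & {\mathfrak P} \end{smallmatrix}]$ (respectively $[\begin{smallmatrix} {\mathfrak o}_E^\times & {\mathfrak o}_E^\times \\ {\mathfrak P}^{n+1} & {\mathfrak P} \end{smallmatrix}]$), and then observes that $\det A \in {\mathfrak P}$ contradicts $\norm_L^E(\det A) = t^2$ with $t \in {\mathfrak o}_L^\times$ (forcing $n=0$ in (b), impossible in (d)). You instead bypass the translation to $A$ and work directly with the invariant $\langle h^{-1}x_1, h^{-1}x_2\rangle = \langle x_1, x_2\rangle = 1$: on $\supp(\varphi^{(4)})$ and $\supp(\varphi^{(2)})$ for $n \ge 1$, each term of $\trace(xy^*)$ lies in ${\mathfrak P}^2$, so $\langle x, y\rangle \in {\mathfrak p}$, a contradiction. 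These two arguments are dual to one another---both use that $h \in \SO(X)$, one via the similitude/determinant side, the other via the isometry side---but your version is a bit more direct because it avoids first unwinding the support conditions back to $A$. Either way, the remaining entrywise verification in (a), (c), and the $n=0$ case of (b) is identical to the paper's.
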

\begin{proof}
	Let's start with a preliminary observation. Let $A = [\begin{smallmatrix}
		a&b\\c&d
		\end{smallmatrix}]$ 
	 and suppose that there is some $h' \in H$ so that $h'h = \rho(1,A^{-1})$. 
	Then we calculate that 
	\begin{align}
		h^{-1} x_1 & = h^{-1} (h'^{-1} x_1) = (h'h)^{-1} x_1= t^{-1} A x_1 \alpha(A)^*= t^{-1} \left[\begin{smallmatrix}
			a \alpha(c)\sqrt{\delta}  & a \alpha(a)\sqrt{\delta} \\
			-c \alpha(c)\sqrt{\delta}  & -c \alpha(a)\sqrt{\delta} \label{eq:h_mat_x1}
		\end{smallmatrix}\right]\\
	\intertext{and,}
		h^{-1} x_2 & = h^{-1} (h'^{-1} x_2)= (h'h)^{-1} x_2= t^{-1} A x_2 \alpha(A)^*
		= 2t^{-1}\left[\begin{smallmatrix}
			-b \alpha(d)\frac{\sqrt{\delta}}{\delta}  &  b \alpha(b)\frac{\sqrt{\delta}}{\delta} \\
			-d \alpha(d)\frac{\sqrt{\delta}}{\delta}  &  d \alpha(b) \frac{\sqrt{\delta}}{\delta}
		\end{smallmatrix}\right]. \label{eq:h_mat_x2}
	\end{align}
	Also notice that for $i = 1,2$ and $g \in \GSO(X)$ we have that $0 =\langle x_i, x_i \rangle = \langle g^{-1} x_i , g^{-1} x_i \rangle$. Considering the current application, in \eqref{eq:phi_ramified_explicit} we can ignore the  restriction of the domain that $\langle x , x \rangle \in {\mathfrak p}^N$ and that $\langle y , y \rangle \in {\mathfrak o}_L$.   

	Suppose that $h = \rho(s, A')$ for some $s \in L^\times$ and some $A' \in \gl(2,E)$. We distinguish an element of the stabilizer $h' = \rho( \varpi_L^{-\nu_E(s)} , [\begin{smallmatrix}
			\varpi_E^{\nu_E(s)} & \\ & \varpi_E^{\nu_E(s)} 
		\end{smallmatrix}])\in H.$ So that, $h'h = \rho (t, A)$ for some $t \in {\mathfrak o}_L^\times$ and $A \in \gl(2,E)$. In particular we may assume that $t$ is a coset representative of the $L/L^1$. Since $(h'h)^{-1}x_i = h^{-1}x_i$, in the following arguments we can restrict our attention to $h = \rho(t,A)$. 


	\begin{enumerate}
		\item[(a)] Suppose that $(h^{-1}x_1 , h^{-1}x_2) \in \supp( \varphi^{(1)})$. By  \eqref{eq:phi_ramified_explicit} we have that $(h^{-1}x_1 , h^{-1}x_2) \in \supp( \varphi^{(1)})$ if and only if $h^{-1}x_1 \in \supp(\varphi_1)$ and $h^{-1}x_2 \in \supp (\varphi_2)$. These two conditions imply that $A \in [\begin{smallmatrix}
				{\mathfrak o}_E & {\mathfrak o}_E \\ \varpi^n {\mathfrak o}_E^\times & {\mathfrak o}_E^\times
			\end{smallmatrix}]$. 
				On the other hand, assume that $A\in [\begin{smallmatrix}
				{\mathfrak o}_E & {\mathfrak o}_E \\ \varpi^n {\mathfrak o}_E^\times & {\mathfrak o}_E^\times
			\end{smallmatrix}]$, then by \eqref{eq:h_mat_x1} and \eqref{eq:h_mat_x2} we can see that $h^{-1}x_1 \in \supp(\varphi_1)$ and $h^{-1}x_2 \in \supp(\varphi_2)$. We conclude that $(h^{-1}x_1 , h^{-1}x_2) \in \varphi^{(1)}$.

		\item[(b)] Suppose that $(h^{-1}x_1 , h^{-1}x_2) \in \supp( \varphi^{(2)})$. By \eqref{eq:phi_ramified_explicit} we have that $(h^{-1}x_1 , h^{-1}x_2) \in \supp( \varphi^{(1)})$ if and only if $h^{-1}x_1 \in \supp(\varphi_1)$ and $h^{-1} x_2 \in [\begin{smallmatrix}
			{\mathfrak o}_E & \varpi_E^{-1} {\mathfrak o}_E^\times \\ {\mathfrak P} & {\mathfrak o}_E
		\end{smallmatrix}]$. These two conditions imply that $A \in [\begin{smallmatrix}
				{\mathfrak o}_E & {\mathfrak o}_E^\times \\ \varpi^n_E{\mathfrak o}_E^\times & {\mathfrak P}
			\end{smallmatrix}]$.
		Since $\rho(t,A) \in \SO(X)$, this can only happen when $n = 0$.
		On the other hand, assume that $A \in [\begin{smallmatrix}
				{\mathfrak o}_E & {\mathfrak o}_E^\times \\ \varpi^n_E{\mathfrak o}_E^\times & {\mathfrak P}
			\end{smallmatrix}]$, then by \eqref{eq:h_mat_x1} and \eqref{eq:h_mat_x2} we can see that $h^{-1}x_1 \in \supp(\varphi_1)$ and $h^{-1}x_2 \in [\begin{smallmatrix}
			{\mathfrak o}_E & \varpi_E^{-1} {\mathfrak o}_E^\times \\
			{\mathfrak P} & {\mathfrak o}_E
		\end{smallmatrix}] \cap X$. We conclude that $(h^{-1}x_1 , h^{-1}x_2) \in \varphi^{(2)}$.

		\item[(c)] Suppose that $(h^{-1}x_1 , h^{-1}x_2) \in \supp( \varphi^{(3)})$. By \eqref{eq:phi_ramified_explicit}, this can happen if and only if $h^{-1}x_1 \in [\begin{smallmatrix}
			{\mathfrak P}^{n + 2} & \varpi_E {\mathfrak o}_E^\times \\ {\mathfrak P}^{2n + 3} & {\mathfrak P}^{n + 2}
		\end{smallmatrix}]$ and $h^{-1}x_2 \in \supp(\varphi_2)$. These two conditions imply that $A \in [\begin{smallmatrix}
				{\mathfrak o}_E^\times & {\mathfrak o}_E \\ {\mathfrak P}^{n + 1} & {\mathfrak o}_E^\times
			\end{smallmatrix}]$
		On the other hand, assume that $A \in [\begin{smallmatrix}
				{\mathfrak o}_E^\times & {\mathfrak o}_E \\ {\mathfrak P}^{n + 1} & {\mathfrak o}_E^\times
			\end{smallmatrix}]$, then by \eqref{eq:h_mat_x1} and \eqref{eq:h_mat_x2} we can see that $h^{-1}x_1 \in [\begin{smallmatrix}
			{\mathfrak P}^{n + 2} & \varpi_E {\mathfrak o}_E^\times \\ {\mathfrak P}^{2n + 3} & {\mathfrak P}^{n + 2}
		\end{smallmatrix}]$ and $h^{-1}x_2 \in \supp (\varphi_2)$. We conclude that $(h^{-1}x_1 , h^{-1}x_2) \in \varphi^{(3)}$.

		\item[(d)] Suppose that $(h^{-1}x_1 , h^{-1}x_2) \in \supp( \varphi^{(4)})$. This can happen if and only if $h^{-1}x_1 \in [\begin{smallmatrix}
			{\mathfrak P}^{n + 2} & \varpi_E {\mathfrak o}_E^\times \\ {\mathfrak P}^{2n + 3} & {\mathfrak P}^{n + 2}
		\end{smallmatrix}]$ and $h^{-1}x_2 \in [\begin{smallmatrix}
			{\mathfrak o}_E & \varpi_E^{-1} \\ {\mathfrak P} & {\mathfrak o}_E
		\end{smallmatrix}]$. These two conditions imply that $A \in [\begin{smallmatrix}
				{\mathfrak o}_E^\times & {\mathfrak o}_E^\times \\ {\mathfrak P}^{n + 1} & {\mathfrak P}
			\end{smallmatrix}]$,   which is impossible since $\rho(t,A) \in \SO(X)$.
	\end{enumerate}
  \end{proof}

\begin{corollary} \label{cor:H_coset_ramified} 
	Let $h \in \SO(X)$. Then $(h^{-1}x_1 , h^{-1}x_2) \in \supp( \varphi)$ if and only if there exists some $h' \in H$ such that $h'h = \rho(t,A)$ for $t$ a representative of $L^\times/\norm_L^E(E^\times)$ and some $A \in \Gamma_0({\mathfrak P}^n)$ such that $\norm_L^E(\det A) = t^2$. 
\end{corollary}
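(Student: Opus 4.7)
The plan is to derive the corollary directly from Lemma \ref{lemma:H_coset_ramified} together with the disjoint decomposition $\varphi = \varphi^{(1)} + \varphi^{(2)} + \varphi^{(3)} + \varphi^{(4)}$ established just beforehand. Because the four supports are pairwise disjoint, the condition $(h^{-1}x_1, h^{-1}x_2) \in \supp(\varphi)$ is equivalent to membership in $\supp(\varphi^{(i)})$ for a unique $i \in \{1,2,3\}$; the case $i=4$ is ruled out by part (d) of the lemma. The preliminary reduction inside the proof of that lemma already shows that, after modifying $h$ by an element of $H$, we may write $h = \rho(t, A)$ with $t$ a fixed representative of $L^\times/\norm_L^E(E^\times)$; the requirement $h \in \SO(X)$ then forces the similitude factor $\norm_L^E(\det A)/t^2$ to equal $1$, which is exactly the norm condition stated in the corollary.

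The substantive task is then to verify that the union of the three matrix strata
\[
S_a = \begin{bmatrix}{\mathfrak o}_E & {\mathfrak o}_E \\ \varpi_E^n {\mathfrak o}_E^\times & {\mathfrak o}_E^\times \end{bmatrix}, \qquad
S_b = \begin{bmatrix}{\mathfrak o}_E & {\mathfrak o}_E^\times \\ {\mathfrak o}_E^\times & {\mathfrak P} \end{bmatrix} \ (\text{only when } n = 0), \qquad
S_c = \begin{bmatrix}{\mathfrak o}_E^\times & {\mathfrak o}_E \\ {\mathfrak P}^{n+1} & {\mathfrak o}_E^\times \end{bmatrix},
\]
intersected with the locus of matrices of unit determinant (which is automatic, since $\norm_L^E(\det A) = t^2 \in {\mathfrak o}_L^\times$), coincides with $\Gamma_0({\mathfrak P}^n)$. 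I would carry this out by a case distinction on the valuations of $c$ and $d$ in $A = \bigl[\begin{smallmatrix} a & b \\ c & d\end{smallmatrix}\bigr] \in \Gamma_0({\mathfrak P}^n)$. If $d \in {\mathfrak P}$, then $ad \in {\mathfrak P}$, so $\det A \in {\mathfrak o}_E^\times$ forces $bc \in {\mathfrak o}_E^\times$, hence $b, c \in {\mathfrak o}_E^\times$; combined with $c \in {\mathfrak P}^n$ this gives $n = 0$ and $A \in S_b$. If $d \in {\mathfrak o}_E^\times$ and $\nu_E(c) = n$ exactly, then $A \in S_a$. If $d \in {\mathfrak o}_E^\times$ and $\nu_E(c) \ge n + 1$, then $bc \in {\mathfrak P}$, so $\det A \in {\mathfrak o}_E^\times$ forces $a \in {\mathfrak o}_E^\times$, placing $A \in S_c$. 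These three situations are mutually exclusive and exhaust $\Gamma_0({\mathfrak P}^n)$; conversely, each $S_i$ is visibly contained in $\Gamma_0({\mathfrak P}^n)$ once the unit-determinant condition is imposed.

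There is essentially no obstacle beyond careful bookkeeping. The one subtlety is that the strata $S_a$ and $S_c$ naively contain some matrices of singular determinant, but these are excluded by the similitude condition $\norm_L^E(\det A) = t^2 \in {\mathfrak o}_L^\times$; likewise $S_b$ contributes only in the degenerate level $n = 0$. Once these points are respected, the equality of sets matches the three cases of Lemma \ref{lemma:H_coset_ramified} with the single condition $A \in \Gamma_0({\mathfrak P}^n)$, yielding both directions of the corollary.
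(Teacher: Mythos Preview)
Your proposal is correct and follows essentially the same approach as the paper: both arguments use the disjoint-support decomposition $\varphi = \sum \varphi^{(i)}$, invoke Lemma \ref{lemma:H_coset_ramified} to rule out $\varphi^{(4)}$, and then verify that the union of the three matrix strata from parts (a)--(c) (intersected with the unit-determinant condition forced by $\rho(t,A) \in \SO(X)$) is exactly $\Gamma_0({\mathfrak P}^n)$. The paper carries out the $n=0$ case by writing $\GL(2,{\mathfrak o}_E)$ as a five-piece disjoint union (further subdividing your $S_a$ and $S_b$ according to whether $a$ is a unit), whereas your valuation-based case split on $(c,d)$ handles both $n=0$ and $n>0$ uniformly; the content is the same.
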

\begin{proof}
	If $n = 0$ then 
	\begin{align*}
		\Gamma_0({\mathfrak o}_E) = \gl(2,{\mathfrak o}_E) = ([\begin{smallmatrix}
					{\mathfrak o}_E^\times & {\mathfrak o}_E^\times \\ {\mathfrak o}_E^\times & {\mathfrak P}
				\end{smallmatrix}] \sqcup [\begin{smallmatrix}
					{\mathfrak P} & {\mathfrak o}_E^\times \\ {\mathfrak o}_E^\times & {\mathfrak o}_E^\times
				\end{smallmatrix}] \sqcup [\begin{smallmatrix}
					{\mathfrak P} & {\mathfrak o}_E^\times \\ {\mathfrak o}_E^\times & {\mathfrak P}
				\end{smallmatrix}] \sqcup ([\begin{smallmatrix}
					{\mathfrak o}_E^\times & {\mathfrak o}_E \\ {\mathfrak o}_E^\times & {\mathfrak o}_E^\times 
				\end{smallmatrix}] \cap \gl(2,{\mathfrak o}_E)) \sqcup [\begin{smallmatrix}
					{\mathfrak o}_E ^\times & {\mathfrak o}_E \\ {\mathfrak P} & {\mathfrak o}_E^\times
				\end{smallmatrix}]).
	\end{align*}

	If $n > 0$ then 
	\begin{align*}
		\Gamma_0({\mathfrak P}^n) = ([\begin{smallmatrix}
			{\mathfrak o}_E^\times & {\mathfrak o}_E \\ \varpi^{n}_E {\mathfrak o}_E^\times & {\mathfrak o}_E^\times
		\end{smallmatrix}] \sqcup [\begin{smallmatrix}
			{\mathfrak o}_E^\times & {\mathfrak o}_E \\ {\mathfrak P}^{n + 1} & {\mathfrak o}_E^\times
		\end{smallmatrix}]).
	\end{align*}
  Examining Lemma \ref{lemma:H_coset_ramified} gives the result.
  \end{proof}
To explicitly calculate the $B(1, \varphi, W ,s )$ we will need the volumes of the subsets of $\gl(2,{\mathfrak o}_E)$ that appear in the proof of Corollary \ref{cor:H_coset_ramified}, in terms of the volume of some subgroups of $\SO(X)$. When $n = 0$ set $\Gamma = \rho({\mathfrak o}_L^\times , \Gamma_0({\mathfrak P})) \cap \SO(X)$. When $n > 0$ set $\Gamma = \rho({\mathfrak o}_L^\times , \Gamma_0({\mathfrak P}^n)) \cap \SO(X)$. 

\begin{lemma}
	Table 1 summarizes volumes of certain subsets of $H\backslash \big(\rho({\mathfrak o}_L^\times, \gl(2, {\mathfrak o}_E)) \cap \SO(X)\big)$, up to a positive constant.
\end{lemma}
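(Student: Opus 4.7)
The plan is to compute each entry of the table by reducing the volume computation on $H\backslash\bigl(\rho({\mathfrak o}_L^\times, \gl(2,{\mathfrak o}_E))\cap\SO(X)\bigr)$ to an integral over explicit open subsets of $\gl(2,{\mathfrak o}_E)$, with the appropriate measure scaling coming from the exact sequence \eqref{eq:extended_exact_sequence}. First, I would use Lemma \ref{lemma:stab} to identify $H$ with $\{\rho(1,[\begin{smallmatrix}1&\\&u\end{smallmatrix}]) : u\in E^1\}$, so that the $H$-action on $\rho(t,A)$ is by left multiplication of $A$ by $[\begin{smallmatrix}1&\\&u\end{smallmatrix}]$, i.e.\ scaling the bottom row by $u\in E^1$. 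Since $|u|_E=1$, this action preserves each of the five (resp.\ two) block-valuation patterns appearing in the proof of Corollary \ref{cor:H_coset_ramified}, so the partition descends to $H\backslash(\cdots)$.

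Next I would use the exact sequence \eqref{eq:extended_exact_sequence} to replace integration over $H\backslash\SO(X)$ with integration over $(E^\times\cdot\{[\begin{smallmatrix}1&\\&u\end{smallmatrix}]\}) \backslash ({\mathfrak o}_L^\times\times\gl(2,{\mathfrak o}_E))$ subject to the constraint $\norm_L^E(\det A)=t^2$. Because $E/L$ is tamely ramified, $[{\mathfrak o}_L^\times:\norm_L^E({\mathfrak o}_E^\times)]=2$, and so the constraint selects exactly one of the two cosets of $\norm_L^E(E^\times)$ in $L^\times$ for each choice of $\det A$; this is the source of the single representative $t$ mentioned in Corollary \ref{cor:H_coset_ramified}. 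Having accounted for this, each volume becomes a product of volumes of factors like ${\mathfrak o}_E$, ${\mathfrak o}_E^\times$, ${\mathfrak P}$, $\varpi_E^n{\mathfrak o}_E^\times$, and ${\mathfrak P}^{n+1}$, which can be read off directly from the characteristic function of the block in question.

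For each of the five subsets in the case $n=0$ (and the two in the case $n>0$), I would write down the corresponding product of these local volumes. The factors $\vol({\mathfrak P}^k)=q_E^{-k}\vol({\mathfrak o}_E)$ and $\vol({\mathfrak o}_E^\times)=(1-q_E^{-1})\vol({\mathfrak o}_E)$ will yield the entries of Table~1 up to the common positive proportionality constant coming from the joint Haar normalization on $\SO(X)$ and $H$ chosen before \eqref{eq:nonsplitbesselintegral}, which I would absorb uniformly into a single constant. The reduction from $H\backslash\Gamma$ to the underlying matrix parametrization introduces at most a $\vol(E^1)/\vol({\mathfrak o}_E^\times)$-style ratio, which is the same for every row of the table and so is harmless once one is computing up to a common positive constant.

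The main obstacle will be the bookkeeping in the case $n=0$, where the five cosets overlap nontrivially with the diagonal torus modulo $H$, so one has to be careful not to double-count contributions from the $E^1$-fiber. I expect to handle this by choosing, for each coset, an explicit section $A\mapsto[\begin{smallmatrix}1&\\&u\end{smallmatrix}]A$ that normalizes (for instance) the $(2,2)$-entry to lie in a fundamental domain for multiplication by $E^1$, after which each volume is a straightforward product of the factors listed above and Table~1 follows.
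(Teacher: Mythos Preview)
Your proposal is a plausible direct-parametrization strategy, but it is more involved than what the paper actually does, and it has one specific soft spot.

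The paper does not unwind the exact sequence at all. Instead it works entirely inside $\SO(X)$ and uses the elementary quotient-measure identity
\[
\vol_{H\backslash \SO(X)}(H\backslash HS) \;=\; \frac{\vol_{\SO(X)}(S)}{\vol_H(H\cap S)}
\]
for each subset $S$. Since every $S$ in the table satisfies $H\cap S = H\cap\rho({\mathfrak o}_L^\times,\GL(2,{\mathfrak o}_E))$, the denominator is the same for all rows and drops out as the promised positive constant. The numerators are then computed by two moves: (i) an index count, e.g.\ $[\Gamma:\rho({\mathfrak o}_L^\times,\Gamma_0({\mathfrak P}^{n+1}))\cap\SO(X)]=q$, giving the $\tfrac{1}{q}\vol(\Gamma)$ entries, and (ii) complementation inside $\Gamma$ or inside $\rho({\mathfrak o}_L^\times,\GL(2,{\mathfrak o}_E))\cap\SO(X)$ to get the remaining entries. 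No product-of-entry-volumes computation is ever performed, and the $E^1$-fiber never has to be parametrized explicitly.

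Your product-of-local-volumes approach would work for four of the five $n=0$ rows, but it does not directly handle the row $\left[\begin{smallmatrix}{\mathfrak o}_E^\times & {\mathfrak o}_E \\ {\mathfrak o}_E^\times & {\mathfrak o}_E^\times\end{smallmatrix}\right]\cap\GL(2,{\mathfrak o}_E)$: the determinant condition cuts out a genuinely non-product subset (e.g.\ $a=b=c=d=1$ is excluded), so this volume cannot be written as a product of entry-wise factors. You would have to compute it by complementation anyway, which is exactly what the paper does. Also, the difficulty you anticipate in the $n=0$ case is not really about the $E^1$-fiber or double-counting; the fiber contribution is uniform across rows and is precisely what the quotient-measure formula absorbs into the common denominator $\vol_H(H\cap S)$.
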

\begin{center}
\begin{table}
\label{volumestab}
\caption{Volumes of subsets of $\rho({\mathfrak o}_L^\times , \gl(2,{\mathfrak o}_E))$}
\renewcommand{\arraystretch}{1.5}
\setlength{\tabcolsep}{10pt}
\begin{tabular}{c|c|c}
$\rho({\mathfrak o}_L^\times, \cdot) \cap \so(X)$ & Volume in \ $\rho({\mathfrak o}_L^\times, \gl(2,{\mathfrak o}_E)) \cap \so(X)$ & Support in $\varphi^{(i)}$\\
\hline
$\left[ \begin{smallmatrix}
	{\mathfrak o}_E^\times & {\mathfrak o}_E^\times \\ {\mathfrak o}_E^\times & {\mathfrak P}
\end{smallmatrix} \right]$  & $(1 - \frac{1}{q})\vol(\Gamma)$ & 2
\\
\vspace{2pt}
$\left[ \begin{smallmatrix}
	{\mathfrak P} & {\mathfrak o}_E^\times \\  {\mathfrak o}_E^\times & {\mathfrak o}_E^\times
\end{smallmatrix} \right]$  & $(1 - \frac{1}{q})\vol(\Gamma)$ & 1
\\
\vspace{2pt}
$\left[ \begin{smallmatrix}
	{\mathfrak P} & {\mathfrak o}_E^\times \\  {\mathfrak o}_E^\times & {\mathfrak P}
\end{smallmatrix} \right]$  & $\frac{1}{q}\vol(\Gamma)$ & 2
\\
\vspace{2pt}
$\left[ \begin{smallmatrix}
	{\mathfrak o}_E^\times & {\mathfrak o}_E \\  {\mathfrak o}_E^\times & {\mathfrak o}_E^\times
\end{smallmatrix} \right] \cap \gl(2,{\mathfrak o}_E)$  & $1 - (3 - \frac{1}{q})\vol(\Gamma)$ & 1
\\
\vspace{2pt}
$\left[ \begin{smallmatrix}
	{\mathfrak o}_E^\times & {\mathfrak o}_E \\  {\mathfrak P} & {\mathfrak o}_E^\times
\end{smallmatrix} \right]$ & $\vol(\Gamma)$ & 3
\\
\hline
\vspace{2pt}
$\left[ \begin{smallmatrix}
	{\mathfrak o}_E^\times & {\mathfrak o}_E \\  \varpi^{n}_E {\mathfrak o}_E^\times & {\mathfrak o}_E^\times
\end{smallmatrix} \right]$  & $(1 - \frac{1}{q})\vol(\Gamma)$ & 1
\\
\vspace{2pt}
$\left[ \begin{smallmatrix}
	{\mathfrak o}_E^\times & {\mathfrak o}_E \\  {\mathfrak P}^{n + 1} & {\mathfrak o}_E^\times
\end{smallmatrix} \right]$  & $\frac{1}{q}\vol(\Gamma)$ & 3
 \end{tabular}
\vspace{.75cm}
\end{table}
\end{center}
\begin{proof}
	Recall the Haar measures of $\SO(X)$ and $H$ which were chosen in Section \ref{sec:intertwiningmaps}. Suppose that $S$ is a subgroup of $\SO(X)$ and $f_S(x)$ is the characteristic function of $S$. We have that 
	\begin{align*}
		\int\limits_{\SO(X)} f_S(g) \, dg = \int\limits_{H\backslash \SO(X)} \ (\int\limits_H f_S(hg)\, dh) \, dg= \int\limits_{H \backslash \SO(X)} \vol_H(H \cap S) f_{HS}(G) \, dh,     
	\end{align*}
which is $\vol_H(H\cap S) \vol_{H\backslash \SO(X)} (H\backslash HS)$.	Therefore, we have the formula for the quotient measure:
	\begin{equation} \label{eq:quotient_volume}
		\vol_{H\backslash G}(H \backslash HS) = \frac{\vol_{\SO(X)}(S)}{\vol_H(H\cap S)}.
	\end{equation}

	We begin with the case that $n >0$. First, note that the following is an exact list of coset representatives:
	\begin{equation*}
		\big(\rho({\mathfrak o}_L^\times, \Gamma_0({\mathfrak P}^{n + 1})) \cap \so(X)\big) \backslash \Gamma = \{\rho(1, [\begin{smallmatrix}
			1&\\x&1
		\end{smallmatrix}]) \mid x \in {\mathfrak o}_E/{\mathfrak P}\}.
	\end{equation*}
  Therefore, the volume of $\rho({\mathfrak o}_L^\times, \Gamma_0({\mathfrak P}^{n + 1})) \cap \so(X)$ is $\frac{1}{q} \vol(\Gamma)$. By \eqref{eq:quotient_volume} we have that
	\begin{align*}
		\vol(H\backslash \rho&({\mathfrak o}_L^\times, \Gamma_0({\mathfrak P}^{n + 1})) = \vol(H\backslash H\rho({\mathfrak o}_L^\times, \Gamma_0({\mathfrak P}^{n + 1})) = \frac{\vol(\rho({\mathfrak o}_L^\times, \Gamma_0({\mathfrak P}^{n + 1}))}{\vol(\rho({\mathfrak o}_L^\times, \Gamma_0({\mathfrak P}^{n + 1}) \cap H)}\\& = \frac{\vol(\rho({\mathfrak o}_L^\times, \Gamma_0({\mathfrak P}^{n + 1}))}{\vol(H)}= \frac{1}{q} \vol(\Gamma).
	\end{align*}
	Since $H\backslash \rho({\mathfrak o}_L^\times, \left[\begin{smallmatrix}
		{\mathfrak o}_E^\times & {\mathfrak o}_E \\ \varpi^n_E {\mathfrak o}_E^\times & {\mathfrak o}_E^\times
	\end{smallmatrix}\right]) \cap \so(X)$ is the complement of $H \backslash \rho({\mathfrak o}_L^\times, \Gamma_0({\mathfrak P}^{n + 1})) \cap \so(X)$ inside of $H\backslash \Gamma$, we find that that 
	\[\vol(H\backslash \rho({\mathfrak o}_L^\times, \left[\begin{smallmatrix}	
		{\mathfrak o}_E^\times & {\mathfrak o}_E \\ \varpi^n_E {\mathfrak o}_E^\times & {\mathfrak o}_E^\times
		\end{smallmatrix}\right]) \cap \so(X)) = \vol(H\backslash \Gamma) - \vol(H\backslash \rho({\mathfrak o}_L^\times, \Gamma_0({\mathfrak P}^{n+1}))\cap \so(X)) = (1 - \frac{1}{q})\vol(\Gamma).\]

	Now assume that $n = 0$. The following is a complete list of coset representatives
	\[
		\rho({\mathfrak o}_L^\times , \left[\begin{smallmatrix}
			{\mathfrak o}_E^\times & {\mathfrak P} \\ {\mathfrak P} & {\mathfrak o}_E^\times
		\end{smallmatrix}\right]) \cap \so(X) \backslash \Gamma = \{[\begin{smallmatrix}
			1&x\\&1
		\end{smallmatrix}] \mid x \in {\mathfrak o}_E/{\mathfrak P}\}.
	\] 
	So, similarly to the above case we find that 
	\begin{align*}
		\vol(H\backslash \rho&({\mathfrak o}_L^\times, \left[\begin{smallmatrix}
			{\mathfrak o}_E^\times & {\mathfrak P} \\ {\mathfrak P} & {\mathfrak o}_E^\times
		\end{smallmatrix}\right]) \cap \so(X)) = \vol(H\backslash H\rho({\mathfrak o}_L^\times, \left[\begin{smallmatrix}
			{\mathfrak o}_E^\times & {\mathfrak P} \\ {\mathfrak P} & {\mathfrak o}_E^\times
		\end{smallmatrix}\right]) \cap \so(X))
	= \frac{\vol(\rho({\mathfrak o}_L^\times, \left[\begin{smallmatrix}
			{\mathfrak o}_E^\times & {\mathfrak P} \\ {\mathfrak P} & {\mathfrak o}_E^\times
		\end{smallmatrix}\right]) \cap \so(X))}{\vol(\rho({\mathfrak o}_L^\times,\left[\begin{smallmatrix}
			{\mathfrak o}_E^\times & {\mathfrak P} \\ {\mathfrak P} & {\mathfrak o}_E^\times
		\end{smallmatrix}\right]) \cap H)}
	\\
		& = \frac{\vol(\rho({\mathfrak o}_L^\times, \left[\begin{smallmatrix}
			{\mathfrak o}_E^\times & {\mathfrak P} \\ {\mathfrak P} & {\mathfrak o}_E^\times
		\end{smallmatrix}\right]) \cap \so(X))}{\vol(H)}
	= \frac{1}{q} \vol(\Gamma).
	\end{align*}
	Since $H\backslash \rho({\mathfrak o}_L^\times, \left[\begin{smallmatrix}
		{\mathfrak o}_E^\times & {\mathfrak o}_E^\times \\ {\mathfrak P} & {\mathfrak o}_E^\times
	\end{smallmatrix}\right]) \cap \so(X))$ is the complement of $H \backslash \rho({\mathfrak o}_L^\times, \left[\begin{smallmatrix}
		{\mathfrak o}_E^\times & {\mathfrak P} \\ {\mathfrak P} & {\mathfrak o}_E^\times
	\end{smallmatrix}\right]) \cap \so(X))$ inside of $H\backslash \Gamma$, we find that that 
	\[
		\vol(H\backslash \rho({\mathfrak o}_L^\times, \left[\begin{smallmatrix}
			{\mathfrak o}_E^\times & {\mathfrak o}_E^\times \\ {\mathfrak P} & {\mathfrak o}_E^\times
		\end{smallmatrix}\right]) \cap \so(X)) = \vol(H\backslash \Gamma) - \vol(H\backslash \rho({\mathfrak o}_L^\times, \left[\begin{smallmatrix}
			{\mathfrak o}_E^\times & {\mathfrak P} \\ {\mathfrak P} & {\mathfrak o}_E^\times
		\end{smallmatrix}\right]) \cap \so(X)) = (1 - \frac{1}{q})\vol(\Gamma).
	\]
  \noindent Lastly we have that $H\backslash \rho({\mathfrak o}_L^\times, \left[\begin{smallmatrix}
		{\mathfrak o}_E^\times & {\mathfrak o}_E \\ {\mathfrak o}_E^\times & {\mathfrak o}_E^\times
	\end{smallmatrix}\right] \cap \gl(2,{\mathfrak o}_E)) \cap \so(X)$ is the complement of $H \backslash \rho({\mathfrak o}_L^\times, \left[\begin{smallmatrix}
		{\mathfrak o}_E^\times & {\mathfrak o}_E \\ {\mathfrak P} & {\mathfrak o}_E^\times
	\end{smallmatrix}\right] \sqcup \left[\begin{smallmatrix}
		{\mathfrak P} & {\mathfrak o}_E^\times \\ {\mathfrak o}_E^\times & {\mathfrak P}
	\end{smallmatrix}\right] \sqcup \left[\begin{smallmatrix}
		{\mathfrak P} & {\mathfrak o}_E^\times \\ {\mathfrak o}_E^\times & {\mathfrak o}_E^\times
	\end{smallmatrix}\right] \sqcup \left[\begin{smallmatrix}
		{\mathfrak o}_E^\times & {\mathfrak o}_E^\times \\ {\mathfrak o}_E^\times & {\mathfrak P}
	\end{smallmatrix}\right]) \cap \so(X))$ inside of $H\backslash \rho( {\mathfrak o}_L^\times, \gl(2,{\mathfrak o}_E))\cap \so(X)$. Therefore 
	\begin{align*}
		\vol(H\backslash \rho&({\mathfrak o}_L^\times, \left[\begin{smallmatrix}
		{\mathfrak o}_E^\times & {\mathfrak o}_E \\ {\mathfrak o}_E^\times & {\mathfrak o}_E^\times
	\end{smallmatrix}\right])\cap \so(X)) = 1 - \vol(H \backslash \rho({\mathfrak o}_L^\times, \left[\begin{smallmatrix}
		{\mathfrak o}_E^\times & {\mathfrak o}_E \\ {\mathfrak P} & {\mathfrak o}_E^\times
	\end{smallmatrix}\right] \sqcup \left[\begin{smallmatrix}
		{\mathfrak P} & {\mathfrak o}_E^\times \\ {\mathfrak o}_E^\times & {\mathfrak P}
	\end{smallmatrix}\right] \sqcup \left[\begin{smallmatrix}
		{\mathfrak P} & {\mathfrak o}_E^\times \\ {\mathfrak o}_E^\times & {\mathfrak o}_E^\times
	\end{smallmatrix}\right] \sqcup \left[\begin{smallmatrix}
		{\mathfrak o}_E^\times & {\mathfrak o}_E^\times \\ {\mathfrak o}_E^\times & {\mathfrak P}
	\end{smallmatrix}\right])\cap \so(X))
\\
	& = 1 - \vol(H\backslash \Gamma) - \vol(H \backslash \rho({\mathfrak o}_L^\times, \left[\begin{smallmatrix}
			{\mathfrak o}_E^\times & {\mathfrak P} \\ {\mathfrak P} & {\mathfrak o}_E^\times
		\end{smallmatrix}\right])\cap \so(X)) - 2 \vol(H\backslash \rho({\mathfrak o}_L^\times, \left[\begin{smallmatrix}
			{\mathfrak o}_E^\times & {\mathfrak o}_E^\times \\ {\mathfrak P} & {\mathfrak o}_E^\times
		\end{smallmatrix}\right]) \cap \so(X))\\&= 1 - (3 - \frac{1}{q}) \vol(\Gamma).
	\end{align*}
  This completes the proof.
  \end{proof}
  
The following result completes the proof of the main theorem for the tamely ramified case.
\begin{theorem} \label{MT_ramified}
Suppose that $L$ has odd residual characteristic, that $E/L$ is ramified, and that $W \in \mathcal W_{\tau_0}$ has $\Gamma_0({\mathfrak p}^n)$-invariance. Let $\varphi$ be chosen as in \eqref{eq:phi_ramified} and assume that $s \gg M$. Then, the function $B(\cdot,  \varphi, W, s)$ as defined in \eqref{eq:nonsplitbesselintegral} is non-zero and $\K({\mathfrak P}^N)$-invariant, where $N = n + 2$. In particular $B(1,\varphi , W , s) \ne 0$.	
\end{theorem}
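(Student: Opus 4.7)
My plan is to separate the statement into paramodular invariance of $B(\cdot, \varphi, W, s)$ and non-vanishing at $g = 1$, reducing each to the structural lemmas proved earlier in this subsection. The invariance is essentially formal, while the non-vanishing requires an explicit computation using the decomposition \eqref{eq:phi_ramified_explicit}.

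For $\K({\mathfrak p}^N)$-invariance, the key observation is that $\varphi$ is defined in \eqref{eq:phi_ramified} as an average $\sum_g \omega(g, 1)\tilde\varphi$ over coset representatives of $\K({\mathfrak p}^N)/(T \cap \K({\mathfrak p}^N))$, and the preceding lemma gives $\omega(t, 1)\tilde\varphi = \tilde\varphi$ for $t \in T$. Left multiplication by any $k \in \K({\mathfrak p}^N)$ of similitude $1$ therefore merely permutes the chosen coset representatives, so $\omega(k, 1)\varphi = \varphi$; from \eqref{eq:nonsplitbesselintegral} this yields $B(gk, \varphi, W, s) = B(g, \varphi, W, s)$. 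The extension to general $k \in \K({\mathfrak p}^N)$ with $\lambda(k) \in {\mathfrak o}_L^\times$ follows by the conjugation argument from Corollary \ref{cor:bessel_invariance_split}, applied summand-wise to each $\varphi^{(i)}$ whose support in \eqref{eq:phi_ramified_explicit} is invariant under the required unit-scaling.

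For non-vanishing of $B(1, \varphi, W, s)$, I would write it as $\sum_{i=1}^4 B(1, \varphi^{(i)}, W, s)$. Part (d) of Lemma \ref{lemma:H_coset_ramified} kills the $i = 4$ term, and parts (a)--(c) identify the $H \backslash \SO(X)$-support of each remaining integral with an explicit union of cosets $H \backslash H \rho(t, \mathcal A_i)$ parametrized by the representatives $t$ of $L^\times/N_{E/L}(E^\times)$ and submatrices $\mathcal A_i \subset \Gamma_0({\mathfrak P}^n)$. On each such coset $h = \rho(t, A)$ with $A \in \Gamma_0({\mathfrak P}^n)$, so the trivial central character of $\tau_0$ together with the $\Gamma_0({\mathfrak P}^n)$-invariance of $W$ give $\pi(h) W = \tau_0(A) W = W$, and Lemma \ref{zetaintegrals}(b) yields $Z(s, \pi(h)W) = |t|_E^{1/2 - s} Z(s, W)$. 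Combined with the volume entries in Table \ref{volumestab} and the coefficients in \eqref{eq:phi_ramified_explicit}, each $B(1, \varphi^{(i)}, W, s)$ reduces to an explicit scalar multiple of $Z(s, W)$.

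The main obstacle is the bookkeeping required to confirm that this sum of scalars is nonzero, accounting for the character twists $\chi(x_2), \chi(y_2)$ in $\varphi^{(2)}, \varphi^{(3)}$. Using the parametrization \eqref{eq:h_mat_x1}--\eqref{eq:h_mat_x2}, these characters reduce on each contributing coset to $\chi(t)$ times $\chi(N_{E/L}(\ast)) = 1$, so the twisted contributions have a predictable sign depending only on the coset of $t$ in $L^\times/N_{E/L}(E^\times)$. Summing the pieces from $\varphi^{(1)}, \varphi^{(2)}, \varphi^{(3)}$ using Table \ref{volumestab}, one then checks that the untwisted $\varphi^{(1)}$-piece---which is a positive rational function of $q$ times $Z(s, W)$ coming from the first row of the table---survives any cancellation among the twisted pieces, so that $B(1, \varphi, W, s)$ is a nonzero rational multiple of $Z(s, W)$. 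Since $Z(s, W) \neq 0$ for $\Re(s) \gg 0$, this completes the proof.
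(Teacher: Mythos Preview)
Your overall structure matches the paper's proof: paramodular invariance follows formally from the averaging definition \eqref{eq:phi_ramified}, and non-vanishing is checked by computing $B(1,\varphi^{(i)},W,s)$ for $i=1,2,3$ via Lemma~\ref{lemma:H_coset_ramified} and Table~\ref{volumestab}. However, the treatment of the $\chi$-twists contains errors that undermine the final non-vanishing step.

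First, every $\varphi^{(i)}$ carries two character factors, not one: from \eqref{eq:phi_ramified_explicit}, $\varphi^{(1)}$ has $\chi(x_3)\chi(y_3)$, $\varphi^{(2)}$ has $\chi(x_3)\chi(y_2)$, and $\varphi^{(3)}$ has $\chi(x_2)\chi(y_3)$. So $\varphi^{(1)}$ is not ``untwisted.'' Second, substituting \eqref{eq:h_mat_x1}--\eqref{eq:h_mat_x2} shows that each single factor has the shape $\chi(t^{-1})\cdot\chi(\mathrm{N}_{E/L}(\ast))\cdot(\text{constant in }\chi(2),\chi(\delta),\chi(-1))$; since every $\varphi^{(i)}$ contributes a \emph{pair} of such factors, the $\chi(t)$-dependence squares away and what remains is an honest constant, not a sign depending on the class of $t$. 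Concretely one finds $\chi(2\delta)$ for $\varphi^{(1)}$ and $\chi(2)$ for $\varphi^{(2)},\varphi^{(3)}$ (using $\chi(-\delta)=\chi(\mathrm{N}(\sqrt\delta))=1$).

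Because of this, your dominance argument does not go through: no piece is positive a priori, and for $n>0$ the sum reduces (as in the paper) to $\chi(2)Z(s,W)\,\mathrm{vol}(\Gamma)\bigl[\chi(\delta)(1-\tfrac1q)+\tfrac1q\bigr]$. When $\chi(\delta)=-1$ this is $\chi(2)Z(s,W)\,\mathrm{vol}(\Gamma)\,(\tfrac2q-1)$, which vanishes exactly when $q=2$. This is precisely why the hypothesis of odd residual characteristic is needed, and it cannot be replaced by a soft ``survives cancellation'' argument. The $n=0$ case similarly requires an explicit check using $\mathrm{vol}(\Gamma)=\tfrac{1}{q+1}$.
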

\begin{proof}
	By construction we have that $\tilde \varphi$ is paramodular invariant under the action of the Weil representation. It follows that $B(\cdot, \tilde\varphi , W , s)$ is also paramodular invariant. We write a generic element of $H\backslash \SO(X)$ as $h = \rho(1 , [\begin{smallmatrix}
		a& b \\ c & d		
	\end{smallmatrix}])$. Using Lemma \ref{lemma:H_coset_ramified} to determine the support of the intertwining map we write
	\begin{align*}
		B(1, \varphi^{(1)}, W ,s) & = \int\limits_{H \backslash \SO(X)} \omega(1,h) \varphi^{(1)}(x_1 , x_2) Z(s, \pi(h) W) \,dh
	\\
		& = \int\limits_{H \backslash \rho({\mathfrak o}_L^\times, \big[\begin{smallmatrix} {\mathfrak o}_E & {\mathfrak o}_E \\ \varpi_E^{n} {\mathfrak o}_E^\times & {\mathfrak o}_E^\times \end{smallmatrix}\big]) \cap \so(X)}  \omega(1,h)   \varphi^{(1)}(x_1 , x_2) Z(s,\pi(h)W) \,dh
	\\
		& = q_L^2 \int\limits_{H \backslash \rho({\mathfrak o}_L^\times, \big[\begin{smallmatrix} {\mathfrak o}_E & {\mathfrak o}_E \\ \varpi_E^{n} {\mathfrak o}_E^\times & {\mathfrak o}_E^\times \end{smallmatrix}\big]) \cap \so(X)}   \chi(2c \alpha(c) d \alpha(d) \delta^{-1}) Z(s , \pi(h)W) \, dh
	\\
		& = q_L^2\vol(H \backslash \rho({\mathfrak o}_L^\times, \big[\begin{smallmatrix} {\mathfrak o}_E & {\mathfrak o}_E \\ \varpi_E^{n} {\mathfrak o}_E^\times & {\mathfrak o}_E^\times \end{smallmatrix}\big]) \cap \so(X)) \chi(2\delta) Z(s,W)
	\intertext{also,}
		B(1 , \varphi^{(3)} , W , s) & = \int\limits_{H \backslash \SO(X)} \omega(1,h) \varphi^{(3)}(x_1 , x_2) Z(s, \pi(h) W) \,dh
	\\
		& = \int\limits_{H \backslash \rho({\mathfrak o}_L^\times, \big[\begin{smallmatrix} {\mathfrak o}_E^\times & {\mathfrak o}_E \\ {\mathfrak P}^{n + 1} & {\mathfrak o}_E^\times \end{smallmatrix}\big]) \cap \so(X)}  \omega(1,h)   \varphi^{(3)}(x_1 , x_2) Z(s,\pi(h)W) \,dh
	\\
		& = q_L\int\limits_{H \backslash \rho({\mathfrak o}_L^\times, \big[\begin{smallmatrix} {\mathfrak o}_E^\times & {\mathfrak o}_E \\ {\mathfrak P}^{n + 1} & {\mathfrak o}_E^\times \end{smallmatrix}\big]) \cap \so(X)}   \chi(-2d \alpha(d) a \alpha(a) \delta^{-1}) Z(s , \pi(h)W) \, dh
	\\
		& = q_L\vol(H \backslash \rho({\mathfrak o}_L^\times, \big[\begin{smallmatrix} {\mathfrak o}_E^\times & {\mathfrak o}_E \\ {\mathfrak P}^{n + 1} & {\mathfrak o}_E^\times \end{smallmatrix}\big])\cap \so(X)) \chi(2) Z(s,W).
	\intertext{If $n = 0$ then we also have a contribution from}
		B(1 , \varphi^{(2)} , W , s) & = \int\limits_{H \backslash \SO(X)} \omega(1,h) \varphi^{(2)}(x_1 , x_2) Z(s, \pi(h) W) \,dh
	\\
		& = \int\limits_{H \backslash \rho({\mathfrak o}_L^\times, \big[\begin{smallmatrix} {\mathfrak P} & {\mathfrak o}_E^\times \\ {\mathfrak o}_E^\times & {\mathfrak o}_E  \end{smallmatrix}\big]) \cap \so(X)}  \omega(1,h)   \varphi^{(2)}(x_1 , x_2) Z(s,\pi(h)W) \,dh
	\\
		& = q_L\int\limits_{H \backslash \rho({\mathfrak o}_L^\times, \big[\begin{smallmatrix} {\mathfrak P} & {\mathfrak o}_E^\times \\ {\mathfrak o}_E^\times & {\mathfrak o}_E  \end{smallmatrix}\big]) \cap \so(X)}   \chi(-2d \alpha(d) b \alpha(b) \delta^{-1}) Z(s , \pi(h)W) \, dh
	\\
		& = q_L\vol(H \backslash \rho({\mathfrak o}_L^\times, \big[\begin{smallmatrix} {\mathfrak P} & {\mathfrak o}_E^\times \\ {\mathfrak o}_E^\times & {\mathfrak o}_E  \end{smallmatrix}\big] \cap \so(X))) \chi(2) Z(s,W).
	\end{align*} 
	Therefore, from Lemma \ref{lemma:H_coset_ramified}, we find that
	\begin{align*}
		B(1, \varphi, W,s)  = &B(1, \varphi^{(1)} + \varphi^{(2)} + \varphi^{(3)}, W,s)
	\\
		= &\chi(2) Z(s,  W) \bigg[\chi(\delta) q_L\vol(H \backslash \rho(1, \big[\begin{smallmatrix} {\mathfrak o}_E & {\mathfrak o}_E \\ \varpi^n_E {\mathfrak o}_E^\times & {\mathfrak o}_E^\times \end{smallmatrix}\big]) \cap \so(X)) \\
		&  +  q_L^2\vol(H \backslash \rho(1, \big[\begin{smallmatrix} {\mathfrak o}_E^\times & {\mathfrak o}_E \\ {\mathfrak P}^{n + 1} & {\mathfrak o}_E^\times \end{smallmatrix}\big]) \cap \so(X)) +\epsilon(n)  q_L\vol(H \backslash \rho(1, \big[\begin{smallmatrix} {\mathfrak P} & {\mathfrak o}_E^\times \\ {\mathfrak o}_E^\times & {\mathfrak o}_E  \end{smallmatrix}\big]) \cap \so(X))\bigg]
	\end{align*}
	where $\epsilon(0) = 1$ and is equal to zero elsewhere.

	When $n > 0$ we can use the volumes in Table \ref{volumestab} to calculate that 
	\begin{align*}
		B(1, \varphi, W , s) & = \chi(2) Z(s,  W) \bigg[\chi(\delta) q_L\vol(H \backslash \rho(1, \big[\begin{smallmatrix} {\mathfrak o}_E^\times & {\mathfrak o}_E 
		\\ \varpi^n_E {\mathfrak o}_E^\times & {\mathfrak o}_E^\times \end{smallmatrix}\big]) \cap \so(X)) \\ & \hspace{3cm} +  q_L^2\vol(H \backslash \rho(1, \big[\begin{smallmatrix} {\mathfrak o}_E^\times & {\mathfrak o}_E \\ {\mathfrak P}^{n + 1} & {\mathfrak o}_E^\times \end{smallmatrix}\big]) \cap \so(X)) \bigg]
	\\
		& = \chi(2) Z(s,  W) \vol(\Gamma) \bigg[\chi(\delta) (1 - \frac{1}{q}) + \frac{1}{q} \bigg] \ne 0
	\end{align*}
	since $q \ne 2$. When $n = 0$ we have 
	\begin{align*}
		B(1,  \varphi, W , s) & = \chi(2) Z(s,  W) \bigg[\chi(\delta) q_L\vol(H \backslash \rho(1, \big[\begin{smallmatrix} {\mathfrak o}_E & {\mathfrak o}_E \\{\mathfrak o}_E^\times & {\mathfrak o}_E^\times \end{smallmatrix}\big]) \so(X)) \\ &\hspace{3cm} +  q_L^2\vol(H \backslash \rho(1, \big[\begin{smallmatrix} {\mathfrak o}_E^\times & {\mathfrak o}_E \\ {\mathfrak P} & {\mathfrak o}_E^\times \end{smallmatrix}\big]) \so(X))
	\\
		& \hspace{3cm} +  q_L\vol(H \backslash \rho(1, \big[\begin{smallmatrix} {\mathfrak P} & {\mathfrak o}_E^\times \\ {\mathfrak o}_E^\times & {\mathfrak o}_E  \end{smallmatrix}\big]) \cap \so(X))\bigg]
	\\
		& = \chi(2) Z(s,W) \bigg[ \chi(\delta)q \big( 1 - (3 -\frac{1}{q}) \vol(\Gamma) + (1 - \frac{1}{q})\vol(\Gamma) \big) + q^2(\vol(\Gamma)) + q(\vol(\Gamma))\bigg]
	\\
		& = \chi(2) Z(s,W) \bigg[ \chi(\delta)q + \vol(\Gamma)(\chi(\delta)(4q - 2) + q^2 + q)\bigg]
	\\
		& \ne 0
	\end{align*}
	since $\vol(\Gamma) = \frac{1}{q + 1}$.
  \end{proof}



%
%
\bibliography{explicittheta.bib}
\bibliographystyle{amsalpha}
\end{document}